\newtheorem{cor}[subsubsection]{Corollary}
\newtheorem{lem}[subsubsection]{Lemma}
\newtheorem{prop}[subsubsection]{Proposition}
\newtheorem{thm}[subsubsection]{Theorem}
\theoremstyle{definition}
\theoremstyle{remark}
\newtheorem{rem}[subsubsection]{Remark}
\newcommand{\thmref}[1]{Theorem~\ref{#1}}
\newcommand{\secref}[1]{Sect.~\ref{#1}}
\newcommand{\lemref}[1]{Lemma~\ref{#1}}
\newcommand{\propref}[1]{Proposition~\ref{#1}}
\newcommand{\corref}[1]{Corollary~\ref{#1}}
\numberwithin{equation}{section}
\newcommand{\nc}{\newcommand}
\nc{\renc}{\renewcommand}
\nc{\ssec}{\subsection}
\nc{\sssec}{\subsubsection}
\nc{\on}{\operatorname}
\nc\ol{\overline}
\nc\wt{\widetilde}
\nc\tboxtimes{\wt{\boxtimes}}
\nc\tstar{\wt{\star}}
\nc{\alp}{a}
\nc{\ZZ}{{\mathbb Z}}
\nc{\NN}{{\mathbb N}}
\nc{\OO}{{\mathbb O}}
\renc{\SS}{{\mathbb S}}
\nc{\DD}{{\mathbb D}}
\nc{\GG}{{\mathbb G}}
\nc{\Fq}{{\mathbb F}_q}
\nc{\Fqb}{\ol{{\mathbb F}_q}}
\nc{\Ql}{\ol{{\mathbb Q}_\ell}}
\nc{\id}{\text{id}}
\nc\X{\mathcal X}
\nc{\Hom}{\on{Hom}}
\nc{\Lie}{\on{Lie}}
\nc{\Loc}{\on{Loc}}
\nc{\Pic}{\on{Pic}}
\nc{\Bun}{\on{Bun}}
\nc{\IC}{\on{IC}}
\nc{\ICs}{\on{IC}^{\frac{\infty}{2}}}
\nc{\bICs}{\overset{\bullet}{\on{IC}}{}^{\frac{\infty}{2}}}
\nc{\tICs}{\wt{\on{IC}}{}^{\frac{\infty}{2}}}
\nc{\ICsg}{\on{IC}^{\gamma+\frac{\infty}{2}}}
\nc{\ICsl}{\on{IC}^{\lambda+\frac{\infty}{2}}}
\nc{\ICslm}{\on{IC}^{\lambda+\frac{\infty}{2},-}}
\nc{\ICsm}{\on{IC}^{\frac{\infty}{2},-}}
\nc{\bICsm}{\overset{\bullet}{\on{IC}}{}^{\frac{\infty}{2},-}}
\nc{\Aut}{\on{Aut}}
\nc{\rk}{\on{rk}}
\nc{\Sh}{\on{Sh}}
\nc{\Perv}{\on{Perv}}
\nc{\pos}{{\on{pos}}}
\nc{\Conv}{\on{Conv}}
\nc{\Sph}{\on{Sph}}
\nc{\Sat}{\on{Sat}}
\nc{\Sym}{\on{Sym}}
\nc{\BunBb}{\overline{\Bun}_B}
\nc{\BunNb}{\overline{\Bun}_N}
\nc{\BunTb}{\overline{\Bun}_T}
\nc{\BunBbm}{\overline{\Bun}_{B^-}}
\nc{\BunBbel}{\overline{\Bun}_{B,el}}
\nc{\BunBbmel}{\overline{\Bun}_{B^-,el}}
\nc{\Buno}{\overset{o}{\Bun}}
\nc{\BunPb}{{\overline{\Bun}_P}}
\nc{\BunBM}{\Bun_{B(M)}}
\nc{\BunBMb}{\overline{\Bun}_{B(M)}}
\nc{\BunPbw}{{\widetilde{\Bun}_P}}
\nc{\BunBP}{\widetilde{\Bun}_{B,P}}
\nc{\GUb}{\overline{G/U}}
\nc{\GUPb}{\overline{G/U(P)}}
\nc{\Hhom}{\underline{\on{Hom}}}
\nc\syminfty{\on{Sym}^{\infty}}
\nc\lal{\ol{\kappa_x}}
\nc\xl{\ol{x}}
\nc\thl{\ol{\theta}}
\nc\nul{\ol{\nu}}
\nc\lambdal{\ol{\lambda}}
\nc{\oX}{\overset{o}{X}{}}
\nc{\hl}{\overset{\leftarrow}h{}}
\nc{\hr}{\overset{\rightarrow}h{}}
\nc{\M}{{\mathcal M}}
\nc{\N}{{\mathcal N}}
\nc{\F}{{\mathcal F}}
\nc{\D}{{\mathcal D}}
\nc{\Q}{{\mathcal Q}}
\nc{\Y}{{\mathcal Y}}
\nc{\G}{{\mathcal G}}
\nc{\E}{{\mathcal E}}
\nc{\CalC}{{\mathcal C}}
\nc\Dh{\widehat{\D}}
\nc{\C}{{\mathcal C}}
\nc{\K}{{\mathcal K}}
\renewcommand{\H}{{\mathcal H}}
\nc{\T}{{\mathcal T}}
\nc{\V}{{\mathcal V}}
\renc{\P}{{\mathcal P}}
\nc{\A}{{\mathcal A}}
\nc{\B}{{\mathcal B}}
\nc{\U}{{\mathcal U}}
\nc{\Gr}{{\on{Gr}}}
\nc{\frn}{{\check{\mathfrak u}(P)}}
\nc{\fC}{\mathfrak C}
\nc{\p}{\mathfrak p}
\nc{\q}{\mathfrak q}
\nc\f{{\mathfrak f}}
\nc{\qo}{{\mathfrak q}}
\nc{\po}{{\mathfrak p}}
\nc{\s}{{\mathfrak s}}
\nc\w{\text{w}}
\renewcommand{\mod}{{\on{-mod}}}
\nc\Spec{\on{Spec}}
\nc\Mod{\on{Mod}}
\nc{\tw}{\widetilde{\mathfrak t}}
\nc{\pw}{\widetilde{\mathfrak p}}
\nc{\qw}{\widetilde{\mathfrak q}}
\nc{\jw}{\widetilde j}
\nc{\grb}{\overline{\Gr}}
\nc{\I}{\mathcal I}
\nc{\kappach}{{\check\kappa_x}}
\nc{\Lambdach}{{\check\Lambda}{}}
\nc{\lambdach}{{\check\lambda}}
\nc{\omegach}{{\check\omega}}
\nc{\nuch}{{\check\nu}}
\nc{\etach}{{\check\eta}}
\nc{\alphach}{{\checka}}
\nc{\oblvtach}{{\check\oblvta}}
\nc{\pich}{{\check\pi}}
\nc{\ch}{{\check h}}
\nc{\Hb}{\overline{\H}}
\nc{\BA}{{\mathbb{A}}}
\nc{\BC}{{\mathbb{C}}}
\nc{\BE}{{\mathbb{E}}}
\nc{\BF}{{\mathbb{F}}}
\nc{\BG}{{\mathbb{G}}}
\nc{\BM}{{\mathbb{M}}}
\nc{\BO}{{\mathbb{O}}}
\nc{\BD}{{\mathbb{D}}}
\nc{\BN}{{\mathbb{N}}}
\nc{\BP}{{\mathbb{P}}}
\nc{\BQ}{{\mathbb{Q}}}
\nc{\BR}{{\mathbb{R}}}
\nc{\BZ}{{\mathbb{Z}}}
\nc{\BS}{{\mathbb{S}}}
\nc{\CA}{{\mathcal{A}}}
\nc{\CB}{{\mathcal{B}}}
\nc{\CE}{{\mathcal{E}}}
\nc{\CF}{{\mathcal{F}}}
\nc{\CG}{{\mathcal{G}}}
\nc{\CH}{{\mathcal{H}}}
\nc{\CL}{{\mathcal{L}}}
\nc{\CC}{{\mathcal{C}}}
\nc{\CM}{{\mathcal{M}}}
\nc{\CN}{{\mathcal{N}}}
\nc{\CK}{{\mathcal{K}}}
\nc{\CO}{{\mathcal{O}}}
\nc{\CP}{{\mathcal{P}}}
\nc{\CQ}{{\mathcal{Q}}}
\nc{\CR}{{\mathcal{R}}}
\nc{\CS}{{\mathcal{S}}}
\nc{\CT}{{\mathcal{T}}}
\nc{\CU}{{\mathcal{U}}}
\nc{\CV}{{\mathcal{V}}}
\nc{\CW}{{\mathcal{W}}}
\nc{\CX}{{\mathcal{X}}}
\nc{\CY}{{\mathcal{Y}}}
\nc{\CZ}{{\mathcal{Z}}}
\nc{\CI}{{\mathcal{I}}}
\nc{\CJ}{{\mathcal{J}}}
\nc{\csM}{{\check{\mathcal A}}{}}
\nc{\oM}{{\overset{\circ}{\mathcal M}}{}}
\nc{\obM}{{\overset{\circ}{\mathbf M}}{}}
\nc{\oCA}{{\overset{\circ}{\mathcal A}}{}}
\nc{\obA}{{\overset{\circ}{\mathbf A}}{}}
\nc{\ooM}{{\overset{\circ}{M}}{}}
\nc{\osM}{{\overset{\circ}{\mathsf M}}{}}
\nc{\vM}{{\overset{\bullet}{\mathcal M}}{}}
\nc{\nM}{{\underset{\bullet}{\mathcal M}}{}}
\nc{\oD}{{\overset{\circ}{\mathcal D}}{}}
\nc{\obD}{{\overset{\circ}{\mathbf D}}{}}
\nc{\oA}{{\overset{\circ}{\mathbb A}}{}}
\nc{\op}{{\overset{\bullet}{\mathbf p}}{}}
\nc{\cp}{{\overset{\circ}{\mathbf p}}{}}
\nc{\oU}{{\overset{\bullet}{\mathcal U}}{}}
\nc{\oZ}{{\overset{\circ}{\mathcal Z}}{}}
\nc{\ofZ}{{\overset{\circ}{\mathfrak Z}}{}}
\nc{\oF}{{\overset{\circ}{\fF}}}
\nc{\fa}{{\mathfrak{a}}}
\nc{\fb}{{\mathfrak{b}}}
\nc{\fd}{{\mathfrak{d}}}
\nc{\ff}{{\mathfrak{f}}}
\nc{\fg}{{\mathfrak{g}}}
\nc{\fgl}{{\mathfrak{gl}}}
\nc{\fh}{{\mathfrak{h}}}
\nc{\fj}{{\mathfrak{j}}}
\nc{\fl}{{\mathfrak{l}}}
\nc{\fm}{{\mathfrak{m}}}
\nc{\fn}{{\mathfrak{n}}}
\nc{\fu}{{\mathfrak{u}}}
\nc{\fp}{{\mathfrak{p}}}
\nc{\fr}{{\mathfrak{r}}}
\nc{\fs}{{\mathfrak{s}}}
\nc{\ft}{{\mathfrak{t}}}
\nc{\fz}{{\mathfrak{z}}}
\nc{\fsl}{{\mathfrak{sl}}}
\nc{\hsl}{{\widehat{\mathfrak{sl}}}}
\nc{\hgl}{{\widehat{\mathfrak{gl}}}}
\nc{\hg}{{\widehat{\mathfrak{g}}}}
\nc{\chg}{{\widehat{\mathfrak{g}}}{}^\vee}
\nc{\hn}{{\widehat{\mathfrak{n}}}}
\nc{\chn}{{\widehat{\mathfrak{n}}}{}^\vee}
\nc{\fA}{{\mathfrak{A}}}
\nc{\fB}{{\mathfrak{B}}}
\nc{\fD}{{\mathfrak{D}}}
\nc{\fE}{{\mathfrak{E}}}
\nc{\fF}{{\mathfrak{F}}}
\nc{\fG}{{\mathfrak{G}}}
\nc{\fK}{{\mathfrak{K}}}
\nc{\fJ}{{\mathfrak{J}}}
\nc{\fL}{{\mathfrak{L}}}
\nc{\fM}{{\mathfrak{M}}}
\nc{\fN}{{\mathfrak{N}}}
\nc{\fP}{{\mathfrak{P}}}
\nc{\fU}{{\mathfrak{U}}}
\nc{\fV}{{\mathfrak{V}}}
\nc{\fZ}{{\mathfrak{Z}}}
\nc{\ba}{{\mathbf{a}}}
\nc{\bb}{{\mathbf{b}}}
\nc{\bc}{{\mathbf{c}}}
\nc{\bd}{{\mathbf{d}}}
\nc{\bbf}{{\mathbf{f}}}
\nc{\be}{{\mathbf{e}}}
\nc{\bi}{{\mathbf{i}}}
\nc{\bj}{{\mathbf{j}}}
\nc{\bn}{{\mathbf{n}}}
\nc{\bo}{{\mathbf{o}}}
\nc{\bp}{{\mathbf{p}}}
\nc{\bq}{{\mathbf{q}}}
\nc{\bu}{{\mathbf{u}}}
\nc{\bv}{{\mathbf{v}}}
\nc{\bx}{{\mathbf{x}}}
\nc{\bs}{{\mathbf{s}}}
\nc{\by}{{\mathbf{y}}}
\nc{\bw}{{\mathbf{w}}}
\nc{\bA}{{\mathbf{A}}}
\nc{\bK}{{\mathbf{K}}}
\nc{\bB}{{\mathbf{B}}}
\nc{\bF}{{\mathbf{F}}}
\nc{\bC}{{\mathbf{C}}}
\nc{\bG}{{\mathbf{G}}}
\nc{\bD}{{\mathbf{D}}}
\nc{\bE}{{\mathbf{E}}}
\nc{\bH}{{\mathbf{H}}}
\nc{\bI}{{\mathbf{I}}}
\nc{\bM}{{\mathbf{M}}}
\nc{\bN}{{\mathbf{N}}}
\nc{\bO}{{\mathbf{O}}}
\nc{\bV}{{\mathbf{V}}}
\nc{\bW}{{\mathbf{W}}}
\nc{\bX}{{\mathbf{X}}}
\nc{\bZ}{{\mathbf{Z}}}
\nc{\bS}{{\mathbf{S}}}
\nc{\sA}{{\mathsf{A}}}
\nc{\sB}{{\mathsf{B}}}
\nc{\sC}{{\mathsf{C}}}
\nc{\sD}{{\mathsf{D}}}
\nc{\sF}{{\mathsf{F}}}
\nc{\sK}{{\mathsf{K}}}
\nc{\sM}{{\mathsf{M}}}
\nc{\sO}{{\mathsf{O}}}
\nc{\sW}{{\mathsf{W}}}
\nc{\sQ}{{\mathsf{Q}}}
\nc{\sP}{{\mathsf{P}}}
\nc{\sZ}{{\mathsf{Z}}}
\nc{\sV}{{\mathsf{V}}}
\nc{\sr}{{\mathsf{r}}}
\nc{\bk}{{\mathsf{k}}}
\nc{\sg}{{\mathsf{g}}}
\nc{\sff}{{\mathsf{f}}}
\nc{\sfe}{{\mathsf{e}}}
\nc{\sfj}{{\mathsf{j}}}
\nc{\sfi}{{\mathsf{i}}}
\nc{\sfb}{{\mathsf{b}}}
\nc{\sfc}{{\mathsf{c}}}
\nc{\sd}{{\mathsf{d}}}
\nc{\sv}{{\mathsf{v}}}
\nc{\sw}{{\mathsf{w}}}
\nc{\BK}{{\bar{K}}}
\nc{\tA}{{\widetilde{\mathbf{A}}}}
\nc{\tB}{{\widetilde{\mathcal{B}}}}
\nc{\tg}{{\widetilde{\mathfrak{g}}}}
\nc{\tG}{{\widetilde{G}}}
\nc{\TM}{{\widetilde{\mathbb{M}}}{}}
\nc{\tO}{{\widetilde{\mathsf{O}}}{}}
\nc{\tU}{{\widetilde{\mathfrak{U}}}{}}
\nc{\TZ}{{\tilde{Z}}}
\nc{\tx}{{\tilde{x}}}
\nc{\tbv}{{\tilde{\bv}}}
\nc{\tfP}{{\widetilde{\mathfrak{P}}}{}}
\nc{\tz}{{\tilde{\zeta}}}
\nc{\tmu}{{\tilde{\lambda}}}
\nc{\urho}{\underline{\pi}}
\nc{\uB}{\underline{B}}
\nc{\uC}{{\underline{\mathbb{C}}}}
\nc{\ui}{\underline{i}}
\nc{\uj}{\underline{j}}
\nc{\ofP}{{\overline{\mathfrak{P}}}}
\nc{\oB}{{\overline{\mathcal{B}}}}
\nc{\og}{{\overline{\mathfrak{g}}}}
\nc{\oI}{{\overline{I}}}
\nc{\eps}{\varepsilon}
\nc{\hrho}{{\hat{\pi}}}
\nc{\one}{{\mathbf{1}}}
\nc{\two}{{\mathbf{t}}}
\nc{\Rep}{{\mathop{\operatorname{\rm Rep}}}}
\nc{\Tot}{{\mathop{\operatorname{\rm Tot}}}}
\nc{\Ker}{{\mathop{\operatorname{\rm Ker}}}}
\nc{\Hilb}{{\mathop{\operatorname{\rm Hilb}}}}
\nc{\End}{{\mathop{\operatorname{\rm End}}}}
\nc{\Ext}{{\mathop{\operatorname{\rm Ext}}}}
\nc{\CHom}{{\mathop{\operatorname{{\mathcal{H}}\it om}}}}
\nc{\GL}{{\mathop{\operatorname{\rm GL}}}}
\nc{\gr}{{\mathop{\operatorname{\rm gr}}}}
\nc{\Id}{{\mathop{\operatorname{\rm Id}}}}
\nc{\de}{{\mathop{\operatorname{\rm def}}}}
\nc{\length}{{\mathop{\operatorname{\rm length}}}}
\nc{\supp}{{\mathop{\operatorname{\rm supp}}}}
\nc{\Cliff}{{\mathsf{Cliff}}}
\nc{\Fl}{\on{Fl}}
\nc{\Fib}{{\mathsf{Fib}}}
\nc{\Coh}{{\mathsf{Coh}}}
\nc{\QCoh}{{\on{QCoh}}}
\nc{\IndCoh}{{\on{IndCoh}}}
\nc{\FCoh}{{\mathsf{FCoh}}}
\nc{\reg}{{\text{\rm reg}}}
\nc{\cplus}{{\mathbf{C}_+}}
\nc{\cminus}{{\mathbf{C}_-}}
\nc{\cthree}{{\mathbf{C}_*}}
\nc{\Qbar}{{\bar{Q}}}
\nc\Eis{\on{Eis}}
\nc\Eisb{\ol\Eis{}}
\nc\Eisr{\on{Eis}^{rat}{}}
\nc\wh{\widehat}
\nc{\Def}{\on{Def_{\check{\fb}}(E)}}
\nc{\barZ}{\overline{Z}{}}
\nc{\barbarZ}{\overline{\barZ}{}}
\nc{\barpi}{\overline\iota}
\nc{\barbarpi}{\overline\barpi}
\nc{\barpip}{\overline\iota{}^+}
\nc{\barpim}{\overline\iota{}^-}
\nc{\fq}{\mathfrak q}
\nc{\fqb}{\ol{\fq}{}}
\nc{\fpb}{\ol{\fp}{}}
\nc{\fpr}{{\fp^{rat}}{}}
\nc{\fqr}{{\fq^{rat}}{}}
\nc{\hattimes}{\wh\otimes}
\nc{\bh}{{\bar{h}}}
\nc{\bOmega}{{\overline{\Omega(\check \fn)}}}
\nc{\seq}[1]{\stackrel{#1}{\sim}}
\nc{\cT}{{\check{T}}}
\nc{\cG}{{\check{G}}}
\nc{\cM}{{\check{M}}}
\nc{\cB}{{\check{B}}}
\nc{\cN}{{\check{N}}}
\nc{\ct}{{\check{\mathfrak t}}}
\nc{\cg}{{\check{\fg}}}
\nc{\cb}{{\check{\fb}}}
\nc{\cn}{{\check{\fn}}}
\nc{\cLambda}{{\check\Lambda}}
\nc{\cla}{{\check\kappa_x}}
\nc{\cmu}{{\check\lambda}}
\nc{\clambda}{{\check\lambda}}
\nc{\cnu}{{\check\nu}}
\nc{\ceta}{{\check\eta}}
\nc{\DefbE}{{\on{Def}_{\cB}(E_\cT)}}
\nc{\imathb}{{\ol{\imath}}}
\nc{\KG}{K\backslash G}
\nc{\comult}{{co\text{-}mult}}
\nc{\counit}{{co\text{-}unit}}
\nc{\uHom}{{\underline{\CHom}}}
\nc{\dgSch}{\on{Sch}}
\nc{\Sch}{\on{Sch}}
\nc{\affdgSch}{\on{Sch}^{\on{aff}}}
\nc{\affSch}{\on{Sch}^{\on{aff}}}
\nc{\Groupoids}{\on{Grpd}}
\nc{\inftygroup}{\on{Spc}}
\nc{\inftyCat}{\infty\on{-Cat}}
\nc{\StinftyCat}{\inftyCat^{\on{St}}}
\nc{\MoninftyCat}{\infty\on{-Cat}^{\on{Mon}}}
\nc{\SymMoninftyCat}{\infty\on{-Cat}^{\on{SymMon}}}
\nc{\SymMonStinftyCat}{\on{DGCat}^{\on{SymMon}}}
\nc{\MonStinftyCat}{\on{DGCat}^{\on{Mon}}}
\nc{\inftystack}{\on{Stk}}
\nc{\inftystackalg}{Stk^{1\text{-}alg}}
\nc{\inftyprestack}{\on{PreStk}}
\nc{\inftydgnearstack}{\on{NearStk}}
\nc{\inftydgstack}{\on{Stk}}
\nc{\inftydgstackalg}{DGStk^{1\text{-}alg}}
\nc{\inftydgprestack}{\on{PreStk}}
\nc{\dgindSch}{\on{indSch}}
\nc{\indSch}{{}^{\on{cl}}\!\on{indSch}}
\nc{\infSch}{\on{infSch}}
\nc{\dr}{{\on{dR}}}
\nc{\mmod}{{\on{-}\!{\mathbf{mod}}}}
\nc{\starr}{\text{\dh}}
\nc{\Spectra}{\on{Spectra}}
\nc{\Crys}{\on{Crys}}
\nc{\oblv}{{\mathbf{oblv}}}
\nc{\ind}{{\mathbf{ind}}}
\nc{\coind}{{\mathbf{coind}}}
\nc{\inv}{{\mathbf{inv}}}
\nc{\triv}{{\mathbf{triv}}}
\nc{\CMaps}{{\mathcal Maps}}
\nc{\Maps}{\on{Maps}}
\nc{\bMaps}{\mathbf{Maps}}
\nc{\BMaps}{\ul{\on{Maps}}}
\nc{\Grid}{\on{Grid}}
\nc{\hGrid}{\on{Grid}^{\geq\,\on{dgnl}}}
\nc{\Diag}{\on{Diag}}
\nc{\bDelta}{\mathbf{\Delta}}
\nc{\tCateg}{(\infty\on{-2)-Cat}}
\nc{\ul}{\underline}
\nc{\Seg}{\on{Seq}}
\nc{\triSeg}{\on{tri-Seq}}
\nc{\quadSeg}{\on{quad-Seq}}
\nc{\nSeg}{\on{n-Seq}}
\nc{\Segm}{\on{Seg}^{\on{mkd}}}
\nc{\fLm}{\fL^{\on{mkd}}}
\nc{\inftyCatm}{\inftyCat^{\on{mkd}}}
\nc{\Blocks}{\mathbf{Blocks}}
\nc{\Snakes}{\mathbf{Snakes}}
\nc{\Sets}{\on{Sets}}
\nc{\Ran}{{\on{Ran}}}
\nc{\Vect}{\on{Vect}}
\nc{\Shv}{\on{Shv}}
\nc{\unn}{\mathbf{union}}
\nc{\Spc}{\on{Spc}}
\nc{\ppart}{(\!(t)\!)}
\nc{\qqart}{[\![t]\!]}
\nc{\Dmod}{\on{D-mod}}
\nc{\cD}{\mathcal D}
\nc{\ocD}{\overset{\circ}{\cD}}
\nc{\sfp}{\mathsf{p}}
\nc{\sfq}{\mathsf{q}}
\nc{\DGCat}{\on{DGCat}}
\renc{\det}{\on{det}}
\nc{\Conf}{{\on{Conf}}}
\nc{\Whit}{\on{Whit}}
\nc{\Reg}{\on{Reg}}
\nc{\Res}{\on{Res}}
\nc{\BunNbom}{\overline\Bun_N^{\omega^\rho}} 
\nc{\BunNbox}{(\overline\Bun_N^{\omega^\rho})_{\infty\cdot x}} 
\nc{\BunNmbox}{(\overline\Bun_{N^-}^{\omega^\rho})_{\infty\cdot x}}
\nc{\Hecke}{\on{Hecke}}
\nc{\BHecke}{B\on{-Hecke}}
\nc{\BmHecke}{B^-\on{-Hecke}}
\nc{\bHecke}{\overset{\bullet}{\on{Hecke}}}
\nc{\bCZ}{\ol\CZ}
\nc{\oCZ}{\overset{\circ}\CZ} 
\nc{\boCZ}{\ol{\oCZ}}
\nc{\sotimes}{\overset{!}\otimes}
\nc{\SI}{\on{SI}}
\nc{\semiinf}{{\frac{\infty}{2}}}
\nc{\coInd}{\on{coInd}}
\nc{\Ind}{\on{Ind}}
\nc{\bCM}{\overset{\bullet}\CM{}}
\nc{\oOmega}{\overset{\circ}\Omega{}} 
\nc{\oConf}{\overset{\circ}\Conf{}}
\nc{\Quant}{\on{Quant}}
\nc{\hbart}{{[\![\hbar]\!]}}
\nc{\hbarl}{{(\!(\hbar)\!)}}
\nc{\Frob}{\on{Frob}}
\nc{\opi}{\overset{\circ}\pi}
\begin{document}

\title[Factorization algebras in quantum 
geometric Langlands]{On factorization algebras arising in the quantum \\
geometric Langlands theory}

\author{D. Gaitsgory}

\date{\today}

\begin{abstract}
We study factorization algebras on configuration spaces of points on a curve, colored by elements 
of the root lattice. Our main result says that the factorization algebra attached to Lusztig's quantum group 
can be obtained as a direct image of a twisted Whittaker sheaf on the Zastava space. 
\end{abstract}

\maketitle

\tableofcontents

\section*{Introduction}

\ssec{Framework for this paper: the FLE}

This paper is a step towards the proof of the Fundamental Local Equivalence (FLE), which is a
(still conjectural\footnote{Recently, a proof by methods different than what we envisaged, was obtained by J.~Campbell, G.~Dhillon and S.~Raskin.}) 
equivalence between the twisted Whittaker category on the affine Grassmannian 
of a reductive group $G$ and the Kazhdan-Lusztig category of its Langlands dual $\cG$. 

\medskip

In this subsection we will recall what FLE is and what role this paper plays in our strategy to prove it. 
  
\sssec{}

We start with a datum of \emph{level} for $G$, which is an $\on{Ad}_G$-invariant symmetric bilinear form $\kappa$
on the Lie algebra $\fg$ of $G$. Given $\kappa$, we can consider the twisted category of D-modules $\Dmod_\kappa(\Gr_G)$
on the affine Grassmannian $\Gr_G=G\ppart/G\qqart$ of $G$, and the subcategory
$$\Whit_\kappa(\Gr_G) \subset \Dmod_\kappa(\Gr_G),$$
obtained by imposing the condition of $N\ppart$-equivariance against a non-degenerate character. 

\medskip

Suppose now that $\kappa$ is such that the form
\begin{equation} \label{e:kappa t}
\kappa+\frac{\kappa_{\on{Kil}}}{2}|_{\ft}
\end{equation} 
on the Cartan subalgebra $\ft\subset \fg$ is non-degenerate. Then to $\kappa$ one can attach its \emph{dual level} for $\cG$, 
denoted $\check\kappa$, so that the forms 
$$\kappa+\frac{\kappa_{\on{Kil}}}{2}|_{\ft}  \text{ and } \check\kappa+\frac{\check\kappa_{\on{Kil}}}{2}|_{\ct}$$
are each other's duals under the identification 
$$\ct\simeq \ft^\vee.$$

\medskip 

We will consider the affine Kac-Moody Lie algebra 
$$0\to k \to \widehat{\cg}_{\check\kappa}\to \cg\ppart\to 0$$ attached to $\cg$ and $\check\kappa$ 
(here $k$ is the ground field), and the category, denoted $\on{KL}_{\check\kappa}(\cG)$, of
representations of the Harish-Chandra pair $(\widehat{\cg}_{\check\kappa},\cG\qqart)$. 

\medskip

The Fundamental Local Equivalence says that there exists a (canonically defined) equivalence of categories
\begin{equation} \label{e:FLE}
\Whit_\kappa(\Gr_G) \simeq \on{KL}_{\check\kappa}(\cG).
\end{equation} 

\medskip

The equivalence \eqref{e:FLE} is not an easy statement to prove because it involves two geometrically
defined categories, one for $G$ and another for $\cG$, while the relationship between $G$ and $\cG$ is
combinatorial in nature (involution on the root data). Our method of attacking the FLE consists of expressing both sides 
in terms of objects that are combinatorially attached to the root datum and then comparing these directly.

\sssec{}  \label{sss:Jacquet}

The point of departure is that when $G$ is a torus $T$, the corresponding equivalence 
\begin{equation} \label{e:FLE T}
\Whit_\kappa(\Gr_T) \simeq \on{KL}_{\check\kappa}(\cT),
\end{equation} 
is relatively easy to construct (note that for a torus $\Whit_\kappa(\Gr_T)=\Dmod_\kappa(\Gr_T)$). 

\medskip

Hence, the natural desire is to describe $\Whit_\kappa(\Gr_G)$ (resp., $\on{KL}_{\check\kappa}(\cG)$) 
in terms of $\Whit_\kappa(\Gr_T)$ (resp., $\on{KL}_{\check\kappa}(\cT)$) and to show that these
descriptions match up under the equivalence \eqref{e:FLE T}. 

\medskip

Next comes the crucial observation that the two sides of \eqref{e:FLE} are not just plain (DG) categories;
rather they naturally extend to \emph{factorization categories} over any smooth curve 
(we are thinking of the variable $t$ in $k\qqart\subset k\ppart$ as a coordinate at a point $x$ on a curve $X$).
Moreover, the conjectural equivalence \eqref{e:FLE} is supposed to be an equivalence \emph{as factorization categories}. In particular, the equivalence 
in \eqref{e:FLE T} does have this structure.

\medskip

The categories $\Whit_\kappa(\Gr_G)$ and $\on{KL}_{\check\kappa}(\cG)$ are equipped with naturally defined
Jacquet functors
$$J_{\Whit}:\Whit_\kappa(\Gr_G)\to \Whit_\kappa(\Gr_T) \text{ and }
J_{\on{KL}}:\on{KL}_{\check\kappa}(\cG)\to \on{KL}_{\check\kappa}(\cT),$$
and each of these functors has a naturally defined factorization structure.

\medskip

Define
$$\Omega_{\kappa,\Whit}:=J_{\Whit}(\one_{\Whit_\kappa(\Gr_G)})\in \Whit_\kappa(\Gr_T)
\text{ and } \Omega_{\check\kappa,\on{KL}}:=J_{\on{KL}}(\one_{\on{KL}_{\check\kappa}(\cG)})\in \on{KL}_{\check\kappa}(\cT),$$
where 
$$\one_{\Whit_\kappa(\Gr_G)}\in \Whit_\kappa(\Gr_T) \text{ and } 
\one_{\on{KL}_{\check\kappa}(\cG)}\in \on{KL}_{\check\kappa}(\cT)$$
are the unit (a.k.a. vacuum) objects. 

\medskip

The factorization structure on the functor $J_{\Whit}$ (resp., $J_{\on{KL}}$) defines on 
$\Omega_{\kappa,\Whit}$ (resp., $\Omega_{\check\kappa,\on{KL}}$) a structure of \emph{factorization algebra}
in the corresponding factorization category, i.e., $\Whit_\kappa(\Gr_T)$ (resp., $\on{KL}_{\check\kappa}(\cT)$). Furthermore, 
the functors $J_{\Whit}$ and $J_{\on{KL}}$ upgrade to functors
\begin{equation} \label{e:Jacquet enhanced Whit}
J^{\on{enh}}_{\Whit}:\Whit_\kappa(\Gr_G)\to \Omega_{\kappa,\Whit}\on{-FactMod}(\Whit_\kappa(\Gr_T))
\end{equation}
and 
\begin{equation} \label{e:Jacquet enhanced KL}
J^{\on{enh}}_{\on{KL}}: \on{KL}_{\check\kappa}(\cG)\to \Omega_{\check\kappa,\on{KL}}\on{-FactMod}(\on{KL}_{\check\kappa}(\cT)),
\end{equation}
respectively, where the notation 
$$\CA\on{-FactMod}(\CC)$$
stands for the category of factorization modules for a factorization algebra $\CA$ in a given factorization category $\CC$. 

\medskip

In the best possible scenario, the functors \eqref{e:Jacquet enhanced Whit} and \eqref{e:Jacquet enhanced KL} would be
equivalences of categories. However, in our situation, they are \emph{not} such. Yet, there exists an explicit procedure that
allows to express the LHS in \eqref{e:Jacquet enhanced Whit} (resp., \eqref{e:Jacquet enhanced KL}) via the RHS, and 
we will discuss it in a subsequent publication. 

\medskip

Hence, for now we would like to match up the right-hand sides in \eqref{e:Jacquet enhanced Whit} and \eqref{e:Jacquet enhanced KL}.
This amounts to constructing an isomorphism of factorization algebras
\begin{equation} \label{e:ident Omega}
\Omega_{\kappa,\Whit} \simeq \Omega_{\check\kappa,\on{KL}}
\end{equation}
with respect to the equivalence of factorization categories in \eqref{e:FLE T}.

\begin{rem}

A similar strategy is supposed to lead to a new proof of the Kazhdan-Lusztig equivalence
$$\on{KL}_{\check\kappa}(\cG)\simeq \Rep_q(\cG),$$
where the latter is the category of modules over Lusztig's version of the quantum group, and $q$
is the quadratic form on the weight lattice of $\cG$ (=coweight lattice of $G$) with values in $\BC^\times$ obtained by exponentiating
the form that we denote $q_k$ (here the ground field $k$ is $\BC$), see \secref{sss:intro q}.

\medskip

The composite equivalence
$$\Whit_\kappa(\Gr_G) \simeq \Rep_q(\cG)$$
was the original form of the FLE suggested by J.~Lurie and the author a number of years ago. 

\end{rem}

\sssec{}  \label{sss:what we do prelim}

What we do in this paper is prove ``a half" of \eqref{e:ident Omega}. Right below we will explain what we mean by this in terms 
of the constructions \secref{sss:Jacquet} (in \secref{sss:what we do} we will translate this into the language adopted in the main body of the paper): 

\medskip

What we do is give an a priori construction of a certain factorization algebra in $\Whit_\kappa(\Gr_T)$, denote it
$\Omega_\kappa$, and show that it is isomorphic to $\Omega_{\kappa,\Whit}$.

\medskip

The main feature of $\Omega_\kappa$ is that it is combinatorial in nature, in that its construction only involves the 
coweight lattice $\Lambda$ of $G$ and the form \eqref{e:kappa t}. 

\medskip

In a subsequent publication we will establish an isomorphism
\begin{equation} \label{e:KM side}
\Omega_\kappa\simeq \Omega_{\check\kappa,\on{KL}}.
\end{equation} 

That would be the ``second half" of \eqref{e:ident Omega}. 

\ssec{What is actually done in this paper?}  \label{ss:what we do}

\sssec{} \label{sss:what we do}
 
We will now describe the passage from what we just explained in \secref{sss:what we do prelim} to the actual contents of the paper. 

\medskip

We observe that the factorization algebra $\Omega_{\kappa,\Whit}$ belongs to the full factorization subcategory 
of $\Whit_\kappa(\Gr_T)$ where we restrict the coweights of $T$ to belong to $\Lambda^{\on{neg}}\subset \Lambda$
(i.e., linear combinations of positive simple coroots with non-positive integral coefficients)\footnote{Technically, we also observe that the coweight $0$ 
component of $\Omega_{\kappa,\Whit}$ is the unit $\one_{\Whit_\kappa(\Gr_T)}$, so $\Omega_{\kappa,\Whit}$ is naturally augmented, 
and we will actually work with its augmentation ideal rather than $\Omega_{\kappa,\Whit}$ itself.}. 

\medskip

Now, by \cite[4.6]{GLys2}, the category of factorization algebras within this subcategory can be identified with the category of factorization algebras in 
the category 
\begin{equation} \label{e:Dmod on Conf}
\Dmod_{\CG^\Conf_\kappa}(\Conf),
\end{equation} 
where $\Conf$ is the configuration space of points on $X$ colored by negative coweights (see \secref{ss:conf}) and $\CG^\Conf_\kappa$
is a (factorization) gerbe on $\Conf$ attached to the form \eqref{e:kappa t}, see \secref{ss:gerbe}. 

\medskip

So, in the main body of the paper we:

\begin{itemize}

\item Construct explicitly a factorization algebra $\Omega_\kappa$ in $\Dmod_{\CG^\Conf_\kappa}(\Conf)$,
see \secref{ss:constr by deformation};

\item Construct the factorization algebra $\Omega_{\kappa,\Whit}$ in $\Dmod_{\CG^\Conf_\kappa}(\Conf)$, using
the geometry of Whittaker sheaves on the affine Grassmannian, see \secref{ss:Omega Whit};

\item Establish a (canonical) isomorphism of factorization algebras
\begin{equation} \label{e:Whit side}
\Omega_{\kappa,\Whit}\simeq \Omega_\kappa,
\end{equation} 
this is our main result, \thmref{t:main 1}.

\end{itemize}

\medskip

\noindent {\it Notational remark:} In the main body of the paper, the above factorization algebras carry a superscript ``$\on{Lus}$",
to distinguish them from two other versions, which carry superscripts ``$\on{DK}$" and ``$\on{sml}$", respectively. All three versions
are the same when $\kappa$ is \emph{irrational}.

\sssec{}  \label{sss:intro q}

The advantage of interpreting $\Omega_{\kappa,\Whit}$ 
as an object of $\Dmod_{\CG^\Conf_\kappa}(\Conf)$ is that
$\Conf$ is a scheme of finite type, so we find ourselves in the realm of usual algebraic geometry. 

\medskip

A crucial feature of the factorization algebras $\Omega_\kappa$ and $\Omega_{\kappa,\Whit}$ is that, when viewed as 
objects of $\Dmod_{\CG^\Conf_\kappa}(\Conf)$, they are \emph{perverse}, i.e., lie in the heart of the t-structure. 
It is this fact that will eventually allow us to construct an isomorphism between them. 

\medskip

We will now supply details as to how $\Omega_\kappa$ and $\Omega_{\kappa,\Whit}$ are constructed and how
the isomorphism \eqref{e:Whit side} is established.

\medskip

However, before we proceed any further, we make the following observation: 

\medskip

The two factorization algebras appearing in \eqref{e:Whit side} are of \emph{geometric nature},
i.e., they can be made sense of in a more general sheaf theory (see \secref{sss:sh th}) and not just D-modules. 
Let explain what replaces the role of the form \eqref{e:kappa t}. 

\medskip

We note that we can interpret the form \eqref{e:kappa t} as a symmetric bilinear form on $\Lambda$ with coefficients 
in $k$, denote it by $b_k$. Consider the quadratic form 
$$q_k(\lambda):=\frac{b_k(\lambda,\lambda)}{2},$$
and let $q$ be the composition of $q_k$ with the projection $k\to k/\BZ$. One can show that the gerbe $\CG^\Conf_\kappa$
only depends on the latter form $q$.

\medskip

When we work with another sheaf theory
$$Y\mapsto \Shv(Y),$$
with a field of coefficients $\sfe$, let $\fZ$ be the abelian group introduced in \secref{sss:K}, i.e., elements $\zeta\in \fZ$
parameterize Kummer local systems on $\BG_m$ 
$$\zeta\mapsto \Psi_\zeta,$$
so for D-modules we have $\fZ=k/\BZ$, and for constructible
sheaves $\fZ=\sfe^\times$. 

\medskip

So for a general sheaf theory, our quantum parameter is a quadratic form 
$$q:\Lambda\to \fZ,$$
see \secref{sss:q}. We denote the corresponding (factorization) gerbe on $\Conf$ by $\CG_q^\Conf$
(see \secref{ss:gerbe}), and the corresponding factorization algebras in $\Shv_{\CG_q^\Conf}(\Conf)$
will be denoted
$$\Omega^?_q \text{ and } \Omega^?_{q,\Whit},$$
where $?=\on{Lus,DK,sml}$.  

\sssec{}

We now explain how the factorization algebra $\Omega_q^{\on{Lus}}$ is constructed. 

\medskip

The initial observation is that the gerbe $\CG_q^\Conf$ is canonically trivialized over the open subscheme
\begin{equation} \label{e:oConf intro}
\oConf \overset{j}\hookrightarrow \Conf
\end{equation} 
consisting of simple divisors (pairwise distinct points of $X$ each carrying a negative simple coroot). 

\medskip

We let $\oOmega\in \Perv(\oConf)$ be the \emph{sign local system}, which is the simplest kind of
factorization algebra on $\oConf$. Due to the trivialization of $\CG_q^\Conf$ over $\oConf$, we
can think of $\oOmega$ as an object of $\Perv_{\CG_q^\Conf}(\oConf)$; when thought of in this
capacity we will denote it by $\oOmega_q$.

\medskip

For any value of $q$ we let $\Omega_q^{\on{sml}}$ denote the Goresky-MacPherson extension of $\oOmega_q$
along the open embedding $j$ of \eqref{e:oConf intro}. We emphasize that the above GM-extension is taking
place in the category of $\CG_q^\Conf$-twisted sheaves (and not plain sheaves). 

\medskip

We now proceed to the definition of $\Omega_q^{\on{DK}}$ and $\Omega_q^{\on{Lus}}$.

\medskip

When the values of $q$ in $\fZ$ are non-torsion we have
$$\Omega_q^{\on{DK}}\simeq \Omega_q^{\on{sml}}\simeq \Omega_q^{\on{Lus}};$$
we denote the resulting factorization algebra simply by $\Omega_q$. 

\medskip

For $q$ that does take torsion values, we introduce a deformation of $q$ over a formal
power series ring $\sfe\hbart$, so that the image of $q$ over $\sfe\hbarl$ is non-torsion valued.
Hence, the factorization algebra $\Omega_{q_\hbarl}$ is well-defined (see the paragraph above). 

\medskip

We set
$$\Omega^{\on{DK}}_{q_\hbart}:=j_*(\oOmega_{q_\hbart})\cap \Omega_{q_\hbarl},$$
where the intersection is taking place in $j_*(\oOmega_{q_\hbarl})$. 

\medskip

By construction, $\Omega^{\on{DK}}_{q_\hbart}$ is flat over $\sfe\hbart$, and we let
$$\Omega^{\on{DK}}_q:=\Omega^{\on{DK}}_{q_\hbart}\underset{\sfe\hbart}\otimes \sfe
\in \Perv_{\CG_q^\Conf}(\Conf).$$

Finally, we let $\Omega^{\on{Lus}}_q$ be the Verdier dual of $\Omega^{\on{DK}}_{q^{-1}}$, where we note that Verdier duality acts as
an anti-equivalence from $\Perv_{\CG_q^\Conf}(\Conf)$ to $\Perv_{\CG_{q^{-1}}^\Conf}(\Conf)$. 

\sssec{}

A remarkable feature of $\Omega^{\on{Lus}}_q$ (and one that will ultimately allow us to prove the isomorphism 
with the \emph{Kac-Moody} side, see \eqref{e:KM side}) is that (under some mild assumption on the torsion, see \secref{sss:dual Cox}) one can describe
$\Omega^{\on{Lus}}_q$ explicitly as a (twisted) perverse sheaf, see \secref{sss:Omega inductive}. 

\medskip

Namely, fix $\lambda\in \Lambda^{\on{neg}}$ and let $\Conf^\lambda$ be the corresponding connected
component of $\Conf$ (configurations of total degree $\lambda$). By induction and factorization, we can 
assume that $\Omega^{\on{Lus}}_q|_{\Conf^\lambda}$ has been defined away from the main diagonal
$$X \overset{\Delta_\lambda}\hookrightarrow \Conf^\lambda, \quad x\mapsto \lambda\cdot x.$$

Let $\jmath_\lambda$ be the embedding of the complementary open locus. Then:

\medskip

\begin{itemize}

\item If $\lambda$ is of the form $w(\rho)-\rho$ with $\ell(w)=2$, we have
$$\Omega^{\on{Lus}}_q|_{\Conf^\lambda}\simeq (\jmath_\lambda)_{!*}(\Omega^{\on{Lus}}_q|_{\Conf^\lambda-\Delta_\lambda(X)});$$

\item If $\lambda$ is not of the form $w(\rho)-\rho$ with $\ell(w)=2$, we have
$$\Omega^{\on{Lus}}_q|_{\Conf^\lambda}\simeq H^0\left((\jmath_\lambda)_!(\Omega^{\on{Lus}}_q|_{\Conf^\lambda-\Delta_\lambda(X)})\right),$$
where $H^0$ refers to taking $0$th cohomology in the perverse t-structure (usual t-structure for D-modules). 

\end{itemize}

\sssec{}

We now proceed to the definition of the factorization algebras 
\begin{equation} \label{e:Omega Whit intro}
\Omega^{\on{DK}}_{q,\Whit},\,\,\Omega^{\on{sml}}_{q,\Whit},\,\, \Omega^{\on{Lus}}_{q,\Whit}.
\end{equation}

According to \secref{sss:Jacquet} (adapted to our more general sheaf-theoretic setting), $\Omega^{\on{Lus}}_{q,\Whit}$ 
is obtained by applying the functor 
$$J_{\Whit}:\Whit_{\CG^G_q}(\Gr_G)\to \Whit_{\CG^T_q}(\Gr_T)$$
to the unit object $\one_{\Whit_{\CG^G_q}(\Gr_G)}\in \Whit_{\CG^G_q}(\Gr_G)$.

\medskip

However, instead of going through all this (which would necessitate developing quite a bit of theory), we will
write down directly what this construction produces\footnote{The connection to the construction of $\Omega^{\on{Lus}}_q$ 
as $J_{\Whit}(\one_{\Whit_{\CG^G_q}(\Gr_G)})$ can be seen from the material explained in (the optional)
Sects. \ref{ss:Whit}-\ref{ss:Gauss via semiinf}.}. 

\medskip

Let $\CZ^-$ be a version of the \emph{Zastava space}, introduced in \secref{sss:usual Zastava}.
Let
$$\oCZ\overset{\bj_Z^-}\hookrightarrow \CZ^-$$
be the \emph{open Zastava space}; it is known to be smooth. 

\medskip

We have a natural projection 
$$\pi^-:\CZ^-\to \Conf,$$
and let $\opi$ denote its restriction to $\oCZ$. 

\medskip

A key feature of this situation is that the pullback of the gerbe
$\CG^\Conf_q$ along $\opi$ acquires a canonical trivialization. Let
$$\nabla^-_{q,Z},\,\, \on{IC}^-_{q,Z},\,\, \Delta^-_{q,Z}$$
be the *-, GM- and !-extensions, respectively, of $\IC_{\oCZ}$, but viewed as a $\CG^\Conf_q$-twisted sheaf. 

\medskip

There is a canonical map
$$\chi:\CZ^-\to \BG_a,$$
given by the residue construction. Denote
$$\on{Gauss}^-_{q,*}:=\nabla^-_{q,Z}\overset{*}\otimes \chi^*(\on{exp}),\,\,
\on{Gauss}^-_{q,!*}:=\on{IC}^-_{q,Z}\overset{*}\otimes \chi^*(\on{exp}),\,\,
\on{Gauss}^-_{q,!}:=\Delta^-_{q,Z}\overset{*}\otimes \chi^*(\on{exp}).$$

All three are $\CG^\Conf_q$-twisted perverse sheaves on $\CZ^-$. Finally, set
$$\Omega^{\on{DK}}_{q,\Whit}:=\pi^-_*(\on{Gauss}^-{q,*}),\,\,
\Omega^{\on{sml}}_{q,\Whit}:=\pi^-_*(\on{Gauss}^-_{q,!*}),\,\,
\Omega^{\on{Lus}}_{q,\Whit}:=\pi^-_*(\on{Gauss}^-_{q,!}).$$

We note, however, that according to a cleanness result given by \thmref{t:acyclicity}, the maps
$$\pi^-_!(\on{Gauss}^-_{q,*})\to \pi^-_*(\on{Gauss}^-_{q,*}),$$
$$\pi^-_!(\on{Gauss}^-_{q,!*})\to \pi^-_*(\on{Gauss}^-_{q,!*})$$
and
$$\pi^-_!(\on{Gauss}^-_{q,!})\to \pi^-_*(\on{Gauss}^-_{q,!})$$
are isomorphisms.

\medskip

This shows that the objects \eqref{e:Omega Whit intro} are all \emph{perverse} and we have
$$\BD_\Conf^{\on{Verdier}}(\Omega^{\on{DK}}_{q,\Whit})\simeq \Omega^{\on{Lus}}_{q^{-1},\Whit} \text{ and }
\BD_\Conf^{\on{Verdier}}(\Omega^{\on{sml}}_{q,\Whit})\simeq \Omega^{\on{sml}}_{q^{-1},\Whit}.$$

\sssec{}

A rather simple calculation (see \propref{p:Whit on open}) shows that 
$$\Omega^{\on{DK}}_{q,\Whit}|_{\oConf},\,\, \Omega^{\on{sml}}_{q,\Whit}|_{\oConf},\,\, \Omega^{\on{Lus}}_{q,\Whit}|_{\oConf}$$
are all isomorphic to $\oOmega_q$.

\medskip

Our two main results, Theorems \ref{t:main 1} and \ref{t:main 2} assert that, under a certain restriction on $q$
(namely, $q$ should \emph{avoid small torsion}, see \secref{sss:small root} for what this means), these identifications extend (automatically, uniquely) to
isomorphisms
$$\Omega^{\on{DK}}_{q,\Whit}\simeq \Omega^{\on{DK}}_{q},\,\, 
\Omega^{\on{Lus}}_{q,\Whit}\simeq \Omega^{\on{Lus}}_{q}$$
and
\begin{equation} \label{e:Omega small intro}
\Omega^{\on{sml}}_{q,\Whit}\simeq \Omega^{\on{sml}}_{q},
\end{equation} 
respectively. 

\medskip

We note that the isomorphism \eqref{e:Omega small intro} has been already established in \cite{Lys2} under
somewhat more restrictive conditions on $q$.

\sssec{}

We will outline the strategy of the proof of Theorems \ref{t:main 1} and \ref{t:main 2} in some detail in \secref{ss:strategy} below.
Here we remark that, as we explain in Sects. \ref{ss:subtop} and \secref{ss:formal param Whit}, the statements
of both these theorems are equivalent to one cohomological estimate.

\medskip

Namely, let $\lambda\in \Lambda^{\on{neg}}-0$ be \emph{not} one of the negative simple roots. Consider the
intersection
$$S^0\cap S^{-,\lambda}\subset \Gr_G,$$
where 
$$S^0=N\ppart\cdot 1 \text{ and } S^{-,\lambda}=N^-\ppart\cdot t^\lambda$$
are the positive and negative semi-infinite orbits on the affine Grassmannian.

\medskip

The datum of $q$ defines a Kummer sheaf on $S^0\cap S^{-,\lambda}$, to be denoted $\Psi_{q,\lambda}$,
see \secref{sss:Psi lambda}\footnote{In fact, if $q$ is of the form $\zeta\cdot q^{\on{min}}_\BZ$, where 
$q^{\on{min}}_\BZ$ is the minimal form on $\Lambda$ and $\zeta\in \fZ$, then $\Psi_{q,\lambda}$ is the pullback of the
Kummer local system $\Psi_\zeta$ on $\BG_m$ along a canonically defined invertible function $f_\lambda$ on 
$S^0\cap S^{-,\lambda}$, see \secref{sss:det funct}.}.

\medskip

Consider the compactly supported cohomology
\begin{equation} \label{e:cohomology Gauss}
H^i_c(S^0\cap S^{-,\lambda},\Psi_{q,\lambda}\overset{*}\otimes \chi^*(\on{exp})).
\end{equation}

We have
$$\dim(S^0\cap S^{-,\lambda})=-\langle \lambda,\check\rho\rangle,$$ and it is fairly easy to see that the top cohomology in 
\eqref{e:cohomology Gauss}, i.e., one for $i=-2\langle \lambda,\check\rho\rangle$, vanishes.

\medskip

Now, the cohomology estimate mentioned above (one which is equivalent to Theorems \ref{t:main 1} and \ref{t:main 2})
says that the cohomology \eqref{e:cohomology Gauss} vanishes also in the \emph{sub-top} degree 
$$i=-2\langle \lambda,\check\rho\rangle-1,$$
for $q$ that avoids small torsion.

\medskip

This is our \thmref{t:main 3} (this is also \cite[Theorem 1.1.5]{Lys2} for a more restrictive hypothesis on $q$). 

\medskip

Our proof of Theorems \ref{t:main 1}, \ref{t:main 2} and \ref{t:main 3} makes use of quantum groups
(this is while the proof in \cite{Lys2} is a direct geometric argument, but one that has to exclude more cases). 

\medskip

So in a certain sense, we have a somewhat crazy story: in order to prove the vanishing of the cohomology
of a local system on a variety in the sub-top degree (say for $\ell$-adic sheaves over a ground field of positive characteristic),
we use the structure theory of quantum groups (the quantum group in question is the quantization of the Langlands
dual group $\cG$). 

\ssec{Strategy of proof: quantum groups creep in} \label{ss:strategy}

We first explain how to prove Theorems \ref{t:main 1} and \ref{t:main 2} when the ground field $k$ has characteristic $0$. 
In \secref{sss:crystal game} we explain how we deduce from this the general case.

\sssec{}

The case when $q$ takes non-torsion values is easy to settle (in fact, in this case \thmref{t:main 3} is an easy calculation,
which was performed already in \cite{Ga3}). So we will assume that $q$ takes values that are torsion in $\fZ$.

\medskip

The proof of Theorems \ref{t:main 1} and \ref{t:main 2} proceeds by studying an additional piece of structure 
on the factorization algebras $\Omega^{\on{Lus}}_{q,\Whit}$ (resp., $\Omega^{\on{Lus}}_{q}$), namely
Lusztig's quantum Frobenius. 

\medskip

Let $\Lambda^\sharp$ be the ``quantum Frobenius lattice" (see \secref{sss:sharp}), and let
$\Conf^\sharp$ be the corresponding configuration space. We can view $\Conf^\sharp$ as a
semi-group acting on $\Conf$. In this case we can talk about factorization algebras in
$\on{Perv}(\Conf^\sharp)$ acting on factorization algebras in $\Shv_{\CG^\Conf_q}(\Conf)$,
see \secref{sss:act}.

\medskip

Following a recipe of \cite{BG2}, we construct a factorization algebra $\Omega^{\on{cl},\sharp}$ in 
$\on{Perv}(\Conf^\sharp)$.

\medskip

By the quantum Frobenius structure on $\Omega^{\on{Lus}}_{q,\Whit}$ and $\Omega^{\on{Lus}}_{q}$
we mean an action of $\Omega^{\on{cl},\sharp}$ on these algebras so that we have the isomorphisms
\begin{equation} \label{e:quant Frob Whit}
\on{Bar}(\Omega^{\on{Lus}}_{q,\Whit})\simeq \Omega^{\on{sml}}_{q,\Whit}
\end{equation}
and 
\begin{equation} \label{e:quant Frob q}
\on{Bar}(\Omega^{\on{Lus}}_{q})\simeq \Omega^{\on{sml}}_{q},
\end{equation}
respectively. 

\medskip

Once we know that there \emph{exists} quantum Frobenius structures on both $\Omega^{\on{Lus}}_{q,\Whit}$ and $\Omega^{\on{Lus}}_{q}$,
the proof of \thmref{t:main 2} is achieved by a Jordan-Holder series argument (see \secref{sss:proof of Lus and sml via Whit}).  

\medskip

Now, the quantum Frobenius structure on $\Omega^{\on{Lus}}_{q,\Whit}$ is constructed geometrically, 
using a metaplectic version of the construction from \cite{BG2}, see \secref{ss:Whit quant Frob}. 

\medskip

However, when it comes to $\Omega^{\on{Lus}}_{q}$, the way we define it does not a priori supply sufficient information 
to construct a quantum Frobenius structure on it. And it is here that quantum groups come in.

\sssec{}

When considering $\Omega^{\on{Lus}}_{q}$, by Lefschetz principle, we may assume that the ground field is actually $\BC$, 
and the sheaf theory in question is that of constructible sheaves in classical topology.

\medskip

Let $\Rep_q(\cT)$ be the category of representations of the quantum torus associated with $q$. I.e., this is a braided monoidal category,
which as a monoidal category is isomorphic to $\Rep(\cT)$, but the braiding is specified by $q$. Thus, it makes sense to talk about
Hopf algebras in $\Rep_q(\cT)$.

\medskip

In \secref{ss:Hopf to Fact} we recall a construction that attaches to a Hopf algebra $H$ in $\Rep_q(\cT)$
(whose augmentation ideal is supported on coweights $\lambda\in \Lambda^{\on{neg}}-0$ with each coweight
component finite-dimensional) a factorization algebra $\Omega_H$ in $\Shv_{\CG^\Conf_q}(\Conf)$. 

\medskip

Applying this construction to Lusztig's, DeConcini-Kac and small versions of the (positive part of the) quantum group, we obtain 
factorization algebras
\begin{equation} \label{e:Omega quant intro}
\Omega^{\on{Lus}}_{q,\on{Quant}},\,\, \Omega^{\on{DK}}_{q,\on{Quant}},\,\, \Omega^{\on{sml}}_{q,\on{Quant}},
\end{equation} 
respectively. 

\medskip

It follows by definition that we have a canonical identification
\begin{equation} \label{e:quant on open}
\Omega^{?}_{q,\on{Quant}}|_{\oConf}\simeq \oOmega_q
\end{equation} 
for all three versions. 
\medskip

The t-exactness property of the construction $H\rightsquigarrow \Omega_H$ shows that the identification \eqref{e:quant on open}
extends (automatically, uniquely) to an isomorphism
$$\Omega^{\on{sml}}_{q,\on{Quant}}\simeq \Omega^{\on{sml}}_q.$$

Further, in \thmref{t:quant and abs} we show that the identification \eqref{e:quant on open} extends to the entire $\Conf$:
\begin{equation} \label{e:quant DK intro}
\Omega^{\on{DK}}_{q,\on{Quant}}\simeq \Omega^{\on{DK}}_q
\end{equation} 
(this is not difficult to prove). Finally, by Verdier duality, from \eqref{e:quant DK intro} we obtain:
$$\Omega^{\on{Lus}}_{q,\on{Quant}}\simeq \Omega^{\on{Lus}}_q.$$

\medskip

Thus, we have identified the pair of factorization algebras ($\Omega^{\on{Lus}}_q$, $\Omega^{\on{sml}}_q$)
with ($\Omega^{\on{Lus}}_{q,\on{Quant}}$, $\Omega^{\on{sml}}_{q,\on{Quant}}$). Hence, in order to 
construct the quantum Frobenius structure on $\Omega^{\on{Lus}}_q$, it is enough to do so for 
$\Omega^{\on{Lus}}_{q,\on{Quant}}$. The latter is given by Lusztig's Frobenius for quantum groups\footnote{It is here that we use the assumption that $q$ avoids small torsion.}.

\begin{rem}
We should remark that the factorization algebras \eqref{e:Omega quant intro} are not new objects in mathematics.
For example, $\Omega^{\on{sml}}_{q,\on{Quant}}$ was the key object of study of the book \cite{BFS} (building on earlier
works of V.~Schechtman and A.~Varchenko). In fact the contents of \secref{ss:Hopf to Fact} summarize the relationship
between $\Omega^{\on{sml}}_{q,\on{Quant}}$ and $u_q(\cN)$
from {\it loc.cit.}
\end{rem} 

\sssec{} \label{sss:crystal game}

Let us now explain how to deduce Theorems \ref{t:main 1} and \ref{t:main 2} over an arbitrary ground field from the case 
of the ground field of characteristic $0$. 

\medskip

As we explained above, Theorems \ref{t:main 1} and \ref{t:main 2} are equivalent to \thmref{t:main 3}, which asserts that
the sub-top cohomology in \eqref{e:cohomology Gauss} vanishes. 

\medskip

With no restriction of generality we can assume that $G$ is simple, and hence $q$ is of the form
$\zeta\cdot q^{\on{min}}_\BZ$, where $q^{\on{min}}_\BZ$ is the minimal integer-valued quadratic form on $\Lambda$
and $\zeta$ is a torsion element in $\fZ$. 

\medskip

In \secref{s:subtop} we prove \thmref{t:indep of char}, which says that the validity of \thmref{t:main 3} for a given $\on{ord}(\zeta)$ 
is independent of the ground field. 

\medskip

To do so, we analyze irreducible components of $Z$ of $S^0\cap S^{-,\lambda}$ for which
$$H^i_c(Z,,\Psi_{q,\lambda}\overset{*}\otimes \chi^*(\on{exp}))$$
may potentially be non-zero for $i=-2\langle \lambda,\check\rho\rangle-1$. 

\medskip

We first single out components that we declare to be ``under scrutiny", then among those some that we call ``suspicious"
(these two conditions do not depend on $\on{ord}(\zeta)$), and finally those that we declare as ``indicted". 
We then proceed to conviction and show that the presence of an indicted component indeed brings about the failure of \thmref{t:main 3} 
(for a given $\on{ord}(\zeta)$).  

\medskip

Thus, \thmref{t:indep of char} amounts to saying that for a given $\on{ord}(\zeta)$, the presence of indicted
irreducible components is independent of the ground field.  We prove this by showing that the property of being
indicted can be expressed purely in terms of the structure of Kashiwara's crystal 
on the set 
$$\sB(\lambda):=\underset{\lambda'}\cup\, \on{Irred}(S^{\lambda'}\cap S^{-,\lambda}).$$

We conclude the proof by noticing that the crystal $\sB(\lambda)$ is independent
of the characteristic of the ground field, due to Kashiwara's uniqueness theorem. 

\begin{rem}
Let us emphasize that the idea to use the structure of Kashiwara's crystal on $\sB(\lambda)$ for the
proof of \thmref{t:main 3} originates in \cite{Lys2}. 

\medskip

In fact, we follow the argument of {\it loc.cit.}, but go one step further: in \cite{Lys2} one looks at the
irreducible components that we call suspicious and rules them out for the specified values of $\on{ord}(\zeta)$.
We go from ``suspicious" and ``indicted" and that puts a stricter bound on $\on{ord}(\zeta)$.

\end{rem}

\ssec{Organization of the paper}

The actual order of exposition in the paper is somewhat different from how we have presented the contents
in \secref{ss:what we do}. We now briefly outline how the paper is organized section-by-section. 

\sssec{}

In \secref{s:Omega} we introduce the setting of factorization algebras in (gerbe-twisted) sheaves on the configuration space
and define the factorization algebras $\Omega_q^{\on{Lus}}$, $\Omega_q^{\on{sml}}$ and $\Omega_q^{\on{DK}}$.

\sssec{}

In \secref{s:properties} we state a number of theorems pertaining to the inductive construction of $\Omega^{\on{DK}}_q$
by extending it across the main diagonal. The factorization algebra $\Omega^{\on{Lus}}_q$ will enjoy a Verdier dual 
behavior.

\sssec{}

In \secref{s:quantum} we introduce the setting of Hopf algebras in the braided monoidal category $\Rep_q(\cT)$,
and the relationship between these Hopf algebras and factorization algebras on $\Conf$. 
Starting from the
positive part of the (various versions of the) quantum group, we construct the factorization algebras 
$\Omega_{q,\on{Quant}}^{\on{Lus}}$, $\Omega_{q,\on{Quant}}^{\on{sml}}$ and $\Omega_{q,\on{Quant}}^{\on{DK}}$.

\medskip 

We establish the isomorphisms $\Omega_{q,\on{Quant}}^{?}\simeq \Omega_{q}^{?}$. We then use 
properties of quantum groups to prove the theorems announced in \secref{s:properties}.

\sssec{}

In \secref{s:Frobenius} we introduce the quantum Frobenius.

\medskip

We introduce the quantum Frobenius lattice $\Lambda^\sharp$ (along with the corresponding configuration space)
$\Conf^\sharp$, and define what it means for a factorization algebra in $\Shv(\Conf^\sharp)$ to act on a factorization
algebra in $\Shv_{\CG^\Conf_q}(\Conf)$.

\medskip

We then use the quantum Frobenius for $U_q^{\on{Lus}}(\cN)$ to construct an action of a certain canonically defined factorization
algebra $\Omega^{\on{cl},\sharp}$ in $\Shv(\Conf^\sharp)$ on $\Omega_{q,\on{Quant}}^{\on{Lus}}$. From here we deduce 
the existence of a (canonically defined) action of $\Shv(\Conf^\sharp)$ on $\Omega_q^{\on{Lus}}$. 

\sssec{}

In \secref{s:Zastava} we recall the definition of the (several versions of the) Zastava space. We introduce the (twisted)
perverse sheaves 
$$\nabla^-_{q,Z},\,\, \Delta^-_{q,Z},\,\, \IC^-_{q,Z}$$
on $\CZ^-$.

\medskip

After tensoring by the pullback of the exponential/Artin-Schreier character sheaf, we obtain the perverse sheaves
\begin{equation} \label{e:Gauss intro}
\on{Gauss}^-_{q,*},\,\, \on{Gauss}^-_{q,!}, \on{Gauss}^-_{q,!*}
\end{equation}
on $\CZ^-$, see Remark \ref{r:Gauss} for the explanation of the name ``Gauss".

\medskip

One of the key technical results here is \thmref{t:acyclicity}, which says that the objects \eqref{e:Gauss intro} extend
\emph{cleanly} along the open embedding
$$\CZ^-\hookrightarrow \ol\CZ,$$
where $\ol\CZ$ is the compactified Zastava space.

\medskip

Finally, in Sects. \ref{ss:Whit}-\ref{ss:Gauss via semiinf} we present a more conceptual point of view 
on the construction of the objects \eqref{e:Gauss intro}. Namely, we show how they arise from factorization
algebras in Whittaker and semi-infinite categories on $\Gr_G$. 

\sssec{}

In \secref{s:Whit} we finally formulate and prove the main results of this paper.

\medskip

We define the factorization algebras
$$\Omega^{\on{DK}}_{q,\Whit},\,\, \Omega^{\on{Lus}}_{q,\Whit},\,\, \Omega^{\on{sml}}_{q,\Whit}$$
by taking the direct image(s) of the objects \eqref{e:Gauss intro} along the projection
$$\pi^-:\CZ^-\to \Conf.$$

We show that their restrictions to $\oConf$ identify with $\oOmega_q$, and we state our main Theorems
\ref{t:main 1} and \ref{t:main 2}, which say that the above identification over $\oConf$ extends to 
isomorphisms
$$\Omega^?_{q,\Whit}\simeq \Omega^?_q$$
for $?=\on{DK,Lus,sml}$. 

\medskip

We then show that Theorems \ref{t:main 1} and \ref{t:main 2} are both equivalent to \thmref{t:main 3},
which states the sub-top cohomology in \eqref{e:cohomology Gauss} vanishes. 

\medskip

Finally, we prove Theorems \ref{t:main 1} and \ref{t:main 2} (over a ground field of characteristic zero) 
using the quantum Frobenius and a certain trick involving Jordan-Holder series. 

\sssec{}

Finally, in \secref{s:subtop} we deduce the validity of \thmref{t:main 3} (and hence also that of Theorems \ref{t:main 1} and \ref{t:main 2})
over an arbitrary ground field from the case when the ground field has characteristic $0$.

\medskip

The game here is to express the validity of \thmref{t:main 3} in terms of the structure of Kashiwara's crystal on the set
of irreducible components of intersections of semi-infinite orbits. 

\sssec{}  

In \secref{s:h bar} we explain how to start with a sheaf theory $Y\mapsto \Shv(Y)$ with a field of coefficients $\sfe$,
and obtain from it a sheaf theory $Y\mapsto \Shv_\hbart(Y)$ over $\sfe\hbart$. 

\medskip

This story is parallel (but simpler) to how one gets $\BZ_\ell$-adic sheaves from $\BZ/\ell^n\BZ$-sheaves, which was systematically 
developed in \cite[Sect. 2.3]{GaLu}. 

\ssec{Notations and conventions}

By and large, this paper follows the notational conventions from \cite{GLys1,GLys2}. 

\sssec{}

In this paper our algebraic geometry takes place over an algebraically closed field, denoted $k$. 

\medskip

We will only need the classical (i.e., non-derived) algebraic geometry. We let $\on{Sch}_{\on{ft}}^{\on{aff}}$ denote the category
of affine schemes of finite type over $k$. We let $\on{PreStk}_{\on{lft}}$ denote the category of prestacks locally of finite type over $k$,
i.e., the category of all functors
$$(\on{Sch}_{\on{ft}}^{\on{aff}})^{\on{op}}\to \infty\on{-Groupoids}.$$

This category contains all schemes, ind-schemes, algebraic stacks, etc. (all locally of finite type). 

\medskip

That said, except for \secref{sss:gerbe twist} below, for the purposes of this paper we will only need groupoids 
with values in \emph{discrete} $\infty\on{-Groupoids}$, i.e., sets, and the most general prestacks that we will consider are ind-schemes. 

\medskip

We let $X$ be a smooth connected curve over $k$.

\sssec{}

We let $G$ be a semi-simple \emph{simply-connected} group\footnote{Although the FLE can be formulated for an arbitrary reductive $G$,
the current paper is dealing with the vacuum objects, namely, the corresponding factorization algebras, which only depend on the root system
of $G$. Hence for the purposes of this paper, it is enough to consider the case of $G$ simply-connected. This assumption has the additional 
advantage of streamlining to discussion of the geometric metaplectic data/factorization gerbes on the configuration spaces: for $G$ simply-connected
these gerbes are \emph{uniquely} recovered from the datum of a quadratic form $q$.} over $k$. 

\medskip

We let $\Lambda$ denote its coweight (=coroot lattice). Let $\check\Lambda$ denote the dual lattice (i.e., the weight 
lattice of $G$). Let $\check\rho\in  \check\Lambda$ denote the half-sum of the positive roots. 

\medskip

Let $I$ denote the Dynkin diagram of $G$. For $i\in I$ we let $\alpha_i\in \Lambda$ denote the corresponding
simple coroot. We let $\Lambda^{\on{pos}}$ (resp., $\Lambda^{\on{neg}}$) spanned by the elements 
$\alpha_i$ (resp., $-\alpha_i$). 

\medskip

For an element $\lambda\in \Lambda^{\on{pos}}$ (resp., $\Lambda^{\on{neg}}$) we will denote by $|\lambda|$ 
its length, i.e., $\langle \lambda,\check\rho\rangle$ (resp., $-\langle \lambda,\check\rho\rangle$).

\sssec{}

We will work with DG categories over a field of coefficients $\sfe$, assumed of characteristic $0$. When we say
``category" we will implicitly mean ``DG category".

\medskip

In this paper, unless specified otherwise, we will work with \emph{small} (non ind-complete) DG categories. 

\medskip

Given a DG category $\bC$, we can talk about t-structures on it. Given a t-structure, we will denote by $\bC^\heartsuit$
its heart. 

\sssec{Sheaf theories}  \label{sss:sh th}

We will work with one of the sheaf theories 
\begin{equation} \label{e:sheaf theory}
Y\mapsto \Shv(Y), \quad (\on{Sch}_{\on{ft}}^{\on{aff}})^{\on{op}}\to \on{DGCat}_\sfe,
\end{equation} 
(see \cite[Sect. 0.8.8]{GLys2}) from the following list:

\begin{itemize}

\item When $k=\BC$ we can take $\Shv(-)$ to be constructible sheaves in the classical topology
with coefficients in a field $\sfe$ of characteristic $0$;

\item Over an arbitrary ground field, we can take $\Shv(-)$ to be constructible $\ol\BQ_\ell$-adic
sheaves (in this case the field $\sfe$ of coefficients is $\ol\BQ_\ell$);

\item Over a ground field $k$ of characteristic $0$, we can take $\Shv(-)$ to be the category of holonomic 
D-modules (in this case the field $\sfe$ of coefficients is $k$). 

\end{itemize}

We extend the assignment \eqref{e:sheaf theory} to a functor
$$(\on{PreStk}_{\on{lft}}^{\on{aff}})^{\on{op}}\to \on{DGCat}_\sfe$$
by the procedure of \cite[Sect. 0.8.9]{GLys2}. 

\sssec{}  \label{sss:gerbe twist}

Let $1\on{-LS}$ be the commutative group object in $\on{PreStk}_{\on{lft}}$ that assigns to
$Y\in \on{Sch}_{\on{ft}}^{\on{aff}}$ the category of 1-dimensional local systems on $Y$.

\medskip

Let $\on{Ge}$ be the Zariski sheafification $B_{\on{Zar}}(1\on{-LS})$ of $B(1\on{-LS})$. For a prestack $\CY$, by  
$\sfe^\times$-gerbe on $\CY$ we will mean a map $\CY\to \on{Ge}$. 

\medskip

Given a $\sfe^\times$-gerbe $\CG$ on $\CY$, we can form a twisted category $\Shv_\CG(\CY)$, see
\cite[Sect. 1.7]{GLys1}.

\medskip

If $Y$ is a scheme, the category $\Shv_\CG(\CY)$ is equipped with a t-structure and we can consider
the abelian category $\Perv_\CG(\CY)$. 

\sssec{}  \label{sss:K}

In each of the above three examples of sheaf theories, we let $\fZ$ be the following abelian group:

 \begin{itemize}

\item For sheaves in the classical topology, we let $\fZ:=\sfe^\times$;

\item For $\ol\BQ_\ell$-adic sheaves, we let $\fZ:=\ol\BZ_\ell^\times$; 

\item For D-modules, we let $\fZ:=k/\BZ$.

\end{itemize}

In each of the above cases, an element $\zeta\in \fZ$ defines a Kummer character sheaf on $\BG_m$,
to be denoted $\Psi_\zeta$.

\sssec{}  \label{sss:K gerbe}

Furthermore, let $\CL$ be a line bundle on $\CY$ and let $\zeta$ be an element in $\fZ$. From this pair 
we obtain a canonically defined $\sfe^\times$-gerbe, to be denoted $\CL^\zeta$.  

\medskip

Indeed, by definition, it is sufficient to perform this construction for $\CY=Y\in \on{Sch}_{\on{ft}}^{\on{aff}}$.
In this case, the sought-for map
$$Y\to B_{\on{Zar}}(1\on{-LS})$$
is such that for every $U\subset Y$ and a trivialization of $\CL|_U$, the corresponding map
\begin{equation} \label{e:triv U}
U\to Y\to B_{\on{Zar}}(1\on{-LS})
\end{equation}
acquires a trivialization, and a change of the trivialization of $\CL|_U$ by $f_U:U\to \BG_m$ results 
in the change of the trivialization of \eqref{e:triv U} by 
$$U \overset{f_U}\to \BG_m \overset{\Psi_\zeta}\to 1\on{-LS}.$$

\ssec{Acknowledgements}

The author would like to thank P.~Etingof, I.~Heckenberger, D.~Nakano and I.~Angiono for their help
with quantum groups at small roots of unity. 

\medskip

The author is grateful to S.~Lysenko for his collaboration on the FLE and for sharing ideas
related to this work.

\medskip

The author's thinking about the subject of this paper has its origins in the book \cite{BFS};
he is grateful to its authors for inspiration and discussions we have had over many years. 

\medskip

A major part of this work was written while the author was holding the I.~M.~Gelfand chair at IHES.

\medskip

The author's research is supported by NSF grant DMS-1707662. 

\section{Factorization algebras attached to quadratic forms}  \label{s:Omega}

In this section we will introduce a quantum parameter $q$, recall the construction of the 
\emph{factorization gerbe} $\CG^\Conf_q$ on the configuration space $\Conf$,
and define the factorization algebras 
$$\Omega^{\on{sml}}_q,\,\, \Omega^{\on{DK}}_q,\,\, \Omega^{\on{Lus}}_q,$$
which are $\CG^\Conf_q$-twisted perverse sheaves on $\Conf$, equipped with a factorization structure. 

\ssec{The parameters}

\sssec{}  \label{sss:q}

We start with a datum of a quadratic form $q$ on $\Lambda$ with values in $\fZ$ (see \secref{sss:K} for what the symbol $\fZ$ stands for),
which is $W$-invariant and \emph{restricted}. The latter means the following:

\medskip

Let 
$$b:\Lambda\otimes \Lambda\to  \fZ$$
be the associated symmetric bilinear form, i.e.,
$$b(\lambda,\mu)=q(\lambda+\mu)-q(\lambda)-q(\mu).$$

We say that $q$ is restricted if $b$ satisfies the following identity
\begin{equation} \label{e:restr}
b(\alpha,\lambda)=\langle \lambda,\check\alpha\rangle \cdot q(\alpha)
\end{equation}
for every coroot $\alpha$ and $\lambda\in \Lambda$. 

\begin{rem}  \label{r:2 restr}

It is easy to see that the identity 
$$2b(\alpha,\lambda)=2\langle \lambda,\check\alpha\rangle \cdot q(\alpha)$$
is a formal consequence of $W$-invariance. But formula \eqref{e:restr} is not;
it is easy to produce a counterexample for the group $SL_2\times \BG_m$. 

\end{rem}

\sssec{}

It is shown in \cite[Sect. 3.2.4]{GLys1} that any restricted $q$ can be written as 
$$\Lambda \overset{q_\BZ}\to \BZ \overset{\zeta}\to \fZ$$ 
for some element $\zeta\in \fZ$ and an \emph{integer-valued} $W$-invariant quadratic form
$q_\BZ$ on $\Lambda$.  Note that it follows from Remark \ref{r:2 restr} that $q_\BZ$ 
automatically satisfies \eqref{e:restr}.

\medskip

In particular, for every simple factor of our root system we have
\begin{equation} \label{e:long and short}
q(\alpha_l)=d\cdot q(\alpha_s),
\end{equation}
where $\alpha_l$ (resp., $\alpha_s$) is any long (resp., short) coroot, and $d$ is the lacing number (the ratio of the squares of the lengths). 

\sssec{} \label{sss:root of unity}

We shall say that $q$ is \emph{torsion-valued} if the values of $q$ in $\fZ$ are torsion. 

\medskip

We shall say that $q$ is \emph{non-torsion valued} if for all coroots $\alpha$, the elements $q(\alpha)$ are non-torsion.
By \eqref{e:long and short}, this condition is enough to check for one coroot in each simple factor.

\medskip

Note that when our sheaf theory is that of sheaves in the classical topology and so $\fZ$ is the multiplicative group of the field of 
coefficients which we denote $\sfe$, ``torsion" means being a root of unity in $\sfe^\times$. So in the usual terminology
of quantum groups, ``torsion vs. non-torsion" means ``roots of unity vs. non-roots of unity". 

\medskip

We shall say that $q$ is \emph{non-degenerate} if $q(\alpha)\neq  0$ for all coroots $\alpha$. By \eqref{e:long and short}, 
this condition is enough to check for the long coroot in each simple factor of the root system. 

\medskip

The condition of non-degeneracy means that we are avoiding what is called ``the quasi-classical case" in \cite[Sect. 33.2]{Lus}
along any of the roots. 

\sssec{}  \label{sss:small root}

We shall say that $q$ \emph{avoids small torsion} if for every simple factor in our root system and (any) long coroot $\alpha_l$ in it we have
$$\on{ord}(q(\alpha_l))\geq d+1,$$
where $d=1,2,3$ is the lacing number of that simple factor. This is equivalent to \cite[Condition 35.1.2(a)]{Lus}.

\medskip

Note that if $d=1$, the above assumption is equivalent to $q$ being non-degenerate. For $d=2$, the assumption is equivalent to
$\on{ord}(\alpha_l)\neq 2$. For $d=3$, the assumption is equivalent to $\on{ord}(\alpha_l)\neq 2,3$. 

\medskip

The key results of this paper (Theorems \ref{t:main 1} and  \ref{t:main 2}) will use this assumption. On the quantum group
side, this assumption ensures a relationship between $U_q^{\on{Lus}}(\cN)$
and $u_q(\cN)$ via the quantum Frobenius, i.e., that \eqref{e:SES quant} is a short exact 
sequence of Hopf algebras.

\begin{rem} \label{r:small torsion bad}
From a certain point of view, when $q$ does \emph{not} avoid small torsion, the factorization algebras we will construct, 
namely, $\Omega_q^{\on{Lus}}$, $\Omega_q^{\on{DK}}$, $\Omega_q^{\on{sml}}$ are not quite the right objects
to consider. 

\medskip

The more relevant ones are the objects $\Omega_{q,\Whit}^{\on{Lus}}$, $\Omega_{q,\Whit}^{\on{DK}}$, 
$\Omega_{q,\Whit}^{\on{sml}}$, constructed in \secref{s:Whit}\footnote{For the explanation why these objects are
better behaved, see Remark \ref{r:Whit good}.}.  And these objects are indeed different (for example, for $G=G_2$
and $\on{ord}(q(\alpha_s))=\on{ord}(q(\alpha_l))=2$, see Remark \ref{r:criminal}). 

\medskip 

From this point of view, the quantum algebras $U_q^{\on{Lus}}(\cN)$,
$U_q^{\on{DK}}(\cN)$, $\fu_q(\cN)$ are not quite the right objects either (at these very small roots of unity). 

\end{rem}

\sssec{}  

Note that the pairing 
$$b:\Lambda\otimes \Lambda\to \fZ$$
canonically extends to a pairing
$$b:\Lambda\otimes \Lambda_{\on{ad}}\to \fZ,$$
where $\Lambda_{\on{ad}}$ is the coroot lattice of the adjoint quotient of $G$.

\medskip

Namely, we set
\begin{equation} \label{e:b extend}
b(\alpha_i,\mu):=\langle \mu, \check\alpha_i \rangle\cdot q(\alpha_i),\quad i\in I.
\end{equation} 

Alternatively, let $n$ be an integer so that $n\cdot \Lambda_{\on{ad}}\subset \Lambda$. Let 
us write $q$ as $q_\BZ\cdot \zeta$, and choose an $n$-th root $\zeta^{\frac{1}{n}}$ of $\zeta$. 
Then we have
$$b(\lambda,\mu)=b_\BZ(\lambda,n\cdot \mu)\cdot \zeta^{\frac{1}{n}}.$$
By \eqref{e:restr}, this gives the same value as \eqref{e:b extend} on the simple coroots. Furthermore, this implies
that the formula 
$$b(\alpha,\lambda):=\langle \lambda,\check\alpha \rangle\cdot q(\alpha)$$
holds for all coroots. 

\sssec{}  \label{sss:w rho}

In view of the above, we have a well-defined homomorphism 
$$\lambda\in \Lambda \mapsto b(\lambda,\rho)\in \fZ,$$
where $\rho$ is half-sum of positive coroots. Namely,  
\begin{equation} \label{e:b rho}
b(\alpha_i,\rho):=q(\alpha_i). 
\end{equation} 

\medskip

The following is simple combinatorial statement:
\begin{lem} \label{l:w rho}  
For $\lambda\in \Lambda^{\on{neg}}$, consider the element
\begin{equation} \label{e:value}
q(\lambda)+b(\lambda,\rho)\in \fZ.
\end{equation}

The element \eqref{e:value} vanishes for all $\lambda$ of the form
$w(\rho)-\rho$, $w\in W$.

\end{lem}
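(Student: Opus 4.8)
The plan is to reduce the statement to a direct computation using the explicit formula \eqref{e:b rho} for $b(-,\rho)$ on simple coroots, together with the restricted identity \eqref{e:restr} written in the form $b(\alpha,\lambda)=\langle\lambda,\check\alpha\rangle\cdot q(\alpha)$ for all coroots $\alpha$. The key observation is that $q(\lambda)+b(\lambda,\rho)$, as a function of $\lambda\in\Lambda$, can be rewritten in a way that is manifestly $W$-equivariant in a suitable sense. First I would note that, since $q$ is $W$-invariant and $b$ is $W$-invariant as a bilinear form, we have for any $w\in W$ and any $\mu$ the identity $q(w\mu)=q(\mu)$ and $b(w\mu,w\nu)=b(\mu,\nu)$. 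Applying this with $\mu=\rho$ gives $q(w(\rho)-\rho)=q(w(\rho))-b(w(\rho),\rho)+q(\rho)=2q(\rho)-b(w(\rho),\rho)$, using $q(w(\rho))=q(\rho)$. Hence
\begin{equation} \label{e:plan reduction}
q(w(\rho)-\rho)+b(w(\rho)-\rho,\rho)=2q(\rho)-b(w(\rho),\rho)+b(w(\rho),\rho)-b(\rho,\rho)=2q(\rho)-2q(\rho)=0,
\end{equation}
where in the last step I use that $b(\rho,\rho)=2q(\rho)$ by the definition of the bilinear form associated to a quadratic form. Thus the statement follows essentially formally.

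The one point that requires care is whether the formula $b(w(\rho)-\rho,\rho)$ makes sense and whether the manipulations above are legitimate, since $\rho$ a priori lies in $\frac12\Lambda$ rather than $\Lambda$, and $b(-,\rho)$ was only defined as a homomorphism on $\Lambda$ via \eqref{e:b rho}. So the actual work is to verify that $w(\rho)-\rho\in\Lambda$ (which is standard: $w(\rho)-\rho=-\sum_{\alpha>0,\,w^{-1}\alpha<0}\alpha$ is a sum of positive coroots, hence lies in $\Lambda^{\mathrm{neg}}$, consistent with the hypothesis $\lambda\in\Lambda^{\mathrm{neg}}$), and that the homomorphism $\lambda\mapsto b(\lambda,\rho)$ defined by \eqref{e:b rho} on simple coroots agrees with the ``geometric'' pairing that makes the computation \eqref{e:plan reduction} valid. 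Concretely, I would use the alternative description of $b$ given just before \secref{sss:w rho}: writing $q=q_\BZ\cdot\zeta$ and choosing a root $\zeta^{1/n}$, one has $b(\lambda,\mu)=b_\BZ(\lambda,n\mu)\cdot\zeta^{1/n}$, so all the identities reduce to identities of integer-valued forms, where $\rho$ genuinely makes sense after clearing denominators and the $W$-equivariance computation is unambiguous.

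An alternative, more combinatorial route, avoiding any discussion of denominators, is to proceed by induction on the length $\ell(w)$. For $\ell(w)=0$ the element $w(\rho)-\rho=0$ and the value \eqref{e:value} is $0$ trivially. For the inductive step, write $w=s_i w'$ with $\ell(w)=\ell(w')+1$, so that $w(\rho)-\rho=s_i(w'(\rho)-\rho)-\alpha_i=(w'(\rho)-\rho)-\langle w'(\rho)-\rho,\check\alpha_i\rangle\alpha_i-\alpha_i$, and set $\lambda'=w'(\rho)-\rho$, $m=\langle\lambda',\check\alpha_i\rangle$; a standard fact gives $\langle w(\rho)-\rho,\check\alpha_i\rangle=-m-2$, equivalently $\langle w'(\rho),\check\alpha_i\rangle=m+1>0$. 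Then expand
\begin{equation} \label{e:plan induction}
q(\lambda'-(m+1)\alpha_i)+b(\lambda'-(m+1)\alpha_i,\rho)=q(\lambda')+b(\lambda',\rho)+(m+1)^2 q(\alpha_i)-(m+1)b(\alpha_i,\lambda')-(m+1)b(\alpha_i,\rho),
\end{equation}
and use $b(\alpha_i,\lambda')=m\cdot q(\alpha_i)$ (from \eqref{e:restr}) and $b(\alpha_i,\rho)=q(\alpha_i)$ (from \eqref{e:b rho}): the last three terms collapse to $\big((m+1)^2-m(m+1)-(m+1)\big)q(\alpha_i)=0$, and by induction $q(\lambda')+b(\lambda',\rho)=0$, completing the step. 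I expect the main (and only real) obstacle to be bookkeeping: making sure the length-reduction identities $\langle w'(\rho),\check\alpha_i\rangle>0$ and $w(\rho)-\rho\in\Lambda^{\mathrm{neg}}$ are invoked correctly, and keeping the signs straight in \eqref{e:plan induction}; everything else is formal.
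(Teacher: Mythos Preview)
Your proof is correct. Both routes work: the direct computation via $W$-invariance of $q$ (once lifted to the integer form $q_\BZ$ so that evaluating at $\rho\in\Lambda_{\on{ad}}$ makes sense), and the induction on $\ell(w)$ using the restricted identity \eqref{e:restr} together with \eqref{e:b rho}. The inductive argument is self-contained and avoids the need to extend $q$ or $b$ beyond $\Lambda$, so it is the cleaner of the two; note also that the positivity $m+1>0$ you mention is not actually used in the cancellation $(m+1)^2-m(m+1)-(m+1)=0$, so the length-reduction remark is inessential.

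As for comparison with the paper: the paper gives no proof at all---the lemma is stated as a ``simple combinatorial statement'' and left to the reader. So there is nothing to compare; you have simply filled in what the paper omitted.
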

    
\ssec{The configuration space}  \label{ss:conf}

\sssec{}

Let $\Conf$ denote the configuration space of points of $x$ weighted by elements of $\Lambda^{\on{neg}}-0$, i.e., expressions of the form
\begin{equation} \label{e:point of conf}
\Sigma\, \lambda_n\cdot x_n, \quad \lambda_n\in \Lambda^{\on{neg}}-0, \quad x_{n'}\neq x_{n''}.
\end{equation} 

\medskip

We have
$$\Conf=\underset{\lambda\in \Lambda^{\on{neg}}-0}\sqcup\, \Conf^\lambda,$$
according to the total degree.  

\medskip

Each connected component $\Conf^\lambda$, also denoted $X^\lambda$, is a partially symmetrized power of $X$. Namely, for
\begin{equation} \label{e:lambda neg}
\lambda=\underset{i\in I}\Sigma\cdot (-n_i)\cdot \alpha_i
\end{equation} 
(where we recall that $I$ is the set of vertices of the Dynkin diagram), 
we have
\begin{equation} \label{e:lambda conf}
\Conf^\lambda:=X^\lambda\simeq \underset{i\in I}\Pi\, X^{(n_i)}.
\end{equation} 

\sssec{}

We let 
$$\oConf \overset{j}\hookrightarrow \Conf$$
denote the open locus corresponding to the condition that in \eqref{e:point of conf} all $\lambda_n$ are negative simple roots. 

\medskip

In other words, each connected component $\oConf^\lambda$ is the complement of the diagonal divisor in \eqref{e:lambda conf}.

\sssec{}

In what follows we will denote by 
$$(\Conf\times \Conf)_{\on{disj}}\subset \Conf\times \Conf$$
the disjoint locus, i.e., the open subset consisting of those pairs 
$$(\Sigma\, \lambda_n\cdot x_n,\Sigma\, \lambda'_{n'}\cdot x'_{n'}),$$
for which all $x_n$ and $x'_{n'}$ are pairwise distinct.  

\medskip

Similarly, we will use the notation  
$$(\Conf^{\times n})_{\on{disj}}\subset \Conf^{\times n}$$
for the $n$-fold product.

\sssec{}

We let
$$\on{add}:\Conf\times \Conf\to \Conf$$
denote the addition map, and let $\on{add}_n$ be its $n$-fold version. 

\medskip

Note that $\on{add}$ (resp., $\on{add}_n$) is \'etale when restricted to $(\Conf\times \Conf)_{\on{disj}}$
(resp., $(\Conf^{\times n})_{\on{disj}}$). 

\sssec{}  \label{sss:Delta lambda}

For an element $\lambda\in \Lambda^{\on{neg}}-0$, let $\Delta_\lambda$ denote the 
corresponding main diagonal
\begin{equation} \label{e:Delta lambda}
X\to \Conf^\lambda\subset \Conf, \quad x\mapsto \lambda\cdot x.
\end{equation} 

For a fixed point $x\in X$, we will use the notation $\iota_\lambda$ for the corresponding 
point $\lambda\cdot x\in \Conf^\lambda\subset \Conf$. 

\ssec{Gerbes on the configuration space}  \label{ss:gerbe}

\sssec{} \label{sss:rigid new}

According to \cite[Sect. 4.5.2]{GLys1}, to $q$ one canonically attaches a \emph{geometric metaplectic data}
$\CG_q^T$ for the torus $T$. It is uniquely characterized by the following two requirements:

\medskip

\noindent--The associated quadratic form (see \cite[Sects. 4.2.1 and 4.2.8]{GLys1}) equals $q$;

\smallskip

\noindent--For $\lambda\in \Lambda$, the gerbe $\CG_q^\lambda$ on $X$ (see \cite[Sect. 4.2.1]{GLys1}) is trivialized for $\lambda$
being each of the negative simple roots.

\medskip

In what follows, for a fixed point $x\in X$ we let
$\CG^\lambda_{q,x}$ denote the fiber of $\CG^\lambda_q$ at $x$; this is a plain $\sfe^\times$-gerbe. 

\sssec{}  \label{sss:factor gerbe}

By \cite[Sect. 4.6.5]{GLys2}, to $\CG_q^T$
we attach a \emph{factorization gerbe}, denoted $\CG_q^\Conf$ on $\Conf$.

\medskip

We recall that the datum of factorization amounts to an isomorphism
\begin{equation} \label{e:factor gerbe}
\CG_q^\Conf\boxtimes \CG_q^\Conf|_{(\Conf\times \Conf)_{\on{disj}}} \simeq \on{add}^*(\CG_q^\Conf)|_{(\Conf\times \Conf)_{\on{disj}}}.
\end{equation}

\medskip

The isomorphism \eqref{e:factor gerbe} is endowed with a datum of commutativity and associativity for iterated isomorphisms
$$(\CG_q^\Conf)^{\boxtimes n}|_{(\Conf^{\times n})_{\on{disj}}}\simeq \on{add}_n^*(\CG_q^\Conf)|_{(\Conf^{\times n})_{\on{disj}}}, \quad n\in \BN.$$

\medskip

Let us spell out the construction $\CG_q^T\rightsquigarrow \CG_q^\Conf$ explicitly, in terms of \cite[Sect. 4.2]{GLys1}.

\sssec{}

According to {\it loc. cit.}, the data of $\CG_q^T$
attaches to a finite set $J$ and a map $\lambda^J: J\to \Lambda$, a gerbe $\CG_q^{\lambda^J}$ on $X^J$.

\medskip

For $J=\{*\}$ and $\lambda^J$ corresponding to $\lambda\in \Lambda$, we recover the gerbe $\CG_q^\lambda$ on $X$, mentioned 
above. 

\medskip

For $J=\{1,2\}$ and $\lambda^J$ given by a pair of elements $\lambda_1,\lambda_2\in \Lambda$, the factorization structure
on $\CG^T_q$ defines an isomorphism 
$$\CG_q^{\lambda_1,\lambda_2}|_{X\times X-\Delta}=\CG_q^{\lambda_1}\boxtimes \CG_q^{\lambda_2}|_{X\times X-\Delta},$$
which extends to an isomorphism
\begin{equation} \label{e:on sq}
\CG_q^{\lambda_1,\lambda_2}\simeq (\CG_q^{\lambda_1}\boxtimes \CG_q^{\lambda_2})\otimes \CO(-\Delta)^{b(\lambda_1,\lambda_2)},
\end{equation} 
where:

\begin{itemize}

\item  for a line bundle $\CL$ on a scheme $\CY$ and an element $\zeta\in \fZ$, we denote by $\CL^\zeta$ the corresponding 
$\sfe^\times$-gerbe on $\CY$, see \secref{sss:K gerbe};

\item $b$ is the symmetric bilinear form attached to $q$ (in fact, \eqref{e:on sq} shows how to recover $b$ starting from $\CG_q^T$). 

\end{itemize}

\medskip

The assignment $(J,\lambda^J)\rightsquigarrow \CG^{\lambda^J}$ is functorial
in $(J,\lambda^J)$. In particular, $\CG_q^{\lambda^J}$ is equivariant with respect to the group $\on{Aut}(J,\Lambda^J)$.

\medskip

The above structure of equivariance has the following property. Let again $J=\{1,2\}$, and $\lambda^J$ be the constant map 
with value $\lambda$. Then the induced structure of equivariance on 
$$\CG_q^{\lambda,\lambda}|_\Delta$$
with respect to $\BZ/2\BZ$ is equipped with a trivialization (the latter makes sense as $\BZ/2\BZ$ acts trivially
on the underlying scheme $\Delta$). 

\medskip

It is explained in \cite[Sect. 4.2.8]{GLys1}, this datum of trivialization 
exhibits $b(\lambda,\lambda)$ as $2\times$ some other element of $\fZ$; this other element is the value $q(\lambda)$ of $q$
on $\lambda$. 

\medskip

Equivalently, this datum of trivialization is equivalent to descending the gerbe $\CG_q^{\lambda,\lambda}$ on $X^2$
to a gerbe on $X^{(2)}$, which identifies with
\begin{equation} \label{e:descend to sq}
(\CG_q^\lambda)^{(2)}\otimes \CO(-\Delta')^{q(\lambda)},
\end{equation} 
where:

\begin{itemize}

\item $\CG^{(n)}$ is the gerbe on $X^{(n)}$ given by the $n$th symmetric power\footnote{I.e, if we trivialize $\CG$ Zariski-locally on $X$, the gerbe
$\CG^{(n)}$ also acquires a trivialization; a change of the trivialization of $\CG$ by a $1$-dimensional local system $E$ on $X$ results in the change
of trivialization of $\CG^{(n)}$ by $E^{(n)}$, where the latter is a well-defined $1$-dimensional local system on $X^{(n)}$.} 
of a given gerbe $\CG$ on $X$;

\item $\Delta'$ is the diagonal divisor in $X^{(2)}$. 

\end{itemize} 

\medskip

Let now $(J,\lambda^J)$ be arbitrary. Let $X^{(J)}$ be the partially symmetrized power of the curve equal to the (GIT) quotient of $X^J$ by $\on{Aut}(J,\Lambda^J)$.
From the above datum of trivialization of the equivariance structure on the diagonals, we obtain that the gerbe $\CG_q^{\lambda^J}$ canonically descends to a gerbe
$\CG_q^{(\lambda^J)}$ on $X^{(J)}$.

\sssec{}

We are now ready to describe $\CG_q^\Conf$ explicitly in terms of $\CG_q^T$. 

\medskip

For $\lambda\in \Lambda^{\on{neg}}$ written as \eqref{e:lambda neg}, set
$$J=\underset{i\in I}\sqcup\, \{1,...,n_i\},$$
and let $\lambda^J$ be such that it sends each $\{1,...,n_i\}$ to $-\alpha_i$. Note that we have a natural identification
$$\Conf^\lambda=X^{(J)}.$$

Then with respect to this identification, we have
$$\CG_q^\Conf|_{\Conf^\lambda}=\CG_q^{(\lambda^J)}.$$

\sssec{}  \label{sss:gerbe expl new}

Here is an even more explicit description of $\CG_q^\Conf$ in terms of $q$. 

\medskip

Recall (see \secref{sss:rigid new}) that each $\CG_q^{-\alpha_i}$ is equipped with a trivialization. 

\medskip

This trivialization uniquely extends to a trivialization of the restriction 
of $\CG_q^\Conf$ to 
$$\oConf \subset  \Conf$$
in a way compatible with factorization. 

\medskip

Using \eqref{e:on sq} and \eqref{e:descend to sq}, we obtain that for $\lambda\in \Lambda^{\on{neg}}$
written as \eqref{e:lambda neg} (so that $\Conf^\lambda$ is written as in \eqref{e:lambda conf}), we have: 

\begin{equation} \label{e:formula for gerbe}
\CG_q^\Conf|_{\Conf^\lambda}=\left(\underset{i}\otimes\, \CO(-\Delta'_i)^{q(\alpha_i)}\right)
\bigotimes  \left(\underset{i\neq j}\otimes \CO(-\Delta_{i,j})^{b(\alpha_i,\alpha_j)}\right),
\end{equation}
where:

\begin{itemize}

\item $\Delta'_i$ denotes the diagonal divisor in $X^{(n_i)}$;

\item The product in the second factor goes over the set of \emph{unordered} pairs of distinct simple coroots,
and $\Delta_{i,j}$ denotes the incidence divisor in $X^{(n_i)}\times X^{(n_i)}$;

\end{itemize}  
 
\sssec{}

Restricting the RHS of formula \eqref{e:formula for gerbe} to the main diagonal, 
we obtain that the gerbe $\CG^\lambda_q$ on $X$ identifies with
\begin{equation} \label{e:gerbe on lambda}
\omega^{q(\lambda)+b(\lambda,\rho)},
\end{equation} 
see \secref{sss:w rho} for the meaning of $b(\lambda,\rho)$. 

\medskip

In particular, from \lemref{l:w rho} we obtain that $\CG^\lambda_q$ is \emph{canonically trivial}
for $\lambda$ of the form $w(\rho)-\rho$. 

\sssec{}  \label{sss:two curves}

Let $f:X_1\to X_2$ be an \'etale morphism of curves, and consider the corresponding map
of the configuration spaces
\begin{equation} \label{e:conf two curves}
f_\Conf:\Conf_1\to \Conf_2. 
\end{equation}

Let $\Conf_{1,et}\subset \Conf_1$ be the locus on which $f_\Conf$ is \'etale. Note that this locus always contains
the main diagonal as well as the complement to the \emph{diagonal divisor}. 

\medskip

Let $(\CG_q^\Conf)_1$ (resp., $(\CG_q^\Conf)_2$) denote the corresponding gerbe on $\Conf_1$
(resp., $\Conf_2$).  It follows from formula \eqref{e:formula for gerbe} that the tautological isomorphism
$$(\CG_q^\Conf)_1|_{\oConf_1}\simeq f_\Conf^*((\CG_q^\Conf)_2)|_{\oConf_1}$$
extends (automatically uniquely) to an isomorphism
\begin{equation} \label{e:gerbe two curves}
(\CG_q^\Conf)_1|_{\Conf_{1,et}}\simeq f_\Conf^*((\CG_q^\Conf)_2)|_{\Conf_{1,et}}.
\end{equation}

\sssec{}

Assume for a second that $X=\BA^1$, with a chosen coordinate $t$. This choice of coordinate 
makes all the divisors $\Delta_i$ and $\Delta_{i,j}$ principal, i.e., it gives
rise to a trivialization of the line bundles $\CO(-\Delta_i)$ and $\CO(-\Delta_{i,j})$, appearing in formula
\eqref{e:formula for gerbe}. In particular, we obtain that a choice of coordinate defines a trivialization of 
the gerbe $\CG_q^\Conf$.

\medskip

Note, however, that this trivialization is \emph{incompatible} with the canonical trivialization of 
$\CG_q^\Conf|_{\oConf}$. The discrepancy is given by a local system equal to the tensor product 
$$\left(\underset{i}\otimes\, f_i^*(\Psi_{q(\alpha_i)})\right) \bigotimes 
\left(\underset{i,j}\otimes\,  f_{i,j}^*(\Psi_{b(\alpha_i,\alpha_j)})\right),$$
where:

\begin{itemize}

\item For $\zeta\in \fZ$, we denote by $\Psi_\zeta$ the corresponding Kummer local system on $\BG_m$
(see \secref{sss:K}); 

\item $f_i$ (resp., $f_{i,j}$) is the (invertible) function on $X^\lambda$ given by the generator of the 
ideal of $\Delta_i$ (resp., $\Delta_{i,j}$). 

\end{itemize}

\ssec{Factorization algebras}

\sssec{}

The object of study in this paper is \emph{factorization algebras} in the category of 
\emph{gerbe-twisted perverse sheaves}\footnote{See \secref{sss:gerbe twist} for what this means.} 
on $\Conf$ (resp., $\oConf$).

\sssec{}

In the untwisted case, such a factorization algebra is an object $\CA\in \on{Perv}(\Conf)$,
equipped with an isomorphism
\begin{equation}  \label{e:factor algebra}
\CA\boxtimes \CA|_{(\Conf\times \Conf)_{\on{disj}}} \simeq \on{add}^!(\CA)|_{(\Conf\times \Conf)_{\on{disj}}},
\end{equation}
which is commutative and associative in a natural sense. 

\medskip

Note that in the above formula 
$\on{add}^!(\CA)|_{(\Conf\times \Conf)_{\on{disj}}}$ is perverse, since $\on{add}|_{(\Conf\times \Conf)_{\on{disj}}}$
is \'etale.

\sssec{}

Let now $\CG^\Conf$ be a gerbe over $\Conf$, equipped with a factorization structure (see \secref{sss:factor gerbe} for what this means).
Then, due to the factorization structure on $\CG$ we can talk about objects of $\on{Perv}_\CG(\Conf)$, 
equipped with a factorization structure. 

\medskip

Indeed, in this
case the left-hand side of \eqref{e:factor algebra} is an object of
$$\on{Perv}_{\CG\boxtimes \CG|_{(\Conf\times \Conf)_{\on{disj}}} }((\Conf\times \Conf)_{\on{disj}}),$$
and the right-hand side is an object of
$$\on{Perv}_{\on{add}^*(\CG)|_{(\Conf\times \Conf)_{\on{disj}}} }((\Conf\times \Conf)_{\on{disj}}),$$
but the two gerbes are identified due to \eqref{e:factor gerbe}. 

\sssec{}

The same discussion applies when we replace $\Conf$ by $\oConf$. 

\ssec{The factorization algebra on the open part} 

\sssec{}

Consider first the untwisted category $\on{Perv}(\oConf)$. We let $\oOmega$ be the factorization algebra in it, 
uniquely determined by the condition that 
$$\oOmega|_{\Conf^{-\alpha_i}}\simeq \sfe_X[1].$$

\sssec{}

Explicitly, recall that 
$$\oConf=\underset{\lambda}\sqcup\, \oConf^\lambda=\underset{\lambda}\sqcup\, (\overset{\circ}X{}^\lambda-\Delta),$$
where $\Delta$ is the diagonal divisor in each $\overset{\circ}X{}^\lambda$.  I.e., for $\lambda$ written as \eqref{e:lambda neg}, we have:
$$\Delta=(\underset{i}\cup\, \Delta'_i)\bigcup\, (\underset{i,j}\cup\, \Delta_{i,j}).$$

Then
$$\oOmega|_{\oConf^\lambda}$$ identifies with the restriction of
$$\underset{i}\boxtimes\, \on{sign}^{n_i}[n_i],$$
where $\on{sign}^n$ denotes the \emph{sign local system} on $X^{(n)}-\Delta$ (the cohomological shift by $[n]$ makes
$\on{sign}^n[n]$ into a perverse sheaf). 

\sssec{}  \label{sss:Omega o}

Recall now that the restriction of the factorization gerbe $\CG_q^\Conf$ to $\oConf$ admits a canonical
trivialization. Hence, we can identify $\Perv_{\CG_q^\Conf}(\oConf)$ with $\Perv(\oConf)$. 

\medskip 

Let $\oOmega_q\in \Perv_{\CG_q^\Conf}(\oConf)$ be the image of $\oOmega$ under the equivalence
$$\Perv_{\CG_q^\Conf}(\oConf)\simeq \Perv(\oConf).$$

\ssec{Construction of the ``small" factorization algebra $\Omega_q^{\on{sml}}$}

When discussing the various versions of the quantum factorization algebra $\Omega_q$,  
we will assume that $q$ is non-degenerate (see \secref{sss:root of unity} for what this means)\footnote{See 
also Remark \ref{r:small torsion bad}, according to which these factorization algebras
may not ultimately be the right objects to consider unless we require a slightly stronger condition, namely, 
that $q$ avoid small torsion (see \secref{sss:small root} for what this means).}.

\medskip

This assumption will be in effect for the duration of this subsection and the next one.

\sssec{}

We define the factorization algebra $\Omega_q^{\on{sml}}\in \Perv_{\CG_q^\Conf}(\Conf)$
$$\Omega_q^{\on{sml}}:=j_{!*}(\oOmega_q).$$

We emphasize that in the above formula, the Goresky-MacPherson extension $j_{!*}(-)$ is understood as a functor
$$\Perv_{\CG_q^\Conf}(\oConf)\to \Perv_{\CG_q^\Conf}(\Conf).$$

\sssec{}

Verdier duality defines a contravariant equivalence
$$\BD^{\on{Verdier}}:\Perv_{\CG_q^\Conf}(\Conf) \simeq \Perv_{\CG_{q^{-1}}^\Conf}(\Conf),$$
where where we note that $\CG_{q^{-1}}^\Conf$ is the factorization gerbe inverse to 
$\CG_q^\Conf$.

\medskip

By construction, we have
$$\BD^{\on{Verdier}}(\Omega_q^{\on{sml}})\simeq \Omega_{q^{-1}}^{\on{sml}}.$$

\sssec{}

Let $f:X_1\to X_2$ be as in \secref{sss:two curves}. Let $\Omega_{q,1}^{\on{sml}}$ (resp., $\Omega^{\on{sml}}_{q,2}$) be the
corresponding factorization algebra on $\Conf_1$ (resp., $\Conf_2$). We obtain that the tautological isomorphism
$$\oOmega_{q,1}^{\on{sml}}\simeq f_\Conf^*(\oOmega_{q,2}^{\on{sml}})$$
extends (automatically uniquely) to an isomorphism
\begin{equation} \label{e:Omega small two curves}
\Omega_{q,1}^{\on{sml}}|_{\Conf_{1,et}} \simeq f_\Conf^*(\Omega_{q,2}^{\on{sml}})|_{\Conf_{1,et}}.
\end{equation}

\ssec{Construction of the DeConcini-Kac and Lusztig versions}  \label{ss:constr by deformation}

We remind that we retain the assumption that $q$ is non-degenerate. 

\sssec{}

Recall the notation $\fZ$ (see \secref{sss:K}), and let $\fZ_\hbar$ be its variant
with a formal parameter, see \secref{sss:K h bar}.

\medskip

Let $q$ be as in \secref{sss:q}. We define a 
form $q_\hbar$ with values in $\fZ_\hbar$ as follows:

\medskip

Let $q^{\on{min}}_\BZ$ be the minimal integer-valued W-invariant quadratic form on $\Lambda$ (i.e., one that
takes value $1$ on the short coroot in every simple factor of our root system).  Set $$q_\hbar=q+q^{\on{min}}_\BZ\cdot \zeta_\hbar;$$ 
where $\zeta_\hbar\in \fZ_\hbar$ 
is the element defined as follows: 

\begin{itemize}

\item For sheaves in the classical topology $\zeta_\hbar= \exp(\hbar)\in (\sfe\hbart)^\times$;

\item For $\ell$-adic sheaves, $\zeta_\hbar=1+\hbar\in (\BZ_\ell[\![\hbar]\!])^\times$;

\item For D-modules $\zeta=\hbar\in k[\![\hbar]\!]/\BZ$.

\end{itemize} 

Let $\CG^\Conf_{q_\hbar}$ denote the corresponding factorization gerbe on $\Conf$
(see \secref{ss:h bar gerbes}). Explicitly, it can be defined by
the same formula as in \eqref{e:formula for gerbe}, up to replacing $q$ by $q_\hbar$. 

\sssec{}

We will denote the corresponding category of twisted sheaves 
by $\Shv_{\CG^\Conf_{q_{\hbart}}}(\Conf)$, and its localization with respect to $\hbar$ by 
$\Shv_{\CG^\Conf_{q_{\hbarl}}}(\Conf)$, see \secref{sss:twisted shvs h}. 

\medskip

We will also consider the ind-completions
$$\Shv^{\on{Ind}}_{\CG^\Conf_{q_{\hbart}}}(\Conf) \text{ and } \Shv^{\on{Ind}}_{\CG^\Conf_{q_{\hbarl}}}(\Conf)$$
of $\Shv_{\CG^\Conf_{q_{\hbart}}}(\Conf)$ and $\Shv_{\CG^\Conf_{q_{\hbarl}}}(\Conf)$, respectively. 

\medskip

We will identify $\Shv^{\on{Ind}}_{\CG^\Conf_{q_{\hbarl}}}(\Conf)$ with a full subcategory of 
$\Shv^{\on{Ind}}_{\CG^\Conf_{q_{\hbart}}}(\Conf)$ consisting of $\hbar$-local objects. 

%

%

\medskip

We let
$$\on{Perv}(-)\subset \Shv(-) \text{ and } \on{Perv}^{\on{Ind}}(-)\subset \Shv^{\on{Ind}}(-)$$
denote the corresponding abelian subcategories.  

\medskip

We will identify $\on{Perv}^{\on{Ind}}_{\CG^\Conf_{q_{\hbarl}}}(\Conf)$ with a full subcategory of 
$\on{Perv}^{\on{Ind}}_{\CG^\Conf_{q_{\hbart}}}(\Conf)$, consisting of objects on which $\hbar$ is invertible. 

\sssec{}

Let $\oOmega_{q_\hbart}$ be the factorization algebra in 
$$\on{Perv}_{\CG^\Conf_{q_{\hbart}}}(\oConf),$$
(the latter category identifies with the untwisted $\on{Perv}_{\hbart}(\oConf)$), 
defined as in \secref{sss:Omega o}.

\medskip

Let  
$$\oOmega_{q_\hbarl}\in \on{Perv}_{\CG^\Conf_{q_{\hbarl}}}(\oConf)$$ be the image of $\oOmega_{q_\hbart}$ 
under the localization functor 
$$\on{Perv}_{\CG^\Conf_{q_{\hbart}}}(\oConf)\to \on{Perv}_{\CG^\Conf_{q_{\hbarl}}}(\oConf).$$

\sssec{}

Set
$$\Omega_{q_\hbarl}:=j_{!*}(\oOmega_{q_\hbarl})\in  \on{Perv}_{\CG^\Conf_{q_{\hbarl}}}(\Conf).$$

\begin{rem}

In many respects, $\Omega_{q_\hbarl}$ behaves similarly to the algebra $\Omega^{\on{sml}}_q$ for $q$
which is \emph{non-torsion valued}. 

%

\end{rem} 

\sssec{}  \label{sss:Omega hbar}

Note that the functor
$$j_*:\Shv_{\CG^\Conf_{q_{\hbart}}}(\oConf)\to \Shv_{\CG^\Conf_{q_{\hbart}}}(\Conf)$$
is t-exact. Indeed, the right t-exactness follows from the right t-exactness of the functor
$$j_*:\Shv_{\CG_q^\Conf}(\oConf)\to \Shv_{\CG_q^\Conf}(\Conf)$$
(the morphism $j$ is affine), while the left t-exactness follows from the fact that left adjoint of $j_*$,
i.e., $j^*$ is right t-exact. 

\medskip

Consider the object
$$j_*(\oOmega_{q_\hbart})\in \on{Perv}_{\CG^\Conf_{q_{\hbart}}}(\Conf)\subset \on{Perv}^{\on{Ind}}_{\CG^\Conf_{q_{\hbart}}}(\Conf).$$

Thinking of $\on{Perv}^{\on{Ind}}_{\CG^\Conf_{q_{\hbarl}}}(\Conf)$ as a full subcategory of $\on{Perv}^{\on{Ind}}_{\CG^\Conf_{q_{\hbart}}}(\Conf)$,
we can consider also the objects 
$$\Omega_{q_\hbarl} \text{ and } j_*(\oOmega_{q_\hbarl}),$$
thought of objects of $\on{Perv}^{\on{Ind}}_{\CG^\Conf_{q_{\hbart}}}(\Conf)$.

\medskip

We have the \emph{injective} maps in $\on{Perv}^{\on{Ind}}_{\CG^\Conf_{q_{\hbart}}}(\Conf)$
$$j_*(\oOmega_{q_\hbart}) \to j_*(\oOmega_{q_\hbarl}) \leftarrow \Omega_{q_\hbarl}.$$

\sssec{}

We define the object 
$$\Omega^{\on{DK}}_{q_\hbart}\in \on{Perv}^{\on{Ind}}_{\CG^\Conf_{q_{\hbart}}}(\Conf)$$
as the intersection
$$j_*(\oOmega_{q_\hbart}) \cap \Omega_{q_\hbarl}\subset j_*(\oOmega_{q_\hbarl}).$$

By \propref{p:t h bis}, $\Omega^{\on{DK}}_{q_\hbart}$ is actually an object of $\on{Perv}_{\CG^\Conf_{q_{\hbart}}}(\Conf)$. 

\medskip

By construction,
\begin{equation} \label{e:Omega h open}
j^*(\Omega^{\on{DK}}_{q_\hbart})\simeq \oOmega_{q_\hbart}.
\end{equation}

Furthermore, the image of $\Omega^{\on{DK}}_{q_\hbart}$ in $\on{Perv}_{\CG^\Conf_{q_{\hbarl}}}(\Conf)$
identifies with $\Omega_{q_\hbarl}$. 

\sssec{}

We define the factorization algebra 
$$\Omega_q^{\on{DK}}\in \on{Perv}_{\CG_q^\Conf}(\Conf)$$ 
as 
$$\Omega^{\on{DK}}_{q_\hbart}/\hbar\simeq \Omega^{\on{DK}}_{q_\hbart}\underset{\sfe\hbart}\otimes \sfe$$
(note that $\Omega^{\on{DK}}_{q_\hbart}$ is torsion-free (equivalently, flat) over $\sfe\hbart$). 
 
\medskip

Finally, we define 
the factorization algebra 
$$\Omega_q^{\on{Lus}}\in  \on{Perv}_{\CG_q^\Conf}(\Conf)$$ 
to be the Verdier dual of $\Omega_{q^{-1}}^{\on{DK}}$. 

\section{Properties of the DeConcini-Kac version}  \label{s:properties}

In this section we will formulate a series of theorems pertaining to the behavior of $\Omega^{\on{DK}}_q$.
These theorems describe (with an increasing degree of precision) how to construct $\Omega^{\on{DK}}_q$
inductively, starting from the open locus $\oConf$, and extending across the diagonals. 

\medskip

Throughout this section we will be assuming that $q$ is non-degenerate (see \secref{sss:root of unity}).

\ssec{First properties of $\Omega^{\on{DK}}_q$}

\sssec{}  \label{sss:maps Omega ext}

By \eqref{e:Omega h open}, we have
$$j^*(\Omega_q^{\on{DK}})\simeq \oOmega_q.$$

By adjunction, we obtain a map
\begin{equation} \label{e:Omega to open open}
\Omega_q^{\on{DK}} \to j_*(\oOmega_q).
\end{equation}

\begin{prop}  \label{p:DK injects}
The map \eqref{e:Omega to open open} is injective.
\end{prop}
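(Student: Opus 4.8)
The plan is to reduce the injectivity of \eqref{e:Omega to open open} to the corresponding statement after $\otimes_{\sfe\hbart}\sfe$, using the flatness of $\Omega^{\on{DK}}_{q_\hbart}$ over $\sfe\hbart$ together with the explicit description of $\Omega^{\on{DK}}_{q_\hbart}$ as the intersection $j_*(\oOmega_{q_\hbart})\cap \Omega_{q_\hbarl}$ inside $j_*(\oOmega_{q_\hbarl})$. First I would consider the composite
$$\Omega^{\on{DK}}_{q_\hbart}\hookrightarrow j_*(\oOmega_{q_\hbart})$$
in $\on{Perv}^{\on{Ind}}_{\CG^\Conf_{q_{\hbart}}}(\Conf)$, which is injective by the very definition of $\Omega^{\on{DK}}_{q_\hbart}$ as a sub-object (an intersection of two sub-objects of $j_*(\oOmega_{q_\hbarl})$, one of which is $j_*(\oOmega_{q_\hbart})$ itself). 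So over $\sfe\hbart$ the analogue of \eqref{e:Omega to open open} is automatically injective; the issue is purely that $\Omega^{\on{DK}}_q$ is defined by specializing $\hbar\mapsto 0$, and injectivity need not be preserved under $\otimes_{\sfe\hbart}\sfe$ in general.

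The key step is therefore to control the cokernel $\CQ:=\on{coker}\bigl(\Omega^{\on{DK}}_{q_\hbart}\to j_*(\oOmega_{q_\hbart})\bigr)$, which lives in $\on{Perv}^{\on{Ind}}_{\CG^\Conf_{q_{\hbart}}}(\Conf)$. Applying $-\underset{\sfe\hbart}\otimes \sfe$ to the short exact sequence
$$0\to \Omega^{\on{DK}}_{q_\hbart}\to j_*(\oOmega_{q_\hbart})\to \CQ\to 0$$
yields a long exact sequence in which the relevant $\on{Tor}_1$ term is $\on{Tor}_1^{\sfe\hbart}(\CQ,\sfe)$, i.e., the $\hbar$-torsion of $\CQ$. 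Since $\Omega^{\on{DK}}_{q_\hbart}$ is flat over $\sfe\hbart$, the snake-lemma reduction shows that the kernel of $\Omega^{\on{DK}}_q\to (j_*\oOmega_{q_\hbart})/\hbar$ is exactly the image of this $\hbar$-torsion of $\CQ$, and that $(j_*\oOmega_{q_\hbart})/\hbar$ maps to $j_*(\oOmega_q)$ with kernel again governed by $\hbar$-torsion phenomena — but $\oOmega_{q_\hbart}$ is flat and $j_*$ is t-exact (as recalled in \secref{sss:Omega hbar}, the morphism $j$ being affine), so $j_*(\oOmega_{q_\hbart})$ is itself $\hbar$-torsion-free and $(j_*\oOmega_{q_\hbart})\underset{\sfe\hbart}\otimes\sfe\simeq j_*(\oOmega_q)$. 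Thus the only obstruction to injectivity of \eqref{e:Omega to open open} is the $\hbar$-torsion in $\CQ$.

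The main obstacle, then, is showing $\CQ$ is $\hbar$-torsion-free, equivalently that the formation of the sub-object $\Omega^{\on{DK}}_{q_\hbart}\subset j_*(\oOmega_{q_\hbart})$ commutes with $\otimes_{\sfe\hbart}\sfe$ on the level of images. Here I would use that $\Omega^{\on{DK}}_{q_\hbart}=j_*(\oOmega_{q_\hbart})\cap \Omega_{q_\hbarl}$ and that $\Omega_{q_\hbarl}=j_{!*}(\oOmega_{q_\hbarl})$ is already $\hbar$-local: concretely, $\CQ$ injects into $j_*(\oOmega_{q_\hbart})/\bigl(j_*(\oOmega_{q_\hbart})\cap \Omega_{q_\hbarl}\bigr)$, which by the second isomorphism theorem injects into $j_*(\oOmega_{q_\hbarl})/\Omega_{q_\hbarl}=j_*(\oOmega_{q_\hbarl})/j_{!*}(\oOmega_{q_\hbarl})$. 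The latter is a module over $\sfe\hbarl$, hence has no $\hbar$-torsion at all; any $\hbar$-torsion element of $\CQ$ would have to map to $0$ there, i.e., lie in $\Omega_{q_\hbarl}\cap j_*(\oOmega_{q_\hbart})=\Omega^{\on{DK}}_{q_\hbart}$, forcing it to be $0$ in $\CQ$. This pins down $\CQ$ as a sub-object of an $\hbar$-invertible module, hence $\hbar$-torsion-free, which by the $\on{Tor}_1$ vanishing above gives the injectivity of \eqref{e:Omega to open open}. I expect the only genuinely delicate point is verifying that these intersection/quotient manipulations are legitimate inside the abelian category $\on{Perv}^{\on{Ind}}_{\CG^\Conf_{q_{\hbart}}}(\Conf)$ and compatible with $j_*$, which should follow from the t-exactness statements already recorded in \secref{sss:Omega hbar} and \propref{p:t h bis}.
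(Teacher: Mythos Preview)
Your proposal is correct and follows essentially the same approach as the paper's proof: the paper also argues that the quotient $j_*(\oOmega_{q_\hbart})/\Omega^{\on{DK}}_{q_\hbart}$ is $\hbar$-torsion-free by embedding it into $j_*(\oOmega_{q_\hbarl})/\Omega_{q_\hbarl}$, on which $\hbar$ is invertible. You have simply spelled out in greater detail the reductions (the $\on{Tor}_1$ analysis, the flatness of $j_*(\oOmega_{q_\hbart})$, and the second isomorphism theorem) that the paper compresses into two sentences.
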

 
\begin{proof}

The assertion is equivalent to the fact that the quotient 
$$j_*(\oOmega_{q_\hbart})/\Omega^{\on{DK}}_{q_\hbart}$$
is $\hbar$-torsion free. However, the above quotient embeds into
$$j_*(\oOmega_{q_\hbarl})/\Omega_{q_\hbarl},$$
on which $\hbar$ is invertible, and the assertion follows.

\end{proof} 

\begin{rem}  \label{r:transfer to A1}
When the ground field $k$ has characteristic zero, 
in Sects. \ref{ss:descr DK}-\ref{ss:descr DK bis}, we will give an explicit description of $\Omega_q^{\on{DK}}$ as a sub-object of
$j_*(\oOmega_q)$.

%
%

\end{rem} 

\sssec{}

From \propref{p:DK injects} we obtain that there exists an injective map
\begin{equation} \label{e:small to DK}
\Omega_q^{\on{sml}}\hookrightarrow \Omega_q^{\on{DK}},
\end{equation}
extending the identification of the restrictions of both sides to $\oConf$ with $\oOmega$. 

\medskip

We claim:

\begin{thm} \label{t:DK vs small gen} 
Let $q$ be non-torsion valued. Then the map \eqref{e:small to DK}
is an isomorphism.
\end{thm}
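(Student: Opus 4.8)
The plan is to show that when $q$ is non-torsion valued, the injective map \eqref{e:small to DK} is an isomorphism by checking that its cokernel vanishes, and for this it suffices to work with the $\hbar$-deformed picture. Recall that $\Omega_q^{\on{DK}}=\Omega^{\on{DK}}_{q_\hbart}/\hbar$ and that $\Omega^{\on{DK}}_{q_\hbart}$ is flat over $\sfe\hbart$; similarly $\Omega_q^{\on{sml}}=j_{!*}(\oOmega_q)$. The key observation is that, after inverting $\hbar$, the form $q_\hbarl$ is non-torsion valued (indeed $q_\hbarl=q+q^{\on{min}}_\BZ\cdot\zeta_\hbar$ with $\zeta_\hbar$ non-torsion in $\fZ_\hbar$), and moreover $q$ itself is assumed non-torsion valued in the present statement. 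So I would first prove the companion fact: \emph{for a non-torsion valued form, $j_{!*}$ and $j_*$ agree on $\oOmega$}, i.e.\ the $*$-extension is already an intersection-cohomology extension, hence $\Omega^{\on{sml}}_q\to j_*(\oOmega_q)$ is an isomorphism.

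The heart of that companion fact is a local, stalk-wise computation along the diagonal strata of $\Conf$. By factorization, it is enough to analyze the behavior near the main diagonal $\Delta_\lambda(X)\hookrightarrow \Conf^\lambda$ for each $\lambda\in\Lambda^{\on{neg}}-0$. Restricting $\CG_q^\Conf$ to the formal (or étale) neighborhood of $\Delta_\lambda$ and using \eqref{e:gerbe on lambda}, the obstruction to cleanness of the extension is governed by the monodromy eigenvalue $\omega^{q(\lambda)+b(\lambda,\rho)}$ together with the Kummer monodromies coming from the off-diagonal factors in \eqref{e:formula for gerbe}; concretely, the relevant local cohomology (a cohomology of a punctured-disk / small-ball complement twisted by the Kummer local system $\Psi_{q(\lambda)}$ and the off-diagonal $\Psi_{b(\alpha_i,\alpha_j)}$) computes the (co)stalks of $j_*(\oOmega_q)$ versus $j_{!*}(\oOmega_q)$ along $\Delta_\lambda$. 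When the values $q(\alpha_i)$ (equivalently, by \eqref{e:long and short}, one value per simple factor) are non-torsion in $\fZ$, the corresponding Kummer local systems have no nontrivial monodromy invariants or coinvariants, so these local cohomology groups are concentrated in the single degree forcing the $*$-, $!$- and $!*$-extensions to coincide strictly. This is exactly the calculation already carried out in \cite{Ga3} in a closely related guise, so I would cite it and only indicate the twist-tracking.

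Granting the companion fact, the argument finishes quickly. We have a chain of injections $\Omega^{\on{sml}}_q\hookrightarrow\Omega^{\on{DK}}_q\hookrightarrow j_*(\oOmega_q)$ (the first by \eqref{e:small to DK}, the second by \propref{p:DK injects}), all restricting to the identity on $\oConf$ with value $\oOmega_q$. If the composite $\Omega^{\on{sml}}_q\to j_*(\oOmega_q)$ is an isomorphism — which is precisely the companion fact for non-torsion valued $q$ — then both maps in the chain are isomorphisms, in particular $\Omega^{\on{sml}}_q\to\Omega^{\on{DK}}_q$ is. Alternatively, and perhaps more in the spirit of the deformation construction, one can run the same cleanness computation directly over $\sfe\hbarl$ for $q_\hbarl$ to see $\Omega_{q_\hbarl}=j_*(\oOmega_{q_\hbarl})$, whence $\Omega^{\on{DK}}_{q_\hbart}=j_*(\oOmega_{q_\hbart})\cap j_*(\oOmega_{q_\hbarl})=j_*(\oOmega_{q_\hbart})$, and then observe that for non-torsion valued $q$ the $*$-extension commutes with $-\underset{\sfe\hbart}\otimes\sfe$ and agrees with $j_{!*}$ in the limit.

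The main obstacle is the local cohomology / cleanness computation along the diagonals: one must keep careful track of the gerbe twist (the monodromy contributions of both the diagonal divisors $\Delta'_i$ with eigenvalue $q(\alpha_i)$ and the incidence divisors $\Delta_{i,j}$ with eigenvalue $b(\alpha_i,\alpha_j)$ in \eqref{e:formula for gerbe}) and verify that non-torsion-ness of $q(\alpha_i)$ is enough to kill the relevant invariants/coinvariants in all the diagonal strata, not just the main one. Once that input is in hand — and it is essentially the content of the easy case of \thmref{t:main 3}, done in \cite{Ga3} — the rest is formal.
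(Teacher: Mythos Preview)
Your ``companion fact''---that for non-torsion valued $q$ the map $j_{!*}(\oOmega_q)\to j_*(\oOmega_q)$ is an isomorphism---is false, and this breaks the argument. In the quantum-group dictionary of \secref{ss:Hopf to Fact}, the object $j_*(\oOmega_q)$ corresponds to the \emph{free} algebra $U^{\on{free}}_q(\cN)$ (\propref{p:identify Omega free}), while $j_{!*}(\oOmega_q)=\Omega^{\on{sml}}_q$ corresponds to $u_q(\cN)$ (\corref{c:identify Omega small}); these differ by the Serre ideal for every value of $q$, torsion or not. Geometrically, your Kummer-monodromy heuristic fails precisely along the main diagonals $\Delta_\lambda$ with $\lambda=w(\rho)-\rho$: by \eqref{e:gerbe on lambda} the restricted gerbe is $\omega^{q(\lambda)+b(\lambda,\rho)}$, and \lemref{l:w rho} says this exponent \emph{vanishes} for such $\lambda$, so the monodromy there is trivial and there is nothing for non-torsion-ness to kill. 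Indeed \corref{c:prop DK char 0 sec}(b) exhibits, for $\ell(w)=2$, a nonzero cone $(\Delta_\lambda)_!(\sfe_X[1])$ between $\Omega^{\on{DK}}_q$ and $(\jmath_\lambda)_*(\jmath_\lambda)^*(\Omega^{\on{DK}}_q)$. Your alternative route via $\Omega_{q_\hbarl}=j_*(\oOmega_{q_\hbarl})$ fails for the same reason.

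The reference to \cite{Ga3} does not rescue this: the cleanness proved there (recorded in this paper as \propref{p:IC Zast irrat}) concerns the IC sheaf on the \emph{Zastava space} $\CZ^-$, not the extension of $\oOmega_q$ across the diagonal stratification of $\Conf$; these are different geometries. The paper proves the theorem by routing through exactly that Zastava/Whittaker picture (see \secref{sss:proof of gen via Whit}, via \thmref{t:Whit non-root} and \propref{p:1-2 equiv}), or in characteristic zero via the quantum-group comparison $\Omega^{\on{DK}}_q\simeq\Omega^{\on{DK}}_{q,\Quant}$ together with \corref{c:quantum non-coroot fact}. Neither route attempts the direct cleanness on $\Conf$ you propose, because it does not hold.
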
 

When the ground field $k$ has characteristic $0$, this theorem will be proved in \secref{ss:proof of prop}. 
A proof that works for any ground field will be given in \secref{sss:proof of gen via Whit}.

\ssec{Properties of $\Omega_q^{\on{DK}}$ in characteristic $0$} \label{ss:descr DK}

From now on, until the end of this section we will assume that the ground field $k$ has characteristic $0$. 

\medskip

We will state a number of properties of $\Omega_q^{\on{DK}}$, which will be proved 
in \secref{ss:proof of prop}. The factorization algebra $\Omega_q^{\on{Lus}}$ will
enjoy Verdier-dual properties of those of $\Omega_q^{\on{DK}}$.

\sssec{} \label{sss:maps Omega}

For $\lambda\in \Lambda^{\on{neg}}$, let $\jmath_\lambda$ denote the open embedding complementary 
to the main diagonal
$$\Delta_\lambda:X\to X^\lambda.$$

\medskip

Note that by \propref{p:DK injects}, the tautological map
\begin{equation} \label{e:Omega to open}
\Omega_q^{\on{DK}}|_{\Conf^\lambda}\to (\jmath_\lambda)_*\circ (\jmath_\lambda)^*(\Omega_q^{\on{DK}})
\end{equation} 
induces an \emph{injection}
\begin{equation} \label{e:Omega to open H0}
\Omega_q^{\on{DK}}|_{\Conf^\lambda} \hookrightarrow H^0\left((\jmath_\lambda)_*\circ (\jmath_\lambda)^*(\Omega_q^{\on{DK}})\right),
\end{equation} 
where $H^0$ refers to the perverse t-structure (as it does throughout the paper). Indeed, the composition of \eqref{e:Omega to open H0}
with the (injective) map
$$H^0\left((\jmath_\lambda)_*\circ (\jmath_\lambda)^*(\Omega_q^{\on{DK}})\right)\to  j_* \circ j^*(\Omega_q^{\on{DK}})=j_*(\oOmega_q)$$
is the map from \propref{p:DK injects}. 

\medskip

This injectivity property is equivalent to the fact that $(\Delta_\lambda)^!(\Omega_q^{\on{DK}})$ lives in (perverse) cohomological
degrees $\geq 1$ for $|\lambda|>1$. 

\medskip

As a consequence of the injectivity of \eqref{e:Omega to open H0}, we obtain a map
\begin{equation} \label{e:GM to Omega}
(\jmath_\lambda)_{!*}\circ (\jmath_\lambda)^*(\Omega_q^{\on{DK}})\to \Omega_q^{\on{DK}}|_{\Conf^\lambda},
\end{equation} 
which is automatically injective. 

\sssec{}

We claim:

\begin{thm} \label{t:prop DK char 0 prim}  Let $\lambda=w(\rho)-\rho$ with $\ell(w)=2$.  Then
$(\Delta_\lambda)^*(\Omega_q^{\on{DK}})$
lives in (perverse) cohomological degrees $\leq -1$, i.e.,  $H^0((\Delta_\lambda)^*(\Omega_q^{\on{DK}}))=0$.
\end{thm}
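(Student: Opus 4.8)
The plan is to reduce the statement to a local computation in the fiber over the main diagonal, and then to exploit the $\hbar$-deformation together with the explicit form of the gerbe on $\Conf^\lambda$ given by \eqref{e:formula for gerbe}. First I would recall that, by construction, $\Omega^{\on{DK}}_q$ is the reduction mod $\hbar$ of $\Omega^{\on{DK}}_{q_\hbart}$, which sits inside $j_*(\oOmega_{q_\hbart})$ with $\hbar$-torsion-free quotient (\propref{p:DK injects}). Hence $(\Delta_\lambda)^*(\Omega^{\on{DK}}_q)$ can be computed by base change along the closed embedding $\Delta_\lambda\colon X\hookrightarrow \Conf^\lambda$ applied to $\Omega^{\on{DK}}_{q_\hbart}$ and then reducing mod $\hbar$. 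The key point is that since $\Omega^{\on{DK}}_{q_\hbart}$ agrees with $j_*(\oOmega_{q_\hbart})$ in a neighborhood of the generic locus and differs from it only by a subsheaf supported on the diagonals, the $*$-restriction to the deepest stratum $\Delta_\lambda(X)$ is controlled by the costalk/stalk of the GM-extension $j_{!*}(\oOmega_{q_\hbarl})$ together with the intersection condition defining $\Omega^{\on{DK}}_{q_\hbart}$.

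Next I would set up the factorization/induction machinery. By factorization, the behavior of $\Omega^{\on{DK}}_q$ on $\Conf^\lambda$ away from $\Delta_\lambda(X)$ is already determined (by \eqref{e:GM to Omega} and the inductive hypothesis), so the only new information lives at the main diagonal. Restricting to a formal (or \'etale-local) neighborhood of a point $x\in X$, and using \secref{sss:two curves} to pass to $X=\BA^1$ with a coordinate, the gerbe $\CG^\Conf_{q}|_{\Conf^\lambda}$ becomes trivial and the computation becomes one about the !- and *-restrictions of a concrete perverse sheaf on $\prod_i X^{(n_i)}$ built from sign local systems twisted by Kummer local systems with monodromies recorded by $q(\alpha_i)$ and $b(\alpha_i,\alpha_j)$; the total monodromy around the main diagonal is, by \eqref{e:gerbe on lambda}, governed by $q(\lambda)+b(\lambda,\rho)$, which by \lemref{l:w rho} \emph{vanishes} precisely because $\lambda=w(\rho)-\rho$. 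That vanishing of the diagonal monodromy is what forces a jump in cohomological amplitude: when the local system near the diagonal has trivial monodromy, the $*$-restriction of the GM (equivalently DK, in this top stratum of depth-two type) extension lands in strictly negative degrees.

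The concrete mechanism I expect to use is the $\ell(w)=2$ hypothesis, which means $\lambda$ is a sum of exactly two simple coroots $\alpha_i+\alpha_j$ (allowing $i=j$, i.e.\ $\lambda=2\alpha_i$ only in the non-simply-laced case, but in the simply-laced ADE case $i\ne j$), so $\Conf^\lambda$ is either $X\times X$ or $X^{(2)}$ — a surface. On a surface the main diagonal is a curve, and the statement ``$(\Delta_\lambda)^*$ lives in degrees $\le -1$'' is just the assertion that $\Omega^{\on{DK}}_q|_{\Conf^\lambda}$ has no subquotient supported on $\Delta_\lambda(X)$ contributing in degree $0$ — equivalently, that over this component $\Omega^{\on{DK}}_q$ coincides with the GM-extension $(\jmath_\lambda)_{!*}$ of its restriction to the complement of the diagonal, i.e.\ the map \eqref{e:GM to Omega} is an isomorphism here. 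I would prove this by the $\hbar$-deformation argument: over $\sfe\hbarl$ the form $q_\hbarl$ is non-torsion along every coroot, so $\Omega_{q_\hbarl}=j_{!*}(\oOmega_{q_\hbarl})$ has \emph{no} $\delta$-type summand at the diagonal (clean extension / the Kummer monodromy is non-trivial), and the DK intersection $j_*(\oOmega_{q_\hbart})\cap \Omega_{q_\hbarl}$ therefore cannot acquire, after reduction mod $\hbar$, a degree-$0$ costalk along $\Delta_\lambda$ — one checks the costalk is computed by an $\hbar$-adic complex whose $H^0$ is $\hbar$-divisible, hence dies mod $\hbar$, using precisely that $q(\lambda)+b(\lambda,\rho)=0$ (\lemref{l:w rho}) so that the relevant Kummer/Artin-Schreier-free local system degenerates to a constant one in the $\hbar\to 0$ limit.

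\textbf{Main obstacle.} The hard part will be the last step: controlling exactly how the DK-intersection $\Omega^{\on{DK}}_{q_\hbart}=j_*(\oOmega_{q_\hbart})\cap\Omega_{q_\hbarl}$ interacts with the $*$-restriction $(\Delta_\lambda)^*$ and the reduction mod $\hbar$, i.e.\ showing that the potential degree-$0$ contribution at the diagonal — which \emph{is} present in $j_*(\oOmega_{q_\hbart})$ itself — gets killed in the intersection with $\Omega_{q_\hbarl}$ and stays killed after $\otimes_{\sfe\hbart}\sfe$. This is genuinely a statement about the $\hbar$-adic filtration on stalks/costalks and requires knowing that the generic monodromy around $\Delta_\lambda$ is $\hbar\cdot(\text{unit})$-valued (so non-trivial before localization, trivial after specialization), which is where \lemref{l:w rho} and the explicit gerbe formula \eqref{e:gerbe on lambda} enter essentially; I would expect the cleanest route is to first establish the $\Conf^\lambda$-local analogue on $\BA^1$ by an explicit nearby/vanishing-cycles computation for the sign-times-Kummer sheaf on $X^{(2)}$ (or $X\times X$) and then globalize by the uniqueness clause in \secref{sss:two curves}.
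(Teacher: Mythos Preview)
Your proposal takes a route quite different from the paper's, and unfortunately it has two genuine problems.

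\textbf{A factual error about $\lambda$.} For $w=s_is_j$ with $i\ne j$ one computes
\[
w(\rho)-\rho=-\alpha_j-(1-\langle\alpha_j,\check\alpha_i\rangle)\alpha_i,
\]
so $|\lambda|=2-\langle\alpha_j,\check\alpha_i\rangle$, which equals $2$ only when $i,j$ are non-adjacent and can be as large as $5$ (type $G_2$). Thus $\Conf^\lambda$ is \emph{not} a surface in general, and your ``rank-one diagonal'' picture collapses. (Also $i=j$ cannot occur, since $s_is_i=e$.)

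\textbf{The monodromy mechanism does not work.} Your key step is that ``the generic monodromy around $\Delta_\lambda$ is $\hbar\cdot(\text{unit})$-valued''. But \lemref{l:w rho} applies to \emph{any} restricted $W$-invariant form, hence equally to $q_\hbar$; thus by \eqref{e:gerbe on lambda} the gerbe $\CG_{q_\hbar}^\lambda$ along $\Delta_\lambda$ is trivial for \emph{all} $\hbar$, not just at $\hbar=0$. There is no degeneration of the diagonal monodromy, so the mechanism you invoke to kill the potential $H^0$ contribution does not exist. More structurally: you correctly deduce that $H^0((\Delta_\lambda)^*\Omega^{\on{DK}}_{q_\hbart})$ is $\hbar$-torsion (since its $\hbar$-localization computes a $*$-stalk of a GM extension), but the claim that it is also $\hbar$-divisible is exactly the content of the theorem and you give no argument for it.

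\textbf{How the paper proceeds.} The paper does not attempt a direct $\hbar$-adic argument. Instead it reduces (via \'etale invariance, Lefschetz, Riemann--Hilbert) to the Betti setting on $\BA^1$, invokes \thmref{t:quant and abs} to identify $\Omega^{\on{DK}}_q\simeq\Omega^{\on{DK}}_{q,\Quant}$, and then uses \eqref{e:*-fibers of Omega}:
\[
(\iota_\lambda)^*(\Omega^{\on{DK}}_{q,\Quant})\simeq \on{C}_\cdot(U^{\on{Lus}}_{q^{-1}}(\cN))^{-\lambda}.
\]
The required vanishing of $H^{-1}$ becomes the statement that $H_1(U^{\on{Lus}}_{q^{-1}}(\cN))$---the span of the generators (Chevalley and divided-power)---has no component in the Serre-relation weight $\rho-w(\rho)$. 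This is an elementary check on weights of generators, but it uses structural information about $U^{\on{Lus}}_q(\cN)$ that is invisible from your purely sheaf-theoretic vantage point. That is the missing idea.
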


%
%

This theorem will be proved in \secref{ss:proof of prop}.

\sssec{}

Let us rephrase \thmref{t:prop DK char 0 prim} in terms of the properties of the map \eqref{e:GM to Omega}: 

\begin{cor} \label{c:prop DK char 0 prim} Let $\lambda=w(\rho)-\rho$ with $\ell(w)=2$. Then the
map \eqref{e:GM to Omega} is an isomorphism.
\end{cor}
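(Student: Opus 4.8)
The corollary is a formal restatement of \thmref{t:prop DK char 0 prim}, so the plan is simply to unwind the definitions rather than prove anything substantially new. Recall that by \eqref{e:Omega to open H0} and the discussion in \secref{sss:maps Omega}, the map \eqref{e:GM to Omega}
$$(\jmath_\lambda)_{!*}\circ (\jmath_\lambda)^*(\Omega_q^{\on{DK}})\to \Omega_q^{\on{DK}}|_{\Conf^\lambda}$$
is always injective, and its target already injects into $H^0\left((\jmath_\lambda)_*\circ (\jmath_\lambda)^*(\Omega_q^{\on{DK}})\right)$. Since the source of \eqref{e:GM to Omega} also maps to that same $H^0$ (as the image of the $!$-extension under the canonical map), the only thing to check is surjectivity of \eqref{e:GM to Omega}, i.e.\ that $\Omega_q^{\on{DK}}|_{\Conf^\lambda}$ coincides with the intermediate extension inside $H^0\left((\jmath_\lambda)_*\circ (\jmath_\lambda)^*(\Omega_q^{\on{DK}})\right)$.

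First I would set $\CF:=\Omega_q^{\on{DK}}|_{\Conf^\lambda}$ and $\CF^\circ:=(\jmath_\lambda)^*\CF$, a perverse sheaf on $\Conf^\lambda-\Delta_\lambda(X)$. The intermediate extension $(\jmath_\lambda)_{!*}(\CF^\circ)$ is characterized as the unique perverse subquotient of $(\jmath_\lambda)_*(\CF^\circ)$ with no subobjects or quotients supported on $\Delta_\lambda(X)$; concretely, it is the image of $(\jmath_\lambda)_!(\CF^\circ)\to (\jmath_\lambda)_*(\CF^\circ)$ in the perverse heart. Since $\CF$ has no subobject supported on $\Delta_\lambda(X)$ (this is exactly the injectivity built into \eqref{e:Omega to open H0}, equivalently $(\Delta_\lambda)^!\CF$ in degrees $\geq 1$), it suffices to show $\CF$ has no \emph{quotient} supported on $\Delta_\lambda(X)$ either. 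But a quotient of $\CF$ supported on $\Delta_\lambda(X)$ is precisely controlled by $H^0\left((\Delta_\lambda)^*\CF\right)$: the maximal such quotient is $(\Delta_\lambda)_*\,{}^pH^0\bigl((\Delta_\lambda)^*\CF\bigr)$ (using that $\Delta_\lambda$ is a closed embedding and $\CF$ is perverse, so $(\Delta_\lambda)^*\CF$ lives in degrees $\leq 0$). By \thmref{t:prop DK char 0 prim}, $H^0\left((\Delta_\lambda)^*(\Omega_q^{\on{DK}})\right)=0$ for $\lambda=w(\rho)-\rho$ with $\ell(w)=2$, so this quotient vanishes.

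Combining the two facts — no sub and no quotient of $\CF$ supported on the diagonal — forces the injective map \eqref{e:GM to Omega} to be an isomorphism: its image is a perverse subsheaf of $\CF$ agreeing with $\CF$ away from $\Delta_\lambda(X)$, hence the cokernel $\CQ$ is supported on $\Delta_\lambda(X)$; but $\CQ$ is a quotient of $\CF$ (since the cokernel of a subobject inclusion of perverse sheaves is perverse), and we have just shown $\CF$ has no such quotient, so $\CQ=0$.

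The only genuinely non-formal input is \thmref{t:prop DK char 0 prim} itself, which is cited as proved in \secref{ss:proof of prop}; everything else here is the standard ${}^pH^0$/${}^pH^{\leq 0}$ bookkeeping for closed embeddings together with the injectivity already recorded in \secref{sss:maps Omega}. So I do not anticipate an obstacle — the \emph{content} is entirely in the theorem, and the corollary is its translation into the language of the map \eqref{e:GM to Omega}. (For $\Omega_q^{\on{Lus}}$ one obtains the Verdier-dual statement about $(\Delta_\lambda)^!$ and the $*$-extension by applying $\BD^{\on{Verdier}}$ throughout.)
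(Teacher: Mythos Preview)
Your proposal is correct and takes essentially the same approach as the paper: the paper presents \corref{c:prop DK char 0 prim} purely as a rephrasing of \thmref{t:prop DK char 0 prim} without writing out any argument, and what you have done is spell out the standard perverse-sheaf bookkeeping (no subobject on the diagonal from \propref{p:DK injects}, no quotient on the diagonal from $H^0((\Delta_\lambda)^*)=0$) that makes this rephrasing valid.
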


%


\ssec{Inductive construction of $\Omega^{\on{DK}}_q$}

\sssec{}  \label{sss:dual Cox}

From now on, until the end of this section we will impose the following additional assumption on $q$: 

\medskip

\noindent(*) \hskip2cm {\it For every positive coroot $\alpha$ we have the inequality $\on{ord}(q(\alpha))\geq \langle \rho,\check\alpha \rangle=|\check\alpha|$.}

\medskip

Note that assumption (*) reads as ``the order of $q(\alpha)$ is large enough". We can rewrite it also as follows: for every simple factor
$$\on{ord}(q(\alpha_{0,s}))\geq |(\alpha_{0,s})^\vee|=:h-1 \text{ and } \on{ord}(q(\alpha_{0,l}))\geq |(\alpha_{0,l})^\vee|=:h^\vee-1,$$
where $\alpha_{0,s}$ and $\alpha_{0,l}$ ate the longest short and long coroots; $h$ is the Coxeter number of $G$, and 
$h^\vee$ is dual Coxeter 
number of the Langlands dual $\cG$ of $G$

\begin{rem} \label{r:Geo}

In what follows we will assume that the result of \cite[Theorem 6.4.1]{Geo}, computing the cohomology of
the De Concini-Kac quantum group, holds under these assumptions, see Equation \eqref{e:Geo}. 

\medskip

In {\it loc.cit.}, this was established under the assumption that $\on{ord}(q(\alpha_{0,s}))$ is odd (and is not divisible by 3 if $G$ 
contains a factor isomorphic to $G_2$). In a recent communication, D.~Nakano communicated to the author a proof in the case
when $\on{ord}(q(\alpha_{0,s}))$ is even, under a slightly stronger assumption on $\on{ord}(q(\alpha_{0,s}))$.

\medskip

However, based on the Kazhdan-Lusztig equivalence between quantum groups and the affine algebra, and based on the 
range of validity of the Kashiwara-Tanisaki localization theorem at the negative level, we believe that (*) is sufficient for the validity of
\eqref{e:Geo}. 

\end{rem}

\sssec{}

We claim: 

\begin{thm} \label{t:prop DK char 0 sec}  \hfill

\smallskip

\noindent{\em(a)}
Let $\lambda$ be \emph{not} of the form $w(\rho)-\rho$ with $\ell(w)=2$.
Then the object $(\Delta_\lambda)^!(\Omega_q^{\on{DK}})$ lives in (perverse) cohomological
degrees $\geq 2$.

\smallskip

\noindent{\em(b)}
Let $\lambda$ be of the form $w(\rho)-\rho$ with $\ell(w)=2$. Then $H^1((\Delta_\lambda)^!(\Omega_q^{\on{DK}}))$
is isomorphic to $\sfe_{X}[1]$. 

\end{thm}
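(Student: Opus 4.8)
──

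The plan is to reduce both parts of \thmref{t:prop DK char 0 sec} to the cohomology computation of the De Concini--Kac quantum group via the dictionary between Hopf algebras in $\Rep_q(\cT)$ and factorization algebras on $\Conf$ (the construction $H\rightsquigarrow \Omega_H$ recalled in \secref{s:quantum}). Concretely, by the Lefschetz principle and the identification $\Omega^{\on{DK}}_q\simeq \Omega^{\on{DK}}_{q,\on{Quant}}=\Omega_{U_q^{\on{DK}}(\cN)}$ established in \thmref{t:quant and abs} (which we are assuming available, as it is stated earlier and its proof is ``not difficult''), the stalk and costalk of $\Omega^{\on{DK}}_q$ along the main diagonal $\Delta_\lambda$ are governed by the $\lambda$-graded components of the bar/cobar complex of $U_q^{\on{DK}}(\cN)$. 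More precisely, the standard yoga of \cite{BFS} translates $(\Delta_\lambda)^!(\Omega_q^{\on{DK}})$ (up to a shift and a twist by $\omega_X^{q(\lambda)+b(\lambda,\rho)}$, which by \lemref{l:w rho} is trivial exactly when $\lambda=w(\rho)-\rho$) into the $\lambda$-component of $\on{Ext}^\bullet_{U_q^{\on{DK}}(\cN)}(\sfe,\sfe)$, i.e.\ into $H^\bullet$ of the De Concini--Kac quantum group with trivial coefficients.

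First I would make this translation precise: set up the factorization-homology interpretation so that $H^k\big((\Delta_\lambda)^!(\Omega_q^{\on{DK}})\big)$ is identified with (a shift of) $H^{k+|\lambda|-1}\big(U_q^{\on{DK}}(\cN)\big)_\lambda$, the weight-$\lambda$ component of quantum group cohomology, where the shift bookkeeping comes from $\dim \oConf^\lambda=|\lambda|$ and the normalization $\oOmega|_{\Conf^{-\alpha_i}}=\sfe_X[1]$. Here I would invoke the computation of \cite[Theorem 6.4.1]{Geo} in the form \eqref{e:Geo} (valid under assumption (*), as discussed in Remark~\ref{r:Geo}): $H^\bullet(U_q^{\on{DK}}(\cN))$ is an exterior algebra $\Lambda^\bullet(\cn^\vee)$ on the dual nilradical placed in cohomological degree $1$, with the weight grading being the root grading. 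Part (a) then becomes the statement that $H^{\leq |\lambda|}(U_q^{\on{DK}}(\cN))_\lambda=0$ unless $\lambda$ is a sum of $\leq 2$ distinct... more carefully: $\Lambda^m(\cn^\vee)$ lives in cohomological degree $m$ and its weight-$\lambda$ part is nonzero only for $\lambda$ a sum of $m$ distinct positive roots; the condition that $(\Delta_\lambda)^!$ start in degree $\geq 2$ translates to: the lowest cohomological degree $m$ with $\Lambda^m(\cn^\vee)_\lambda\neq 0$ satisfies $m\geq |\lambda|+ (\text{something})$, and one checks combinatorially that $m < |\lambda|+1$ forces $\lambda$ to be a sum of two distinct positive roots summing to length $|\lambda|$, i.e.\ exactly $\lambda=w(\rho)-\rho$ with $\ell(w)=2$ by the standard bijection between length-$2$ elements of $W$ and such pairs. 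For part (b), when $\lambda=w(\rho)-\rho$ with $\ell(w)=2$, the relevant lowest piece is $1$-dimensional (a single pair of positive roots, or rather $\Lambda^2$ applied appropriately contributes exactly one line), yielding $H^1\big((\Delta_\lambda)^!(\Omega_q^{\on{DK}})\big)\simeq \sfe_X[1]$ after accounting for the $\omega_X$-twist being trivial.

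A cleaner route to part (b) that I would pursue in parallel: combine part (a) with \thmref{t:prop DK char 0 prim}/\corref{c:prop DK char 0 prim}. By the corollary, for $\lambda=w(\rho)-\rho$ with $\ell(w)=2$ the map $(\jmath_\lambda)_{!*}\circ(\jmath_\lambda)^*(\Omega_q^{\on{DK}})\to \Omega_q^{\on{DK}}|_{\Conf^\lambda}$ is an isomorphism, so $\Omega_q^{\on{DK}}|_{\Conf^\lambda}$ is the IC-extension of its own restriction to $\Conf^\lambda-\Delta_\lambda(X)$. Then $(\Delta_\lambda)^!$ of an IC-sheaf is computed by the costalk formula, and by induction/factorization the restriction $(\jmath_\lambda)^*(\Omega_q^{\on{DK}})$ near the diagonal is controlled: its only contribution near $\Delta_\lambda(X)$ comes from configurations $\alpha\cdot x + \beta\cdot x'$ with $\alpha+\beta=\lambda$ and $\alpha,\beta$ positive roots (using part (a) applied to the smaller-degree pieces, which have already been treated inductively), and there are exactly two such ordered decompositions giving one $\sfe_X[1]$ after symmetrization. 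Running the local cohomology (attaching-cells / Cousin) spectral sequence for the stratification of $\Conf^\lambda$ by diagonals, the first nonvanishing costalk cohomology is $H^1$ and equals $\sfe_X[1]$.

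The main obstacle I anticipate is \emph{not} the combinatorics of roots (the identification of length-$2$ Weyl elements with pairs of positive roots is classical and the length bookkeeping is routine) but rather making airtight the translation between the \emph{costalk} $(\Delta_\lambda)^!(\Omega^{\on{DK}}_q)$ and \emph{quantum group cohomology} with the correct cohomological shifts, signs, and gerbe-twists — in particular verifying that the twist by $\omega_X^{q(\lambda)+b(\lambda,\rho)}$ really is canonically trivial precisely on the relevant $\lambda$ (this is \lemref{l:w rho}, but one must check the triviality is compatible with the factorization structure so that the ``$\sfe_X[1]$'' in (b) is canonical and not just abstractly isomorphic). A secondary subtlety is that \eqref{e:Geo} is only known under a hypothesis slightly stronger than (*); I would state part (b) as contingent on \eqref{e:Geo} exactly as flagged in Remark~\ref{r:Geo}, and note that the argument is otherwise insensitive to which admissible form of the quantum-group cohomology computation one feeds in.
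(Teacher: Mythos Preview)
Your overall strategy matches the paper's exactly: reduce to $X=\BA^1$, invoke \thmref{t:quant and abs} to replace $\Omega^{\on{DK}}_q$ by $\Omega^{\on{DK}}_{q,\on{Quant}}$, use translation invariance to pass from $(\Delta_\lambda)^!$ to $(\iota_\lambda)^!$, apply the identity $(\iota_\lambda)^!(\Omega_H)\simeq \on{C}^\cdot(H)^\lambda$ from \eqref{e:!-fibers of Omega}, and then feed in \cite[Theorem 6.4.1]{Geo}. That is precisely what \secref{ss:proof of prop} does.

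There is, however, a genuine error in how you state the Georgia result. You write that $H^\bullet(U_q^{\on{DK}}(\cN))$ is ``an exterior algebra $\Lambda^\bullet(\cn^\vee)$'' and then analyze which $\lambda$ occur as sums of $m$ distinct positive roots. That is the description of the Chevalley--Eilenberg \emph{complex}, not its cohomology. What \eqref{e:Geo} actually says is the Kostant-type formula
\[
\on{C}^\cdot(U^{\on{DK}}_q(\cN))\simeq \bigoplus_{w\in W}\, \sfe^{w(\rho)-\rho}[-\ell(w)],
\]
which is vastly smaller. Once you have this correct form, there is no combinatorics to do at all: the $\lambda$-component of $(\iota_\lambda)^!(\Omega^{\on{DK}}_{q,\on{Quant}})$ is either zero (if $\lambda$ is not of the form $w(\rho)-\rho$) or $\sfe[-\ell(w)]$ (if it is), and parts (a) and (b) of the theorem drop out immediately by reading off the degree. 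Your ``cleaner route to part (b)'' via IC-extension and Cousin spectral sequences is therefore unnecessary.

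A second, smaller point: the shift $H^{k+|\lambda|-1}$ you introduce is spurious. Formula \eqref{e:!-fibers of Omega} gives $(\iota_\lambda)^!(\Omega_H)\simeq \on{C}^\cdot(H)^\lambda$ with no $|\lambda|$-dependent shift; the normalization is already built into the functor $H\mapsto \Omega_H$. Likewise, the gerbe-twist worry you flag is a non-issue once one is on $\BA^1$ with a chosen coordinate, since that choice trivializes $\omega_X$ and hence all the gerbes $\CG^\lambda_q$ via \eqref{e:gerbe on lambda}.
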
 

This theorem will be proved in \secref{ss:proof of prop}.

\begin{rem}
It is likely that for the validity of \thmref{t:prop DK char 0 sec}, assumption (*) is an overkill. What we actually need is that the map \eqref{e:mod Serre}
be an isomorphism, see Remark \ref{r:when KD is KD}.
\end{rem}  

\sssec{}

Let us rephrase \thmref{t:prop DK char 0 sec} in terms of the maps from \secref{sss:maps Omega}:

\begin{cor} \label{c:prop DK char 0 sec}  \hfill

\smallskip

\noindent{\em(a)} Let $\lambda$ be \emph{not} of the form $w(\rho)-\rho$ with $\ell(w)=2$. Then 
the map \eqref{e:Omega to open H0} is an isomorphism. 

\smallskip

\noindent{\em(b)} 
Let $\lambda$ be of the form $w(\rho)-\rho$ with $\ell(w)=2$. Then 
the cone of the map \eqref{e:Omega to open} is isomorphic to $(\Delta_\lambda)_!(\sfe_{X}[1])$;
in particular, the object $(\jmath_\lambda)_*\circ (\jmath_\lambda)^*(\Omega_q^{\on{DK}})$ is perverse. 

\end{cor}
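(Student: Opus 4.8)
The plan is to read the corollary off \thmref{t:prop DK char 0 sec} by means of the recollement triangle attached to the stratification of $\Conf^\lambda$ into the open locus $\Conf^\lambda-\Delta_\lambda(X)$ and the closed main diagonal $\Delta_\lambda(X)\simeq X$. Namely, applying the standard triangle $(\Delta_\lambda)_*(\Delta_\lambda)^!(-)\to(-)\to(\jmath_\lambda)_*(\jmath_\lambda)^*(-)\to$ to $\Omega_q^{\on{DK}}|_{\Conf^\lambda}$, one identifies the cone of the map \eqref{e:Omega to open} with $(\Delta_\lambda)_*(\Delta_\lambda)^!(\Omega_q^{\on{DK}})[1]$ (recall that $(\Delta_\lambda)_*=(\Delta_\lambda)_!$ since $\Delta_\lambda$ is a closed embedding). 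As $(\Delta_\lambda)_*$ is t-exact for the perverse t-structure, both parts of the corollary are governed entirely by the perverse amplitude and the perverse cohomology of $(\Delta_\lambda)^!(\Omega_q^{\on{DK}})$, which is exactly what \thmref{t:prop DK char 0 sec} supplies.

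For part (a), \thmref{t:prop DK char 0 sec}(a) places $(\Delta_\lambda)^!(\Omega_q^{\on{DK}})$ — hence also $(\Delta_\lambda)_*(\Delta_\lambda)^!(\Omega_q^{\on{DK}})$ — in perverse degrees $\geq 2$. Then, in the long exact sequence of perverse cohomology for the recollement triangle, both $H^i$ and $H^{i+1}$ of $(\Delta_\lambda)_*(\Delta_\lambda)^!(\Omega_q^{\on{DK}})$ vanish whenever $i\leq 0$, so $H^i(\Omega_q^{\on{DK}}|_{\Conf^\lambda})\to H^i((\jmath_\lambda)_*(\jmath_\lambda)^*(\Omega_q^{\on{DK}}))$ is an isomorphism for all $i\leq 0$. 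Evaluating at $i=0$ and using that $\Omega_q^{\on{DK}}|_{\Conf^\lambda}$ is perverse (hence equal to its own $H^0$) yields precisely that \eqref{e:Omega to open H0} is an isomorphism.

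For part (b), the injectivity property recalled in \secref{sss:maps Omega} already places $(\Delta_\lambda)^!(\Omega_q^{\on{DK}})$ in perverse degrees $\geq 1$, while \thmref{t:prop DK char 0 sec}(b) pins it down completely: its only nonzero perverse cohomology, in degree $1$, is $\sfe_X[1]$, so $(\Delta_\lambda)^!(\Omega_q^{\on{DK}})\simeq\sfe_X$. Therefore the cone of \eqref{e:Omega to open} is $(\Delta_\lambda)_*(\sfe_X[1])=(\Delta_\lambda)_!(\sfe_X[1])$, which is perverse because $\sfe_X[1]$ is perverse on $X$ and $(\Delta_\lambda)_*$ is t-exact. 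Finally, the distinguished triangle $\Omega_q^{\on{DK}}|_{\Conf^\lambda}\to(\jmath_\lambda)_*(\jmath_\lambda)^*(\Omega_q^{\on{DK}})\to(\Delta_\lambda)_!(\sfe_X[1])\to$ exhibits $(\jmath_\lambda)_*(\jmath_\lambda)^*(\Omega_q^{\on{DK}})$ as an extension of two perverse objects, hence perverse.

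The deduction is formal, so I do not expect a genuine obstacle; the content sits entirely in \thmref{t:prop DK char 0 sec}. The one point deserving care is in part (b): one really uses that $(\Delta_\lambda)^!(\Omega_q^{\on{DK}})$ has no perverse cohomology in degrees $\geq 2$, which the lower bound from \secref{sss:maps Omega} alone does not provide, so \thmref{t:prop DK char 0 sec}(b) must be understood as determining this costalk on the nose. Alternatively, one may invoke \corref{c:prop DK char 0 prim} to write $\Omega_q^{\on{DK}}|_{\Conf^\lambda}=(\jmath_\lambda)_{!*}(\jmath_\lambda)^*(\Omega_q^{\on{DK}})$ and combine the standard upper bound on the costalk of an $\IC$-extension with \thmref{t:prop DK char 0 sec}(b) to reach the same conclusion.
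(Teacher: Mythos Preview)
Your deduction via the recollement triangle is exactly the intended ``rephrasing''; the paper gives no separate proof, so there is nothing more to compare.

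You are right to flag the gap in part~(b): \thmref{t:prop DK char 0 sec}(b) as stated only computes $H^1$ of $(\Delta_\lambda)^!(\Omega_q^{\on{DK}})$, whereas you need this costalk to be concentrated in perverse degree~$1$. Your first resolution is the correct one: the proof in \secref{ss:proof of prop} actually establishes $(\iota_\lambda)^!(\Omega_q^{\on{DK}})\simeq \sfe[-\ell(w)]$ on the nose (property~(a) there), which via translation invariance pins down $(\Delta_\lambda)^!(\Omega_q^{\on{DK}})$ completely. So \thmref{t:prop DK char 0 sec}(b) should indeed be read as recording the full costalk.

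Your second proposed fix, however, does not work. There is no general \emph{upper} bound on $i^!$ of an intermediate extension: for $j:U\hookrightarrow Y$ open with closed complement $i:Z\hookrightarrow Y$, the defining conditions on $j_{!*}$ are that $i^*(j_{!*}\CF)$ lives in perverse degrees $\le -1$ and $i^!(j_{!*}\CF)$ in degrees $\ge 1$; nothing bounds $i^!$ from above. So \corref{c:prop DK char 0 prim} alone, combined with \thmref{t:prop DK char 0 sec}(b), does not suffice---you really need the full computation from the proof.
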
 

\sssec{}  \label{sss:Omega inductive}
Note that, given the factorization property of $\Omega_q^{\on{DK}}$, 
Corollaries \ref{c:prop DK char 0 prim} and \ref{c:prop DK char 0 sec}(a) supply
a recipe of how to construct $\Omega_q^{\on{DK}}$ inductively on $|\lambda|$: 

\medskip

The base of induction is when $\lambda$ is a negative simple coroot, in which case $\Omega_q^{\on{DK}}|_{X^\lambda}=\sfe_X[1]$.
Let now $|\lambda|>1$. By induction and factorization we can assume that we have already constructed 
$\Omega_q^{\on{DK}}|_{X^\lambda-\Delta_\lambda(X)}$. To recover $\Omega_q^{\on{DK}}|_{X^\lambda}$ we proceed as follows: 

\medskip

\begin{itemize}

\item If $\lambda=w(\rho)-\rho$ with $\ell(w)=2$, apply $(\jmath_\lambda)_{!*}(-)$;

\item If $\lambda$ is not of this form, apply $H^0((\jmath_\lambda)_*(-))$. 

\end{itemize} 

\ssec{Further properties of $\Omega_q^{\on{DK}}$ in characteristic $0$}  \label{ss:descr DK bis}

We continue to assume that the inequality (*) holds\footnote{Unlike the case of \thmref{t:prop DK char 0 sec},
it is likely that for the validity of \thmref{t:prop DK char 0 non-deg} below, the full strength of (*) is needed.}. 

\sssec{}

We claim: 

\begin{thm}  \label{t:prop DK char 0 non-deg} \hfill

\smallskip

\noindent{\em(a)} If $\lambda$ is \emph{not} of the form $w(\rho)-\rho$ for $w\in W$, then 
$(\Delta_\lambda)^!(\Omega_q^{\on{DK}})=0$.

%
%
 \smallskip

\noindent{\em(b)} If $\lambda$ is the form $w(\rho)-\rho$ for $w\in W$ with $\ell(w)\geq 3$, then
$(\Delta_\lambda)^!(\Omega_q^{\on{DK}})$ is isomorphic to $\sfe_X[-\ell(w)+1]$, in particular, it is 
concentrated in cohomological degree $\ell(w)-2$. 

\end{thm}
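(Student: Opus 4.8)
The plan is to reduce the statement to a computation with the De Concini--Kac quantum group, using the identification $\Omega_q^{\on{DK}}\simeq \Omega^{\on{DK}}_{q,\on{Quant}}$ announced in \secref{ss:strategy} (via \thmref{t:quant and abs}), together with the Hopf-algebra-to-factorization-algebra dictionary of \secref{s:quantum}. Recall that under this dictionary the $!$-costalk $(\Delta_\lambda)^!(\Omega_q^{\on{DK}})$ at the main diagonal $\Delta_\lambda\colon X\hookrightarrow X^\lambda$ computes (up to shift) the $\lambda$-graded component of the \emph{cohomology} $H^\bullet\big(U_q^{\on{DK}}(\cN)\big)$ of the De Concini--Kac quantum group, i.e. the $\Ext$-algebra $\Ext^\bullet_{U_q^{\on{DK}}(\cN)}(\sfe,\sfe)$, equipped with its weight grading. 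This is the content of \cite{Geo}, whose Theorem 6.4.1 we are assuming to hold under hypothesis (*) (see Remark \ref{r:Geo} and Equation \eqref{e:Geo}). So the whole theorem becomes a translation of Ginzburg--Kumar/Geoffriau-type cohomology statement: the cohomology of $U_q^{\on{DK}}(\cN)$ is an exterior algebra on generators indexed by the positive coroots (equivalently, by the longest element factored into reflections), each generator of weight $-\alpha$ and cohomological degree $1$.

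Concretely, first I would recall from \secref{s:quantum} the precise form of the functor $H\rightsquigarrow \Omega_H$ and the fact that $(\Delta_\lambda)^!$ of $\Omega_H$ reads off, in degree $i$ and colour $\lambda$, the space $H^{i+|\lambda|}\!\big(\mathrm{Cobar}(H)\big)_\lambda$ --- i.e. the Hochschild/cobar cohomology of the Hopf algebra $H = U_q^{\on{DK}}(\cN)$ in internal weight $\lambda$. (The shift $|\lambda|=-\langle\lambda,\check\rho\rangle$ accounts for the normalization making $\oOmega_q$ perverse.) Then I would invoke \eqref{e:Geo}: $H^\bullet(U_q^{\on{DK}}(\cN))\cong \Lambda^\bullet(V)$ where $V=\bigoplus_{\alpha>0}\sfe\cdot\xi_\alpha$, $\deg\xi_\alpha=1$, $\mathrm{wt}(\xi_\alpha)=-\alpha$. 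Now a monomial $\xi_{\alpha_1}\cdots\xi_{\alpha_k}$ (distinct positive coroots) has cohomological degree $k$ and weight $-(\alpha_1+\cdots+\alpha_k)$. The key combinatorial input is the classical fact that a sum of $k$ \emph{distinct} positive coroots equals $\rho - w(\rho)$ for some $w\in W$ with $\ell(w)=k$ \emph{if and only if} that set of coroots is an inversion set (equivalently, $\{\alpha_1,\dots,\alpha_k\}$ is "biconvex"); moreover when $\lambda = w(\rho)-\rho$ with $\ell(w)=\ell$ there is a \emph{unique} such subset, namely the inversion set of $w^{-1}$, so exactly one exterior monomial contributes. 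Hence: if $\lambda\notin\{w(\rho)-\rho : w\in W\}$ then no monomial of $\Lambda^\bullet V$ has weight $\lambda$, giving part (a); and if $\lambda=w(\rho)-\rho$ with $\ell(w)=\ell\ge 3$, exactly one monomial contributes, in cohomological degree $\ell$, so $(\Delta_\lambda)^!(\Omega_q^{\on{DK}})$ is $\sfe_X$ placed in (perverse) cohomological degree $\ell-2 = \ell(w)-2$ — i.e. $\sfe_X[-\ell(w)+1]$ after accounting for the $[\dim X]=[1]$ in the perversity convention, which matches the cases $\ell=1$ (open locus: $\Omega_q^{\on{DK}}|_{X^{-\alpha_i}}=\sfe_X[1]$) and $\ell=2$ (\thmref{t:prop DK char 0 sec}(b)) already established.

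I would then record that this argument is uniform: the same exterior-algebra description and the same weight-counting handle every $\ell$, so parts (a) and (b) of \thmref{t:prop DK char 0 non-deg} are simply the $\ell=0$-vs-nonexistent and $\ell\ge 3$ ranges of one statement (with $\ell=1,2$ being the earlier theorems). Finally, one checks compatibility of the grading/shift conventions between the $\Ext$-algebra side and the $(\Delta_\lambda)^!$ side on the base cases $\ell=1,2$, where both sides are already known, which pins down the normalization once and for all. The main obstacle is entirely on the quantum-group side: establishing (or legitimately citing) Equation \eqref{e:Geo} --- the statement that $H^\bullet(U_q^{\on{DK}}(\cN))$ is the exterior algebra on the $|\Phi^+|$ generators concentrated in degree $1$ --- under hypothesis (*) rather than the more restrictive parity/torsion conditions of \cite{Geo}; this is precisely the point flagged in Remark \ref{r:Geo}, and everything else (the factorization-algebra translation, the inversion-set combinatorics) is formal given it.
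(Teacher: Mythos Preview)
Your overall strategy is exactly the paper's: reduce via \thmref{t:quant and abs} to $\Omega^{\on{DK}}_{q,\Quant}$, use translation equivariance to pass from $(\Delta_\lambda)^!$ to $(\iota_\lambda)^!$, and then invoke formula \eqref{e:!-fibers of Omega} together with the quantum Kostant theorem \eqref{e:Geo} from \cite{Geo}. However, you have misstated what \eqref{e:Geo} actually says, and this introduces a genuine error into your argument.

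Equation \eqref{e:Geo} is \emph{not} the statement that $\on{C}^\cdot(U_q^{\on{DK}}(\cN))$ is an exterior algebra $\Lambda^\bullet V$ on generators indexed by positive coroots. It is the statement
\[
\on{C}^\cdot(U^{\on{DK}}_q(\cN))\simeq \underset{w\in W}\bigoplus\, \sfe^{w(\rho)-\rho}[-\ell(w)],
\]
i.e.\ a direct sum of lines indexed by $W$, already labelled by weight $w(\rho)-\rho$ and degree $\ell(w)$. These two descriptions disagree even as graded vector spaces: for $G$ of type $A_2$ the exterior algebra has dimension $2^3=8$ while $|W|=6$. (Classically this is the difference between the Chevalley--Eilenberg cochain complex $\Lambda^\bullet(\fn^*)$ and its cohomology $H^\bullet(\fn)$; the differential is nontrivial because $\fn$ is nonabelian.) Consequently your ``key combinatorial input'' about sums of distinct positive coroots being of the form $\rho-w(\rho)$ if and only if they form an inversion set is based on a false premise, and in any case is unnecessary: the correct form of \eqref{e:Geo} hands you the weight-and-degree decomposition indexed by $W$ directly, with no combinatorics required. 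Once you replace your description of \eqref{e:Geo} by the actual one, parts (a) and (b) of the theorem are immediate, and your proof coincides with the paper's.
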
 

This theorem will be proved in \secref{ss:proof of prop}. 

\sssec{}

Let us rephrase \thmref{t:prop DK char 0 non-deg}(a) in terms of properties of the map \eqref{e:Omega to open}: 

\begin{cor}  \label{c:prop DK char 0 non-deg}
If $\lambda$ is \emph{not} of the form $w(\rho)-\rho$ for $w\in W$, then the map
\eqref{e:Omega to open} is an isomorphism; in particular, $(\jmath_\lambda)_*\circ (\jmath_\lambda)^*(\Omega_q^{\on{DK}})$
is perverse.
\end{cor}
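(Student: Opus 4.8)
The statement is an immediate formal consequence of \thmref{t:prop DK char 0 non-deg}(a), and the only thing to do is translate the vanishing of $(\Delta_\lambda)^!(\Omega_q^{\on{DK}})$ into the isomorphism property of the adjunction map \eqref{e:Omega to open}. First I would recall the recollement triangle attached to the closed embedding $\Delta_\lambda : X\hookrightarrow \Conf^\lambda$ and the complementary open embedding $\jmath_\lambda$: for any $\CF\in\Shv_{\CG^\Conf_q}(\Conf^\lambda)$ there is a distinguished triangle
\begin{equation} \label{e:recoll cor non-deg}
(\Delta_\lambda)_*(\Delta_\lambda)^!(\CF)\to \CF\to (\jmath_\lambda)_*(\jmath_\lambda)^*(\CF)\overset{+1}\to .
\end{equation}
Applying this to $\CF=\Omega_q^{\on{DK}}|_{\Conf^\lambda}$ and invoking \thmref{t:prop DK char 0 non-deg}(a), which gives $(\Delta_\lambda)^!(\Omega_q^{\on{DK}})=0$ when $\lambda$ is not of the form $w(\rho)-\rho$, the first term of \eqref{e:recoll cor non-deg} vanishes, so the second arrow — which is precisely the map \eqref{e:Omega to open} — is an isomorphism.

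For the "in particular" clause, once \eqref{e:Omega to open} is an isomorphism we have $(\jmath_\lambda)_*(\jmath_\lambda)^*(\Omega_q^{\on{DK}})\simeq \Omega_q^{\on{DK}}|_{\Conf^\lambda}$, and the right-hand side is perverse because $\Omega_q^{\on{DK}}\in\on{Perv}_{\CG_q^\Conf}(\Conf)$ by construction (see \secref{ss:constr by deformation}); perversity is a local property, so it is inherited by the restriction to the connected component $\Conf^\lambda$. Hence $(\jmath_\lambda)_*\circ(\jmath_\lambda)^*(\Omega_q^{\on{DK}})$ is perverse, as claimed.

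There is essentially no obstacle here: the corollary contains no new content beyond \thmref{t:prop DK char 0 non-deg}(a), and the argument is the standard recollement bookkeeping. The one point worth a sentence is that all the functors $(\Delta_\lambda)^!$, $(\jmath_\lambda)_*$, $(\jmath_\lambda)^*$ and the triangle \eqref{e:recoll cor non-deg} are understood in the gerbe-twisted category $\Shv_{\CG_q^\Conf}(-)$; since $\Delta_\lambda$ and $\jmath_\lambda$ are maps over $\Conf^\lambda$ and the gerbe is pulled back along them, recollement works verbatim in the twisted setting, so no extra care is needed. The genuinely hard work is entirely contained in \thmref{t:prop DK char 0 non-deg} itself (to be proved in \secref{ss:proof of prop}), where the cohomology computation of the De Concini–Kac quantum group (Remark \ref{r:Geo}, Equation \eqref{e:Geo}) enters.
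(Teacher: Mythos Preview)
Your proposal is correct and is exactly the standard recollement argument that the paper has in mind; the paper itself gives no explicit proof, simply introducing the corollary with ``Let us rephrase \thmref{t:prop DK char 0 non-deg}(a) in terms of properties of the map \eqref{e:Omega to open}.'' Your write-up spells out that rephrasing precisely.
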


\sssec{}

Assume for a moment that $q$ is non-torsion-valued, so that by \thmref{t:DK vs small gen}, the map
$$\Omega^{\on{sml}}\to \Omega^{\on{DK}}$$
is an isomorphism, and by duality, the map
$$ \Omega^{\on{Lus}}\to \Omega^{\on{sml}}$$
is also an isomorphism.

\medskip

From \corref{c:prop DK char 0 non-deg} we obtain:

\begin{cor} \label{c:fibers sm non coroot}
Let $q$ be non-torsion-valued. Then for any $\lambda\in \Lambda^{\on{neg}}-0$ \emph{not} of the form $w(\rho)-\rho$, both restrictions
$\Delta_\lambda^!(\Omega_q^{\on{sml}})$ and $\Delta_\lambda^*(\Omega_q^{\on{sml}})$ vanish.
\end{cor}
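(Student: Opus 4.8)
The plan is to deduce \corref{c:fibers sm non coroot} directly from \corref{c:prop DK char 0 non-deg} together with the hypotheses that $q$ is non-torsion-valued and that we are in characteristic $0$. Recall that under the non-torsion hypothesis \thmref{t:DK vs small gen} identifies $\Omega_q^{\on{sml}}$ with $\Omega_q^{\on{DK}}$, and Verdier duality identifies $\Omega_q^{\on{sml}}$ with $\Omega_q^{\on{Lus}}$; since these are all the same object, it suffices to work with $\Omega_q^{\on{DK}}$.

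First I would fix $\lambda\in\Lambda^{\on{neg}}-0$ not of the form $w(\rho)-\rho$ and invoke \thmref{t:prop DK char 0 non-deg}(a), which gives $(\Delta_\lambda)^!(\Omega_q^{\on{DK}})=0$ outright. This is exactly the vanishing of $\Delta_\lambda^!(\Omega_q^{\on{sml}})$. For the $*$-restriction, I would use Verdier duality: since $\Omega_q^{\on{sml}}\simeq\Omega_q^{\on{Lus}}=\BD^{\on{Verdier}}(\Omega_{q^{-1}}^{\on{DK}})$, we have
$$\Delta_\lambda^*(\Omega_q^{\on{sml}})\simeq \Delta_\lambda^*\,\BD^{\on{Verdier}}(\Omega_{q^{-1}}^{\on{DK}})\simeq \BD^{\on{Verdier}}\left(\Delta_\lambda^!(\Omega_{q^{-1}}^{\on{DK}})\right),$$
using the standard commutation of $\BD^{\on{Verdier}}$ with $\Delta_\lambda^!$ versus $\Delta_\lambda^*$ (here $\Delta_\lambda$ is a closed embedding of the curve $X$, which is smooth, so Verdier duality on $X$ is again an anti-equivalence). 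Now $q^{-1}$ is non-torsion-valued precisely when $q$ is, and $\lambda$ is of the form $w(\rho)-\rho$ for the datum $q^{-1}$ iff it is for $q$ (this condition on $\lambda$ is independent of $q$). Hence \thmref{t:prop DK char 0 non-deg}(a) applies equally to $\Omega_{q^{-1}}^{\on{DK}}$, giving $\Delta_\lambda^!(\Omega_{q^{-1}}^{\on{DK}})=0$, and therefore $\Delta_\lambda^*(\Omega_q^{\on{sml}})=0$ as well.

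I do not anticipate a genuine obstacle here: this corollary is a formal consequence of \thmref{t:prop DK char 0 non-deg}(a) (or its restatement \corref{c:prop DK char 0 non-deg}), \thmref{t:DK vs small gen}, and the duality exchanging $\Omega_q^{\on{DK}}$ with $\Omega_{q^{-1}}^{\on{Lus}}$. The only points requiring a word of care are (i) checking that the non-torsion and the ``not of the form $w(\rho)-\rho$'' hypotheses transfer between $q$ and $q^{-1}$ — both are immediate, the former since the order of $\zeta^{-1}$ equals that of $\zeta$, the latter since the condition is purely about the coweight lattice — and (ii) recording that in characteristic $0$ all the cited theorems are available. Alternatively, one could bypass duality entirely for the $*$-restriction by combining \corref{c:prop DK char 0 non-deg} (which says $(\jmath_\lambda)_*(\jmath_\lambda)^*\Omega_q^{\on{DK}}$ is perverse and equals $\Omega_q^{\on{DK}}|_{\Conf^\lambda}$ via \eqref{e:Omega to open}) with the factorization/inductive description to see that $\Delta_\lambda^*$ also vanishes; but the duality argument is shorter and I would present that.
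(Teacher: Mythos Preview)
Your argument is correct and matches the paper's approach. The paper states the corollary immediately after recording that, under the non-torsion hypothesis, \thmref{t:DK vs small gen} gives $\Omega_q^{\on{sml}}\simeq\Omega_q^{\on{DK}}$ and duality gives $\Omega_q^{\on{Lus}}\simeq\Omega_q^{\on{sml}}$, and then simply says ``From \corref{c:prop DK char 0 non-deg} we obtain''; your write-up spells out exactly this, including the Verdier-duality step that handles the $*$-restriction via $\Omega_{q^{-1}}^{\on{DK}}$.
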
 

%

%

\begin{rem}
Note that in the case of non-torsion valued $q$, \thmref{t:DK vs small gen} implies that the map \eqref{e:GM to Omega} is an
 isomorphism for all $\lambda$. 

\medskip

In particular, in this case, for
$\lambda$ \emph{not} of the form $w(\rho)-\rho$ with $w\in W$, all the maps in
\begin{multline*} 
\Omega^{\on{sml}}|_{\Conf^\lambda} \simeq
\Omega^{\on{DK}}|_{\Conf^\lambda} \simeq (\jmath_\lambda)_{!*}\circ (\jmath_\lambda)^*(\Omega_q^{\on{DK}})\to \Omega_q^{\on{DK}}|_{\Conf^\lambda}\to \\
\to H^0\left((\jmath_\lambda)_*\circ (\jmath_\lambda)^*(\Omega_q^{\on{DK}})\right)\to 
(\jmath_\lambda)_*\circ (\jmath_\lambda)^*(\Omega_q^{\on{DK}})
\end{multline*} 
are isomorphisms. Indeed, the fact that the composite arrow
is an isomorphism is equivalent to the vanishing of $(\Delta_\lambda)^!(\Omega^{\on{sml}})$, which is the content of 
\corref{c:fibers sm non coroot}. 
\end{rem} 

\ssec{A sharper estimate}

In this subsection we will assume that  the inequality in (*) is sharp.

\sssec{}

We claim: 

\begin{thm} \label{t:prop DK char 0 very non-deg}
For $\lambda$ of the form $w(\rho)-\rho$ for $w\in W$ with $\ell(w)\geq 3$, the object 
$(\Delta_\lambda)^*(\Omega_q^{\on{DK}})$ is concentrated in cohomological degrees $\leq -1$. 
\end{thm}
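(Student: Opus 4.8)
The plan is to argue as in Theorems~\ref{t:prop DK char 0 sec}--\ref{t:prop DK char 0 non-deg}: transport the question to the quantum group $U_q^{\on{DK}}(\cN)$ and feed in the cohomology computation \eqref{e:Geo}, now using that the inequality in (*) is in fact an equality. Since the statement concerns constructible cohomology over a field of characteristic zero, by the Lefschetz principle we may assume $k=\BC$ and take $\Shv$ to be constructible sheaves, and then replace $\Omega^{\on{DK}}_q$ by $\Omega^{\on{DK}}_{q,\on{Quant}}$ by means of \thmref{t:quant and abs}, where $\Omega^{\on{DK}}_{q,\on{Quant}}$ is the factorization algebra produced from $H:=U_q^{\on{DK}}(\cN)$ by the construction of \secref{ss:Hopf to Fact}. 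By the factorization structure the assertion over $\Conf^\lambda=X^\lambda$ is local along the main diagonal, so (using that $(\Delta_\lambda)^*(\Omega^{\on{DK}}_q)$ is $X$-smooth) it is equivalent to the vanishing of the top cohomology of the $*$-fiber $\iota_\lambda^*(\Omega^{\on{DK}}_{q,\on{Quant}})$ at a point $x\in X$.

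It is convenient to first peel off the link of the main diagonal. Put $\mathcal G:=\Omega^{\on{DK}}_q|_{X^\lambda}$, $i:=\Delta_\lambda$ and $j:=\jmath_\lambda$. The standard triangle $i^!\mathcal G\to i^*\mathcal G\to i^*j_*j^*\mathcal G$, combined with \thmref{t:prop DK char 0 non-deg}(b) --- which identifies $i^!\mathcal G=(\Delta_\lambda)^!(\Omega^{\on{DK}}_q)$ with $\sfe_X[-\ell(w)+1]$, placed in cohomological degree $\ell(w)-1\ge 2$ --- reduces the assertion $H^{\ge 0}(i^*\mathcal G)=0$ to the corresponding estimate on $i^*j_*j^*\mathcal G$, i.e.\ on the cohomology of $\Omega^{\on{DK}}_q$ over a punctured formal neighbourhood of $\Delta_\lambda(X)$. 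By factorization and the inductive description of \secref{sss:Omega inductive} --- for $\lambda$ of the present form the inductive step is $H^0((\jmath_\lambda)_*(-))$ --- this link cohomology is, under the dictionary of \secref{ss:Hopf to Fact}, computed by a bar-type complex of $H$ in weight $\lambda$, so the required vanishing becomes a (co)homological statement about $U_q^{\on{DK}}(\cN)$ in that weight. Finally one invokes \eqref{e:Geo}: for weights $\lambda=w(\rho)-\rho$ with $\ell(w)\ge 3$ the relevant (co)homology of $U_q^{\on{DK}}(\cN)$ vanishes in the degree in question precisely because, under the \emph{sharp} form of (*), the contribution of the Frobenius-center generators $E_\alpha^{\on{ord}(q(\alpha))}$ does not reach (or is cancelled in) that degree; if (*) is merely an inequality the analogous group need not vanish, so sharpness is genuinely used.

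The main obstacle is this last step. One must (a) make the cohomological normalization in the dictionary of \secref{ss:Hopf to Fact} precise enough to pin down exactly which (co)homology group of $U_q^{\on{DK}}(\cN)$ governs the top cohomology of $\iota_\lambda^*(\Omega^{\on{DK}}_{q,\on{Quant}})$, and (b) read off from the De~Concini--Kac computation \eqref{e:Geo} the precise dependence on the orders $\on{ord}(q(\alpha))$ that makes the sharp bounds $\on{ord}(q(\alpha_{0,s}))=h-1$ and $\on{ord}(q(\alpha_{0,l}))=h^\vee-1$ the threshold for the desired vanishing. The remaining ingredients --- the Lefschetz reduction, the identification $\Omega^{\on{DK}}_q\simeq\Omega^{\on{DK}}_{q,\on{Quant}}$, and the triangle argument localizing the problem to the link of the main diagonal --- are routine given what has already been established.
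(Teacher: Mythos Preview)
Your reduction to the Betti setting and to $\Omega^{\on{DK}}_{q,\on{Quant}}$ via \thmref{t:quant and abs} matches the paper, as does the passage to the fiber $(\iota_\lambda)^*$. But from there your route through the link triangle and \eqref{e:Geo} misidentifies the relevant quantum-group input---which is exactly the ``obstacle (a)'' you flag.

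The paper's argument is direct and does not use the triangle. Formula \eqref{e:*-fibers of Omega} already gives
\[
(\iota_\lambda)^*(\Omega_q^{\on{DK}}) \;\simeq\; \on{C}_\cdot\bigl((U_q^{\on{DK}}(\cN))^\vee\bigr)^{-\lambda} \;=\; \on{C}_\cdot\bigl(U^{\on{Lus}}_{q^{-1}}(\cN)\bigr)^{-\lambda},
\]
so the claim becomes: $H_1(U^{\on{Lus}}_{q^{-1}}(\cN))$ has no component of coweight $\rho-w(\rho)$. Since $H_1$ of an augmented algebra is the quotient of its augmentation ideal by its square, it is spanned by a generating set; for $U^{\on{Lus}}_{q^{-1}}(\cN)$ the generators have coweights $\alpha_i$ (Chevalley generators) and $\on{ord}(q(\alpha))\cdot\alpha$ for each positive coroot $\alpha$ (divided-power generators). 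The former are excluded since $\ell(w)\geq 3$. If $\rho-w(\rho)=\on{ord}(q(\alpha))\cdot\alpha$, evaluating the integer-valued form $q_\BZ(-)+b_\BZ(-,\rho)$ on both sides (it vanishes on $w(\rho)-\rho$ as in \lemref{l:w rho}) forces $\on{ord}(q(\alpha))=\langle\rho,\check\alpha\rangle$, contradicting the \emph{strict} inequality in (*).

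Two things go wrong in your outline. First, \eqref{e:Geo} computes $\on{C}^\cdot(U^{\on{DK}}_q(\cN))$, which by \eqref{e:!-fibers of Omega} controls $!$-fibers; it is the input for the statements about $(\Delta_\lambda)^!(\Omega_q^{\on{DK}})$, not for $(\Delta_\lambda)^*$. The $*$-fiber is governed by the \emph{dual} Hopf algebra $U^{\on{Lus}}_{q^{-1}}(\cN)$, and the only input needed is the list of its generators---no cohomology theorem at all. The ``Frobenius-center generators $E_\alpha^{\on{ord}(q(\alpha))}$'' you mention are exactly the divided-power generators of $U^{\on{Lus}}$, but they enter via $H_1(U^{\on{Lus}})$, not via $\on{C}^\cdot(U^{\on{DK}})$. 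Second, the link-triangle reduction is not clean: the long exact sequence gives
\[
0\to H^0(i^*\mathcal G)\to H^0(i^*j_*j^*\mathcal G)\to H^1(i^!\mathcal G)\to 0,
\]
and for $\ell(w)=3$ one has $H^1(i^!\mathcal G)\neq 0$ by \thmref{t:prop DK char 0 non-deg}(b), so $H^0(i^*j_*j^*\mathcal G)\neq 0$ and you cannot conclude from an estimate on it alone.
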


This theorem will be proved in  \secref{ss:proof of prop}. 

\sssec{}

Let us rephrase \thmref{t:prop DK char 0 very non-deg} in terms of properties of the map \eqref{e:GM to Omega}:

\begin{cor} \label{c:prop DK char 0 very non-deg}
For $\lambda$ of the form $w(\rho)-\rho$ for $w\in W$ with $\ell(w)\geq 3$, the map
\eqref{e:GM to Omega} is an isomorphism.
\end{cor}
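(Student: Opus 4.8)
The plan is to derive Corollary~\ref{c:prop DK char 0 very non-deg} as a purely formal consequence of Theorem~\ref{t:prop DK char 0 very non-deg}, in exactly the same way that Corollary~\ref{c:prop DK char 0 prim} was extracted from Theorem~\ref{t:prop DK char 0 prim}. Recall the setup from \secref{sss:maps Omega}: the map \eqref{e:GM to Omega} is the injective arrow
$$(\jmath_\lambda)_{!*}\circ (\jmath_\lambda)^*(\Omega_q^{\on{DK}})\hookrightarrow \Omega_q^{\on{DK}}|_{\Conf^\lambda},$$
whose existence rests on the injectivity of \eqref{e:Omega to open H0}, i.e. on the fact that $(\Delta_\lambda)^!(\Omega_q^{\on{DK}})$ lives in perverse degrees $\geq 1$ (which for $\lambda$ of the form $w(\rho)-\rho$ with $\ell(w)\geq 3$ is supplied by \thmref{t:prop DK char 0 non-deg}(b), giving concentration in degree $\ell(w)-2\geq 1$).

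\emph{First} I would recall the standard characterization: for a perverse sheaf $\CF$ on a smooth irreducible $X^\lambda$ containing the smooth locus $\jmath_\lambda$ of the complement of $\Delta_\lambda(X)$, the intermediate extension $(\jmath_\lambda)_{!*}\circ(\jmath_\lambda)^*\CF \to \CF$ is an isomorphism if and only if $\CF$ has neither sub- nor quotient-object supported on $\Delta_\lambda(X)$; equivalently, since $\Delta_\lambda$ is a closed embedding of a curve (so codimension the appropriate amount), this holds iff $(\Delta_\lambda)^*\CF$ lives in perverse degrees $\leq -1$ \emph{and} $(\Delta_\lambda)^!\CF$ lives in perverse degrees $\geq 1$. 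Applied to $\CF = \Omega_q^{\on{DK}}|_{\Conf^\lambda}$: the $!$-condition is \thmref{t:prop DK char 0 non-deg}(b) (concentration in degree $\ell(w)-2$, which is $\geq 1$), and the $*$-condition is precisely the content of \thmref{t:prop DK char 0 very non-deg}. Since \eqref{e:GM to Omega} is already known to be injective with cokernel supported on $\Delta_\lambda(X)$, it suffices to note that the $*$-restriction hypothesis kills any nonzero quotient supported there: if the cokernel $\CQ$ were nonzero it would be a nonzero perverse sheaf on $\Delta_\lambda(X)\simeq X$, hence $(\Delta_\lambda)^*\CF \to (\Delta_\lambda)^*\CQ$ would be surjective on $H^0$ in the perverse $t$-structure, contradicting $H^0((\Delta_\lambda)^*\CF)=0$ from \thmref{t:prop DK char 0 very non-deg}.

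\emph{Then} the argument concludes: \eqref{e:GM to Omega} is an injection of perverse sheaves with trivial cokernel, hence an isomorphism. I would phrase it concisely as: by \thmref{t:prop DK char 0 non-deg}(b) the map \eqref{e:Omega to open H0} is injective, so \eqref{e:GM to Omega} is defined and injective; its cokernel is supported on $\Delta_\lambda(X)$; but \thmref{t:prop DK char 0 very non-deg} forces $(\Delta_\lambda)^*(\Omega_q^{\on{DK}}|_{\Conf^\lambda})$ into perverse degrees $\leq -1$, which is incompatible with having a nonzero perverse quotient supported on $\Delta_\lambda(X)$; therefore the cokernel vanishes.

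**The main obstacle** is essentially presentational rather than mathematical — there is no real obstacle once \thmref{t:prop DK char 0 very non-deg} is granted, since the passage theorem-to-corollary is the familiar ``IC $=$ full extension when both $*$- and $!$-restrictions to the boundary stratum are suitably concentrated'' principle. The one point requiring a little care is making sure the numerology matches: one must check that $\ell(w)\geq 3$ indeed makes the $!$-bound $\ell(w)-2\geq 1$ strict enough, and that the boundary stratum $\Delta_\lambda(X)$ has the right codimension in $\Conf^\lambda$ for the intermediate-extension criterion to apply verbatim — but both of these are immediate from the definitions in \secref{sss:Delta lambda} and from \thmref{t:prop DK char 0 non-deg}. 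All the genuine work is hidden in \thmref{t:prop DK char 0 very non-deg} itself, whose proof is deferred to \secref{ss:proof of prop}.
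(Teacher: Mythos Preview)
Your proposal is correct and matches the paper's approach: the paper states the corollary as an immediate ``rephrasing'' of \thmref{t:prop DK char 0 very non-deg} without spelling out the argument, and what you have written is exactly the standard deduction (no quotient supported on the diagonal because $H^0((\Delta_\lambda)^*)=0$). One small over-citation: the injectivity of \eqref{e:GM to Omega} is already established unconditionally in \secref{sss:maps Omega} via \propref{p:DK injects}, so you need not invoke \thmref{t:prop DK char 0 non-deg}(b) for that step --- only \thmref{t:prop DK char 0 very non-deg} is doing new work here.
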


\begin{rem}
Thus, we obtain that if the inequality in (*) is sharp, for $\lambda$ of the form $w(\rho)-\rho$ for $w\in W$ with $\ell(w)\geq 3$,
both maps
$$(\jmath_\lambda)_{!*}\circ (\jmath_\lambda)^*(\Omega_q^{\on{DK}})\to \Omega_q^{\on{DK}}|_{\Conf^\lambda}\to
H^0\left((\jmath_\lambda)_*\circ (\jmath_\lambda)^*(\Omega_q^{\on{DK}})\right)$$
are isomorphisms. 
\end{rem}

\sssec{}

The above Theorems \ref{t:prop DK char 0 non-deg} and \ref{t:prop DK char 0 very non-deg} give an even more detailed
recipe for the inductive construction of $\Omega_q^{\on{DK}}$ (cf. \secref{sss:Omega inductive}). By induction and factorization 
we can assume that we have already constructed 
$\Omega_q^{\on{DK}}|_{X^\lambda-\Delta_\lambda(X)}$. To recover $\Omega_q^{\on{DK}}|_{X^\lambda}$ we proceed as follows: 

\medskip

\begin{itemize}

\item If $\lambda=w(\rho)-\rho$ with $\ell(w)=2$, apply $(\jmath_\lambda)_{!*}(-)$;

\item If $\lambda=w(\rho)-\rho$ with $\ell(w)\geq 3$, apply $H^0((\jmath_\lambda)_{*}(-))$,
which is the same as $(\jmath_\lambda)_{!*}(-)$;

\item If $\lambda$ is not of the form $w(\rho)-\rho$, apply $H^0((\jmath_\lambda)_{*}(-))$,
which is the same as $(\jmath_\lambda)_{*}(-)$ (if $q$ is non-torsion valued, this is also the same as 
$(\jmath_\lambda)_{!*}(-)$).

\end{itemize}

\section{Factorization algebras via quantum groups}  \label{s:quantum}

In this section we will work with constructible sheaves in the classical topology with coefficients
in a field $\sfe$ (assumed algebraically closed and of characteristic $0$). 

\medskip

We will describe a relationship between the (three versions of the) factorization algebra $\Omega_q$ 
and the corresponding versions of the quantum group. 

\ssec{Quantum Hopf algebras} \label{ss:Hopf}
  
\sssec{}

Let $\Vect^\Lambda$ denote the category of $\Lambda$-graded vector spaces,
which is the same as the category $\Rep(\cT)$, of algebraic representations of the torus $\cT$.

\medskip

For $\lambda\in \Lambda$ we denote by $\sfe^\lambda$ the corresponding object of $\Vect^\Lambda$:
this is a copy of $\sfe$ placed in coweight component $\lambda$. 

\medskip

For an on object $V\in \Vect^\Lambda$ and $\lambda\in \Lambda$, we will let $(V)^\lambda$ denote its
$\lambda$-coweight component, i.e.,
$$(V)^\lambda=\CHom_{\Vect^\Lambda}(\sfe^\lambda,V).$$

\sssec{}

Choose a bilinear form 
$$b':\Lambda\otimes \Lambda\to \sfe^\times$$
so that $q(\lambda)=b'(\lambda,\lambda)$.

\medskip

Such a form $b'$ defines a braiding on 
$$\Rep(\cT)\simeq \Vect^\Lambda,$$
viewed as a monoidal category. Explicitly, the braiding automorphism
$$\sfe^{\lambda+\mu}=\sfe^\lambda\otimes \sfe^\mu \overset{R}\to \sfe^\mu\otimes \sfe^\lambda=
\sfe^{\mu+\lambda}$$ 
is given by multiplication by $b'(\lambda,\mu)$. 

\medskip

Denote the resulting braided monoidal category by $\Rep_q(\cT)$. 

\begin{rem}
It follows from \cite[Sects. C.2.3]{GLys1} 
that $\Rep_q(\cT)$, viewed as a braided monoidal category, depends only on $q$ (and not the choice of $b'$), up to a 
\emph{canonical} equivalence. A choice of $b'$ serves to identify the underlying monoidal category with $\Rep(\cT)$.
\end{rem} 

\sssec{}  \label{sss:conditions on Hopf}

We will consider a certain type of Hopf algebras in $\Rep_q(\cT)$. Namely, we will impose the following conditions
on our Hopf algebra (denoted $H$): 

\begin{itemize} 

\item All weight components $H^\lambda$ lie in the heart of the t-structure and 
are finite-dimensional;

\item If the weight component $H^\lambda$ is non-zero, then $\lambda\in \Lambda^{\on{pos}}$;

\item The unit map $1:\sfe\to H^0$ is an isomorphism.

\end{itemize} 

\sssec{} \label{sss:duality Hopf}

We note that component-wise duality assigns to such a Hopf algebra $H$ its dual, to be denoted $H^\vee$, which
is naturally a Hopf algebra in $\Rep_{q^{-1}}(\cT)$;
$$(H^\vee)^\lambda=(H^\lambda)^\vee.$$

\ssec{The free, cofree and small versions of the positive part of the quantum group}  \label{ss:quantum groups}

Throughout this subsection we will be assuming that
$q$ is non-degenerate (see \secref{sss:root of unity} for what this means)\footnote{See, however, Remark \ref{r:small torsion bad}.}. 

\medskip

We will consider some particular Hopf algebras in $\Rep_q(\cT)$, attached to our root system. 

\sssec{}

The first one is the free algebra,
denoted $U^{\on{free}}_q(\cN)$. As an associative algebra, it is the free associative algebra on the 
object
$$\underset{i}\oplus\, \sfe^{\alpha_i}\in \Rep_q(\cT).$$

In other words, it is defined by the universal property that 
\begin{equation} \label{e:univ pr free}
\Hom_{\on{AssAlg}(\Rep_q(\cT))}(U^{\on{free}}_q(\cN),A)=\Hom_{\Rep_q(\cT)}(\underset{i}\oplus\, \sfe^{\alpha_i},A)\simeq \underset{i}\Pi\, (A)^{\alpha_i}.
\end{equation} 

In particular, we have the canonical maps
\begin{equation} \label{e:Chev gen}
e_i:\sfe^{\alpha_i}\to U^{\on{free}}_q(\cN),
\end{equation}
which we will refer to as the ``generators". 

\medskip

The Hopf algebra structure on $U^{\on{free}}_q(\cN)$ is determined by the requirement that the co-multiplication map
$$\on{comult}:U^{\on{free}}_q(\cN)\to U^{\on{free}}_q(\cN)\otimes U^{\on{free}}_q(\cN),$$
viewed as a map in $\on{AssAlg}(\Rep_q(\cT))$, corresponds under \eqref{e:univ pr free} to the maps
$$e_i\otimes 1+1\otimes e_i: \sfe^{\alpha_i}\to U^{\on{free}}_q(\cN)\otimes U^{\on{free}}_q(\cN),$$
where $1$ stands for the unit map $\sfe\to U^{\on{free}}_q(\cN)$.

\sssec{}

We set $U^{\on{cofree}}_q(\cN)$ to be dual (in the sense of \secref{sss:duality Hopf}) of the Hopf algebra 
$U^{\on{free}}_{q^{-1}}(\cN)$ in $\Rep_{q^{-1}}(\cT)$. 

\medskip

As a co-associative co-algebra it has the universal property that
\begin{equation} \label{e:univ pr cofree}
\Hom_{\on{CoAssCoAlg}(\Rep_q(\cT))}(A,U^{\on{cofree}}_q(\cN))=\Hom_{\Rep_q(\cT)}(A,\sfe^{\alpha_i})\simeq \underset{i}\Pi\, 
((A)^{\alpha_i})^\vee.
\end{equation} 

In particular, we have the canonical maps
\begin{equation} \label{e:cogen}
e_i^\vee: U^{\on{cofree}}_q(\cN)\to \sfe^{\alpha_i}.
\end{equation} 

\medskip

The Hopf algebra structure on $U^{\on{cofree}}_q(\cN)$ is determined by the requirement that the multiplication map
$$\on{mult}:U^{\on{cofree}}_q(\cN)\otimes U^{\on{cofree}}_q(\cN)\to U^{\on{cofree}}_q(\cN)$$
corresponds under \eqref{e:univ pr cofree} to the maps
$$e^\vee_i\otimes 1^\vee+1^\vee\otimes e^\vee_i:U^{\on{cofree}}_q(\cN)\otimes U^{\on{cofree}}_q(\cN)\to \sfe^{\alpha_i},$$
where $1^\vee$ denotes the co-unit map 
$$U^{\on{cofree}}_q(\cN)\to \sfe.$$

\sssec{}

It is easy to see that the map $e_i$ of \eqref{e:Chev gen} defines an isomorphism of $\alpha_i$-coweight 
components. We let $e_i^\vee$ denote the resulting map
\begin{equation} \label{e:co-gen}
U^{\on{free}}_q(\cN)\to \sfe^{\alpha_i},
\end{equation}
the projection onto the $\alpha_i$-coweight component.

\medskip

We have a commutative diagram
$$
\CD
U^{\on{free}}_q(\cN)\otimes U^{\on{free}}_q(\cN)  @>{e^\vee_i\otimes 1^\vee+1^\vee\otimes e^\vee_i}>>  \sfe^{\alpha_i}  \\
@VVV   @VV{\on{id}}V \\
U^{\on{free}}_q(\cN)   @>{e^\vee_i}>>  \sfe^{\alpha_i},
\endCD
$$
where $1^\vee$ denotes the co-unit map $U^{\on{free}}_q(\cN)\to \sfe$.

\medskip

Similarly, it is easy to see that the map $e_i^\vee$ of \eqref{e:cogen} defines isomorphisms
on $\alpha_i$-coweight components. 
In particular, we have the canonical map
$$e_i:\sfe^{\alpha_i}\to U^{\on{cofree}}_q(\cN),$$
that makes the diagrams
\begin{equation} \label{e:gen for cofree}
\CD
\sfe^{\alpha_i}    @>{e_i}>>   U^{\on{cofree}}_q(\cN)) \\
@V{\on{id}}VV   @VV{\on{comult}}V   \\
\sfe^{\alpha_i}  @>{e_i\otimes 1+1\otimes e_i}>>  U^{\on{cofree}}_q(\cN)\otimes U^{\on{cofree}}_q(\cN)
\endCD
\end{equation} 
commute. 

\sssec{}

We now claim that there exists a canonically defined map of Hopf algebras
\begin{equation} \label{e:free to cofree}
U^{\on{free}}_q(\cN)\to U^{\on{cofree}}_q(\cN).
\end{equation} 

As a map of associative algebras, \eqref{e:free to cofree} is determined by the requirement that the diagrams
$$
\CD
\sfe^{\alpha_i}    @>{e_i}>>   U^{\on{free}}_q(\cN) \\
@VVV   @VVV    \\
\sfe^{\alpha_i}    @>{e_i}>>   U^{\on{cofree}}_q(\cN)
\endCD
$$
commute. 

\medskip

The compatibility with the coalgebra structure is insured by the diagrams \eqref{e:gen for cofree}. 

\sssec{}

It follows that in terms of \eqref{e:univ pr cofree}, the map \eqref{e:free to cofree} corresponds to the maps  $e_i^\vee$
of \eqref{e:co-gen}. 

\medskip

We obtain that the duality functor of \secref{sss:duality Hopf} sends the map \eqref{e:free to cofree} to the map
$$U^{\on{free}}_{q^{-1}}(\cN)\to U^{\on{cofree}}_{q^{-1}}(\cN)$$
in $\Rep_{q^{-1}}(\cT)$. 

\sssec{}

Let $u_q(\cN)$ be the Hopf algebra in $\Rep_q(\cT)$ equal to the image of the map \eqref{e:free to cofree}.

\medskip

This is the small version of (the positive part of) the quantum group.

\sssec{} \label{sss:Serre}

Recall that for a pair of vertices of the Dynkin diagram $i,j$ one can attach a canonical element
$$S^{i,j}_q\in U^{\on{free}}_q(\cN)$$
called the \emph{quantum Serre relation}, whose coweight is
$$\alpha_i+(1-\langle \alpha_i,\check\alpha_j\rangle)\in \Lambda^{\on{pos}}.$$

We normalize $S^{i,j}_q$ so that it has the form
$$e_i^{1-\langle \alpha_i,\check\alpha_j\rangle}\cdot e_j+...+e_j\cdot e_i^{1-\langle \alpha_i,\check\alpha_j\rangle},$$

\medskip

The following is established in \cite{Ro}:

\begin{lem} \label{l:quantum Serre}
Let $q$ be \emph{non-torsion valued}. Then $u_q(\cN)$ is the quotient of $U^{\on{free}}_q(\cN)$ by the two-sided 
ideal $\CI_q$ generated by the elements $S^{i,j}_q$ for all pairs of vertices $i,j\in I$.
\end{lem}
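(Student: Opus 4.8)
The plan is to reduce the statement to a comparison between two presentations of the same Hopf algebra: on one side, $u_q(\cN)$ defined as the image of $U^{\on{free}}_q(\cN)\to U^{\on{cofree}}_q(\cN)$; on the other, the quotient $U^{\on{free}}_q(\cN)/\CI_q$ where $\CI_q$ is generated by the quantum Serre elements $S^{i,j}_q$. Since $u_q(\cN)$ is a quotient of $U^{\on{free}}_q(\cN)$ (it is the image of an algebra map out of $U^{\on{free}}_q(\cN)$), it suffices to identify the kernel $\CJ_q$ of $U^{\on{free}}_q(\cN)\to U^{\on{cofree}}_q(\cN)$ with the two-sided ideal $\CI_q$. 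The inclusion $\CI_q\subseteq \CJ_q$ is the easy direction: one checks that each $S^{i,j}_q$ maps to zero in $U^{\on{cofree}}_q(\cN)$, which (by the universal property \eqref{e:univ pr cofree}, applied to the coalgebra structure) amounts to verifying that all iterated ``matrix coefficients'' $e^\vee_{i_1}\otimes\cdots\otimes e^\vee_{i_k}$ paired against $\on{comult}^{(k)}(S^{i,j}_q)$ vanish; equivalently, that $S^{i,j}_q$ lies in the radical of the Drinfeld-style pairing between $U^{\on{free}}_q(\cN)$ and $U^{\on{free}}_{q^{-1}}(\cN)$. This is a finite computation using the comultiplication $e_i\mapsto e_i\otimes 1+1\otimes e_i$ together with the braiding $b'$, and it is precisely the computation identifying $\CJ_q$ as the radical of that pairing.

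\textbf{The core of the argument} is the reverse inclusion $\CJ_q\subseteq \CI_q$, i.e. that the Serre relations generate the \emph{entire} radical of the pairing. Here is where I would invoke \cite{Ro}: Rosso's theorem (in the form giving the defining relations of $U_q(\cn)$ via the Drinfeld pairing, valid when $q$ is not a root of unity) states exactly that the radical of the bilinear form on the free algebra is the Serre ideal. So the plan is: (i) set up the Hopf pairing $\langle\,-,-\,\rangle: U^{\on{free}}_q(\cN)\otimes U^{\on{free}}_{q^{-1}}(\cN)\to \sfe$ determined by $\langle e_i, e_j\rangle = \delta_{ij}$ and the compatibility with products/coproducts, and observe by unwinding the definitions of $U^{\on{cofree}}$ (as the dual of $U^{\on{free}}_{q^{-1}}$) that the map \eqref{e:free to cofree} is the map $x\mapsto \langle x, -\rangle$, hence its kernel is the left radical of the pairing; (ii) symmetrically, the right radical is controlled by duality (\secref{sss:duality Hopf}), so the two radicals correspond under $q\leftrightarrow q^{-1}$; (iii) quote Rosso to conclude the (left) radical equals $\CI_q$ under the non-torsion hypothesis, which guarantees all the relevant $q$-integers and $q$-binomial coefficients appearing in $S^{i,j}_q$ are nonzero and the nondegeneracy results of \cite{Ro} apply.

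\textbf{The main obstacle} I anticipate is bookkeeping rather than conceptual: matching the normalizations. One must check that the braiding $b'$ chosen in \secref{ss:quantum groups} (with $q(\lambda)=b'(\lambda,\lambda)$) produces exactly the skew-Hopf pairing that Rosso uses, so that ``Rosso's radical'' literally coincides with $\CJ_q$ as defined here via $U^{\on{cofree}}$; a sign or an inversion $q\leftrightarrow q^{-1}$ is easy to get wrong, and the role of the extra Cartan/grading data in the standard references (which we have stripped away by working in $\Rep_q(\cT)$) must be accounted for. A secondary point is making sure the non-torsion hypothesis is used in the right place — it is needed both so that $e_i^\vee$ is an isomorphism on the $\alpha_i$-component with the normalization that makes $S^{i,j}_q$ monic as written, and so that Rosso's nondegeneracy (hence the precise identification of the radical) holds; over a root of unity the radical is strictly larger, which is exactly why the lemma is stated only for non-torsion $q$. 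Once these normalization issues are pinned down, the proof is a citation plus a short verification.
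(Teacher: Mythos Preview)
Your proposal is correct and matches the paper's approach exactly: the paper does not prove this lemma at all but simply attributes it to \cite{Ro} (Rosso), and your plan likewise reduces to identifying the kernel of \eqref{e:free to cofree} with the radical of the Drinfeld--Rosso pairing and then citing Rosso for the fact that this radical is generated by the Serre elements when $q$ is non-torsion. Your additional remarks about normalization and the role of the non-torsion hypothesis are accurate and useful, but they go beyond what the paper itself supplies.
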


\ssec{The DeConcini-Kac and Luztig's versions of the positive part of the quantum group}  \label{ss:quantum groups bis}

In order to define the Hopf algebras $U_q^{\on{DK}}(\cN)$ and $U_q^{\on{Lus}}(\cN)$, we will need to consider the deformation
of $q$ as in \secref{ss:constr by deformation}. 

\sssec{}

Recall the quadratic form 
$$q_\hbar:\Lambda \to (\sfe\hbart)^\times.$$

We introduce the category $\Rep_{q_\hbart}(\cT)$ and its localization $\Rep_{q_\hbarl}(\cT)$ by the recipe
of \secref{sss:intr hbar}, applied to the \emph{small} category 
$$\Rep(\cT)^{\on{loc.fin.dim.}}= (\Vect^{\on{fin.dim.}})^\Lambda$$
where the superscript ``loc.fin.dim." means that we are considering representations with each weight
component finite-dimensional.

\medskip

The procedure of \secref{sss:t on hbar} endows these categories with a t-structure,
so that the analog of \propref{p:t h} holds. 

\medskip

Note that as abstract DG categories, we have:
$$\Rep_{q_\hbart}(\cT) \simeq ((\sfe\hbart\mod)^{\on{f.g.}})^\Lambda \text{ and }
\Rep_{q_\hbarl}(\cT) \simeq ((\Vect_{\sfe\hbarl})^{\on{fin.dim.}})^\Lambda,$$
respectively, equipped with their natural t-structures. 

\medskip

The role of $q_\hbar$ is that it defines a braided structure on these categories. In particular, we obtain 
$$\Rep_{q_\hbarl}(\cT)\simeq \Rep_{q'}(\cT)^{\on{loc.fin.dim.}},$$
where the latter is the variant of the category $\Rep_{q'}(\cT)$
over the field $\sfe\hbarl$ with $q'=q_\hbar$. 

\sssec{}

Consider the resulting Hopf algebras in $\Rep_{q_\hbarl}(\cT)$:
$$U^{\on{free}}_{q_\hbarl}(\cN) \text{ and } U^{\on{cofree}}_{q_\hbarl}(\cN)$$
and the map 
\begin{equation} \label{e:free to cofree laurent}
U^{\on{free}}_{q_\hbarl}(\cN) \to U^{\on{cofree}}_{q_\hbarl}(\cN).
\end{equation}

Set $U_{q_\hbarl}(\cN)$ be the image of the map \eqref{e:free to cofree laurent}. As in \lemref{l:quantum Serre}, 
the kernel $\CI_{q_\hbarl}$ of the projection
$$U^{\on{free}}_{q_\hbarl}(\cN) \to U_{q_\hbarl}(\cN)$$
is generated by the corresponding elements
$$S^{i,j}_{q_\hbarl}\in U^{\on{free}}_{q_\hbarl}(\cN).$$

\sssec{}

Consider now the Hopf algebra $U^{\on{free}}_{q_\hbart}(\cN)$ in $\Rep_{q_\hbart}(\cT)$, and set
$$\CI_{q_\hbart}:=U^{\on{free}}_{q_\hbart}(\cN)\cap \CI_{q_\hbarl},$$
where the intersection is taking place in $U^{\on{free}}_{q_\hbarl}(\cN)$.

\medskip

Then $\CI_{q_\hbart}\subset U^{\on{free}}_{q_\hbart}(\cN)$ is a Hopf ideal, and we define
$$U^{\on{DK}}_{q_\hbart}(\cN):=U^{\on{free}}_{q_\hbart}(\cN)/\CI_{q_\hbart}.$$

By construction, $U^{\on{DK}}_{q_\hbart}(\cN)$ is torsion-free, and hence flat, as a module over $\sfe\hbart$.
In particular, for every $\lambda\in \Lambda$, the corresponding coweight component
$$(U^{\on{DK}}_{q_\hbart}(\cN))^\lambda$$
is a free module of finite rank over $\sfe\hbart$. Moreover this 
rank equals 
$$\dim_{\sfe\hbarl}\left((U^{\on{DK}}_{q_\hbarl}(\cN))^\lambda\right)=\dim_{\sfe}\left(U(\cn)^\lambda\right),$$
where $U(\cn)$ is the usual universal enveloping algebra of $\cn$. 

\sssec{}

The map 
\begin{equation} \label{e:free to cofree taylor}
U^{\on{free}}_{q_\hbart}(\cN) \to U^{\on{cofree}}_{q_\hbart}(\cN)
\end{equation}
factors as
\begin{equation} \label{e:free to cofree taylor DK}
U^{\on{free}}_{q_\hbart}(\cN) \twoheadrightarrow U^{\on{DK}}_{q_\hbart}(\cN)\to U^{\on{cofree}}_{q_\hbart}(\cN).
\end{equation}

Note that the map
$$U^{\on{DK}}_{q_\hbart}(\cN)\to U^{\on{cofree}}_{q_\hbart}(\cN)$$
is injective, but its cokernel has $\hbar$-torsion, so it ceases to be injective mod $\hbar$. 

\begin{rem} \label{r:when KD is KD taylor}
By construction, the elements $S^{i,j}_{q_\hbarl}$ actually belong to $U^{\on{free}}_{q_\hbart}(\cN)$. When viewed 
in this capacity, we will denote them by $S^{i,j}_{q_\hbart}$, and they belong to the ideal $\CI_{q_\hbart}$.

\medskip

In \secref{sss:proof KD is KD} we will show that under the assumption that $q$ satisfies (*) (see \secref{sss:dual Cox})
the elements $S^{i,j}_{q_\hbart}$ generate $\CI_{q_\hbart}$ as a two-sided ideal.

\medskip

Hence, in this case, we can explicitly write $U^{\on{DK}}_{q_\hbart}(\cN)$ as the quotient of
$U^{\on{free}}_{q_\hbart}(\cN)$ by the elements $S^{i,j}_{q_\hbart}$.

\end{rem}

\sssec{}

Define 
$$U^{\on{DK}}_q(\cN):=U^{\on{DK}}_{q_\hbart}(\cN)/\hbar\simeq U^{\on{DK}}_{q_\hbart}(\cN)\underset{\sfe\hbart}\otimes \sfe,$$
the latter isomorphism due to the fact that $U^{\on{DK}}_{q_\hbart}(\cN)$ is $\sfe\hbart$-flat. 

\medskip

The factorization \eqref{e:free to cofree taylor DK} implies that the maps
\begin{equation} \label{e:string Hopf short}
U^{\on{free}}_q(\cN)\twoheadrightarrow  u_q(\cN) \hookrightarrow U^{\on{cofree}}_q(\cN)
\end{equation}
factor as
\begin{equation} \label{e:string Hopf left}
U^{\on{free}}_q(\cN)\twoheadrightarrow  U^{\on{DK}}_q(\cN) \twoheadrightarrow
u_q(\cN) \hookrightarrow U^{\on{cofree}}_q(\cN).
\end{equation}

\begin{rem}  \label{r:when KD is KD}
From Remark \ref{r:when KD is KD taylor} we obtain that under the assumption that $q$ satisfies (*), 
$U^{\on{DK}}_q(\cN)$ equals the quotient of $U^{\on{free}}_q(\cN)$ by the two-sided ideal generated 
by the elements $S^{i,j}_q$.

\medskip

In general, for a given value of $q$, the map
\begin{equation} \label{e:mod Serre}
U^{\on{free}}_q(\cN)/\langle S^{i,j}_q\rangle\to U^{\on{DK}}_q(\cN)
\end{equation} 
is an isomorphism \emph{if and and only if} $U^{\on{free}}_q(\cN)/\langle S^{i,j}_q\rangle$ has the right
size, i.e., if
$$\dim\left((U^{\on{free}}_q(\cN)/\langle S^{i,j}_q \rangle)^\lambda\right)=\dim\left(U(\cn)^\lambda\right), \quad \lambda \in \Lambda^{\on{pos}}.$$

\end{rem}

\sssec{}

As a (trivial) particular case, from \lemref{l:quantum Serre} we obtain:

\begin{cor} \label{c:quantum non-coroot}
Let $q$ be non-torsion valued. Then the map
$$U^{\on{DK}}_q(\cN) \twoheadrightarrow u_q(\cN)$$
is an isomorphism.
\end{cor}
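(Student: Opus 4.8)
The plan is to identify the two-sided ideals
$$\CI_q:=\ker\bigl(U^{\on{free}}_q(\cN)\twoheadrightarrow U^{\on{DK}}_q(\cN)\bigr)
\quad\text{and}\quad
\CJ_q:=\ker\bigl(U^{\on{free}}_q(\cN)\twoheadrightarrow u_q(\cN)\bigr);$$
since the map $U^{\on{DK}}_q(\cN)\to u_q(\cN)$ appearing in \eqref{e:string Hopf left} is the one induced on the quotients by $\id_{U^{\on{free}}_q(\cN)}$, the equality $\CI_q=\CJ_q$ is exactly what we want. The factorization \eqref{e:string Hopf left} already exhibits $u_q(\cN)$ as a further quotient of $U^{\on{DK}}_q(\cN)$, so $\CI_q\subseteq\CJ_q$, and only the reverse inclusion needs an argument.

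First I would compute $\CI_q$ in terms of the deformation. By definition $U^{\on{DK}}_q(\cN)=U^{\on{DK}}_{q_\hbart}(\cN)\underset{\sfe\hbart}\otimes\sfe$ and $U^{\on{DK}}_{q_\hbart}(\cN)=U^{\on{free}}_{q_\hbart}(\cN)/\CI_{q_\hbart}$; since $-\underset{\sfe\hbart}\otimes\sfe$ is right exact and $U^{\on{free}}_{q_\hbart}(\cN)\underset{\sfe\hbart}\otimes\sfe=U^{\on{free}}_q(\cN)$ (base change of a free algebra, using $q_\hbar|_{\hbar=0}=q$), the ideal $\CI_q$ is precisely the image of $\CI_{q_\hbart}$ under the reduction $U^{\on{free}}_{q_\hbart}(\cN)\twoheadrightarrow U^{\on{free}}_q(\cN)$. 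Now the constructional part of Remark~\ref{r:when KD is KD taylor} (which does not invoke (*)) says that the quantum Serre elements $S^{i,j}_{q_\hbart}$ lie in $\CI_{q_\hbart}$; and, because $q_\hbar$ specializes to $q$ and the Serre element is normalized to have top and bottom monomial coefficients equal to $1$, its reduction mod $\hbar$ is $S^{i,j}_q$. Hence $S^{i,j}_q\in\CI_q$ for all $i,j\in I$, so the two-sided ideal $\langle S^{i,j}_q\rangle$ is contained in $\CI_q$.

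Finally, since $q$ is non-torsion valued, \lemref{l:quantum Serre} identifies $\CJ_q$ with $\langle S^{i,j}_q\rangle$, and we conclude $\CJ_q=\langle S^{i,j}_q\rangle\subseteq\CI_q\subseteq\CJ_q$, whence $\CI_q=\CJ_q$. Alternatively, one can avoid keeping track of the ideals and argue by graded dimensions: $U^{\on{DK}}_{q_\hbart}(\cN)$ is flat over $\sfe\hbart$ with weight components of rank $\dim_\sfe U(\cn)^\lambda$, so $\dim_\sfe(U^{\on{DK}}_q(\cN))^\lambda=\dim_\sfe U(\cn)^\lambda$, while for non-torsion valued $q$ one also has $\dim_\sfe u_q(\cN)^\lambda=\dim_\sfe U(\cn)^\lambda$ (the analogue for $q$ of the count recalled for $q_\hbarl$), and a surjection between objects with equal finite-dimensional weight components is an isomorphism. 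There is no genuine obstacle here — all the content sits in \lemref{l:quantum Serre} and in the $\hbar$-deformation construction of $U^{\on{DK}}_q(\cN)$; the one point deserving care is the identification of $\CI_q$ with the mod-$\hbar$ image of $\CI_{q_\hbart}$, i.e., that the Serre relations persist under the specialization $\hbar\mapsto 0$.
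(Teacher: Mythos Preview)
Your proof is correct and follows essentially the same approach as the paper. The paper simply says ``As a (trivial) particular case, from \lemref{l:quantum Serre} we obtain'' before stating the corollary; your argument spells out what this means: the factorization \eqref{e:string Hopf left} gives $\CI_q\subseteq\CJ_q$, Remark~\ref{r:when KD is KD taylor} puts the Serre elements in $\CI_q$, and \lemref{l:quantum Serre} gives $\CJ_q=\langle S^{i,j}_q\rangle$, closing the loop.
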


\sssec{}

We define the Hopf algebra $U_q^{\on{Lus}}(\cN)$ as the dual (in the sense of \secref{sss:duality Hopf}) of 
the Hopf algebra $U_{q^{-1}}^{\on{DK}}(\cN)$ in $\Rep_{q^{-1}}(\cT)$. 

\medskip

By duality, from \eqref{e:string Hopf left} we obtain that the maps in \eqref{e:string Hopf short} factor also as
$$U^{\on{free}}_q(\cN)\twoheadrightarrow  u_q(\cN) \hookrightarrow U_q^{\on{Lus}}(\cN)  \hookrightarrow
U^{\on{cofree}}_q(\cN).$$

To summarize we have the following string of maps of Hopf algebras
\begin{equation} \label{e:string Hopf}
U^{\on{free}}_q(\cN)\twoheadrightarrow U_q^{\on{DK}}(\cN)\twoheadrightarrow u_q(\cN)
\hookrightarrow U_q^{\on{Lus}}(\cN) \hookrightarrow U^{\on{cofree}}_q(\cN).
\end{equation} 

\sssec{}

Finally, from \corref{c:quantum non-coroot} we obtain:

\begin{cor} \label{c:quantum non-coroot bis}
Let $q$ be \emph{non-torsion valued}. Then both maps 
$$U_q^{\on{DK}}(\cN)\twoheadrightarrow u_q(\cN) \text{ and } u_q(\cN)\hookrightarrow U^{\on{Lus}}_q(\cN)$$
are isomorphisms. 
\end{cor}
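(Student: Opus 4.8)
The plan is to deduce both assertions from \corref{c:quantum non-coroot} by an elementary duality argument; no new geometry or combinatorics is needed. The first assertion — that $U_q^{\on{DK}}(\cN)\twoheadrightarrow u_q(\cN)$ is an isomorphism — is literally the statement of \corref{c:quantum non-coroot}, so there is nothing to prove there. For the second assertion, that $u_q(\cN)\hookrightarrow U^{\on{Lus}}_q(\cN)$ is an isomorphism, I would invoke the component-wise duality functor $H\mapsto H^\vee$ of \secref{sss:duality Hopf}, which is a contravariant equivalence $\Rep_q(\cT)\rightsquigarrow \Rep_{q^{-1}}(\cT)$. The only preliminary observation is that $q$ is non-torsion valued if and only if $q^{-1}$ is, since $q^{-1}(\alpha)=q(\alpha)^{-1}$ has the same order in $\fZ$ as $q(\alpha)$; thus \corref{c:quantum non-coroot} applies equally to the parameter $q^{-1}$.

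Next I would record the relevant behaviour of the duality functor on the Hopf algebras at hand. Since on each weight component the functor $(\,\cdot\,)^\vee$ is just linear dualization of a finite-dimensional vector space, it is exact component-wise, hence carries the image of a morphism of (weight-graded) Hopf algebras to the image of the dual morphism. Applied to $U^{\on{free}}_{q^{-1}}(\cN)\to U^{\on{cofree}}_{q^{-1}}(\cN)$, whose image is $u_{q^{-1}}(\cN)$ by definition, and using that duality interchanges $U^{\on{free}}$ and $U^{\on{cofree}}$ (as noted just after \eqref{e:free to cofree}), we obtain a canonical identification $u_{q^{-1}}(\cN)^\vee\simeq u_q(\cN)$. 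By the very definition of $U_q^{\on{Lus}}(\cN)$ as the dual of $U^{\on{DK}}_{q^{-1}}(\cN)$, we have $U^{\on{DK}}_{q^{-1}}(\cN)^\vee\simeq U^{\on{Lus}}_q(\cN)$. Under these identifications, the inclusion $u_q(\cN)\hookrightarrow U^{\on{Lus}}_q(\cN)$ appearing in \eqref{e:string Hopf} is, by construction (this is exactly how \eqref{e:string Hopf} was derived from \eqref{e:string Hopf left}), the dual of the surjection $U^{\on{DK}}_{q^{-1}}(\cN)\twoheadrightarrow u_{q^{-1}}(\cN)$.

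It then remains only to apply \corref{c:quantum non-coroot} with $q$ replaced by $q^{-1}$: the map $U^{\on{DK}}_{q^{-1}}(\cN)\twoheadrightarrow u_{q^{-1}}(\cN)$ is an isomorphism, so its dual $u_q(\cN)\hookrightarrow U^{\on{Lus}}_q(\cN)$ is an isomorphism, which is the claim. I do not expect any genuine obstacle here: the argument is purely formal once the compatibility of the string \eqref{e:string Hopf} with the duality anti-equivalence is taken from \secref{ss:quantum groups bis}. The one point meriting a sentence of care is precisely that bookkeeping — that $(\,\cdot\,)^\vee$ really does send the maps in \eqref{e:string Hopf} for $q^{-1}$ to those for $q$ in the stated way — but this was already established in the text when \eqref{e:string Hopf} was written down, so the corollary follows immediately.
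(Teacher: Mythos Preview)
Your proposal is correct and matches the paper's approach exactly: the paper simply says ``from \corref{c:quantum non-coroot} we obtain'' and leaves the duality argument implicit, while you have spelled out precisely the intended reasoning (the first map is \corref{c:quantum non-coroot} verbatim, and the second is its dual applied to $q^{-1}$, using that $U_q^{\on{Lus}}(\cN)$ is defined as $(U^{\on{DK}}_{q^{-1}}(\cN))^\vee$ and that $u_{q^{-1}}(\cN)^\vee\simeq u_q(\cN)$).
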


\ssec{From Hopf algebras to factorization algebras}  \label{ss:Hopf to Fact}

In this subsection we summarize the construction from \cite[Sect. 29.5]{GLys2}. 

\sssec{}

Let $X$ be $\BA^1$ with a chosen coordinate. Let $\CG_q^\Conf$ be the $\sfe^\times$-gerbe attached to the 
quadratic form $q:\Lambda\to \sfe^\times$. 

\medskip

Note that, according to formula \eqref{e:gerbe on lambda}, our choice of the coordinate on $\BA^1$ gives rise to
a trivialization of the gerbe $\CG^\lambda_q$ for every $\lambda\in \Lambda^{\on{neg}}-0$. 

\sssec{}

The construction of \cite[Sect. 29.5]{GLys2} defines a \emph{contravariant} equivalence between the category of Hopf algebras in $\Rep_q(\cT)$,
satisfying the conditions of \secref{sss:conditions on Hopf} and that of factorization algebras in $\on{Perv}_{\CG_q^\Conf}(\Conf)$
\begin{equation} \label{e:from Hopf to fact}
H \mapsto \Omega_H,
\end{equation} 
which has the following properties.

\sssec{}  \label{sss:fibers and cofibers of Omega}

For $\lambda\in \Lambda^{\on{neg}}-0$, let $\iota_\lambda$ denote the embedding of the point $\lambda\cdot 0$ into 
$\Conf^\lambda$, cf. \secref{sss:Delta lambda}.
(Note that the above trivialization of the gerbe $\CG^\lambda_q$ allows to view fibers and cofibers of
objects of $\Shv_{\CG_q^\Conf}(\Conf)$ at $\lambda\cdot 0\in \Conf^\lambda$ as objects of $\Vect$.)

\medskip

Let $H\mod(\Rep_q(\cT))$ denote the category of left $H$-modules in $\Rep_q(\cT)$, where $H$ is viewed as an associative algebra.
Note that the operation of tensor product on the right defines an action of $\Rep_q(\cT)$ on $H\mod(\Rep_q(\cT))$, 
$$\CM,\sfe^\lambda \mapsto \CM\otimes \sfe^\lambda.$$

\medskip

For $\CM_1,\CM_2\in H\mod$, let
$\uHom_H(\CM_1,\CM_2)$ denote the $\Lambda$-graded vector space whose $\lambda$-component is 
$$\CHom_{H\mod(\Rep_q(\cT))}(\CM_1\otimes \sfe^\lambda,\CM_2).$$

\sssec{}

The first property of the functor \eqref{e:from Hopf to fact} says that
\begin{equation} \label{e:!-fibers of Omega}
\iota_\lambda^!(\Omega_H)\simeq (\uHom_H(\sfe,\sfe))^\lambda.
\end{equation}

Note that the RHS in \eqref{e:!-fibers of Omega} identifies canonically with the $\lambda$-component of the 
cochain complex of $H$,
$$\on{C}^\cdot(H),$$
where $H$ is viewed as a plain $\Lambda^{\on{pos}}$-graded associative augmented algebra via the (monoidal!) functor 
$$\Rep_q(\cT)\to \Rep(\cT)=\Vect^\Lambda \to \Vect.$$ 

\sssec{}  \label{sss:Omega and duality}

The second property of the functor \eqref{e:from Hopf to fact}
states that we have a canonical isomorphism (functorial in $H$):
\begin{equation} \label{e:duality omega}
\BD^{\on{Verdier}}(\Omega_H)\simeq \Omega_{H^\vee},
\end{equation}
where both sides are viewed as factorization algebras in $\on{Perv}_{\CG_{q^{-1}}^\Lambda}(\Conf)$. 

\sssec{}

Combining with \eqref{e:!-fibers of Omega} we obtain that the following expression for the *-fibers of $\Omega(H)$:
\begin{equation} \label{e:*-fibers of Omega}
\iota_\lambda^*(\Omega_H)\simeq \left((\uHom_{H^\vee}(\sfe,\sfe))^\lambda\right)^\vee \simeq
\left((\on{C}^\cdot(H^\vee))^\lambda\right)^\vee\simeq (\on{C}_\cdot(H^\vee))^{-\lambda},
\end{equation}
where $\on{C}_\cdot(-)$ stands for the chain complex of a given associative augmented algebra. 

\sssec{}

For a given $\lambda\in \Lambda^{\on{neg}}-0$, let
$$(\iota_\lambda)^{!*}:\Shv_{\CG_q^\Conf}(\Conf)\to \Vect$$
be the functor of \emph{hyperbolic restriction} along $\iota_\lambda$. This functor is a feature of the topological
setting, and it is defined as the composite of the following two functors:

\medskip

The first functor is  *-restriction along
$$\Conf_\BR \hookrightarrow  \Conf,$$
where $\Conf_\BR$ is the \emph{topological submanifold} that corresponds to \emph{real configurations}, i.e., we take points
$$\underset{k}\Sigma\, \lambda_k\cdot x_k\in \Conf,$$
where all $x_k$ belong to $\BR\subset \BC=\BA^1$. 

\medskip

The second functor is !-restriction along 
$$\on{pt} \to \Conf_\BR,$$
corresponding to the point $\lambda\cdot 0$.

\medskip

Note that the functor $(\iota_\lambda)^{!*}$, when applied to the full subcategory of $\Shv_{\CG}(\Conf^\lambda)$, consisting of objects 
constructible with respect to the diagonal stratification, is \emph{t-exact} and \emph{conservative}.

\sssec{}

The third property of the functor \eqref{e:from Hopf to fact} states that we have a canonical isomorphism
\begin{equation} \label{e:hyprbolic of Omega}
(\iota_\lambda)^{!*}(\Omega_H)\simeq (H^{-\lambda})^\vee.
\end{equation} 

\sssec{}

The isomorphism \eqref{e:hyprbolic of Omega}, combined with the properties of $(\iota_\lambda)^{!*}$ implies:

\begin{cor} \label{c:inj/surj Omega}
If a map of Hopf algebras $H_1\to H_2$ is injective/surjective, then the corresponding map in $\on{Perv}_{\CG_q^\Conf}(\Conf)$
$$\Omega_{H_2}\to \Omega_{H_1}$$
is surjective/injective.
\end{cor}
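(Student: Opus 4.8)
The plan is to reduce the statement to the t-exactness and conservativity of the hyperbolic-restriction functors $(\iota_\lambda)^{!*}$ on the relevant subcategory, combined with the explicit formula \eqref{e:hyprbolic of Omega}. First I would fix $\lambda \in \Lambda^{\on{neg}}-0$ and restrict attention to the connected component $\Conf^\lambda$; since both $\Omega_{H_1}|_{\Conf^\lambda}$ and $\Omega_{H_2}|_{\Conf^\lambda}$ are constructible with respect to the diagonal stratification (being perverse factorization algebras assembled out of the Hopf-algebra data), the functor $(\iota_\lambda)^{!*}$ applies to them and is, by the paragraph preceding the corollary, \emph{t-exact} and \emph{conservative} on this subcategory. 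Next I would observe that the map $\Omega_{H_2}\to \Omega_{H_1}$ induced by $H_1\to H_2$ is a map of perverse sheaves, so its kernel and cokernel (taken in the abelian category $\Perv_{\CG_q^\Conf}(\Conf^\lambda)$) are again perverse and diagonally constructible; a perverse sheaf $\mathcal F$ on $\Conf^\lambda$ vanishes if and only if $(\iota_{\lambda'})^{!*}(\mathcal F)=0$ for every $\lambda'\in \Lambda^{\on{neg}}$ with $\Conf^{\lambda'}$ appearing in the stratification of $\Conf^\lambda$ — this is exactly conservativity applied stratum-by-stratum, using factorization to pass between the strata and their model configuration spaces.

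With this in hand the argument is: apply $(\iota_\lambda)^{!*}$ to the map $\Omega_{H_2}\to\Omega_{H_1}$. By t-exactness, this sends the short exact sequences $0\to K\to \Omega_{H_2}\to \on{Im}\to 0$ and $0\to \on{Im}\to \Omega_{H_1}\to C\to 0$ to short exact sequences of vector spaces. By \eqref{e:hyprbolic of Omega} the map on $(\iota_\lambda)^{!*}$ is identified, up to the contravariant duality, with the map $(H_1^{-\lambda})^\vee \to (H_2^{-\lambda})^\vee$ dual to $H_1^{-\lambda}\to H_2^{-\lambda}$. Hence if $H_1\to H_2$ is injective (resp. surjective) in each weight component, then $(\iota_\lambda)^{!*}(\Omega_{H_2}\to\Omega_{H_1})$ is surjective (resp. injective) for every $\lambda$, so by t-exactness the cokernel (resp. kernel) perverse sheaf has vanishing hyperbolic restriction at every stratum, hence vanishes by conservativity. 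Therefore $\Omega_{H_2}\to\Omega_{H_1}$ is surjective (resp. injective), which is the assertion. One should also note that injectivity/surjectivity of a map of Hopf algebras satisfying the conditions of \secref{sss:conditions on Hopf} is equivalent to injectivity/surjectivity in every weight component, since everything is $\Lambda$-graded and the weight components are finite-dimensional.

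The main obstacle I anticipate is purely bookkeeping rather than conceptual: namely, justifying that $(\iota_\lambda)^{!*}$ may be applied compatibly across all strata of $\Conf^\lambda$, i.e. that the factorization structure on $\Omega_H$ identifies its restriction to a diagonal stratum (a product of smaller configuration spaces) with the external product of the corresponding $\Omega$'s, so that conservativity on each factor assembles to conservativity on $\Conf^\lambda$. This uses the factorization property of $\Omega_H$ together with the compatibility of $(\iota_\lambda)^{!*}$ with external products, both of which are built into the construction of \cite[Sect. 29.5]{GLys2}; one just has to be careful that the gerbe twist $\CG_q^\Conf$ is the trivialized one coming from the coordinate on $\BA^1$, so that all fibers genuinely land in $\Vect$. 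Everything else is formal once \eqref{e:hyprbolic of Omega} and the t-exactness/conservativity of $(\iota_\lambda)^{!*}$ are granted.
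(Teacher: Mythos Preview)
Your proposal is correct and follows exactly the approach the paper intends: the paper simply states that the corollary follows from the isomorphism \eqref{e:hyprbolic of Omega} combined with the t-exactness and conservativity of $(\iota_\lambda)^{!*}$, and you have unpacked precisely that. One small simplification: the paper asserts that the single functor $(\iota_\lambda)^{!*}$ is already conservative on the full subcategory of diagonally constructible objects of $\Shv_{\CG_q^\Conf}(\Conf^\lambda)$, so you do not need the stratum-by-stratum argument or the factorization bookkeeping you flagged as the main obstacle --- checking the one hyperbolic fiber on each connected component suffices.
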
 

\ssec{Factorization algebras attached to quantum groups}

In this subsection we will apply the functor  \eqref{e:from Hopf to fact} to the Hopf algebras associated with quantum groups
from Sects. \ref{ss:quantum groups} and \ref{ss:quantum groups bis}.

\medskip

In particular, we will again impose the assumption that $q$ avoids small torsion. 

\sssec{}

Denote
$$\Omega^{\on{free}}_{q,\Quant}:=\Omega_{U^{\on{free}}_q(\cN)},\,\, \Omega^{\on{cofree}}_{q,\Quant}:=\Omega_{U^{\on{cofree}}_q(\cN)},$$
$$\Omega^{\on{sml}}_{q,\Quant}:=\Omega_{u_q(\cN)},\,\, \Omega^{\on{DK}}_{q,\Quant}:=\Omega_{U^{\on{DK}}_q(\cN)},\,\,
\Omega^{\on{Lus}}_{q,\Quant}:=\Omega_{U^{\on{Lus}}_q(\cN)}.$$

\medskip

First, we claim:

\begin{prop}  \label{p:identify Omega free}
There exists a canonical identification
\begin{equation} \label{e:identify Omega free}
\Omega^{\on{free}}_{q,\Quant}\simeq j_*(\oOmega_q)
\end{equation} 
as factorization algebras. 
\end{prop}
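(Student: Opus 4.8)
The plan is to identify both sides of \eqref{e:identify Omega free} by computing what each does on fibers and checking they share the universal property of a ``cofree'' factorization algebra with respect to the ``unit'' factorization algebra $\oOmega_q$. More concretely, I would proceed as follows.

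First I would recall the defining feature of $U_q^{\on{free}}(\cN)$: it is the \emph{free} associative algebra on $\underset{i}\oplus\, \sfe^{\alpha_i}$ in $\Rep_q(\cT)$, so its cochain complex $\on{C}^\cdot(U_q^{\on{free}}(\cN))$ is very small. Indeed, a free associative algebra on a graded vector space $V$ has a two-term bar-type resolution of the augmentation module, so $\on{C}^\cdot(U_q^{\on{free}}(\cN))$ is concentrated in cohomological degrees $0$ and $1$, with $H^1$ in weight $\lambda$ equal to $\sfe$ if $\lambda=\alpha_i$ for some $i$ and $0$ otherwise, and $H^0$ supported in weight $0$. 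Feeding this through \eqref{e:!-fibers of Omega}, I get that $\iota_\lambda^!(\Omega^{\on{free}}_{q,\Quant})$ is $\sfe[-1]$ when $\lambda=-\alpha_i$ (up to the usual shift conventions matching $\oOmega|_{\Conf^{-\alpha_i}}\simeq \sfe_X[1]$) and is concentrated in strictly positive cohomological degrees — in fact in degree $\geq 2$ — otherwise. Equivalently, via \eqref{e:hyprbolic of Omega}, $(\iota_\lambda)^{!*}(\Omega^{\on{free}}_{q,\Quant})\simeq (U_q^{\on{free}}(\cN)^{-\lambda})^\vee$, which records the full Poincar\'e series of the free algebra.

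Next I would match this with $j_*(\oOmega_q)$. Since $j_*$ is t-exact (the morphism $j$ is affine and $j^*$ is right t-exact — exactly the argument used in \secref{sss:Omega hbar}), $j_*(\oOmega_q)$ is a $\CG_q^\Conf$-twisted perverse sheaf, and its factorization structure is inherited from that of $\oOmega_q$ via the factorization structure of $j$ and the compatibility of $j_*$ with the étale maps $\on{add}$ on the disjoint locus. Its $!$-fiber along $\iota_\lambda$ is $\Delta_\lambda^!\, j_*(\oOmega_q)$, which by base change and the local structure of $X^\lambda$ near the main diagonal is computed by the (reduced) cohomology of the complement of the diagonal arrangement, tensored against the sign local system — i.e.\ exactly the cochain complex of the free algebra on the simple-coroot generators. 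So the $!$-fibers of $j_*(\oOmega_q)$ agree with those of $\Omega^{\on{free}}_{q,\Quant}$; the same comparison on $*$-fibers follows from \eqref{e:*-fibers of Omega} together with $\BD^{\on{Verdier}}(j_*(\oOmega_q))\simeq j_!(\oOmega_{q^{-1}})$ and the analogous computation of $j_!$, matching the chain complex $\on{C}_\cdot(U_{q^{-1}}^{\on{free}}(\cN))$.

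Finally I would upgrade this fiberwise coincidence to a canonical \emph{isomorphism of factorization algebras}. The clean way is to exhibit a natural map and then check it is an isomorphism using the conservativity of hyperbolic restriction $(\iota_\lambda)^{!*}$ on diagonally-constructible objects. The map: restricting any factorization algebra $\CA$ in $\on{Perv}_{\CG_q^\Conf}(\Conf)$ to $\oConf$ and using $\CA|_{\oConf}\simeq \oOmega_q$ (which holds for $\Omega^{\on{free}}_{q,\Quant}$ since the free algebra has a one-dimensional $\alpha_i$-component and nothing in other ``primitive'' weights, matching \eqref{e:quant on open}-type computations), adjunction gives a canonical map $\Omega^{\on{free}}_{q,\Quant}\to j_*(\oOmega_q)$. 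I would then check this is an isomorphism component by component on $\Conf^\lambda$, by induction on $|\lambda|$ and factorization: away from the main diagonal it is an isomorphism by the inductive hypothesis, so the cone is supported on $\Delta_\lambda(X)$, and the $!$-fiber computation above shows the cone vanishes. The main obstacle I anticipate is the bookkeeping around the factorization/gerbe structure — making sure the abstract equivalence $H\mapsto \Omega_H$ of \cite[Sect.~29.5]{GLys2} is set up so that $\Omega_{U_q^{\on{free}}(\cN)}$ really is the \emph{$*$-pushforward} $j_*(\oOmega_q)$ and not some other extension, and that the identifications are compatible with the specific trivializations of $\CG_q^\lambda$ coming from the coordinate on $\BA^1$ used in \secref{ss:Hopf to Fact} — but once the $!$- and $*$-fibers are pinned down as the cochain/chain complexes of the free algebra, perversity plus conservativity of hyperbolic restriction forces the answer.
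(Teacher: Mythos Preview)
Your approach is correct and reaches the right conclusion, but it takes a detour the paper avoids. The paper's argument is: once you have computed $\on{C}^\cdot(U_q^{\on{free}}(\cN))\simeq \underset{i}\oplus\,\sfe^{-\alpha_i}[-1]$, formula \eqref{e:!-fibers of Omega} gives $(\iota_\lambda)^!(\Omega^{\on{free}}_{q,\Quant})=0$ for every $\lambda$ not a negative simple coroot (not merely ``concentrated in degrees $\geq 2$'' as you write --- it is literally zero, as your own first paragraph already says). By induction and factorization this forces the $!$-restriction of $\Omega^{\on{free}}_{q,\Quant}$ to all of $\Conf\setminus\oConf$ to vanish, and then the recollement triangle $i_*i^!\to\on{id}\to j_*j^*$ immediately yields $\Omega^{\on{free}}_{q,\Quant}\simeq j_*\bigl(j^*(\Omega^{\on{free}}_{q,\Quant})\bigr)\simeq j_*(\oOmega_q)$. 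No separate computation of the $!$-fibers of $j_*(\oOmega_q)$ is needed.

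Your middle paragraph --- computing $\iota_\lambda^!\bigl(j_*(\oOmega_q)\bigr)$ directly as arrangement cohomology with sign coefficients and identifying it with $\on{C}^\cdot(U_q^{\on{free}}(\cN))$ --- is true, but it is itself a nontrivial statement (essentially an Orlik--Solomon/Arnold-type computation in the twisted setting), and invoking it is close to circular: that identification is exactly what the machinery $H\mapsto\Omega_H$ is packaging. The paper's route sidesteps this entirely by characterizing $j_*(\oOmega_q)$ via its universal property (vanishing $i^!$) rather than by computing its fibers. Your final inductive cone argument would work, but it requires both fiber computations as input, whereas the paper only needs one.
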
 

\begin{proof} 

First we establish an identification
$$j^*(\Omega^{\on{free}}_{q,\Quant})\simeq \oOmega_q.$$

By factorization, it is enough to establish the isomorphisms
$$\Omega^{\on{free}}_{q,\Quant}|_{\Conf^{-\alpha_i}}\simeq \sfe_{\BA^1}[1]$$
for individual simple coroots. By translation invariance, it suffices to establish the isomorphisms
$$(\iota_{-\alpha_i})^!(\Omega^{\on{free}}_{q,\Quant})\simeq \sfe[-1].$$

However, this follows from formula \eqref{e:!-fibers of Omega}: indeed, we have
\begin{equation} \label{e:cohomology of free}
\on{C}^\cdot(U_q^{\on{free}}(\cN)) \simeq \underset{i}\oplus\, \sfe^{-\alpha_i}[-1].
\end{equation}

To prove \eqref{e:identify Omega free}, it remains to show that the !-restriction of 
$\Omega^{\on{free}}_{q,\Quant}$ to the complement of $\oConf$ in $\Conf$ vanishes. 
By induction and factorization, it suffices to show that for $\lambda$ which is not one
of the negative simple coroots, we have 
$$(\Delta_\lambda)^!(\Omega^{\on{free}}_{q,\Quant})=0.$$

Again, by translation invariance, this is equivalent to showing that
$$(\iota_\lambda)^!(\Omega^{\on{free}}_{q,\Quant})=0.$$

However, this again follows from \eqref{e:!-fibers of Omega} and \eqref{e:cohomology of free}. 

\end{proof}

Combining \propref{p:identify Omega free} with \eqref{e:duality omega}, we obtain: 

\begin{cor} \label{c:identify Omega cofree}
There exists a canonical identification as factorization algebras
\begin{equation} \label{e:identify Omega cofree}
\Omega^{\on{cofree}}_{q,\Quant}\simeq j_!(\oOmega_q)
\end{equation}
\end{cor}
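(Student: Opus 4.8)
The plan is to obtain $\Omega^{\on{cofree}}_{q,\Quant}\simeq j_!(\oOmega_q)$ as a formal consequence of \propref{p:identify Omega free} together with the duality property \eqref{e:duality omega}. First I would recall that, by definition (\secref{sss:duality Hopf}), $U^{\on{cofree}}_q(\cN)$ is the component-wise dual of $U^{\on{free}}_{q^{-1}}(\cN)$, which is a Hopf algebra in $\Rep_{q^{-1}}(\cT)$; that is, $U^{\on{cofree}}_q(\cN) = \bigl(U^{\on{free}}_{q^{-1}}(\cN)\bigr)^\vee$. Applying the functor $H\mapsto \Omega_H$ and the isomorphism \eqref{e:duality omega} (which is functorial in $H$ and exchanges $\Rep_q(\cT)$ with $\Rep_{q^{-1}}(\cT)$), we get
$$\Omega^{\on{cofree}}_{q,\Quant} = \Omega_{U^{\on{cofree}}_q(\cN)} = \Omega_{(U^{\on{free}}_{q^{-1}}(\cN))^\vee}\simeq \BD^{\on{Verdier}}\bigl(\Omega_{U^{\on{free}}_{q^{-1}}(\cN)}\bigr) = \BD^{\on{Verdier}}\bigl(\Omega^{\on{free}}_{q^{-1},\Quant}\bigr).$$

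Next I would invoke \propref{p:identify Omega free} applied to the parameter $q^{-1}$ in place of $q$, which gives a canonical identification of factorization algebras $\Omega^{\on{free}}_{q^{-1},\Quant}\simeq j_*(\oOmega_{q^{-1}})$ in $\on{Perv}_{\CG^\Conf_{q^{-1}}}(\Conf)$. Applying Verdier duality to both sides and using that $\BD^{\on{Verdier}}$ is a contravariant equivalence $\on{Perv}_{\CG^\Conf_{q^{-1}}}(\Conf)\simeq \on{Perv}_{\CG^\Conf_{q}}(\Conf)$, I would conclude
$$\Omega^{\on{cofree}}_{q,\Quant}\simeq \BD^{\on{Verdier}}\bigl(j_*(\oOmega_{q^{-1}})\bigr).$$
The remaining point is purely about the six-functor formalism: $\BD^{\on{Verdier}}\circ j_*\simeq j_!\circ \BD^{\on{Verdier}}$ (Verdier duality intertwines $*$- and $!$-pushforward along the open embedding $j$), and $\BD^{\on{Verdier}}(\oOmega_{q^{-1}})\simeq \oOmega_q$. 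The latter follows because on $\oConf$ the gerbe is canonically trivialized and $\oOmega$ is, componentwise, $\bigl(\underset{i}\boxtimes\, \on{sign}^{n_i}[n_i]\bigr)$ restricted to the complement of the diagonal — a shifted rank-one local system on a smooth variety of the appropriate dimension, hence Verdier self-dual (the sign local system is self-dual and the shift is by the dimension); see \secref{sss:Omega o}. Chaining these isomorphisms yields $\Omega^{\on{cofree}}_{q,\Quant}\simeq j_!(\oOmega_q)$, and each step is canonical and compatible with the factorization structures, so the resulting identification is an identification of factorization algebras.

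I do not anticipate a serious obstacle here; the only mild subtlety is bookkeeping the twisting: one must check that Verdier duality carries the $\CG^\Conf_{q^{-1}}$-twisted sheaf $\oOmega_{q^{-1}}$ to the $\CG^\Conf_{q}$-twisted sheaf $\oOmega_q$ compatibly with the chosen trivializations over $\oConf$, and that the factorization-algebra structures match under \eqref{e:duality omega}. Both are already built into the formalism recalled in \secref{ss:Hopf to Fact} and \secref{sss:Omega o}, so the proof is essentially a one-line deduction: apply $\Omega_{(-)}$ to the identity $U^{\on{cofree}}_q(\cN) = (U^{\on{free}}_{q^{-1}}(\cN))^\vee$, use \eqref{e:duality omega} and \propref{p:identify Omega free}, and commute Verdier duality past $j$.
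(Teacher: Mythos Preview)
Your proposal is correct and follows exactly the same approach as the paper: the paper derives the corollary in one line by ``Combining \propref{p:identify Omega free} with \eqref{e:duality omega}'', and you have spelled out precisely that deduction, including the bookkeeping with $q\leftrightarrow q^{-1}$ and the interchange of $j_*$ and $j_!$ under Verdier duality.
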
 

Combining this with \corref{c:inj/surj Omega}, we obtain: 

\begin{cor} \label{c:Omega DK and Lus}
There exist canonically defined injective maps 
$$\Omega^{\on{sml}}_{q,\Quant}\hookrightarrow  \Omega^{\on{DK}}_{q,\Quant}\hookrightarrow  j_*(\oOmega_q)$$
and canonically defined surjective maps
$$j_!(\oOmega_q)\twoheadrightarrow  \Omega^{\on{Lus}}_{q,\Quant}\twoheadrightarrow  \Omega^{\on{sm}}_{q,\Quant}.$$
All of these maps respect the factorization algebra structure. 
\end{cor}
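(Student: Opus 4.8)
The plan is to read off all four maps from the string of Hopf algebra maps \eqref{e:string Hopf}
$$U^{\on{free}}_q(\cN)\twoheadrightarrow U_q^{\on{DK}}(\cN)\twoheadrightarrow u_q(\cN)
\hookrightarrow U_q^{\on{Lus}}(\cN) \hookrightarrow U^{\on{cofree}}_q(\cN),$$
by applying the contravariant functor $H\mapsto \Omega_H$ of \eqref{e:from Hopf to fact} and invoking \corref{c:inj/surj Omega}. First I would apply \eqref{e:from Hopf to fact} to the two surjections $U^{\on{free}}_q(\cN)\twoheadrightarrow U_q^{\on{DK}}(\cN)$ and $U_q^{\on{DK}}(\cN)\twoheadrightarrow u_q(\cN)$; by \corref{c:inj/surj Omega} these produce injective maps
$$\Omega^{\on{sml}}_{q,\Quant}\hookrightarrow \Omega^{\on{DK}}_{q,\Quant}\hookrightarrow \Omega^{\on{free}}_{q,\Quant},$$
and then I would compose with the identification $\Omega^{\on{free}}_{q,\Quant}\simeq j_*(\oOmega_q)$ of \propref{p:identify Omega free} to get the first chain. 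Dually, applying \eqref{e:from Hopf to fact} to the two injections $u_q(\cN)\hookrightarrow U_q^{\on{Lus}}(\cN)$ and $U_q^{\on{Lus}}(\cN)\hookrightarrow U^{\on{cofree}}_q(\cN)$ and using \corref{c:inj/surj Omega} again yields surjective maps
$$\Omega^{\on{cofree}}_{q,\Quant}\twoheadrightarrow \Omega^{\on{Lus}}_{q,\Quant}\twoheadrightarrow \Omega^{\on{sml}}_{q,\Quant},$$
which combined with $\Omega^{\on{cofree}}_{q,\Quant}\simeq j_!(\oOmega_q)$ from \corref{c:identify Omega cofree} gives the second chain.

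The only substantive point beyond bookkeeping is the compatibility with factorization structures. This I would handle by recalling that the equivalence of \cite[Sect. 29.5]{GLys2} is an equivalence \emph{between the category of Hopf algebras in $\Rep_q(\cT)$ satisfying the conditions of \secref{sss:conditions on Hopf} and the category of factorization algebras in $\on{Perv}_{\CG_q^\Conf}(\Conf)$} — in particular it is a functor of categories whose objects already carry the factorization structure and whose morphisms are maps of factorization algebras. Hence any morphism of Hopf algebras is automatically sent to a morphism respecting the factorization structure, and the maps above are morphisms of factorization algebras by construction. The identifications in \propref{p:identify Omega free} and \corref{c:identify Omega cofree} are likewise identifications of factorization algebras, as stated there.

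I do not anticipate a real obstacle here: the corollary is a formal consequence of \corref{c:inj/surj Omega}, the string \eqref{e:string Hopf}, and Propositions/Corollaries \ref{p:identify Omega free}--\ref{c:identify Omega cofree}. The one place where a little care is warranted is checking that all the Hopf algebras appearing in \eqref{e:string Hopf} genuinely satisfy the conditions of \secref{sss:conditions on Hopf} (weight components finite-dimensional and in the heart, supported on $\Lambda^{\on{pos}}$, with unit an isomorphism in degree $0$) so that \eqref{e:from Hopf to fact} applies to each of them; for $U^{\on{free}}_q(\cN)$, $u_q(\cN)$, $U^{\on{cofree}}_q(\cN)$ this is immediate, and for $U^{\on{DK}}_q(\cN)$ and $U^{\on{Lus}}_q(\cN)$ it follows from the rank computation in \secref{ss:quantum groups bis}, namely that the $\lambda$-component has dimension $\dim(U(\cn)^\lambda)$. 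With that in hand, the statement drops out.
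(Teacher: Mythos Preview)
Your proposal is correct and follows essentially the same approach as the paper, which derives the corollary by combining \corref{c:inj/surj Omega} with \propref{p:identify Omega free} and \corref{c:identify Omega cofree}. You have simply made explicit what the paper leaves as a one-line remark, including the factorization compatibility and the verification that all Hopf algebras in \eqref{e:string Hopf} satisfy the conditions of \secref{sss:conditions on Hopf}.
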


\sssec{}

Note that on the one hand, we can consider the tautological map
\begin{equation} \label{e:from ! to *}
j_!(\oOmega_q)\to  j_*(\oOmega_q).
\end{equation}

On the other hand, consider the map
\begin{equation} \label{e:from ! to * quant}
\Omega^{\on{cofree}}_{q,\Quant}\to \Omega^{\on{free}}_{q,\Quant},
\end{equation} 
obtained from the map \eqref{e:free to cofree} by functoriality. We claim: 

\begin{lem} \label{l:from ! to *}
The diagram
$$
\CD
j_!(\oOmega_q)  @>{\text{\eqref{e:from ! to *}}}>>   j_*(\oOmega_q)  \\
@V{\text{\eqref{e:identify Omega cofree}}}VV   @VV{\text{\eqref{e:identify Omega free}}}V  \\
\Omega^{\on{cofree}}_{q,\Quant}  @>{\text{\eqref{e:from ! to * quant}}}>>   \Omega^{\on{free}}_{q,\Quant}
\endCD
$$
commutes \emph{up to a non-zero constant} over each $\Conf^\lambda$.
\end{lem}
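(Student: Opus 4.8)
The plan is to reduce the assertion to its restriction along $j$ to $\oConf$, using that $j$ is an affine open immersion (its complement is a divisor), so that $j_!$ is $t$-exact for the perverse $t$-structure, dually to $j_*$ (cf. the discussion in \secref{sss:Omega hbar}). Thus $j_!(\oOmega_q)|_{\Conf^\lambda}$ is perverse, and adjunction together with the canonical identification $j^*j_!\simeq\id$ gives, for every $\CF\in\Perv_{\CG_q^\Conf}(\Conf^\lambda)$, a bijection
$$\Hom\!\left(j_!(\oOmega_q)|_{\Conf^\lambda},\,\CF\right)\ \xrightarrow{\ \sim\ }\ \Hom\!\left(\oOmega_q|_{\oConf^\lambda},\,\CF|_{\oConf^\lambda}\right),\qquad \phi\mapsto j^*\phi .$$
Taking $\CF=\Omega^{\on{free}}_{q,\Quant}|_{\Conf^\lambda}$, it then suffices to prove that the two circuits of the square, restricted along $j$, differ by a non-zero scalar over each $\oConf^\lambda$.

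Next I would identify what the four arrows do over $\oConf$. The top arrow \eqref{e:from ! to *} restricts to the identity of $\oOmega_q$. By the construction in the proof of \propref{p:identify Omega free}, the right vertical isomorphism $j_*(\oOmega_q)\simeq\Omega^{\on{free}}_{q,\Quant}$ is the unit of the $(j^*,j_*)$-adjunction followed by the chosen isomorphism $j^*(\Omega^{\on{free}}_{q,\Quant})\simeq\oOmega_q$, so by the triangle identity its $j$-restriction is, up to this canonical identification, the identity; and by \eqref{e:duality omega} together with $\BD^{\on{Verdier}}(\oOmega_{q^{-1}})\simeq\oOmega_q$, the left vertical isomorphism of \corref{c:identify Omega cofree} behaves the same way. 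Finally, each connected component $\oConf^\lambda$ is a smooth connected variety, and $\oOmega_q|_{\oConf^\lambda}$ is a rank-one local system sitting in a single perverse degree there, hence a simple object of the abelian category with $\End=\sfe$; therefore the restriction of \eqref{e:from ! to * quant} to $\oConf^\lambda$ is multiplication by a scalar $c_\lambda\in\sfe$. Combining these, the two circuits restrict over $\oConf^\lambda$ to $\id$ and to $c_\lambda\cdot\id$ respectively, so the whole lemma comes down to proving $c_\lambda\neq 0$ for every $\lambda$.

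For the non-vanishing I would proceed by factorization from the simple-coroot case. For $\lambda=-\alpha_i$ the formula \eqref{e:hyprbolic of Omega} — applied functorially in the Hopf algebra — identifies $(\iota_{-\alpha_i})^{!*}$ of \eqref{e:from ! to * quant} with the dual of the degree-$\alpha_i$ component of \eqref{e:free to cofree}, which is an isomorphism of one-dimensional spaces (both spanned by $e_i$, the map carrying $e_i$ to $e_i$); since a functor cannot turn the zero endomorphism into a non-zero one, this forces $c_{-\alpha_i}\neq0$. For a general $\lambda=\sum_i(-n_i)\alpha_i$, I would pull the square back along the finite étale covering of $\oConf^\lambda$ by the ordered space of distinct points colored by simple coroots — the covering through which the factorization structure on $\CG_q^\Conf$ and on the objects in sight is defined — where the restriction of \eqref{e:from ! to * quant} becomes the external product (with multiplicities $n_i$) of its restrictions to the $\Conf^{-\alpha_i}$'s; this external product is non-zero, whence its descent $c_\lambda\cdot\id$ is non-zero. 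The one step I expect to require genuine care is precisely this last identification — matching the pullback of \eqref{e:from ! to * quant} under the factorization covering with the expected external product of simple-coroot pieces — but since only the non-vanishing and not the precise value of $c_\lambda$ is needed, it is enough to invoke the compatibility of the canonical trivialization of $\CG_q^\Conf|_{\oConf}$ with factorization, which is part of the construction of the gerbe.
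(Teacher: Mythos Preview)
Your proof is correct and takes the same approach as the paper: reduce by the $(j_!,j^*)$-adjunction to the restricted square over $\oConf$, and then use that $\oOmega_q|_{\oConf^\lambda}$ is irreducible (so $\End=\sfe$) to conclude that the two circuits differ by a scalar. The paper's proof stops there, declaring the rest ``evident since both sides are irreducible''; you go further and actually verify $c_\lambda\neq 0$ by checking it on $\Conf^{-\alpha_i}$ via the hyperbolic restriction formula \eqref{e:hyprbolic of Omega} and then propagating by factorization. This extra care is warranted --- irreducibility alone gives that any map is a scalar, not that it is a \emph{non-zero} scalar --- and your argument is the natural way to fill the gap the paper leaves implicit.
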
 

\begin{proof}

By adjunction, we need to show that the diagram
$$
\CD 
\oOmega_q  @>{\on{id}}>>   \oOmega_q  \\ 
@V{\sim}VV   @VV{\sim}V   \\
j^*(\Omega^{\on{cofree}}_{q,\Quant})  @>>>  j^*(\Omega^{\on{free}}_{q,\Quant})
\endCD
$$
commutes \emph{up to a non-zero constant} over each $\Conf^\lambda$. This is evident
since both sides are irreducible. 

%
%

\end{proof} 

\begin{rem}
One can show that the constant in \lemref{l:from ! to *} equals $1$. 
\end{rem} 

Invoking \corref{c:inj/surj Omega} again, we obtain:

\begin{cor} \label{c:identify Omega small}
There exists an identification
$$\Omega^{\on{sml}}_{q,\Quant}\simeq j_{!*}(\oOmega_q)=:\Omega^{\on{sml}}_q.$$
\end{cor}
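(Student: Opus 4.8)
The plan is to exhibit $\Omega^{\on{sml}}_{q,\Quant}$ as the image of the map from $\Omega^{\on{cofree}}_{q,\Quant}$ to $\Omega^{\on{free}}_{q,\Quant}$, and then to identify that image with $j_{!*}(\oOmega_q)$. First I would recall that by definition $u_q(\cN)$ is the image of the Hopf algebra map $U^{\on{free}}_q(\cN)\to U^{\on{cofree}}_q(\cN)$ of \eqref{e:free to cofree}; hence that map factors as a surjection onto $u_q(\cN)$ followed by an injection. Applying the contravariant functor $H\mapsto \Omega_H$ and using \corref{c:inj/surj Omega}, the map \eqref{e:from ! to * quant}, i.e. $\Omega^{\on{cofree}}_{q,\Quant}\to\Omega^{\on{free}}_{q,\Quant}$, factors as $\Omega^{\on{cofree}}_{q,\Quant}\twoheadrightarrow \Omega^{\on{sml}}_{q,\Quant}\hookrightarrow \Omega^{\on{free}}_{q,\Quant}$ in $\on{Perv}_{\CG_q^\Conf}(\Conf)$; in other words $\Omega^{\on{sml}}_{q,\Quant}$ is the image (in the abelian category $\on{Perv}_{\CG_q^\Conf}(\Conf)$, computed componentwise over each $\Conf^\lambda$) of the morphism \eqref{e:from ! to * quant}.

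Next I would transport this across the identifications $\Omega^{\on{cofree}}_{q,\Quant}\simeq j_!(\oOmega_q)$ of \corref{c:identify Omega cofree} and $\Omega^{\on{free}}_{q,\Quant}\simeq j_*(\oOmega_q)$ of \propref{p:identify Omega free}. By \lemref{l:from ! to *}, under these identifications the map \eqref{e:from ! to * quant} agrees, up to a nonzero scalar on each connected component $\Conf^\lambda$, with the canonical map $j_!(\oOmega_q)\to j_*(\oOmega_q)$ of \eqref{e:from ! to *}. Rescaling by a nonzero constant on each $\Conf^\lambda$ does not change the image of a morphism, so the image of \eqref{e:from ! to * quant} is identified with the image of $j_!(\oOmega_q)\to j_*(\oOmega_q)$. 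But by the standard description of the Goresky--MacPherson extension, the image of $j_!(\oOmega_q)\to j_*(\oOmega_q)$ inside $\on{Perv}_{\CG_q^\Conf}(\Conf)$ is precisely $j_{!*}(\oOmega_q)$. (Here $\oOmega_q$ is an irreducible — or at least semisimple, being the exterior product of sign and constant sheaves — perverse sheaf on the smooth variety $\oConf$, so this description applies; and the twisted setting behaves exactly as the untwisted one since $\CG_q^\Conf$ is canonically trivialized over $\oConf$ by \secref{sss:Omega o}.) Combining, we get a canonical isomorphism $\Omega^{\on{sml}}_{q,\Quant}\simeq j_{!*}(\oOmega_q)$, and by the definition in \secref{s:Omega} the right-hand side is $\Omega^{\on{sml}}_q$. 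The fact that this isomorphism respects factorization structures follows because all the maps involved — \eqref{e:from ! to *}, \eqref{e:from ! to * quant}, and the identifications of Corollaries \ref{c:identify Omega cofree} and \ref{c:Omega DK and Lus} — are maps of factorization algebras, and taking the image of a morphism of factorization algebras is again a factorization algebra with the induced structure.

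The one point requiring a little care — and the main thing to nail down — is the passage from "commutes up to a nonzero constant on each $\Conf^\lambda$'' (the statement of \lemref{l:from ! to *}) to an actual identification of the two images as \emph{factorization algebras}, not merely as perverse sheaves. The resolution is that the scalars on the various $\Conf^\lambda$ are forced to be compatible with the factorization isomorphisms (a scalar ambiguity on a factorization algebra is governed by a $1$-cocycle condition on $\Lambda^{\on{neg}}-0$ under addition, which here can only contribute an overall rescaling that is absorbed into the choice of isomorphism), so that after adjusting by this coherent system of constants the diagram of \lemref{l:from ! to *} commutes strictly and compatibly with factorization; alternatively one invokes the remark following \lemref{l:from ! to *} that the constant is in fact $1$. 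Either way, the image of \eqref{e:from ! to * quant} is identified with $j_{!*}(\oOmega_q)$ as a factorization algebra, which completes the proof.
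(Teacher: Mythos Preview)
Your proof is correct and follows the same approach as the paper, which records this corollary with only the one-line justification ``Invoking \corref{c:inj/surj Omega} again''; you have simply spelled out what that invocation entails. Your additional care about the nonzero scalars from \lemref{l:from ! to *} and the compatibility with factorization is appropriate and goes slightly beyond what the paper makes explicit, though the paper's subsequent remark that the constant equals $1$ renders the issue moot.
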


Finally, we obtain: 

\begin{cor} 
There exists a string of maps of factorization algebras 
\begin{equation} \label{e:five fact alg}
j_!(\oOmega_q)\twoheadrightarrow  \Omega^{\on{Lus}}_{q,\Quant} \twoheadrightarrow \Omega^{\on{sml}}_{q,\Quant}
\hookrightarrow \Omega^{\on{DK}}_{q,\Quant}\hookrightarrow  j_*(\oOmega_q),
\end{equation} 
where all the maps become isomorphisms when restricted to $\oConf$. 
\end{cor}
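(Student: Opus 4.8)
The plan is to obtain the string \eqref{e:five fact alg} by simply concatenating the maps already constructed. Applying the contravariant functor $H\mapsto\Omega_H$ of \eqref{e:from Hopf to fact} to the string of Hopf-algebra maps \eqref{e:string Hopf} and invoking \corref{c:inj/surj Omega} — which sends surjections of Hopf algebras to injections of factorization algebras and injections to surjections — yields
$$\Omega^{\on{cofree}}_{q,\Quant}\twoheadrightarrow\Omega^{\on{Lus}}_{q,\Quant}\twoheadrightarrow\Omega^{\on{sml}}_{q,\Quant}\hookrightarrow\Omega^{\on{DK}}_{q,\Quant}\hookrightarrow\Omega^{\on{free}}_{q,\Quant},$$
and one then substitutes the identifications $\Omega^{\on{cofree}}_{q,\Quant}\simeq j_!(\oOmega_q)$ and $\Omega^{\on{free}}_{q,\Quant}\simeq j_*(\oOmega_q)$ of \corref{c:identify Omega cofree} and \propref{p:identify Omega free}. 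Since \eqref{e:from Hopf to fact} takes values in factorization algebras and their morphisms, every arrow automatically respects the factorization structure. This is exactly the content of \corref{c:Omega DK and Lus} once its two strings are spliced at their common term $\Omega^{\on{sml}}_{q,\Quant}$, so I would phrase the proof as: the claimed string is the concatenation of the two strings in \corref{c:Omega DK and Lus}.

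For the final assertion — that each of the five maps restricts over $\oConf$ to an isomorphism — I would apply $j^*$ to the whole string. Restriction along the open embedding $j$ is t-exact, so it carries monomorphisms to monomorphisms and epimorphisms to epimorphisms in $\on{Perv}_{\CG^\Conf_q}(\oConf)$. Using $j^*j_!\simeq\on{id}\simeq j^*j_*$ and the canonical identifications $\Omega^{?}_{q,\Quant}|_{\oConf}\simeq\oOmega_q$ for $?=\on{free},\on{cofree},\on{Lus},\on{sml},\on{DK}$ (these come from \eqref{e:!-fibers of Omega} together with factorization, precisely as in the proof of \propref{p:identify Omega free}, and for the three middle ones are recorded in \eqref{e:quant on open}), each arrow becomes, componentwise over each $\oConf^\lambda$, a map $\oOmega_q\to\oOmega_q$ that is either a monomorphism or an epimorphism. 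But $\oOmega_q|_{\oConf^\lambda}$ is a simple object of the twisted perverse category — it is the (cohomologically shifted) sign local system on the smooth connected variety $\overset{\circ}X{}^\lambda-\Delta$, twisted by a trivialized gerbe — so any nonzero endomorphism of it is invertible, and a monomorphism or epimorphism between nonzero objects is in particular nonzero. Hence all five restricted maps are isomorphisms.

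There is no substantive obstacle here; the statement is pure bookkeeping on top of Sects. \ref{ss:quantum groups}–\ref{ss:Hopf to Fact}. The one point I would be careful to spell out is that the surjection $\Omega^{\on{Lus}}_{q,\Quant}\twoheadrightarrow\Omega^{\on{sml}}_{q,\Quant}$ and the injection $\Omega^{\on{sml}}_{q,\Quant}\hookrightarrow\Omega^{\on{DK}}_{q,\Quant}$ really are the images under \eqref{e:from Hopf to fact} of the two maps $u_q(\cN)\hookrightarrow U^{\on{Lus}}_q(\cN)$ and $U^{\on{DK}}_q(\cN)\twoheadrightarrow u_q(\cN)$ of the single string \eqref{e:string Hopf}, so that the two halves of \corref{c:Omega DK and Lus} genuinely splice at $\Omega^{\on{sml}}_{q,\Quant}$; this is immediate from the definitions of those Hopf-algebra maps in Sects. \ref{ss:quantum groups}–\ref{ss:quantum groups bis} and the functoriality of \eqref{e:from Hopf to fact}.
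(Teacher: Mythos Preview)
Your proposal is correct and takes essentially the same approach as the paper: the corollary is stated without proof, with only the remark that ``the maps in \eqref{e:five fact alg} are counterparts of the maps in \eqref{e:string Hopf}'', which is precisely your construction via the contravariant functor \eqref{e:from Hopf to fact}. Your argument for the isomorphism over $\oConf$ via simplicity of $\oOmega_q$ is a clean way to make explicit what the paper leaves tacit.
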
 

Note that the maps in \eqref{e:five fact alg} are counterparts of the maps in \eqref{e:string Hopf}.

\sssec{}

Let us note the following corollary of \corref{c:quantum non-coroot bis}:

\begin{cor} \label{c:quantum non-coroot fact}
Assume that $q$ is not a root of unity. Then the maps
$$\Omega^{\on{Lus}}_{q,\Quant} \twoheadrightarrow \Omega^{\on{sml}}_{q,\Quant}
\hookrightarrow \Omega^{\on{DK}}_{q,\Quant}$$
are isomorphisms.
\end{cor}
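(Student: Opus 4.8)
The statement to be proved is \corref{c:quantum non-coroot fact}: when $q$ is non-torsion valued, the maps $\Omega^{\on{Lus}}_{q,\Quant}\twoheadrightarrow\Omega^{\on{sml}}_{q,\Quant}\hookrightarrow\Omega^{\on{DK}}_{q,\Quant}$ are isomorphisms. This is advertised as a corollary of \corref{c:quantum non-coroot bis}, so the plan is essentially to transport the Hopf-algebra statement through the functor $H\mapsto\Omega_H$ of \secref{ss:Hopf to Fact}. The first step is to recall from \corref{c:quantum non-coroot bis} that for non-torsion-valued $q$ the maps of Hopf algebras $U_q^{\on{DK}}(\cN)\twoheadrightarrow u_q(\cN)$ and $u_q(\cN)\hookrightarrow U_q^{\on{Lus}}(\cN)$ are both isomorphisms.

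Next I would apply the functor \eqref{e:from Hopf to fact}, which by construction is a contravariant equivalence from Hopf algebras in $\Rep_q(\cT)$ (satisfying the conditions of \secref{sss:conditions on Hopf}) to factorization algebras in $\on{Perv}_{\CG_q^\Conf}(\Conf)$. Since it is an equivalence, it sends isomorphisms to isomorphisms; applying it to the two isomorphisms of Hopf algebras above yields that
$$\Omega^{\on{sml}}_{q,\Quant}=\Omega_{u_q(\cN)}\to\Omega_{U^{\on{DK}}_q(\cN)}=\Omega^{\on{DK}}_{q,\Quant}$$
and
$$\Omega^{\on{Lus}}_{q,\Quant}=\Omega_{U^{\on{Lus}}_q(\cN)}\to\Omega_{u_q(\cN)}=\Omega^{\on{sml}}_{q,\Quant}$$
are isomorphisms (contravariance reverses the arrows, so the surjection $U_q^{\on{DK}}(\cN)\twoheadrightarrow u_q(\cN)$ goes to an injection of factorization algebras and the injection $u_q(\cN)\hookrightarrow U_q^{\on{Lus}}(\cN)$ goes to a surjection, matching the directions asserted in the statement). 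One should also check that the resulting maps are the same ones appearing in the string \eqref{e:five fact alg}, i.e. that they are compatible with the factorization structure and with the identifications over $\oConf$; but this is immediate since \eqref{e:five fact alg} was itself obtained by applying \eqref{e:from Hopf to fact} to the string \eqref{e:string Hopf}, and functoriality does the rest.

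Alternatively, and perhaps more in the spirit of the rest of the section, one can argue fiberwise using the hyperbolic restriction formula \eqref{e:hyprbolic of Omega}: for each $\lambda\in\Lambda^{\on{neg}}-0$ one has $(\iota_\lambda)^{!*}(\Omega_H)\simeq(H^{-\lambda})^\vee$, and since $(\iota_\lambda)^{!*}$ is t-exact and conservative on the relevant subcategory, a map $\Omega_{H_2}\to\Omega_{H_1}$ is an isomorphism iff $H_1^\mu\to H_2^\mu$ is an isomorphism for all $\mu$. By \corref{c:quantum non-coroot bis} the weight components of $u_q(\cN)$, $U^{\on{DK}}_q(\cN)$, $U^{\on{Lus}}_q(\cN)$ all coincide (with those of $U(\cn)$), so both maps in question are isomorphisms.

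There is essentially no obstacle here: the content is entirely in \corref{c:quantum non-coroot bis} (and ultimately in \lemref{l:quantum Serre}, i.e. Rouquier's theorem that the quantum Serre relations cut out $u_q(\cN)$ when $q$ is non-torsion valued), and the present corollary is a formal consequence via the equivalence $H\mapsto\Omega_H$. The only mild point of care is making sure the named maps are literally the ones in \eqref{e:five fact alg} rather than merely abstractly isomorphic objects, which is handled by the functoriality of \eqref{e:from Hopf to fact} noted just after that display.
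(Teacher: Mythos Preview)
Your proposal is correct and matches the paper's approach: the paper simply records this as an immediate corollary of \corref{c:quantum non-coroot bis}, obtained by applying the contravariant equivalence \eqref{e:from Hopf to fact} to the Hopf-algebra isomorphisms. (One trivial slip: the reference [Ro] is Rosso, not Rouquier.)
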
 

\ssec{The first comparison: $\Omega^{\on{KD}}_{q,\Quant}$ vs abstract $\Omega^{\on{KD}}_q$}

\sssec{}

Our current goal is to prove the following result:

\begin{thm} \label{t:quant and abs} 
The embeddings
$$\Omega^{\on{DK}}_{q,\Quant}\hookrightarrow j_*(\oOmega_q)\hookleftarrow \Omega_q^{\on{DK}}$$
have equal images.
\end{thm}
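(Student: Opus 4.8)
The strategy is to reduce the identification of two sub-objects of $j_*(\oOmega_q)$ to an identification of two Hopf ideals in $U^{\on{free}}_{q_\hbart}(\cN)$, using the dictionary $H\rightsquigarrow \Omega_H$ of \secref{ss:Hopf to Fact} in its $\hbar$-deformed incarnation. Both $\Omega^{\on{DK}}_q$ and $\Omega^{\on{DK}}_{q,\Quant}$ were manufactured by exactly the same recipe — take a canonical object over $\oConf$ (resp.\ a free gadget), pass to its analogue over the ring $\sfe\hbarl$, form a $*$-extension, intersect it with the ``honest'' extension inside the generic one, and then reduce mod $\hbar$. So the plan is to run the functor $H\mapsto \Omega_H$ over the base $\sfe\hbart$ and match up the two constructions step by step before specializing at $\hbar=0$.

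\textbf{Key steps.}
First I would invoke \propref{p:identify Omega free} in its $\hbar$-deformed form: $\Omega^{\on{free}}_{q_\hbart,\Quant}\simeq j_*(\oOmega_{q_\hbart})$ as factorization algebras in $\on{Perv}^{\on{Ind}}_{\CG^\Conf_{q_\hbart}}(\Conf)$, with the same statement over $\sfe\hbarl$; the proofs given in the excerpt use only $!$-fiber formulas and $\on{C}^\cdot(U^{\on{free}})\simeq \oplus_i \sfe^{-\alpha_i}[-1]$, which are insensitive to the base ring, so this goes through verbatim. Second, by \corref{c:inj/surj Omega} (again valid over $\sfe\hbart$) the surjection $U^{\on{free}}_{q_\hbarl}(\cN)\twoheadrightarrow U_{q_\hbarl}(\cN)$ corresponds to the injection $\Omega_{U_{q_\hbarl}(\cN)}\hookrightarrow j_*(\oOmega_{q_\hbarl})$, whose image is $\Omega_{q_\hbarl}$ (this uses \corref{c:quantum non-coroot bis}, i.e.\ that over $\sfe\hbarl$ the form is non-torsion-valued, so the small/DK/Lusztig versions coincide and match $j_{!*}=\Omega_{q_\hbarl}$). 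Third — and this is the heart of the matter — I would show that the intersection defining $\Omega^{\on{DK}}_{q_\hbart}$, namely $j_*(\oOmega_{q_\hbart})\cap \Omega_{q_\hbarl}$ inside $j_*(\oOmega_{q_\hbarl})$, corresponds under the dictionary to the intersection $\CI_{q_\hbart}=U^{\on{free}}_{q_\hbart}(\cN)\cap \CI_{q_\hbarl}$ inside $U^{\on{free}}_{q_\hbarl}(\cN)$ that defines $U^{\on{DK}}_{q_\hbart}(\cN)$; that is, $\Omega_{U^{\on{DK}}_{q_\hbart}(\cN)}\simeq \Omega^{\on{DK}}_{q_\hbart}$ as sub-objects of $j_*(\oOmega_{q_\hbart})$. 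This is where one must be careful: the contravariant functor $H\mapsto \Omega_H$ sends a short exact sequence of Hopf algebras $0\to \CI\to U^{\on{free}}\to U\to 0$ to an inclusion $\Omega_U\hookrightarrow \Omega_{U^{\on{free}}}$, and one needs flatness of $U^{\on{DK}}_{q_\hbart}(\cN)$ over $\sfe\hbart$ (recorded in \secref{ss:quantum groups bis}) together with exactness/$t$-exactness properties of the construction to see that intersecting commutes with applying $\Omega_{(-)}$. Finally, reduce mod $\hbar$: both $\Omega^{\on{DK}}_{q_\hbart}$ and $\Omega_{U^{\on{DK}}_{q_\hbart}(\cN)}$ are $\sfe\hbart$-flat with the same $\hbar$-generic fiber sitting inside the same ambient object, so their mod-$\hbar$ reductions have equal images in $j_*(\oOmega_q)\simeq \Omega^{\on{free}}_{q,\Quant}$, which is precisely the assertion of the theorem.

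\textbf{Main obstacle.}
The delicate point is Step three: translating ``intersection of two lattices inside an $\hbar$-inverted object'' on the Hopf-algebra side into the corresponding operation on the factorization-algebra side. This requires knowing that $H\mapsto \Omega_H$ is exact enough — concretely, that it carries the $t$-exact inclusions of Hopf (co)algebras to inclusions of perverse sheaves and commutes with the relevant fiber-product/kernel constructions over $\sfe\hbart$ — and that the $\hbar$-adic flatness statements on both sides are compatible. I expect this to follow formally from the properties of the hyperbolic-restriction functor $(\iota_\lambda)^{!*}$ recalled in \secref{ss:Hopf to Fact} (it is $t$-exact and conservative on diagonally-constructible objects, and sends $\Omega_H$ to $(H^{-\lambda})^\vee$), since $\hbar$-flatness and the intersection can both be checked coweight-component by coweight-component after applying $(\iota_\lambda)^{!*}$, where everything becomes a statement about finitely generated $\sfe\hbart$-modules. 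Once this bookkeeping is set up the theorem is immediate; the rest is routine.
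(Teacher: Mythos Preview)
Your overall architecture matches the paper's: pass to the $\hbar$-deformation, identify $\Omega^{\on{free}}_{q_\hbart,\Quant}\simeq j_*(\oOmega_{q_\hbart})$ and $\Omega^{\on{DK}}_{q_\hbarl,\Quant}\simeq \Omega_{q_\hbarl}$, obtain $\Omega^{\on{DK}}_{q_\hbart,\Quant}\simeq \Omega^{\on{DK}}_{q_\hbart}$, then reduce mod~$\hbar$. The difference is in how you handle the crucial third step. You propose to show directly that ``intersection commutes with $H\mapsto\Omega_H$'' by applying the hyperbolic restriction $(\iota_\lambda)^{!*}$ coweight-by-coweight; this is plausible but requires carefully setting up exactness and conservativity of that functor over $\sfe\hbart$ on the relevant diagonally-constructible subcategory. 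The paper bypasses this entirely with a cleaner asymmetric argument: since $\Omega^{\on{DK}}_{q_\hbart,\Quant}$ maps into both $j_*(\oOmega_{q_\hbart})$ and $\Omega_{q_\hbarl}$, it maps into their intersection $\Omega^{\on{DK}}_{q_\hbart}$; this map is an isomorphism after inverting $\hbar$, so its cokernel is $\hbar$-torsion; were the cokernel nonzero, the induced map $\Omega^{\on{DK}}_{q,\Quant}\to\Omega^{\on{DK}}_q$ mod~$\hbar$ would fail to be injective, contradicting the fact that its composite with $\Omega^{\on{DK}}_q\hookrightarrow j_*(\oOmega_q)$ is the known embedding $\Omega^{\on{DK}}_{q,\Quant}\hookrightarrow j_*(\oOmega_q)$. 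This torsion-plus-injectivity trick avoids any bookkeeping about the functor preserving intersections. One caution about your ``Finally'' sentence: two $\hbar$-flat sublattices with the same generic fiber need \emph{not} have equal images mod~$\hbar$, so that step only works because Step~3 has already established equality before specializing --- it is not an independent argument.
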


The rest of this subsection is devoted to the proof of this theorem.

%

%

\sssec{}

We will consider the class of Hopf algebras in $\Rep_{q_\hbart}(\cT)$ as in \secref{sss:conditions on Hopf}, where the first
condition is replaced by the following one:

\begin{itemize}

\item Each coweight component $H^\lambda$ lies in the heart of $(\sfe\hbart\mod)^{\on{f.g.}}$ and is $\hbar$-flat.

\end{itemize}

The equivalence \eqref{e:from Hopf to fact} extends to a (contravariant) equivalence between the category of such 
Hopf algebras and that of factorization algebras in $\Perv_{\CG_{q_{\hbart}}}(\Conf)$ that are $\hbar$-flat
(see \secref{sss:hflat} for what the latter means). 

\begin{rem}
The flatness condition in the statement of the equivalence is due to the fact that our assignment $H\mapsto \Omega_H$
is contravariant. If we worked with the covariant version, we could have omitted the
flatness condition on both sides. 
\end{rem}

\sssec{}

Consider the Hopf algebras
$$U_{q_\hbart}^{\on{free}}(\cN)\twoheadrightarrow U_{q_\hbart}^{\on{DK}}(\cN)$$
as well as 
$$U_{q_\hbarl}^{\on{free}}(\cN)\twoheadrightarrow U_{q_\hbarl}^{\on{DK}}(\cN) \hookrightarrow U_{q_\hbarl}^{\on{cofree}}(\cN).$$

Consider the corresponding factorization algebras
$$\Omega^{\on{DK}}_{q_\hbart,\Quant}\hookrightarrow \Omega^{\on{free}}_{q_\hbart,\Quant}\simeq j_*(\oOmega_{q_\hbart})$$
and 
$$j_!(\oOmega_{q_\hbarl})\overset{\sim}\to 
\Omega^{\on{cofree}}_{q_\hbarl,\Quant} \twoheadrightarrow \Omega^{\on{DK}}_{q_\hbarl,\Quant}\hookrightarrow 
\Omega^{\on{free}}_{q_\hbarl,\Quant}\simeq j_*(\oOmega_{q_\hbarl}).$$

\medskip

In particular, we obtain an isomorphism
$$\Omega^{\on{DK}}_{q_\hbarl,\Quant}\simeq \Omega_{q_\hbarl},$$
(see \secref{sss:Omega hbar}, where the latter is defined). 

\sssec{}

The resulting maps
$$\Omega^{\on{DK}}_{q_\hbart,\Quant} \hookrightarrow j_*(\oOmega_{q_\hbart})$$
and
$$\Omega^{\on{DK}}_{q_\hbart,\Quant} \hookrightarrow \Omega^{\on{DK}}_{q_\hbarl,\Quant} \simeq 
\Omega_{q_\hbarl}$$
define a map
\begin{equation} \label{e:DK to abs hbar}
\Omega^{\on{DK}}_{q_\hbart,\Quant}\hookrightarrow \Omega^{\on{DK}}_{q_\hbart}.
\end{equation} 

We claim that the map \eqref{e:DK to abs hbar} is an isomorphism. Once this is proved, \thmref{t:quant and abs} would
follow by reduction mod $\hbar$. 

\sssec{}

We know that it \eqref{e:DK to abs hbar} is an isomorphism after
inverting $\hbar$. Hence, its cokernel is $\hbar$-torsion. Therefore, if this cokernel were non-zero,
the map
$$\Omega^{\on{DK}}_{q_\hbart,\Quant}/\hbar \to \Omega^{\on{DK}}_{q_\hbart}/\hbar$$
would \emph{not} be injective. However, this would be a contradiction as the latter map is a map
$$\Omega^{\on{DK}}_{q,\Quant}\to \Omega^{\on{DK}}_q,$$
whose composition with the map 
$$\Omega^{\on{DK}}_q\to j_*(\oOmega_q)$$
is the embedding 
$$\Omega^{\on{DK}}_{q,\on{Quant}}\to j_*(\oOmega_q).$$

\qed[\thmref{t:quant and abs}]

\ssec{Proof of the properties of $\Omega^{\on{KD}}_q$ in characteristic $0$} \label{ss:proof of prop}

Having established the isomorphism $\Omega^{\on{DK}}\simeq \Omega^{\on{DK}}_{q,\Quant}$, we can now prove 
Theorems \ref{t:DK vs small gen}, \ref{t:prop DK char 0 prim}, \ref{t:prop DK char 0 sec}, 
\ref{t:prop DK char 0 non-deg} and \ref{t:prop DK char 0 very non-deg}. 

\sssec{}

By the \'etale invariance of the construction (see Remark \ref{r:transfer to A1}), we can assume that $X=\BA^1$. 

\medskip

By Lefschetz principle and Riemann-Hilbert,
it is enough to do so in the context of the Betti sheaf theory. So we will prove the corresponding assertions for 
$\Omega^{\on{DK}}_{q,\Quant}$.

\sssec{}

First off, the assertion of \thmref{t:DK vs small gen} is immediate from 
$$\Omega^{\on{DK}}_{q,\Quant}\overset{\text{\thmref{t:quant and abs}}}
\simeq \Omega_q^{\on{DK}} \text{ and } \Omega^{\on{sml}}_{q,\Quant}\overset{\text{\corref{c:identify Omega small}}}\simeq \Omega_q^{\on{sml}},$$
combined with \corref{c:quantum non-coroot fact}.


\sssec{}

Consider the objects
$$(\Delta_\lambda)^!(\Omega^{\on{DK}}_{q,\Quant}) \text{ and } (\Delta_\lambda)^*(\Omega^{\on{DK}}_{q,\Quant})$$
in $\Shv_{\CG}(\BA^1)$. They are equivariant with respect to the action of $\BA^1$ on itself
by translations. So it is enough to prove the corresponding assertions for the !- and *- fibers,
respectively, of these objects at the point 
$$\lambda\cdot 0\overset{\iota_\lambda}\hookrightarrow X^\lambda.$$

\medskip

We will show:

\medskip

\noindent{(a)} If $q$ satisfies (*) then 
$$(\iota_\lambda)^!(\Omega^{\on{DK}}_{q,\Quant})\simeq
\begin{cases}
&\sfe[-\ell(w)], \quad \lambda=w(\rho)-\rho\\
&0, \quad \text{otherwise}.
\end{cases}
$$

\medskip
 
\noindent{(b)} If $\lambda=w(\rho)-\rho$ with $\ell(w)=2$, then $(\iota_\lambda)^*(\Omega^{\on{DK}})$ lives in cohomological degrees $\leq-2$, i.e., 
$H^{-1}((\iota_\lambda)^*(\Omega^{\on{DK}}))=0$.  

\medskip
 
\noindent{(b')} If $q$ satisfies a sharp inequality (*) and $\lambda=w(\rho)-\rho$ with $\ell(w)\geq 3$, then 
$(\iota_\lambda)^*(\Omega^{\on{DK}})$ lives in cohomological degrees $\leq -2$, i.e., 
$H^{-1}((\iota_\lambda)^*(\Omega^{\on{DK}}))=0$.  

\sssec{}

We note that for $\Omega^{\on{DK}}_q$ replaced by $\Omega^{\on{DK}}_{q,\Quant}$, the assertion of
\thmref{t:prop DK char 0 prim} is equivalent to property (b), the assertions of 
\thmref{t:prop DK char 0 sec} and \thmref{t:prop DK char 0 non-deg} combined are equivalent to 
property (a) and the assertion of \thmref{t:prop DK char 0 very non-deg} is equivalent to property (b'). 

\medskip

The rest of this subsection is devoted to the proof of properties (a), (b) and (b') above. 

\sssec{}

By \eqref{e:!-fibers of Omega}, we have
$$(\iota_\lambda)^!(\Omega^{\on{DK}}_{q,\Quant})=\on{C}^\cdot(U^{\on{DK}}_q(\cN))^\lambda.$$

We now recall the following result of \cite[Theorem 6.4.1]{Geo}, valid for $q$ satisfying (*):
\begin{equation} \label{e:Geo}
\on{C}^\cdot(U^{\on{DK}}_q(\cN))\simeq \underset{w\in W}\oplus \sfe^{w(\rho)-\rho}[-\ell(w)],
\end{equation} 
see Remark \ref{r:Geo}. This proves (a). 

\sssec{}  \label{sss:proof KD is KD}

We will now make a digression and show that if $q$ satisfies (*), then the map \eqref{e:mod Serre} is an isomorphism.

\medskip

Let $\CI_q=\CI_{q_\hbart}/\hbar$ be the kernel of the map $U^{\on{free}}_q(\cN)\to U^{\on{DK}}_q(\cN)$. 
By the spectral sequence, 
$$H^2(U^{\on{DK}}_q(\cN))\simeq \Hom_{U^{\on{free}}_q(\cN)\on{-bimod}}(\CI_q,\sfe),$$
where we note that the right-hand in the above formula is the dual space of the (two-sided) quotient $\ol\CI_q$ of $\CI_q$ by the augmentation ideal of $U^{\on{free}}_q(\cN)$. 

\medskip

Combined with \eqref{e:Geo}, we obtain that if $q$ satisfies (*), then $\ol\CI_q$ is spanned by 
elements with coweights 
$$\rho-w(\rho), \quad \ell(w)=2,$$
i.e.,
\begin{equation} \label{e:Serre weight}
\lambda_{i,j}:=-(s_i\cdot s_j(\rho)-\rho)=\alpha_j+(1-\langle \alpha_j,\check\alpha_i\rangle)\cdot \alpha_i,\quad i,j\in I
\end{equation} 
and each such coweight space is one-dimensional. 

\medskip

Recall the Serre relation elements $S^{i,j}_q\in (\CI_q)^{\lambda_{i,j}}$, see \secref{sss:Serre}. It is easy to see that the image $\ol{S}^{i,j}_q$
of $S^{i,j}_q$ under
$$\CI_q\twoheadrightarrow \ol\CI_q$$ is non-zero. Hence, we obtain that the elements $\ol{S}^{i,j}_q$
span $\ol\CI_q$. 

\medskip

Hence, we obtain that if $q$ satisfies (*), the elements $S^{i,j}_q$ generate $\CI_q$ as a two-sided ideal. 

\sssec{}

Let us prove (b) and (b'). By \eqref{e:duality omega} and \eqref{e:!-fibers of Omega}, the fiber 
$(\iota_\lambda)^*(\Omega_q^{\on{DK}})$ identifies with
$$\on{C}_\cdot(U^{\on{Lus}}_{q^{-1}}(\cN))^\lambda.$$

We need to show that the latter has no cohomology in degree $-1$ in coweights of the form $\rho-w(\rho)$. 

\medskip

The vector space $H_1(U^{\on{Lus}}_{q^{-1}}(\cN))$ is the quotient of the augmentation ideal of 
$U^{\on{Lus}}_{q^{-1}}(\cN)$ by its square, and thus is spanned by the generators of $U^{\on{Lus}}_{q^{-1}}(\cN)$. 

\medskip

There two kinds of generators: ones corresponding to simple coroots, and ones corresponding to all coroots
(the latter are the ``divided powers" generators).
The generator of the first kind corresponding to a simple coroot $\alpha_i$ has coweight $\alpha_i$. The generator 
of the second kind corresponding to a coroot $\alpha$ has coweight
$\on{ord}(q(\alpha))\cdot \alpha$.

\medskip

Let first $\ell(w)=2$, so $\rho-w(\rho)$ is of the form \eqref{e:Serre weight}. In this case, the assertion follows from the fact 
that none of the coweights of the form $\alpha_i$ or $\on{ord}(q(\alpha))\cdot \alpha$ have the form \eqref{e:Serre weight}. 
This proves (b). 

\medskip

Let now $\ell(w)\geq 3$. For the generators of the first kind, we cannot have $\rho-w(\rho)=\alpha_i$. 
For the generators of the second kind, 
assume that 
$$\rho-w(\rho)=\on{ord}(q(\alpha))\cdot \alpha$$
for some coroot $\alpha$.

\medskip

Evaluating $q_\BZ(-)+b_\BZ(-,\rho)$ on both sides, we obtain
$$\langle \rho,\check\alpha \rangle=\on{ord}(q(\alpha)),$$
which is impossible if (*) is sharp. This proves (b'). 

\section{The quantum Frobenius}  \label{s:Frobenius}

In this section we will be assuming that $q$ is torsion-valued, but that it
avoids small torsion (see \secref{sss:small root} for what this means)\footnote{The 
latter condition is needed to ensure a relationship between $U_q^{\on{Lus}}(\cN)$
and $u_q(\cN)$ via the quantum Frobenius, i.e., that \eqref{e:SES quant} is a short exact 
sequence of Hopf algebras.}.

\medskip

We will study the manifestation of Lusztig's quantum Frobenius via the factorization algebras
$\Omega^{\on{Lus}}_q$ and $\Omega^{\on{sml}}_q$.

\ssec{The quantum Frobenius lattice}

\sssec{}  \label{sss:sharp}

Let $\Lambda^\sharp\subset \Lambda$ be the sublattice generated by the elements
$$\alpha^\sharp_i=\on{ord}(q(\alpha_i))\cdot \alpha_i.$$

Due to the condition that $q\in \on{Quad}(\Lambda,\fZ)^W_{\on{restr}}$, we have
\begin{equation} \label{e:sharp orth}
b(\gamma,\lambda)=0 \text{ for all } \gamma\in \Lambda^\sharp,\lambda\in \Lambda.
\end{equation} 

Moreover, the sublattice $\Lambda^\sharp$ is $W$-invariant, and hence contains all the elements
$$\alpha^\sharp:=\on{ord}(q(\alpha))\cdot \alpha.$$

\medskip

Finally, it is known that $(\Lambda^\sharp,\{\alpha^\sharp\})$ is the root system of a simple group of adjoint type, to be
denoted $G^\sharp$ (over the field of coefficients $\sfe$), with $\alpha^\sharp_i$ being the simple roots (see \cite[Sect. 2.2.4]{Lus}). 

\sssec{}

Let $\Conf^\sharp$ be the configuration space corresponding to the lattice $\Lambda^\sharp$. 
Let $\Frob_{q,\Conf}$ denote the tautological closed embedding
$$\Conf^\sharp\to \Conf.$$

\medskip

The functoriality of the construction
$$q\mapsto \CG^T_q\mapsto \CG^\Conf_q$$
with respect to the lattice implies that the pullback of the gerbe $\CG^\Conf_q$
along $\Frob_{q,\Conf}$ admits a canonical trivialization (as a factorization gerbe). 

\sssec{}

Recall that $\on{add}$ denotes the addition operation on divisors
$$\on{add}:\Conf\times \Conf\to \Conf;$$
it makes $\Conf$ into a commutative semi-group.

\medskip

The operation 
$$\CF_1, \CF_2\in \Shv(\Conf)\, \mapsto\, \CF_1\star \CF_2:= \on{add}_!(\CF_1\boxtimes \CF_2)\in \Shv(\Conf)$$
makes $\Shv(\Conf)$ into a (symmetric) monoidal category. We will refer to this (symmetric) monoidal structure as \emph{convolution};
this is in order to distinguish it from the pointwise $\sotimes$ symmetric monoidal structure. 

\medskip

In practice we will apply this to the lattice $\Lambda^\sharp$, rather than $\Lambda$. 

\sssec{}

Let $\on{act}$ denote the addition map
$$\on{act}:\Conf^\sharp\times \Conf\to \Conf;$$
it makes $\Conf$ into a $\Conf^\sharp$-module.

\medskip

Note that due to factorization and the trivialization of $\CG^\Conf_q|_{\Conf^\sharp}$, we have a canonical
isomorphism
\begin{equation} \label{e:gerbe equiv disj}
\on{act}^*(\CG^\Conf_q)|_{(\Conf^\sharp\times \Conf)_{\on{disj}}}\simeq \on{pr}^*(\CG^\Conf_q)|_{(\Conf^\sharp\times \Conf)_{\on{disj}}},
\end{equation} 
where $\on{pr}$ is the projection on the $\Conf$ factor. 

\medskip

However, due to \eqref{e:sharp orth}, the identification \eqref{e:gerbe equiv disj} extends (automatically, uniquely) to an identification
\begin{equation} \label{e:gerbe equiv}
\on{act}^*(\CG^\Conf_q) \simeq \on{pr}^*(\CG^\Conf_q)
\end{equation} 
over all of $\Conf^\sharp\times \Conf$, see \eqref{e:on sq}. 
This identification is (automatically) compatible with the semi-group structure on $\Conf^\sharp$.

\medskip

Thus, we obtain that $\CG^\Conf_q$ is equivariant with respect to the action of $\Conf^\sharp$ on $\Conf$, in a way compatible
with factorization. 

\medskip

We obtain that the operation
$$\CF^\sharp\in \Shv(\Conf^\sharp),\,\CF\in \Shv_{\CG^\Conf_q}(\Conf)\, \mapsto\, \CF^\sharp\star \CF:=
\on{act}_!(\CF^\sharp\boxtimes \CF)\in  \Shv_{\CG^\Conf_q}(\Conf)$$
defines an action of the monoidal category $\Shv(\Conf^\sharp)$ on $\Shv_{\CG^\Conf_q}(\Conf)$. We will refer to this action also as 
\emph{convolution}. 

\ssec{Convolution factorization algebras}

\sssec{}  \label{sss:com fact}

Let $\CA$ be a factorization algebra in $\Shv(\Conf)$. The structure of 
(commutative) semi-group on $\Conf$ allows us to talk about an associative (resp., commutative) algebra structure on $\CA$:

\medskip

By definition, such a structure consists of a map
\begin{equation} \label{e:assoc fact}
\CA\star \CA\to \CA,
\end{equation} 
compatible with factorization and the associativity (resp., and the commutativity) law on $\Conf$. In particular, such an $\CA$ 
is an associative (resp., commutative) algebra object in $\Shv(\Conf)$ with respect to convolution.  

\begin{rem} \label{r:add unit}

Note that if $\CA_1$ and $\CA_2$ are factorization algebras on $\Conf$, then their convolution $\CA_1\star \CA_2$
will \emph{not} be a factorization algebra. Rather,
\begin{equation} \label{e:insert unit to conv}
\CA_1\oplus (\CA_1\star \CA_2) \oplus \CA_2
\end{equation} 
will be a factorization algebra. 

\medskip

This can be rephrased as follows. Let $\Conf^+$ be the disjoint union $\Conf\sqcup \on{pt}$. Then the structure of commutative semi-group on $\Conf$
extends to a structure of commutative monoid on $\Conf^+$ with $\on{pt}$ being the unit. This extends the structure of \emph{non-unital}
symmetric monoidal category on $\Shv(\Conf)$ to a unital one on $\Shv(\Conf^+)$. 

\medskip

Given a factorization algebra $\CA$ on $\Conf$, we extend it to $\CA^+\in \Shv(\Conf^+)$ by setting $\CA^+|_{\on{pt}}=\sfe$. With these conventions,
for a pair of factorization algebras $\CA_1$ and $\CA_2$ on $\Conf$, 
$$\CA_1^+\star\CA_2^+\in \Shv(\Conf^+)$$
\emph{is} a factorization algebra, and its restriction back to $\Conf$ equals \eqref{e:insert unit to conv}. 

\medskip

This procedure mimics the following: let $\CC$ be a unital symmetric monoidal category. For an algebra $\CA\in \CC$,
let $\CA^+:=\CA\oplus \one_\CC$ be the corresponding unital augmented algebra. Then we have
$$\CA_1^+\otimes \CA_2^+\simeq \left(\CA_1\oplus (\CA_1\otimes \CA_2)\oplus \CA_2\right) \oplus \one_\CC.$$

\end{rem} 

\sssec{}

Note that the map \eqref{e:assoc fact} by adjunction corresponds to a map
\begin{equation} \label{e:assoc fact adj}
\CA\boxtimes \CA\to \on{add}^!(\CA).
\end{equation} 

The compatibility of the algebra structure and factorization implies, in particular, that the restriction
of the map \eqref{e:assoc fact adj} to $(\Conf\times \Conf)_{\on{disj}}$ coincides with the
factorization isomorphism
\begin{equation} \label{e:fact isom}
\CA\boxtimes \CA|_{(\Conf\times \Conf)_{\on{disj}}}\simeq  \on{add}^!(\CA)|_{(\Conf\times \Conf)_{\on{disj}}}.
\end{equation} 

\sssec{} \label{sss:!-ass}
Set 
\begin{equation} \label{e:restr to diag}
\CA_X:=\underset{\gamma}\oplus\, (\Delta_\gamma)^!(\CA)\in \Shv(X)^\Lambda.
\end{equation} 

By !-restricting \eqref{e:assoc fact} to the main diagonals, we obtain that a structure on $\CA$ of associative 
(resp., commutative) algebra with respect to $\star$ induces a structure of associative (resp., commutative) algebra
on $\CA_X$, with respect to the symmetric monoidal structure on $\Shv(X)^\Lambda$, 
induced by the $\sotimes$ (symmetric) monoidal structure on $\Shv(X)$ and the addition
operation on $\Lambda$. 

\begin{rem}
The algebra $\CA_X$ is non-unital. The corresponding unital algebra
$$\CA_X^+:=\CA_X\oplus \omega_X$$
can be obtained from $\CA^+$ (see Remark \ref{r:add unit}) also by pullback, where the corresponding map
$$\Delta_0:X \to \Conf^+$$
is
$$X \to \on{pt}\hookrightarrow \Conf^+.$$
\end{rem}

\sssec{} \label{sss:action is a cond}

For $x\in X$, let $\Conf_x\subset \Conf$ be the closed subset equal to the union of the images of the maps
$$\Conf \simeq \on{pt}\times \Conf \overset{\iota_{-\alpha_i}\times \on{id}}\longrightarrow \Conf\times \Conf
\overset{\on{add}}\to \Conf.$$

I.e., $\Conf_x$ is the closed subset consisting of those divisors that have a non-trivial contribution at $x$. 

\medskip

Let $\CF_1$ and $\CF_2$ be a pair of perverse sheaves on $\Conf$ that do not have sub-objects supported at 
subsets of the form $\Conf_x$ (for the same $x$ appearing on both sides). In this case the map 
$$(j_{\on{disj}})_!\circ (j_{\on{disj}})^*(\CF_1\boxtimes \CF_2)\to \CF_1\boxtimes \CF_2 $$
is \emph{surjective} as a map of perverse sheaves on $\Conf\times \Conf$. (Here $j_{\on{disj}}$ denotes
the open embedding $(\Conf\times \Conf)_{\on{disj}}\hookrightarrow \Conf\times \Conf$.)

\medskip

Let $\CA$ be a factorization algebra in $\Perv(\Conf)$, such that $\CA_X$ does not have quotient objects supported at closed points
of $X$. This implies that $\CA$ does not have quotient objects supported on subsets of the form $\Conf_x$. 

\medskip

We obtain that in this case, \emph{if} the factorization isomorphism \eqref{e:fact isom}, viewed as a map
$$\CA\boxtimes \CA|_{(\Conf\times \Conf)_{\on{disj}}}\to \on{add}^!(\CA)|_{(\Conf\times \Conf)_{\on{disj}}},$$
extends to all of $\Conf\times \Conf$,
\emph{then} it does so uniquely.

\medskip

Thus, we obtain that under the above condition, being an associative algebra within 
factorization algebras is \emph{a condition and not an additional structure}. Furthermore, in this case, the associative
algebra structure is automatically commutative. 

\sssec{}  \label{sss:act}

Given a factorization algebra $\CA^\sharp\in \Shv(\Conf^\sharp)$, equipped with an associative algebra structure,
and a factorization algebra $\CA\in \Shv_{\CG^\Conf_q}(\Conf)$, one can talk about an action of $\CA^\sharp$ on $\CA$:

\medskip

By an action we will mean the datum of a map of factorization algebras
\begin{equation} \label{e:mod embed first}
(\Frob_{q,\Conf})_!(\CA^\sharp)\to \CA
\end{equation} 
and a map
\begin{equation} \label{e:assoc fact act}
\CA^\sharp\star \CA\to \CA,
\end{equation} 
that make the following diagram commute:
\begin{equation} \label{e:action compat}
\CD
\CA^\sharp \star  (\Frob_{q,\Conf})_!(\CA^\sharp) @>{\sim}>>  (\Frob_{q,\Conf})_!(\CA^\sharp\star \CA^\sharp) @>>> (\Frob_{q,\Conf})_!(\CA^\sharp)  \\
@VVV & &   @VVV  \\
\CA^\sharp\star \CA & @>>> & \CA,
\endCD
\end{equation} 
in a way compatible with factorization and the associativity law. 

\begin{rem}
The fact that we have to specify the map \eqref{e:mod embed first} in addition to \eqref{e:assoc fact act} is the reflection
of the non-unital nature of our symmetric monoidal categories, see Remark \ref{r:add unit}.

\medskip

These two pieces of data can be combined into one:
$$(\CA^\sharp)^+\star \CA^+\to \CA ^+,$$
subject to the associativity law. 

\end{rem}


\sssec{}

%
The map \eqref{e:assoc fact act} can be expressed by adjunction as a map 
\begin{equation} \label{e:assoc fact act adj adj}
(\Frob_{q,\Conf})_!(\CA^\sharp)\boxtimes \CA \to \on{add}^!(\CA)
\end{equation} 
on $\Conf\times \Conf$.

\medskip

Again, the compatibility with factorization implies that the restriction of the restriction of the map 
\eqref{e:assoc fact act adj adj} to $(\Conf\times \Conf)_{\on{disj}}$ indentifies with
\begin{equation} \label{e:fact isom mod}
(\Frob_{q,\Conf})_!(\CA^\sharp)\boxtimes \CA |_{(\Conf\times \Conf)_{\on{disj}}}\overset{\text{\eqref{e:mod embed first}}\boxtimes \on{id}}\longrightarrow
\CA \boxtimes \CA |_{(\Conf\times \Conf)_{\on{disj}}}\simeq \on{add}^!(\CA)|_{(\Conf\times \Conf)_{\on{disj}}}.
\end{equation} 

\sssec{}

Let us be given an action of $\CA^\sharp$ on $\CA$, and consider $\CA^\sharp_X$ and $\CA_X$
(defined as in \eqref{e:restr to diag}).

\medskip

We obtain a map 
$$\CA^\sharp_X\to \CA_X$$
and an action of $\CA^\sharp_X$, viewed as an algebra object in the (symmetric) monoidal category
$\Shv(X)^{\Lambda^\sharp}$, 
on $\CA_X$ (that are compatible via an analog of the diagram \eqref{e:action compat}), where we view  
\begin{equation} \label{e:categ on X}
\Shv_{\CG^\lambda_q}(X)^\Lambda:=\underset{\lambda}\oplus\, \Shv_{\CG^\lambda_q}(X)
\end{equation} 
as an $\Shv(X)^{\Lambda^\sharp}$-module category.

\begin{rem}

This data can be equivalently expressed as an action of $(\CA^\sharp_X)^+$ on $\CA_X^+$. 

\end{rem}

\sssec{}

Finally, let assume that both $\CA^\sharp$ and $\CA$ are perverse and such that $\CA^\sharp_X$ and $\CA_X$ 
do not have quotient objects supported at closed points of $x$. 

\medskip

Then as in \secref{sss:action is a cond}, we obtain that \emph{given a datum of
\eqref{e:mod embed first}}, an action of $\CA^\sharp$ on $\CA$ is not an additional structure, but rather a condition. Namely, this condition
says that the map \eqref{e:fact isom mod} extends to all of $\Conf\times \Conf$ (if it does, this extension is unique).

%
%
%
%
%
%
%

\ssec{The classical factorization algebra $\Omega^{\sharp,\on{cl}}$}

In this subsection we will construct a certain factorization algebra, denoted $\Omega^{\on{cl}}$ in $\Perv(\Conf)$.
In practice, we will apply this construction to the lattice $\Lambda^\sharp$ rather than $\Lambda$; in this case, the resulting
factorization algebra will be denoted by $\Omega^{\sharp,\on{cl}}$.

\sssec{}

Recall (following, e.g., \cite[Sect. 2.3]{Ga2}) that to a commutative algebra object $A$ in the (non-unital) symmetric monoidal category
$\Vect^{\Lambda^{\on{neg}}-0}$, one canonically attaches a factorization algebra $\on{Fact}(A)$ on $\Conf$, equipped with a structure
of commutative algebra in the sense of \secref{sss:com fact}.

\medskip

We have:
\begin{equation} \label{e:!-fibers of Fact}
\on{Fact}(A)_X\simeq \omega_X\otimes A,
\end{equation}
as commutative algebras in $\Shv(X)^\Lambda$, see \secref{sss:!-ass} for the notation.

\medskip

In fact, the assignment
\begin{equation} \label{e:com Fact}
A \mapsto \on{Fact}(A)
\end{equation}
is the composition of the pullback functor
$$\on{ComAlg}(\Vect^{\Lambda^{\on{neg}}-0})\to \on{ComAlg}(\Shv(X)^{\Lambda^{\on{neg}}-0}),\quad A\mapsto \omega_X\otimes A$$
followed by an equivalence of categories 
\begin{equation} \label{e:com Fact gen}
\on{ComAlg}(\Shv(X)^{\Lambda^{\on{neg}}-0})\to \on{ComAlg}(\on{FactAlg}(\Shv(\Conf))),
\end{equation}
where the functor inverse to \eqref{e:com Fact gen} is given by $\CA\mapsto \CA_X$. 

\begin{rem}

When the sheaf theory we are working with is that of sheaves in the classical topology, 
the functor \eqref{e:com Fact} is a special case of the functor \eqref{e:from Hopf to fact}
for $q=1$.

\medskip

Namely, the two procedures are related via the Koszul duality equivalence between the category of
\emph{co-commutative} Hopf algebras as in \secref{sss:conditions on Hopf} and 
$\on{ComAlg}((\Vect^{\Lambda^{\on{neg}}-0})^{\on{loc.fin.dim.}})$: 

\medskip

For a Hopf algebra $H$, the associative algebra
$$\on{C}^\cdot(H)$$
has a natural structure of $\BE_2$-algebra, and if $H$ was cocommutative, this $\BE_2$-algebra structure 
naturally upgrades to an $\BE_\infty$-structure. 

\end{rem}

\sssec{}

We will apply the above construction to (the augmentation ideal of)
$$A:=\on{C}^\cdot(\cn).$$

Denote the resulting factorization algebra by $\Omega^{\on{cl}}$. 

\begin{rem}

As was mentioned above,  in practice we will apply this construction
to $\Lambda^\sharp$ rather than $\Lambda$, and consider the factorization algebra
$$\Omega^{\sharp,\on{cl}}:=\on{Fact}(\on{C}^\cdot(\fn^\sharp))\in \Shv(\Conf^\sharp),$$
where $\fn^\sharp$ is the maximal unipotent in the group $G^\sharp$ from \secref{sss:sharp}.

\end{rem}

\sssec{}

Since $(A)^{-\alpha_i}=\sfe[-1]$, from \eqref{e:!-fibers of Fact} we obtain that
$$\Omega^{\on{cl}}|_{\Conf^{-\alpha_i}}=\sfe_X[1].$$

Hence, we obtain a canonical identification of factorization algebras over $\oConf$:
$$j^*(\Omega^{\on{cl}})\simeq \oOmega.$$

\medskip

Consider the resulting map
\begin{equation} \label{e:omega cl to open}
\Omega^{\on{cl}}\to j_*(\oOmega).
\end{equation}

The following is established in \cite[Sect. 3 and Lemma 4.8]{BG2}:

\begin{thm} \label{t:BG1}
The factorization algebra $\Omega^{\on{cl}}$ is perverse, and the map \eqref{e:omega cl to open} is injective.
\end{thm}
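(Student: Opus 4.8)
The statement to be proved, \thmref{t:BG1}, asserts that the factorization algebra $\Omega^{\on{cl}}=\on{Fact}(\on{C}^\cdot(\fn))$ is perverse and that the canonical map $\Omega^{\on{cl}}\to j_*(\oOmega)$ is injective. As indicated, this is the content of \cite[Sect.~3 and Lemma~4.8]{BG2}, so the proof I would present is a recollection of that argument adapted to the present notation. The key point is that $\Omega^{\on{cl}}$ is, by construction, built from the cochain complex of the nilpotent Lie algebra $\fn$ via the functor $A\mapsto \on{Fact}(A)$, and one must translate Lie-algebra cohomological information into a statement about perversity and about the vanishing of sections supported on the diagonals.

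\textbf{Step 1: identify the $!$-fibers.} First I would use the factorization structure to reduce the perversity claim to a fiberwise statement. By factorization and induction on $|\lambda|$, it suffices to compute $(\iota_\lambda)^!(\Omega^{\on{cl}})$ (equivalently $(\Delta_\lambda)^!(\Omega^{\on{cl}})$) for each $\lambda\in\Lambda^{\on{neg}}-0$. By the defining property \eqref{e:!-fibers of Fact}, $\Omega^{\on{cl}}_X\simeq \omega_X\otimes \on{C}^\cdot(\fn)$ as a graded object, and more generally iterating the factorization isomorphism along a stratum of $\Conf^\lambda$ expresses the $!$-restriction to that stratum in terms of tensor products of $\on{C}^\cdot(\fn)^{\mu}$ over the $\mu$ occurring in the divisor, shifted appropriately. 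The perversity then follows from the fact that $\on{C}^i(\fn)^{\mu}=0$ unless $i\geq $ (number of simple roots in a decomposition of $\mu$), i.e.\ $\on{C}^\cdot(\fn)^\mu$ lives in cohomological degrees $\geq |\mu|$ in the appropriate normalization, together with a dimension count: the stratum indexed by a partition of $\lambda$ into $k$ points has dimension $k$, and the cohomological shifts conspire so that the total object sits in perverse degree $0$. Concretely one checks that $(\Delta_\lambda)^!(\Omega^{\on{cl}})$ lives in perverse degrees $\geq 1$ for $|\lambda|>1$ and equals $\sfe_X[1]$ for $\lambda=-\alpha_i$; combined with the analogous $*$-estimate (obtained by the Verdier-duality symmetry of the construction, since $\on{C}^\cdot(\fn)$ is self-dual up to the appropriate twist) this gives perversity of $\Omega^{\on{cl}}$ on all of $\Conf$.

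\textbf{Step 2: injectivity of \eqref{e:omega cl to open}.} Since $j^*(\Omega^{\on{cl}})\simeq\oOmega$, the kernel $\CK$ of $\Omega^{\on{cl}}\to j_*(\oOmega)$ is a perverse subsheaf supported on the complement of $\oConf$, i.e.\ on the union of the diagonals. To show $\CK=0$ it is enough, by factorization and induction on $|\lambda|$, to show that $\Omega^{\on{cl}}|_{\Conf^\lambda}$ has no nonzero subobject supported on $\Delta_\lambda(X)\cup(\text{smaller diagonals already handled})$, and by the inductive hypothesis this reduces to showing $\Omega^{\on{cl}}|_{\Conf^\lambda}$ has no subobject supported on the main diagonal $\Delta_\lambda(X)$. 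A subobject supported there would contribute to $H^0\big((\Delta_\lambda)^*(\Omega^{\on{cl}})[-1]\big)$ or rather force $(\Delta_\lambda)^!(\Omega^{\on{cl}})$ to have a summand in perverse degree $0$; but the computation of Step 1 shows $(\Delta_\lambda)^!(\Omega^{\on{cl}})$ lives in degrees $\geq 1$ for $|\lambda|>1$, which rules this out. (Equivalently, one invokes the standard criterion: a perverse sheaf $\CF$ on a variety with open $j$ such that $\CF\to j_*j^*\CF$ has kernel $\CK$ supported on the boundary; $\CK\hookrightarrow\CF$ means $\CK$ is a subobject, and $\mathrm{Hom}(\CK,\CF)\to\mathrm{Hom}(\CK,j_*j^*\CF)=0$, so $\CK=0$ once $\CF$ has no subobject on the boundary, which is exactly the $!$-fiber estimate.)

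\textbf{Main obstacle.} The routine part is the bookkeeping of cohomological shifts in Step 1 — matching the topological shifts coming from the dimensions of strata of $\Conf^\lambda$ with the internal grading of $\on{C}^\cdot(\fn)$ so that everything lands in perverse degree $0$ and the $!$-estimate $(\Delta_\lambda)^!\in D^{\geq 1}$ for $|\lambda|>1$ comes out. The genuinely substantive input, which I would simply cite from \cite{BG2} (ultimately a statement about Lie algebra cohomology of $\fn$, or equivalently about the Chevalley complex and the fact that $H^\cdot(\fn)$ is concentrated in the weights $w(\rho)-\rho$ with the right degrees — Kostant's theorem), is precisely the vanishing range for $\on{C}^\cdot(\fn)^\mu$ that makes both the perversity and the no-boundary-subobject statements work; establishing this from scratch would be the hard part, but it is available in the literature and in the references the paper relies on.
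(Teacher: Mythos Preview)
Your overall strategy is sound and matches what \cite{BG2} does: reduce to computing $(\Delta_\lambda)^!(\Omega^{\on{cl}})$ via the formula $\Omega^{\on{cl}}_X\simeq \omega_X\otimes \on{C}^\cdot(\fn)$, then use factorization and induction. However, two of your concrete claims are incorrect and a third is misplaced.

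First, the inequality ``$\on{C}^i(\fn)^\mu=0$ unless $i\geq |\mu|$'' points the wrong way. A basis vector $e_{\beta_1}^*\wedge\cdots\wedge e_{\beta_i}^*\in\Lambda^i(\fn^*)$ has weight $-\sum\beta_j$, and since each $|\beta_j|\geq 1$ one gets $i\leq |\mu|$, not $i\geq |\mu|$. Fortunately what is actually needed is much weaker: since $(\Delta_\lambda)^!(\Omega^{\on{cl}})\simeq \omega_X\otimes \on{C}^\cdot(\fn)^\lambda$ and $\omega_X$ sits in perverse degree $-1$, one has ${}^pH^k\big((\Delta_\lambda)^!\Omega^{\on{cl}}\big)\simeq \IC_X\otimes H^{k+1}(\fn)^\lambda$. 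Thus the $!$-half of perversity needs only $H^d(\fn)^\lambda=0$ for $d\leq 0$, which is trivial; and the injectivity of \eqref{e:omega cl to open} needs only $H^1(\fn)^\lambda=0$ for $|\lambda|>1$, which follows from the elementary identification $H^1(\fn)=(\fn/[\fn,\fn])^*=\oplus_i\,\sfe^{-\alpha_i}$. So Kostant's theorem is \emph{not} the substantive input for \thmref{t:BG1}; it only enters for the finer statements in \thmref{t:BG2}.

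Second, the Verdier self-duality you invoke for the $*$-estimate does not hold: in the framework of \secref{ss:Hopf to Fact} one has $\Omega^{\on{cl}}\simeq \Omega_{U(\fn)}$, hence by \eqref{e:duality omega} $\BD^{\on{Verdier}}(\Omega^{\on{cl}})\simeq \Omega_{U(\fn)^\vee}=\Omega_{\CO(N)}$, and $\on{C}^\cdot(\CO(N))\not\simeq \on{C}^\cdot(\fn)$ (the former has zero differential, the latter does not). The $*$-estimate must therefore be obtained by a separate computation --- e.g.\ via the formula \eqref{e:*-fibers of Omega}, which gives $(\iota_\lambda)^*(\Omega^{\on{cl}})\simeq \on{C}_\cdot(\CO(N))^{-\lambda}$, concentrated in degrees $\leq -1$ since $\CO(N)$ is polynomial --- or by the explicit description of $\on{Fact}(A)$ given in \cite[Sect.~3]{BG2}.
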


In fact, the proof in \secref{ss:proof of prop} applies to $\Omega^{\on{cl}}$, giving rise to  the following explicit 
inductive description of $\Omega^{\on{cl}}$ as a sub-object of $j_*(\oOmega)$:

\begin{thm} \label{t:BG2}
For $\lambda\in \Lambda^{\on{neg}}$, the maps 
$$(\jmath_\lambda)_{!*}\circ (\jmath_\lambda)^*(\Omega^{\on{cl}})\to 
\Omega^{\on{cl}}|_{\Conf^\lambda}\to H^0\left((\jmath_\lambda)_*\circ (\jmath_\lambda)^*(\Omega^{\on{cl}})\right)\to
(\jmath_\lambda)_*\circ (\jmath_\lambda)^*(\Omega^{\on{cl}})$$
have the following proprties:

\smallskip

\noindent{\em(a)}
For $\lambda=w(\rho)-\rho$ with $\ell(w)\geq 2$, the map $(\jmath_\lambda)_{!*}\circ (\jmath_\lambda)^*(\Omega^{\on{cl}})\to 
\Omega^{\on{cl}}|_{\Conf^\lambda}$ is an isomorphism.

\medskip

\noindent{\em(b)} 
For $\lambda=w(\rho)-\rho$ with $\ell(w)\geq 2$, the cone of the map
$$\Omega^{\on{cl}}|_{\Conf^\lambda}\to (\jmath_\lambda)_*\circ (\jmath_\lambda)^*(\Omega^{\on{cl}})$$
identifies with $(\Delta_\lambda)_*(\sfe_X[-\ell(w)+1])$. In particular:

\smallskip

\noindent{\em(i)} If $\ell(w)=2$, the object $(\jmath_\lambda)_*\circ (\jmath_\lambda)^*(\Omega^{\on{cl}})$
is perverse; 

\smallskip

\noindent{\em(ii)} If $\ell(w)\geq 3$, the map
$$\Omega^{\on{cl}}|_{\Conf^\lambda}\to H^0\left((\jmath_\lambda)_*\circ (\jmath_\lambda)^*(\Omega^{\on{cl}})\right)$$
is an isomorphism. 

\smallskip

\noindent{\em(c)}
For $\lambda$ \emph{not} of the form $w(\rho)-\rho$, the maps 
$$\Omega^{\on{cl}}|_{\Conf^\lambda}\to H^0\left((\jmath_\lambda)_*\circ (\jmath_\lambda)^*(\Omega^{\on{cl}})\right)\to
(\jmath_\lambda)_*\circ (\jmath_\lambda)^*(\Omega^{\on{cl}})$$
are isomorphisms.

\end{thm}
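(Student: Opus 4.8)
The plan is to prove \thmref{t:BG2} as a direct transcription of the argument that establishes Theorems \ref{t:prop DK char 0 prim}, \ref{t:prop DK char 0 sec}, \ref{t:prop DK char 0 non-deg} and \ref{t:prop DK char 0 very non-deg} for $\Omega_q^{\on{DK}}$ in \secref{ss:proof of prop}, specialized to the ``classical limit''. Concretely, one works with the constructible sheaf theory over $\BC$ (by Lefschetz principle and \'etale invariance, reducing to $X=\BA^1$), and recalls from the remark following \eqref{e:com Fact gen} that when the sheaf theory is that of sheaves in the classical topology, $\on{Fact}(-)$ applied to $\on{C}^\cdot(\cn)$ is the $q=1$ case of the functor $H\mapsto \Omega_H$ of \eqref{e:from Hopf to fact}, namely $\Omega^{\on{cl}}=\Omega_{U(\cn)}$, where $U(\cn)$ is viewed as a cocommutative Hopf algebra in $\Rep_1(\cT)=\Vect^\Lambda$. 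This is the precise analogue of $\Omega^{\on{DK}}_{q,\Quant}=\Omega_{U^{\on{DK}}_q(\cN)}$, and the perversity statement of \thmref{t:BG1} is exactly the $q=1$ instance of \propref{p:identify Omega free} together with the corresponding statements about $j_*$, $j_!$ and $j_{!*}$ (see \corref{c:Omega DK and Lus}), since $U(\cn)$ sits between $U^{\on{free}}(\cn)$ and $U^{\on{cofree}}(\cn)$.

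First I would record the fiber computations. By \eqref{e:!-fibers of Omega} we have $(\iota_\lambda)^!(\Omega^{\on{cl}})=\on{C}^\cdot(U(\cn))^\lambda$, and by \eqref{e:duality omega}+\eqref{e:!-fibers of Omega} we have $(\iota_\lambda)^*(\Omega^{\on{cl}})=\on{C}_\cdot(U(\cn)^\vee)^\lambda$, where $U(\cn)^\vee$ is the function algebra $\CO(\cN)$ (cocommutative becomes commutative under duality; note $\Omega^{\on{cl}}$ is self-dual up to $q\mapsto q^{-1}$, which is trivial at $q=1$, so in fact $(\iota_\lambda)^*(\Omega^{\on{cl}})=(\iota_{-\lambda}\text{-flavored dual of }\on{C}^\cdot(U(\cn)))$). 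The key input replacing \eqref{e:Geo} is the classical Kostant/Bott theorem: $\on{C}^\cdot(\cn)=H^\cdot(\cn)\simeq \underset{w\in W}\oplus\, \sfe^{w(\rho)-\rho}[-\ell(w)]$ as a $\Lambda$-graded vector space (Lie algebra cohomology of the nilradical, with its well-known weight decomposition indexed by $W$ with cohomological degree $\ell(w)$). This gives immediately that $(\iota_\lambda)^!(\Omega^{\on{cl}})$ is $\sfe[-\ell(w)]$ if $\lambda=w(\rho)-\rho$ and $0$ otherwise --- this is the classical counterpart of property (a) in \secref{ss:proof of prop}, and it is unconditional (no analogue of hypothesis (*) is needed, since Bott's theorem is a theorem). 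For the $*$-fibers one argues as in the proof of (b) and (b') there: $H_1(\CO(\cN))$ is the cotangent space, spanned in weights $\{\alpha_i\}$ only (the group $N$ is unipotent with generators the simple root vectors --- there are no ``divided power'' generators in the classical case), so $(\iota_\lambda)^*(\Omega^{\on{cl}})$ has no cohomology in degree $-1$ whenever $\lambda=w(\rho)-\rho$ with $\ell(w)\geq 2$, since such $\lambda$ is never a simple coroot $\alpha_i$. This handles both (i) of part (b) and the sharper statement (ii)/part (a) of \thmref{t:BG2} uniformly.

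Next I would translate these fiber estimates into the statements about the maps, exactly as Corollaries \ref{c:prop DK char 0 prim}, \ref{c:prop DK char 0 sec}, \ref{c:prop DK char 0 non-deg}, \ref{c:prop DK char 0 very non-deg} are deduced. Part (c): for $\lambda$ not of the form $w(\rho)-\rho$, the vanishing $(\Delta_\lambda)^!(\Omega^{\on{cl}})=0$ (from the translation-invariance reduction to $(\iota_\lambda)^!$ and Bott) says precisely that $\Omega^{\on{cl}}|_{\Conf^\lambda}\to (\jmath_\lambda)_*(\jmath_\lambda)^*(\Omega^{\on{cl}})$ is an isomorphism, hence all three maps in the chain are isomorphisms and $(\jmath_\lambda)_*(\jmath_\lambda)^*(\Omega^{\on{cl}})$ is perverse. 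Part (b): for $\lambda=w(\rho)-\rho$, $(\Delta_\lambda)^!(\Omega^{\on{cl}})\simeq \sfe_X[-\ell(w)+1]$ (in perverse degree $\ell(w)-2$, after the shift by $\dim X=1$) identifies the cone of $\Omega^{\on{cl}}|_{\Conf^\lambda}\to (\jmath_\lambda)_*(\jmath_\lambda)^*(\Omega^{\on{cl}})$ with $(\Delta_\lambda)_*(\sfe_X[-\ell(w)+1])$ --- via the standard triangle relating $(\jmath_\lambda)_!(\jmath_\lambda)^*$, the identity, and $(\Delta_\lambda)_*(\Delta_\lambda)^*$, dualized; when $\ell(w)=2$ this cone is perverse so $(\jmath_\lambda)_*(\jmath_\lambda)^*(\Omega^{\on{cl}})$ is perverse (giving (i)), and when $\ell(w)\geq 3$ the cone lives in perverse degrees $\leq -1$ so $H^0$ doesn't see it, giving $\Omega^{\on{cl}}|_{\Conf^\lambda}\xrightarrow{\sim} H^0((\jmath_\lambda)_*(\jmath_\lambda)^*(\Omega^{\on{cl}}))$, which is (ii). Part (a): for $\ell(w)\geq 2$, the statement that $(\jmath_\lambda)_{!*}(\jmath_\lambda)^*(\Omega^{\on{cl}})\to \Omega^{\on{cl}}|_{\Conf^\lambda}$ is an isomorphism is equivalent to $H^0((\Delta_\lambda)^*(\Omega^{\on{cl}}))=0$, i.e. to the $*$-fiber living in degrees $\leq -1$, which is exactly what the $H_1(\CO(\cN))$-computation above gave. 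The one subtlety is bookkeeping the various cohomological shifts by $\dim X=1$ between ``perverse degree'' and ``degree'' conventions and checking that the triangle identifying the cone is set up correctly; I expect that to be the only place needing care, since the substantive geometric content (clean extension, behavior across diagonals) is entirely inherited from the already-established quantum-group picture of \secref{s:quantum} at $q=1$. There is no serious obstacle: once one observes $\Omega^{\on{cl}}=\Omega_{U(\cn)}$ and invokes Bott's theorem in place of \eqref{e:Geo}, the proof of \secref{ss:proof of prop} applies verbatim, which is precisely what the text asserts.
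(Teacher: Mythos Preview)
Your overall strategy is correct and matches the paper's: the text simply asserts that ``the proof in \secref{ss:proof of prop} applies to $\Omega^{\on{cl}}$,'' and you have correctly unpacked this, identifying $\Omega^{\on{cl}}$ with $\Omega_{U(\cn)}$ via the Koszul-duality remark and replacing the input \eqref{e:Geo} by the classical Kostant/Bott computation of $\on{C}^\cdot(\cn)$. Your treatment of the $!$-fibers, and hence of parts (b) and (c), is fine.

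There is, however, a genuine error in your argument for part~(a). You claim that $H_1\bigl((U(\cn))^\vee\bigr)=H_1(\CO(N))$ is ``spanned in weights $\{\alpha_i\}$ only.'' This is false: $\CO(N)$ is a polynomial algebra in $\dim N$ variables (one for each positive root), so $I/I^2\simeq\fn^*$ carries \emph{all} positive-root weights, not just the simple ones. (You have confused the generators of $U(\cn)$, which are indeed the $e_i$, with those of its dual $\CO(N)$.) The parenthetical about self-duality is also off: $\BD(\Omega_{U(\cn)})=\Omega_{\CO(N)}$, and these have different $!$-fibers, so $\Omega^{\on{cl}}$ is not Verdier self-dual.

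The conclusion you want nonetheless survives, but it requires an extra step: one must show that $\rho-w(\rho)$ is never a positive root when $\ell(w)\geq 2$. This follows from a norm argument: if $\rho-\alpha=w(\rho)$ for a positive root $\alpha$, then $(\rho-\alpha,\rho-\alpha)=(\rho,\rho)$ forces $(\alpha,\alpha)=2(\rho,\alpha)$, i.e.\ $\langle\rho,\check\alpha\rangle=1$, so $\alpha$ is simple and $w=s_\alpha$ has length~$1$. With this in hand, the weight $-\lambda=\rho-w(\rho)$ indeed misses every weight of $H_1(\CO(N))$ for $\ell(w)\geq 2$, and your deduction of part~(a) goes through. (Note that the paper's one-line ``the proof applies'' glosses over exactly this point as well; the quantum argument in \secref{ss:proof of prop} handles the divided-power generators via hypothesis~(*), and the classical substitute is the norm observation above.)
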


\begin{cor}
The map 
$$\Omega^{\on{cl}}|_{\Conf^\lambda}\to H^0\left((\jmath_\lambda)_*\circ (\jmath_\lambda)^*(\Omega^{\on{cl}})\right)$$
is an isomorphism for all $\lambda$ \emph{not} of the form $w(\rho)-\rho$ with $\ell(w)=2$. 
\end{cor}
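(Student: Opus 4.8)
The plan is to deduce the statement directly from \thmref{t:BG2} by a case analysis on $\lambda\in\Lambda^{\on{neg}}-0$, with two substantive cases.

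First I would handle the $\lambda$ that are not of the form $w(\rho)-\rho$ for any $w\in W$. For these, part (c) of \thmref{t:BG2} asserts that both maps in
\[
\Omega^{\on{cl}}|_{\Conf^\lambda}\to H^0\big((\jmath_\lambda)_*\circ(\jmath_\lambda)^*(\Omega^{\on{cl}})\big)\to (\jmath_\lambda)_*\circ(\jmath_\lambda)^*(\Omega^{\on{cl}})
\]
are isomorphisms, so in particular the first one — which is precisely the map appearing in the statement — is an isomorphism. Next I would handle $\lambda=w(\rho)-\rho$ with $\ell(w)\geq 3$; for these the desired assertion is literally part (b)(ii) of \thmref{t:BG2}, which rests in turn on the perversity of $\Omega^{\on{cl}}$ established in \thmref{t:BG1}.

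Finally I would check that these two cases already exhaust every $\lambda$ to which the corollary applies. The $\lambda$ of the form $w(\rho)-\rho$ with $\ell(w)=2$ are excluded by hypothesis; $w=e$ gives $w(\rho)-\rho=0\notin\Lambda^{\on{neg}}-0$; and the elements $w(\rho)-\rho$ with $\ell(w)=1$ run exactly over the negative simple coroots $-\alpha_i$ (take $w=s_i$), which form the base of the inductive description — there $\Conf^\lambda\simeq X$ coincides with its own main diagonal, $\Omega^{\on{cl}}|_{\Conf^\lambda}=\sfe_X[1]$ is given directly, and no extension across a diagonal is involved. Hence every $\lambda$ with $|\lambda|>1$ not of the excluded form falls under one of the two cases above.

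I do not anticipate any genuine obstacle here: the substance is wholly contained in \thmref{t:BG2}, and what remains is the routine bookkeeping of which Weyl-group elements have length $1$, $2$ and $\geq 3$, together with the observation that perversity of $\Omega^{\on{cl}}$ (\thmref{t:BG1}) is what makes the ``$H^0$'' in the target meaningful.
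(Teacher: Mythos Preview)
Your proposal is correct and matches the paper's approach: the corollary is stated without proof precisely because it is an immediate consequence of \thmref{t:BG2}, via exactly the case split you describe (part (c) for $\lambda$ not of the form $w(\rho)-\rho$, and part (b)(ii) for $\lambda=w(\rho)-\rho$ with $\ell(w)\ge 3$). Your observation that the $\ell(w)=1$ cases are the negative simple coroots, where $\Conf^\lambda=X$ coincides with its main diagonal and the statement is vacuous as the base of the induction, is the right way to read the corollary's implicit restriction to $|\lambda|>1$.
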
 

\begin{rem}
Note that points (a) and (b) of \thmref{t:BG2} for $\lambda=w(\rho)-\rho$ with $\ell(w)\geq 3$ mean that 
in this case, the maps $$(\jmath_\lambda)_{!*}\circ (\jmath_\lambda)^*(\Omega^{\on{cl}})\to 
\Omega^{\on{cl}}|_{\Conf^\lambda}\to H^0\left((\jmath_\lambda)_*\circ (\jmath_\lambda)^*(\Omega^{\on{cl}})\right)$$
are both isomorphisms.
\end{rem}

%
%
%
%

\begin{rem}
One can use the proof of \thmref{t:quant and abs} to
show that the factorization algebra $\Omega^{\on{cl}}$ is in fact a special case of $\Omega^{\on{DK}}_q$
(constructed by deforming the parameter in \secref{ss:constr by deformation}) for $q=1$. 
\end{rem}

\ssec{Relationship of $\Omega_q^{\on{Lus}}$ with $\Omega_q^{\on{sml}}$ via
$\Omega^{\on{cl}}$}

\sssec{}

We will now state a key theorem that expresses the relationship of $\Omega_q^{\on{Lus}}$ and $\Omega_q^{\on{sml}}$
via $\Omega_q^{\sharp,\on{cl}}$. (We emphasize that for the validity of this theorem, the assumption that $q$ avoid small torsion is important.)

\begin{thm} \label{t:Lus and sml}  \hfill

\smallskip

\noindent{\em(a)} For every vertex of the Dynkin diagram, we have
$$H^0\left((\Delta_{-\alpha_i^\sharp})^!(\Omega_q^{\on{Lus}})\right)\simeq \sfe_X[1].$$
In particular, we have a have an isomorphism of factorization algebras in $\Perv(\oConf^\sharp)$:
\begin{equation} \label{e:Frob restr open}
(j^\sharp)^*\left(H^0\left((\Frob_{q,\Conf})^!(\Omega_q^{\on{Lus}})\right)\right)\simeq \oOmega^\sharp.
\end{equation}

\smallskip

\noindent{\em(b)} The map 
\begin{equation} \label{e:Frob restr open *}
H^0\left((\Frob_{q,\Conf})^!(\Omega_q^{\on{Lus}})\right) \to (j^\sharp)_*(\oOmega^\sharp)
\end{equation}
is injective. 

\smallskip

\noindent{\em(c)} The image of the map \eqref{e:Frob restr open *} equals that of $\Omega^{\sharp,\on{cl}}$. 
In particular, we have an isomorphism of factorization algebras 
\begin{equation} \label{e:Frob restr is cl}
\Omega^{\sharp,\on{cl}}\simeq H^0\left((\Frob_{q,\Conf})^!(\Omega_q^{\on{Lus}})\right).
\end{equation}

\smallskip

\noindent{\em(d)} 
The map 
$$(\Frob_{q,\Conf})_!(\Omega^{\sharp,\on{cl}})\to \Omega_q^{\on{Lus}},$$
resulting from \eqref{e:Frob restr is cl} extends to an action of 
$\Omega^{\sharp,\on{cl}}$ on $\Omega_q^{\on{Lus}}$.

\smallskip

\noindent{\em(e)} 
The projection $\Omega_q^{\on{Lus}}\to \Omega_q^{\on{sml}}$ is compatible with 
$\Omega^{\sharp,\on{cl}}$-actions, where the action on $\Omega_q^{\on{sml}}$ is the trivial one.

\smallskip

\noindent{\em(f)}
The resulting map
\begin{equation} \label{e:coinv abs}
\left(\on{Cone}((\Omega^{\sharp,\on{cl}})_X\to (\Omega_q^{\on{Lus}})_X)\right)\underset{(\Omega^{\sharp,\on{cl}})_X}\otimes \omega_X\to 
(\Omega_q^{\on{sml}})_X
\end{equation}
is an isomorphism.

\end{thm}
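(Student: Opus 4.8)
The plan is to combine the quantum-group description of the factorization algebras with the structure of Lusztig's quantum Frobenius. By the Lefschetz principle and Riemann-Hilbert we reduce to the Betti setting, where by \thmref{t:quant and abs} (and its Verdier dual) together with \corref{c:identify Omega small} we may replace $\Omega_q^{\on{Lus}}$, $\Omega_q^{\on{sml}}$ by $\Omega^{\on{Lus}}_{q,\Quant}=\Omega_{U^{\on{Lus}}_q(\cN)}$ and $\Omega^{\on{sml}}_{q,\Quant}=\Omega_{u_q(\cN)}$. The input from quantum groups is that, under the assumption that $q$ avoids small torsion, Lusztig's quantum Frobenius gives a short exact sequence of Hopf algebras in $\Rep_q(\cT)$
\begin{equation} \label{e:SES quant}
\sfe\to u_q(\cN)\to U^{\on{Lus}}_q(\cN)\to \sfe[\Lambda^\sharp\text{-part}]\to \sfe,
\end{equation}
where the quotient Hopf algebra is (the positive part of) the enveloping algebra $U(\fn^\sharp)$ of the Frobenius-dual nilpotent, regarded as an object of $\Rep_q(\cT)$ supported on the sublattice $\Lambda^\sharp$ (on which the braiding is trivial by \eqref{e:sharp orth}). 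I would extract parts (a)–(c) from this sequence via the formulas \eqref{e:!-fibers of Omega} and \eqref{e:hyprbolic of Omega}: pulling back $\Omega_{U^{\on{Lus}}_q(\cN)}$ along $\Frob_{q,\Conf}$ and taking $H^0$ of $(\Delta_{-\alpha_i^\sharp})^!$ computes a low-degree piece of $\on{C}^\cdot(U^{\on{Lus}}_q(\cN))$ in the $\Lambda^\sharp$-graded direction, which by the Hochschild–Serre spectral sequence of \eqref{e:SES quant} and the acyclicity of $u_q(\cN)$ in those degrees reduces to $\on{C}^\cdot(U(\fn^\sharp))$; comparing with the definition $\Omega^{\sharp,\on{cl}}=\on{Fact}(\on{C}^\cdot(\fn^\sharp))$ and using \thmref{t:BG1} and \thmref{t:BG2} identifies the image inside $(j^\sharp)_*(\oOmega^\sharp)$. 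The injectivity in (b) I would get exactly as in \propref{p:DK injects}/\secref{sss:maps Omega}: the relevant $!$-restrictions to diagonals live in positive perverse degree because $\on{C}^{\leq 0}$ of the Frobenius quotient is concentrated as in \thmref{t:BG2}.

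For parts (d) and (e), the point is to promote the map $(\Frob_{q,\Conf})_!(\Omega^{\sharp,\on{cl}})\to \Omega_q^{\on{Lus}}$ to an action in the sense of \secref{sss:act}. Here I would invoke the ``action is a condition, not a structure'' principle of \secref{sss:action is a cond}: both $\Omega^{\sharp,\on{cl}}_X$ and $(\Omega_q^{\on{Lus}})_X$ have no quotient objects supported at closed points of $X$ (their diagonal restrictions are computed by \thmref{t:BG2} and \thmref{t:prop DK char 0 non-deg}, which are all free local systems up to shift), so it suffices to check that the convolution map \eqref{e:fact isom mod} over $(\Conf\times\Conf)_{\on{disj}}$ extends over all of $\Conf^\sharp\times\Conf$. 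But this extension is visible on the quantum-group side: the coproduct and module structure on $U^{\on{Lus}}_q(\cN)$ relative to the Hopf quotient $U(\fn^\sharp)$ give, via the functor \eqref{e:from Hopf to fact} applied with the $\Conf^\sharp$-equivariance \eqref{e:gerbe equiv} of the gerbe, precisely such an action; one then checks it restricts to the already-constructed data on the disjoint locus by the irreducibility argument of \lemref{l:from ! to *}. For (e), the surjection $U^{\on{Lus}}_q(\cN)\twoheadrightarrow$ (no, the inclusion) $u_q(\cN)\hookrightarrow U^{\on{Lus}}_q(\cN)$ is a map of $U(\fn^\sharp)$-comodule algebras with $u_q(\cN)$ carrying the trivial coaction, so functoriality of \eqref{e:from Hopf to fact} makes $\Omega_q^{\on{Lus}}\to \Omega_q^{\on{sml}}$ compatible with the $\Omega^{\sharp,\on{cl}}$-actions, the target being trivial.

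Finally, for part (f) I would restrict everything to the main diagonals, i.e. pass to $\Shv(X)^\Lambda$ via $\CA\mapsto \CA_X$. Using \eqref{e:!-fibers of Fact} we have $(\Omega^{\sharp,\on{cl}})_X\simeq \omega_X\otimes \on{C}^\cdot(\fn^\sharp)$, and by \eqref{e:!-fibers of Omega} and \eqref{e:hyprbolic of Omega} applied to \eqref{e:SES quant} the complex $(\Omega_q^{\on{Lus}})_X$ is computed by $\on{C}^\cdot(U^{\on{Lus}}_q(\cN))$ while $(\Omega_q^{\on{sml}})_X$ is computed by $\on{C}^\cdot(u_q(\cN))$, all with their algebra structures coming from \secref{sss:!-ass}. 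The claim \eqref{e:coinv abs} then becomes the statement that the cofiber of $\on{C}^\cdot(\fn^\sharp)\to \on{C}^\cdot(U^{\on{Lus}}_q(\cN))$, base-changed along the augmentation $\on{C}^\cdot(\fn^\sharp)\to \sfe$, computes $\on{C}^\cdot(u_q(\cN))$ — which is exactly the Hochschild–Serre (eigenspace) spectral sequence for the Hopf-algebra extension \eqref{e:SES quant} collapsing, i.e. $\on{C}^\cdot(u_q(\cN))\simeq \sfe\underset{\on{C}^\cdot(\fn^\sharp)}\otimes \on{C}^\cdot(U^{\on{Lus}}_q(\cN))$. I expect the main obstacle to be precisely this last step: verifying that the relevant spectral sequence degenerates (equivalently that $\on{C}^\cdot(U^{\on{Lus}}_q(\cN))$ is, as a module over $\on{C}^\cdot(\fn^\sharp)$, free, or at least flat in the derived sense), which is where the hypothesis that $q$ avoids small torsion is essential — it is what guarantees \eqref{e:SES quant} is an honest short exact sequence of Hopf algebras with $U^{\on{Lus}}_q(\cN)$ faithfully flat over $u_q(\cN)$, so that the cohomological comparison goes through; one deduces the isomorphism \eqref{e:coinv abs} of sheaves on $X$ and then, since both sides underlie factorization algebras with no punctual quotients, this diagonal isomorphism propagates to the asserted isomorphism of factorization algebras.
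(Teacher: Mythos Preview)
Your overall strategy—reduce to the Betti setting, identify $\Omega_q^{\on{Lus}}$ and $\Omega_q^{\on{sml}}$ with $\Omega_{U_q^{\on{Lus}}(\cN)}$ and $\Omega_{u_q(\cN)}$, and then exploit Lusztig's quantum Frobenius \eqref{e:SES quant}—matches the paper's. The main organizational difference is the order of implications. You propose to establish (a)--(c) first by a direct Hochschild--Serre computation on the fibers, then verify (d) via the ``action is a condition'' principle of \secref{sss:action is a cond}, and finally prove (f). The paper goes the other way: it first constructs the action (d) and the compatibility (e) \emph{directly}, by upgrading the functor $H\mapsto \Omega_H$ to intertwine the tensor-product/coaction monoidal structures on Hopf algebras with convolution on factorization algebras (so the cocentral map \eqref{e:quant Frob} immediately yields an action, without needing to invoke \secref{sss:action is a cond}); it then proves (f) as \propref{p:ten prod map Omega}; and only then deduces (a)--(c) from (f) by a short spectral-sequence argument (Sects.~\ref{sss:Frob prop 1}--\ref{sss:Frob prop 3}), using that $H^1$ of the small-quantum-group cochains is concentrated in weights $-\alpha_i\notin\Lambda^\sharp$. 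Your direct route to (a)--(c) is plausible but less clean, and some of your references are to $\Omega_q^{\on{DK}}$-statements (\propref{p:DK injects}, \thmref{t:prop DK char 0 non-deg}) where you need the Verdier-dual $\Omega_q^{\on{Lus}}$-versions.

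The genuine gap is exactly where you flag it: the isomorphism underlying (f), namely
\[
\uHom_{U_q^{\on{Lus}}(\cN)}(\sfe,\sfe)\underset{\uHom_{U(\fn^\sharp)}(\sfe,\sfe)}\otimes \sfe \;\overset{\sim}\longrightarrow\; \uHom_{u_q(\cN)}(\sfe,\sfe).
\]
You gesture at Hochschild--Serre degeneration or faithful flatness of $U_q^{\on{Lus}}(\cN)$ over $u_q(\cN)$, but neither of these is the argument that actually closes the gap. The paper proves this as \propref{p:ten prod map Omega} via Koszul duality: under the equivalence $\on{KD}_{U_q^{\on{Lus}}(\cN)}$ of \eqref{e:KD}, the left-hand side corresponds to $\Frob^*_{q,T}((U(\fn^\sharp))^\vee)$ (using functoriality of $\on{KD}$ along $U_q^{\on{Lus}}(\cN)\to U(\fn^\sharp)$), while the right-hand side corresponds to $\uHom_{u_q(\cN)}(U_q^{\on{Lus}}(\cN),\sfe)$ (using functoriality along $u_q(\cN)\to U_q^{\on{Lus}}(\cN)$); the isomorphism between these is then literally the dual of \eqref{e:Tor over small}, which is the content of \eqref{e:SES quant} being a short exact sequence. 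This Koszul-duality translation is the missing idea in your sketch.
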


\begin{rem}  \label{r:aug}
The reason that in point (f) we have the somewhat bizarre looking 
\begin{equation} \label{e:bizarre}
\left(\on{Cone}((\Omega^{\sharp,\on{cl}})_X\to (\Omega_q^{\on{Lus}})_X)\right)\underset{(\Omega^{\sharp,\on{cl}})_X}\otimes \omega_X
\end{equation}
instead of just 
$$\left((\Omega^{\sharp,\on{Lus}})_X\right)\underset{(\Omega^{\sharp,\on{cl}})_X}\otimes \omega_X$$
is that we are working in the non-unital setting. E.g., our operation of coinvariants with the cone is designed so that it would
send $\Omega^{\sharp,\on{cl}}$ itself to $0$. 

\medskip

We could equivalently rewrite \eqref{e:coinv abs} as an isomorphism
\begin{equation} \label{e:less bizarre}
(\Omega_q^{\on{Lus}})_X^+\underset{(\Omega^{\sharp,\on{cl}}_X)^+}\otimes \omega_X\simeq (\Omega_q^{\on{sml}})_X^+.
\end{equation}

\end{rem} 

\begin{rem}  \label{r:bar}
Given an action of $\CA^\sharp\in \Shv(\Conf^\sharp)$ on $\CA\in \Shv_{\CG^\Conf_q}(\Conf)$, we can form the object
$$\on{Bar}(\CA^\sharp,\CA):=\on{Bar}((\CA^\sharp)^+,\CA^+) \in \Shv_{\CG^\Conf_q}(\Conf),$$
which also carries a structure of factorization algebra.

\medskip

We have
$$\on{Bar}(\CA^\sharp,\CA)_X\simeq \on{Bar}(\CA^\sharp_X,\CA_X):=\on{Bar}((\CA^\sharp_X)^+,\CA^+_X)\simeq 
\left(\on{Cone}(\CA^\sharp_X\to \CA_X)\right)\underset{\CA^\sharp_X}\otimes \omega_X.$$

\medskip

So, by factorization, \thmref{t:Lus and sml} is equivalent to the statement that the map
$$\Omega_q^{\on{Lus}}\to \Omega_q^{\on{sml}}$$
induces an isomorphism
$$\on{Bar}(\Omega^{\sharp,\on{cl}},\Omega_q^{\on{Lus}})\simeq \Omega_q^{\on{sml}}.$$
\end{rem}

%

\sssec{}

\thmref{t:Lus and sml} will be proved in the case of a ground field of characteristic $0$ in the rest of this section. The general case will be treated
in \secref{sss:proof of Lus and sml via Whit}.

\sssec{}  

In the case of when the ground field has characteristic $0$, as in \secref{ss:proof of prop}, we reduce the assertion 
to the case when $X=\BA^1$
and the sheaf theory being that of constructible sheaves in the classical topology. Consider the corresponding factorization algebra 
$\Omega^{\on{Lus}}_{q,\Quant}$, which we already know is isomorphic to $\Omega^{\on{Lus}}_q$.

\sssec{}  \label{sss:Frob quant}

In \secref{ss:Frob for fact} we will show that there exists a canonically defined action of $\Omega^{\sharp,\on{cl}}$ on
$\Omega^{\on{Lus}}_{q,\Quant}$ such that the map
$$\Omega_{q,\Quant}^{\on{Lus}}\to \Omega_{q,\Quant}^{\on{sml}}$$
is compatible with the trivial $\Omega^{\sharp,\on{cl}}$-action on $\Omega_{q,\Quant}^{\on{sml}}$
and induces an identification
\begin{equation} \label{e:coinv Quant}
\left((\on{Cone}(\Omega^{\sharp,\on{cl}})_X\to (\Omega_{q,\Quant}^{\on{Lus}})_X)\right)
\underset{(\Omega^{\sharp,\on{cl}})_X}\otimes \omega_X\to (\Omega_{q,\Quant}^{\on{sml}})_X
\end{equation}

Let us show how the existence of this action and the 
isomorphism \eqref{e:coinv Quant} implies the assertion of \thmref{t:Lus and sml}. 

\begin{rem}  \label{r:Frob quant}
We emphasize that for the above structure to exist on $\Omega^{\on{Lus}}_{q,\Quant}\simeq \Omega^{\on{Lus}}_q$,
the assumption that $q$ should avoid small torsion is important. This is due to the fact that the construction of this structure 
uses (and is essentially equivalent to) the quantum Frobenius for $U_q^{\on{Lus}}(\cN)$. 
\end{rem} 

\sssec{}  \label{sss:Frob prop 1}

We only need to show that the map
$$(\Frob_{q,\Conf})_!(\Omega^{\sharp,\on{cl}})\to \Omega_{q,\Quant}^{\on{Lus}}$$
induces an isomorphism
$$\Omega^{\sharp,\on{cl}}\to H^0\left((\Frob_{q,\Conf})^!(\Omega_{q,\Quant}^{\on{Lus}})\right).$$

By induction and factorization, this is equivalent to showing that for any $\gamma\in \Lambda^{\sharp,\on{neg}}-0$,
$$(\Delta_\gamma)^!\left(\on{Cone}((\Frob_{q,\Conf})_!(\Omega^{\sharp,\on{cl}})\to \Omega_{q,\Quant}^{\on{Lus}})\right)$$
lives in cohomological degrees $\geq 1$. 

\medskip

By translation invariance, it suffices to show that
$$(\iota_\gamma)^!\left(\on{Cone}((\Frob_{q,\Conf})_!(\Omega^{\sharp,\on{cl}})\to \Omega_{q,\Quant}^{\on{Lus}})\right)$$
lives in cohomological degrees $\geq 2$. 

\sssec{}

Let us denote by $M$ the $(\Lambda^{\on{neg}}-0)$-graded vector space 
$$\underset{\lambda}\oplus\, (\iota_\lambda)^!\left(\on{Cone}((\Frob_{q,\Conf})_!(\Omega^{\sharp,\on{cl}})\to \Omega_{q,\Quant}^{\on{Lus}})\right).$$

It is acted on by the $(\Lambda^{\sharp,\on{neg}}-0)$-graded algebra
$$\underset{\gamma}\oplus\, (\iota_\gamma)^!(\Omega^{\sharp,\on{cl}})\simeq \on{C}^\cdot(\fn^\sharp).$$

Isomorphism \eqref{e:coinv Quant} implies that
\begin{equation} \label{e:fiber of Tor}
M\underset{\on{C}^\cdot(\fn^\sharp)}\otimes \sfe\simeq \underset{\lambda}\oplus\, (\iota_\lambda)^!(\Omega^{\on{sml}}_{q,\on{Quant}})=:M'.
\end{equation} 

We need to show that $H^0(M)$ and $H^1(M)$ do not have non-zero components of degrees that lie in $\Lambda^\sharp$. 

\sssec{}  \label{sss:Frob prop 3}

The datum of action of $\on{C}^\cdot(\fn^\sharp)$ on $M$ and the isomorphism \eqref{e:fiber of Tor} is equivalent to that
of action of the algebraic group $N^\sharp$ on $M'$ and an isomorphism
\begin{equation} \label{e:N sharp action}
\on{C}^\cdot(N^\sharp,M')\simeq M.
\end{equation} 

We know that $M'$ is concentrated in cohomological degrees $\geq 1$, and 
$$H^1(M')\simeq \underset{i}\oplus\, \sfe^{-\alpha_i}.$$

Hence, by the spectral sequence,
$$H^0(M)=0$$
and
$$H^1(M)\simeq H^1(M')\simeq \underset{i}\oplus\, \sfe^{-\alpha_i}.$$

This proves the desired assertion since none of the elements $-\alpha_i$ lie in $\Lambda^\sharp$, due to the non-degeneracy assumption 
on $q$. 

\ssec{The quantum Frobenius map for quantum groups}

\sssec{}

Let $T^\sharp$ denote the torus, whose lattice of characters is $\Lambda^\sharp$; we can identify $T^\sharp$
with the Cartan subgroup of $G^\sharp$, see \secref{sss:sharp}. 

\medskip

The embedding $\Lambda^\sharp\to \Lambda$ gives rise to a map
$$\Rep(T^\sharp) \to Z_{\BE_3}(\Rep_q(\cT)).$$

At the level of abelian categories (which is all we need for the moment), this means that we have a braided monoidal
functor 
$$\Frob^*_{q,T}:(\Rep(T^\sharp))^\heartsuit\to (\Rep_q(\cT))^\heartsuit,$$
such for every $V\in \Rep(T^\sharp)$ and $\CM\in \Rep_q(\cT)$, the square
of the braiding
$$\Frob^*_{q,T}(V)\otimes W\to W\otimes \Frob^*_{q,T}(V)\to \Frob^*_{q,T}(V)\otimes W$$
is the identity map.

\sssec{}

Let $\CC^0$ be an $\BE_3$-category (i.e., an algebra object in the category of unital braided monoidal categories). 
Then the operation of tensor product makes $\on{HopfAlg}(\CC^0)$ into a monoidal category. If $\CC^0$
is a symmetric monoidal category, then $\on{HopfAlg}(\CC^0)$ acquires a symmetric monoidal structure.

\medskip

In particular, it makes sense to talk about coassociative (resp., cocommutative) coalgebra objects in $\on{HopfAlg}(\CC^0)$. 

\medskip

Note that in the commutative case, a structure on a Hopf algebra 
$H^0$ of cocommutative coalgebra object in $\on{HopfAlg}(\CC^0)$ is equivalent to the structure on $H^0$ 
of \emph{cocommutative Hopf algebra}.

\sssec{}

Let $\CC$ be a braided monoidal category; let $\CC^0$ be an $\BE_3$-category and
let us be given a functor
$$\epsilon:\CC^0\to Z_{\BE_3}(\CC).$$

In this case, the operation of tensor product defines an action of $\on{HopfAlg}(\CC^0)$ on $\on{HopfAlg}(\CC)$
$$H^0,H\mapsto \epsilon(H^0)\otimes H.$$

\medskip

In particular, it makes sense to talk about a \emph{coaction} of a coassociative algebra object $H^0\in \on{HopfAlg}(\CC^0)$
on $H\in \on{HopfAlg}(\CC)$. 

\sssec{}

Assume that $\CC^0$ has a t-structure, so that the operation of tensor product is t-exact. Let $H^0$ be a Hopf
algebra in the heart of the t-structure. Then the structure on $H^0$ of coassociative coalgebra object in $\on{HopfAlg}(\CC^0)$
is equivalent to the \emph{condition} that the square
$$
\CD
H^0  @>{\on{comult}}>>  H^0\otimes H^0  \\
@V{\on{id}}VV  @V{R}VV   \\
H^0  @>{\on{comult}}>>  H^0\otimes H^0 
\endCD
$$
commutes. 

\medskip

Let $\CC$ be also equipped with a t-structure for which the tensor product 
and the action of $\CC^0$ on $\CC$ are t-exact functors. 

\medskip

Let $H^0$ be as above. Let $H$ be a Hopf algebra in $\CC$ that also lies in the heart of the t-structure. 

\medskip

Then the datum of coaction of $H^0$ on $H$ is equivalent to that of \emph{co-central} homomorphism
\begin{equation} \label{e:cocentral}
H\to \epsilon(H^0),
\end{equation} 
i.e., this is a homomorphism of Hopf algebras that makes the diagram
$$
\CD
H  @>{\on{comult}}>>  H\otimes H  @>>>  \epsilon(H^0)\otimes H \\
@V{\on{id}}VV & &  @VV{R}V    \\
H @>{\on{comult}}>>  H\otimes H @>>> H\otimes \epsilon(H^0) 
\endCD
$$
commute. 

%

\sssec{}

We now recall that Lusztig's quantum Frobenius is a cocentral map
\begin{equation} \label{e:quant Frob}
\Frob_{q,N}:U_q^{\on{Lus}}(\cN)\to \Frob^*_{q,T}(U(\fn^\sharp)).
\end{equation} 

A basic feature of this map is that the composite map
\begin{equation} \label{e:SES quant}
u_q(\cN) \to U_q^{\on{Lus}}(\cN)\to \Frob^*_{q,T}(U(\fn^\sharp))
\end{equation}
factors through augmentation, i.e., equals
$$u_q(\cN) \overset{\on{counit}}\longrightarrow \sfe \overset{\on{unit}}\longrightarrow \Frob^*_{q,T}(U(\fn^\sharp)).$$

Moreover, the resulting map  
\begin{equation} \label{e:Tor over small}
U_q^{\on{Lus}}(\cN)\underset{u_q(\cN)}\otimes \sfe\to \Frob^*_{q,T}(U(\fn^\sharp))
\end{equation} 
(as plain objects of $\Rep_q(\cT)$), is an isomorphism\footnote{The latter fact follows by considering PBW bases.}. 
This expresses the fact that \eqref{e:SES quant} is a ``short exact sequence"
of Hopf algebras. 

\begin{rem}  \label{r:quant Frob good}
We emphasize that it is for the validity of the above assertions that we made the assumption that
$q$ should avoid small torsion. 
\end{rem}

\sssec{}

The maps in \eqref{e:SES quant} induce maps
\begin{equation} \label{e:SES Omega}
\uHom_{U(\fn^\sharp)}(\sfe,\sfe) \to \uHom_{U^{\on{Lus}}_q(\cN)}(\sfe,\sfe) \to \uHom_{u_q(\cN)}(\sfe,\sfe)
\end{equation}
(as associative algebras in $\Rep_q(\cT)$) 
such that the composite factors through augmentation. 

\medskip

In particular, we obtain a map
\begin{equation} \label{e:ten prod map Omega}
\uHom_{U^{\on{Lus}}_q(\cN)}(\sfe,\sfe) \underset{\uHom_{U(\fn^\sharp)}(\sfe,\sfe)}\otimes \sfe\to \uHom_{u_q(\cN)}(\sfe,\sfe).
\end{equation} 

\begin{prop} \label{p:ten prod map Omega}
The map \eqref{e:ten prod map Omega} is an isomorphism.
\end{prop}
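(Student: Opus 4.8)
The plan is to deduce Proposition~\ref{p:ten prod map Omega} from the ``short exact sequence'' property \eqref{e:Tor over small} by a Koszul-duality / change-of-rings spectral sequence argument, entirely internal to $\Rep_q(\cT)$. The point is that the functor $\uHom_{(-)}(\sfe,\sfe)$ is (up to the usual $\BE_2$-upgrade) the cobar construction, so the three algebras in \eqref{e:SES Omega} are $\on{C}^\cdot(U(\fn^\sharp))$, $\on{C}^\cdot(U^{\on{Lus}}_q(\cN))$ and $\on{C}^\cdot(u_q(\cN))$, and the question is whether the natural map
$$
\on{C}^\cdot(U^{\on{Lus}}_q(\cN)) \underset{\on{C}^\cdot(U(\fn^\sharp))}\otimes \sfe \longrightarrow \on{C}^\cdot(u_q(\cN))
$$
is an isomorphism.

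First I would make precise, as a lemma, the fact that for a cocentral Hopf-algebra map $H\to \epsilon(H^0)$ as in \eqref{e:cocentral} with $H^0$ cocommutative and the map satisfying the ``SES'' property \eqref{e:Tor over small} (i.e.\ $\fu:=H\underset{H^0}{\otimes}\sfe$ is again a Hopf algebra, here $\fu=u_q(\cN)$ and $H^0$ is the image of $\Frob^*_{q,T}(U(\fn^\sharp))$), one has a Hopf-algebra extension $H^0 \to H \to \fu$ in $\Rep_q(\cT)$; here I would invoke \eqref{e:SES quant} and \eqref{e:Tor over small} directly. Dualizing the cobar picture, the corresponding statement is a Hochschild--Serre / Cartan--Eilenberg type spectral sequence
$$
\on{Ext}^\bullet_{\fu}\bigl(\sfe, \on{Ext}^\bullet_{H^0}(\sfe,\sfe)\bigr) \Rightarrow \on{Ext}^\bullet_{H}(\sfe,\sfe),
$$
computed in the braided category $\Rep_q(\cT)$, coming from the extension of algebras $H^0\hookrightarrow H\twoheadrightarrow \fu$ (note $H$ is free, equivalently faithfully flat, as an $H^0$-module, again by PBW as in the footnote to \eqref{e:Tor over small}). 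The flatness is what makes the change-of-rings map $\uHom_H(\sfe,\sfe)\underset{\uHom_{H^0}(\sfe,\sfe)}\otimes\sfe \to \uHom_{\fu}(\sfe,\sfe)$ well-behaved.

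Then I would argue as follows. Since $\Frob^*_{q,T}$ is braided monoidal and lands in the $\BE_3$-center, $H^0$ is ``transparent'' --- the relevant braidings with $H^0$ are trivial, cf.\ \secref{sss:sharp} and the discussion of $\Frob^*_{q,T}$ --- so the spectral sequence is genuinely one of algebras in $\Rep_q(\cT)$ with the $H^0$-factor behaving as an ordinary (symmetric) tensor factor; concretely $\uHom_{H^0}(\sfe,\sfe)=\on{C}^\cdot(\fn^\sharp)$ with its $\BE_\infty$-structure. The map \eqref{e:ten prod map Omega} is precisely the edge map of this spectral sequence, i.e.\ the base-change comparison for $\sfe\underset{H^0}\otimes(-)$ applied to the free resolution of $\sfe$ over $H$; since $H$ is $H^0$-flat, $\sfe\underset{H^0}\otimes^{\mathbb L}(\text{$H$-projective}) $ is again acyclic over $\fu$, which gives the isomorphism. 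Equivalently, on the cobar side, $\on{C}^\cdot(H)\underset{\on{C}^\cdot(H^0)}{\otimes}\sfe$ computes $\on{C}^\cdot$ of the quotient coalgebra $H\underset{H^0}{\square}\sfe=\fu$ exactly because $H$ is injective as an $H^0$-comodule (PBW).

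\textbf{The main obstacle} I anticipate is bookkeeping the braiding: in a genuinely braided category the ``Hochschild--Serre'' spectral sequence for a Hopf extension is not completely standard, so I would want to reduce to a situation where the $H^0$-factor is symmetric. This is exactly what the hypothesis that $\Frob^*_{q,T}$ lands in $Z_{\BE_3}(\Rep_q(\cT))$ --- equivalently, that $\Lambda^\sharp$ is $b$-orthogonal to all of $\Lambda$, see \eqref{e:sharp orth} --- is for, and the fact that $H^0=U(\fn^\sharp)$ is cocommutative lets one treat $\on{C}^\cdot(\fn^\sharp)$ as an honest commutative ($\BE_\infty$) coefficient algebra. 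Once that reduction is in place, everything is the classical change-of-rings argument for a faithfully-flat (PBW-free) Hopf extension, and the only remaining check is that $H^0$-freeness of $H$ survives passing to $\Rep_q(\cT)$-internal Hom, which is immediate since internal $\Hom$ here is computed coweight-component-wise and each component is an ordinary finite-dimensional $\sfe$-vector space.
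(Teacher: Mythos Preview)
Your intuition is right --- the isomorphism is a Koszul-dual shadow of \eqref{e:Tor over small} --- but the execution has a genuine error and a genuine gap.

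\textbf{The error.} You have the direction of the Hopf extension reversed. From \eqref{e:SES quant} the sequence is
\[
\fu := u_q(\cN)\ \hookrightarrow\ H := U_q^{\on{Lus}}(\cN)\ \twoheadrightarrow\ H^0 := \Frob^*_{q,T}(U(\fn^\sharp)),
\]
i.e.\ $u_q(\cN)$ is the normal sub-Hopf-algebra and $U(\fn^\sharp)$ is the quotient, not the other way around. Your formula $\fu = H\underset{H^0}\otimes \sfe$ and your extension ``$H^0\hookrightarrow H\twoheadrightarrow \fu$'' are both backwards; the PBW freeness is of $H$ over $\fu$, and \eqref{e:Tor over small} reads $H\underset{\fu}\otimes\sfe\simeq H^0$. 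Your Hochschild--Serre display inherits this swap, so as written it is the wrong spectral sequence.

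\textbf{The gap.} Even after fixing the direction, the correct Hochschild--Serre spectral sequence
\[
\on{Ext}^p_{H^0}\bigl(\sfe,\on{Ext}^q_{\fu}(\sfe,\sfe)\bigr)\ \Rightarrow\ \on{Ext}^{p+q}_H(\sfe,\sfe)
\]
expresses $\on{C}^\cdot(H)$ as $H^0$-cohomology with coefficients in $\on{C}^\cdot(\fu)$. That is \emph{not} the same as identifying $\on{C}^\cdot(H)\underset{\on{C}^\cdot(H^0)}\otimes\sfe$ with $\on{C}^\cdot(\fu)$: the latter asks about $\on{C}^\cdot(H)$ as a module over the algebra $\on{C}^\cdot(H^0)$, not about $H^0$-equivariant cohomology. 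The map \eqref{e:ten prod map Omega} is not an edge map of Hochschild--Serre, and flatness of $H$ over $\fu$ does not by itself produce the desired module-theoretic freeness of $\on{C}^\cdot(H)$ over $\on{C}^\cdot(H^0)$. To bridge this you need precisely the categorical Koszul duality the paper invokes.

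\textbf{What the paper does.} The paper works with the equivalence $\on{KD}_A:A\mod^{\on{loc.nilp}}(\Rep_q(\cT))\simeq B\mod(\Rep_q(\cT))$ for $B=\uHom_A(\sfe,\sfe)^{\on{op}}$, under which $\on{ind}$ along a map of $B$'s corresponds to $\oblv$ along the map of $A$'s, and $\oblv$ to $\coind$. Applying this with $A=H$, one computes that under $\on{KD}_H$ the left-hand side of \eqref{e:ten prod map Omega} corresponds to the $H$-module $\Frob^*_{q,T}\bigl((U(\fn^\sharp))^\vee\bigr)$ (pulled back along $\Frob_{q,N}$), while the right-hand side corresponds to $\uHom_{u_q(\cN)}(U_q^{\on{Lus}}(\cN),\sfe)$. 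The required isomorphism between these two $H$-modules is then literally the $\Lambda$-graded dual of \eqref{e:Tor over small}. No spectral sequence is needed, and the braiding issues you worry about are absorbed into the statement that $\on{KD}_A$ is an equivalence internal to $\Rep_q(\cT)$.
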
 

\sssec{}

In order to prove \propref{p:ten prod map Omega} we will now review the theory of Koszul duality. 

\medskip
 
Let $A$ be an associative algebra in $\Rep_q(\cT)$, with coweights in $\Lambda^{\on{pos}}$, and such that its coweight
$0$ component is $\sfe$ (which automatically gives $A$ an augmentation).

\medskip

Let $A\mod^{\on{loc.nilp}}(\Rep_q(\cT))$ be the category of \emph{locally nilpotent} $A$-modules in $\Rep_q(\cT)$.
By definition, this category is compactly generated by modules of the form $\sfe^\lambda$ with the trivial action.
Set
$$B:=\uHom_A(\sfe,\sfe)^{\on{op}}.$$

Then the functor
$$\CM\mapsto \uHom_A(\sfe,\CM)$$
defines an equivalence
\begin{equation} \label{e:KD}
\on{KD}_A:A\mod^{\on{loc.nilp}}(\Rep_q(\cT))\to B\mod(\Rep_q(\cT)).
\end{equation} 

We have
$$\on{KD}_A(\sfe)\simeq B \text{ and } \on{KD}_A(A^\vee)\simeq \sfe,$$
where $A^\vee$ denotes the ($\Lambda$-graded) dual of $A$, naturally 
considered as an object of $A\mod^{\on{loc.nilp}}(\Rep_q(\cT))$. 

\sssec{}  \label{sss:KD funct}

Let now $$\epsilon_A:A_1\to A_2$$ be a homomorphism of associative algebras. We have the forgetful functor
$$\oblv_{A_2\to A_1}:A_2\mod^{\on{loc.nilp}}(\Rep_q(\cT))\to A_2\mod^{\on{loc.nilp}}(\Rep_q(\cT)),$$
and its right adjoint, denoted $\coind_{A_1\to A_2}$, and given by
$$\CM_1\mapsto \uHom_{A_1}(A_2,\CM_1).$$

\medskip

The homomorphism $\epsilon_A$ gives rise to a homomorphism 
$$\epsilon_B:B_2\to B_1.$$

Under the equivalences \eqref{e:KD} for $A_1$ and $A_2$, the functors $\oblv_{A_2\to A_1}$ and $\coind_{A_1\to A_2}$
correspond to the functors
$$\ind_{B_2\to B_1}:B_2\mod(\Rep_q(\cT)) \rightleftarrows B_1\mod(\Rep_q(\cT)):\oblv_{B_1\to B_2},$$
where $\ind_{B_2\to B_1}$ is the functor
$$\CN_2\mapsto B_1\underset{B_2}\otimes \CN_2.$$

\sssec{}

We are now ready to prove \propref{p:ten prod map Omega}:

\medskip

Let us apply the discussion in \secref{sss:KD funct} to the homomorphism
$$U_q^{\on{Lus}}(\cN)\to \Frob^*_{q,T}(U(\fn^\sharp)).$$

We obtain that under the equivalence
\begin{equation} \label{e:KD q}
\uHom_{U^{\on{Lus}}_q(\cN)}(\sfe,\sfe)\mod \simeq U^{\on{Lus}}_q(\cN)\mod^{\on{loc.nilp}},
\end{equation} 
the left-hand side in \eqref{e:ten prod map Omega} corresponds to 
$$\Frob^*_{q,T}((U(\fn^\sharp))^\vee),$$
viewed as a $U_q^{\on{Lus}}(\cN)$-module via $\Frob_{q,N}$.

\medskip

Consider now the homomorphism
$$u_q(\cN) \to U_q^{\on{Lus}}(\cN).$$
We obtain that the right-hand side in \eqref{e:ten prod map Omega}, viewed as a
$\uHom_{U^{\on{Lus}}_q(\cN)}(\sfe,\sfe)$-module via
$$\uHom_{U^{\on{Lus}}_q(\cN)}(\sfe,\sfe)\to \uHom_{u_q(\cN)}(\sfe,\sfe),$$
corresponds under \eqref{e:KD q} to 
$$\uHom_{u_q(\cN)}(U_q^{\on{Lus}}(\cN),\sfe).$$

Hence, the isomorphism stated in the proposition amounts to the fact that the map
$$\Frob^*_{q,T}((U(\fn^\sharp))^\vee) \to \uHom_{u_q(\cN)}(U_q^{\on{Lus}}(\cN),\sfe)$$
is an isomorphism. 

\medskip

However, this follows from \eqref{e:Tor over small} by dualization. 

\qed[\propref{p:ten prod map Omega}]

\ssec{Quantum Frobenius for factorization algebras}  \label{ss:Frob for fact}

\sssec{}

In this section we will perform the construction of the action of $\Omega^{\sharp,\on{cl}}$ on
$\Omega^{\on{Lus}}_{q,\Quant}$.

\medskip

This is obtained by enhancing the paradigm of \secref{ss:Hopf to Fact}. 

\sssec{}

First, the (symmetric) monoidal structure on $\on{HopfAlg}(\Rep(T^\sharp))$
$$H^0_1,H^0_2\mapsto H^0_1\otimes H^0_2$$
corresponds under the equivalence \eqref{e:from Hopf to fact} to the (symmetric) monoidal structure on
the category of factorization algebras in $\Shv((\Conf^\sharp)^+)$ given by convolution:
$$\CA_{1},\CA_{2} \mapsto \CA_{1} \star \CA_{2},$$
see Remark \ref{r:add unit}.

\medskip

Further, the action of $\on{HopfAlg}(\Rep(T^\sharp))$ on $\on{HopfAlg}(\Rep_q(\cT))$ corresponds
under the equivalence \eqref{e:from Hopf to fact} to the 
action of the category of factorization algebras in $\Shv(\Conf^\sharp)$ on the category of factorization algebras
in $\Shv_{\CG^\Conf_q}(\Conf)$, also given by convolution:
$$\CA^\sharp,\CA\mapsto \CA^\sharp\star \CA.$$

\sssec{}

Hence, the datum of the co-central homomorphism \eqref{e:quant Frob} gives rise to an action of
$\Omega^{\sharp,\on{cl}}$ on $\Omega^{\on{Lus}}_{q,\on{Quant}}$. 

\medskip

Finally, the isomorphism \eqref{e:coinv Quant} follows from the isomorphism of \propref{p:ten prod map Omega}
(see Remark \ref{r:aug}).

\section{Zastava spaces and Whittaker/semi-infinite categories}   \label{s:Zastava}

In this section we review some geometric constructions needed in order to introduce the Whittaker incarnations
of the factorization algebras $\Omega^?_q$.  

\ssec{Affine Grassmannian over the configuration space}

\sssec{}

Recall that we denote by $G$ the semi-simple simply connected group, whose coroot lattice is $\Lambda$.
Let 
$$\Gr^{\omega^\rho}_{G,\Conf}\to \Conf$$
denote the following version of the affine Grassmannian of $G$:

\medskip

Namely, for a test-scheme $Y$, a $Y$-point of $\Gr^{\omega^\rho}_{G,\Conf}$ is a triple $(D,\CP_G,\alpha)$, where:

\begin{itemize}

\item $D$ is a $Y$-point of $\Conf$;

\item $\CP_G$ is a $G$-bundle on $Y\times X$;

\item $\alpha$ is an identification of $\CP_G$ with the $G$-bundle $\CP_G^{\omega^\rho}$, defined on 
$Y\times X-\Gamma_D$. 

\end{itemize} 

Here:

\begin{itemize}

\item $\CP_G^{\omega^\rho}$ is the $G$-bundle induced 
from a (chosen once and for all) square root $\omega^{\otimes \frac{1}{2}}$ of the canonical line bundle $\omega$ via 
$$\BG_m\overset{2\rho}\longrightarrow T \hookrightarrow G,$$

\item $\Gamma_D$ is the preimage of the incidence divisor in $\Conf\times X$
under $$Y\times X \overset{D\times \on{id}}\to \Conf\times X.$$

\end{itemize} 

We let
$$\fs:\Conf\to \Gr^{\omega^\rho}_{G,\Conf}$$
denote the unit section, which sends $D$ to the triple $(D,\CP_G^{\omega^\rho},\on{id})$.

\sssec{}

Along with $\Gr^{\omega^\rho}_{G,\Conf}$ one introduces the corresponding version of the group-(ind)schemes
$$\fL^+(G)^{\omega^\rho}_\Conf \subset \fL(G)^{\omega^\rho}_\Conf,$$
see \cite[Sect. 12.2.1]{GLys2}, equipped with an action on $\Gr^{\omega^\rho}_{G,\Conf}$.  

\medskip

We can identify $\Gr^{\omega^\rho}_{G,\Conf}$ with the prestack quotient 
$$\fL(G)^{\omega^\rho}_\Conf/\fL^+(G)^{\omega^\rho}_\Conf,$$
sheafified in the \'etale topology. 

\sssec{}

A key feature of $\Gr^{\omega^\rho}_{G,\Conf}$ is the factorization structure over $\Conf$, i.e., a canonical isomorphism
\begin{equation} \label{e:factor Gr}
(\Gr^{\omega^\rho}_{G,\Conf}\times \Gr^{\omega^\rho}_{G,\Conf})\underset{\Conf\times \Conf}\times (\Conf\times \Conf)_{\on{disj}}\simeq 
\Gr^{\omega^\rho}_{G,\Conf} \underset{\Conf}\times (\Conf\times \Conf)_{\on{disj}},
\end{equation}
which is associative in the natural sense.  

\medskip

The same applies to the group-(ind)schemes $\fL(G)^{\omega^\rho}_\Conf$ and $\fL^+(G)^{\omega^\rho}_\Conf$. 

\sssec{}

Inside $\fL(G)^{\omega^\rho}_\Conf$ we consider the group ind-subscheme
$$\fL(N)^{\omega^\rho}_\Conf\subset \fL(G)^{\omega^\rho}_\Conf.$$

The purpose of the twist $\omega^\rho$ was that there exists a canonical character
$$\chi:\fL(N)^{\omega^\rho}_\Conf\to \BG_a,$$
see \cite[Sect. 8.2.1]{GLys2}.

\medskip

The character $\chi$ is compatible with the factorization structure in the natural sense. 

\sssec{}

It follows from  \cite[Sect. 3.3.4 and Corollary 3.3.6]{GLys1} that the form $q$ gives rise to a \emph{canonically} defined geometric metaplectic datum,
for $G$ (here we use the assumption that $G$ is simply-connected).  

\medskip

In particular, by \cite[Sect. 4.6.3]{GLys2}, we obtain a gerbe on $\Gr^{\omega^\rho}_{G,\Conf}$, denoted $\CG^G_q$, equipped with a 
factorization structure.

\ssec{Semi-infinite orbits and their closures}

\sssec{}

In what follows we will consider the closed subfunctor
$$\ol{S}{}^0_\Conf\subset \Gr^{\omega^\rho}_{G,\Conf}$$
and its open subfunctor
$$S^0_\Conf\subset \ol{S}{}^0_\Conf.$$

Namely, $\ol{S}{}^0_\Conf$ consists of those triples $(D,\CP_G,\alpha)$, for which for every dominant weight $\clambda$, the map
\begin{equation} \label{e:kappa map}
\omega^{\otimes \langle \rho,\clambda\rangle} \to \CV^\clambda_{\CP_G^{\omega^\rho}}\to \CV^\lambda_{\CP_G},
\end{equation} 
defined away from $\Gamma_D$, extends to all of $Y\times X$.

\medskip

In the above formula:

\begin{itemize}

\item $\CV^\clambda$ denotes the Weyl module for $G$ of highest weight $\clambda$;

\item $\CV^\clambda_{\CP_G}$ is the vector bundle, obtained as the twist of 
$\CV^\lambda$ by the given $G$-bundle;

\item $\omega^{\otimes \langle \rho,\clambda\rangle}:=(\omega^{\otimes \frac{1}{2}})^{\otimes \langle 2\rho,\clambda\rangle}$; 

\item The map $\omega^{\otimes \langle \rho,\clambda\rangle} \to \CV^\clambda_{\CP_G^{\omega^\rho}}$ is given by
the highest weight line in $\CV^\clambda$.

\end{itemize}

\sssec{}

We let 
$$S^0_\Conf \overset{\bj}\longrightarrow \ol{S}{}^0_\Conf$$ be the locus corresponding to the condition that \eqref{e:kappa map} are
\emph{injective bundle maps}, i.e., that the quotient is a vector bundle on $Y\times X$.  

\medskip

The subfunctors 
\begin{equation} \label{e:S subfunctors}
S^0_\Conf\subset \ol{S}{}^0_\Conf\subset \Gr^{\omega^\rho}_{G,\Conf}
\end{equation} 
inherit a factorization structure from that of $\Gr^{\omega^\rho}_{G,\Conf}$. 

\sssec{}

Both subfunctors \eqref{e:S subfunctors} 
are preserved by the action of $\fL(N)^{\omega^\rho}_\Conf$. Moreover, the action of $\fL(N)^{\omega^\rho}_\Conf$
on the unit section
$$\fs:\Conf\to S^0_\Conf\subset \Gr^{\omega^\rho}_{G,\Conf}$$
defines an isomorphism 
\begin{equation} \label{e:open S as quotient}
\fL(N)^{\omega^\rho}_\Conf/\fL^+(N)^{\omega^\rho}_\Conf\simeq S^0_\Conf,
\end{equation} 

\medskip

From here we obtain that the character $\chi$ induces a map 
$$S^0_\Conf\to \BG_a,$$
which we will denote by the same character $\chi$. 

\sssec{}

The gerbe $\CG^G_q$ is \emph{equivariant} with respect to the action of $\fL(N)^{\omega^\rho}_\Conf$ on $\Gr^{\omega^\rho}_{G,\Conf}$. 
Moreover, for any point of $\Gr^{\omega^\rho}_{G,\Conf}$, the resulting character sheaf on its stabilizer in $\fL(N)^{\omega^\rho}_\Conf$ is trivial. 

\medskip

In particular, the restriction
$$\CG^G_q|_{S^0_\Conf}$$
admits a canonical $\fL(N)^{\omega^\rho}_\Conf$-equivariant trivialization. 

\sssec{}

Let 
$$\ol{S}^{-,\Conf}_\Conf\subset \Gr^{\omega^\rho}_{G,\Conf}$$
be the closed subfunctor defined as follows: a $Y$-point $(D,\CP_G,\alpha)$ of $\Gr^{\omega^\rho}_{G,\Conf}$
belongs to $\ol{S}^{-,\Conf}_\Conf$ if for every dominant weight $\clambda$, the composite map
\begin{equation} \label{e:kappa - map}
\CV^{\vee,\lambda}_{\CP_G} \simeq \CV^{\vee,\clambda}_{\CP_G^{\omega^\rho}}\to \omega^{\otimes \langle \rho,\clambda\rangle}, 
\end{equation} 
which defined away from $\Gamma_D$, extends to a regular map
\begin{equation} \label{e:kappa - map corrected}
\CV^{\vee,\lambda}_{\CP_G} \to \omega^{\otimes \langle \rho,\clambda\rangle}(-\clambda(D)), 
\end{equation} 
where

\begin{itemize}

\item $\CV^{\vee,\clambda}$ denotes the dual Weyl module for $G$ of highest weight $\clambda$;

\item The map $\CV^{\vee,\clambda}_{\CP_G^{\omega^\rho}}\to \omega^{\otimes \langle \rho,\clambda\rangle}$ corresponds 
to the canonical $N^-$-invariant functional on  $\CV^{\vee,\clambda}$;

\item $-\clambda(D)$ is the effective (!) Cartier divisor on $Y\times X$ equal to the pullback of the tautological
effective divisor on $\on{Div}^{\on{eff}}\times X$ along the map
$$Y\times X \overset{D\times \on{id}}\longrightarrow \Conf\times X \overset{-\clambda\times \on{id}}\longrightarrow 
\on{Div}^{\on{eff}}\times X.$$

\end{itemize} 

\sssec{}

Let 
\begin{equation} \label{e:embed open - orbit}
S^{-,\Conf}_\Conf \overset{\bj^-}\longrightarrow \ol{S}^{-,\Conf}_\Conf
\end{equation} 
be the open subfunctor corresponding to the locus, where the maps \eqref{e:kappa - map corrected} are regular bundle maps.

\sssec{}

Note that in addition to the unit section
$$\fs:\Conf\to \Gr^{\omega^\rho}_{G,\Conf},$$ there exists another canonical section
\begin{equation} \label{e:Conf section}
\fs^-:\Conf\to \Gr^{\omega^\rho}_{G,\Conf}
\end{equation}
that sends a $Y$-point of $\Conf$, given by $D$, to the $G$-bundle induced from the $T$-bundle
$$\clambda \mapsto \omega^{\otimes \langle \rho,\clambda\rangle}(-\clambda(D)).$$
This image of this section belongs to $S^{-,\Conf}_\Conf$. 

\medskip

The action of $\fL(N^-)^{\omega^\rho}_\Conf$ on this section defines an isomorphism
\begin{equation} \label{e:negative orbit}
\fL(N^-)^{\omega^\rho}_\Conf/\on{Ad}_{\fs^-}(\fL^+(N^-)^{\omega^\rho}_\Conf)\to S^{-,\Conf}_\Conf.
\end{equation} 

\sssec{}

The gerbe $\CG^G_q$ is equivariant also with respect to $\fL(N^-)^{\omega^\rho}_\Conf$, and for any point of $\Gr^{\omega^\rho}_{G,\Conf}$, 
the resulting character sheaf on its stabilizer in $\fL(N^-)^{\omega^\rho}_\Conf$ is trivial. 

\medskip

Moreover, according to \cite[Sect. 5.1]{GLys1}, the pullback of $\CG^G_q$ along $\fs^-$ identifies canonically with $\CG^\Conf_q$.

\medskip

From here it follows that the restriction of $\CG^G_q$ to $S^{-,\Conf}_\Conf$
carries a (unique) $\fL(N^-)^{\omega^\rho}_\Conf$-equivariant identification with the pullback of the gerbe $\CG_q^\Conf$
along 
$$S^{-,\Conf}_\Conf\to \Conf.$$

\sssec{}

Let $\CG^{\on{ratio}}_q$ denote the gerbe on $\Gr^{\omega^\rho}_{G,\Conf}$ equal to the ratio\footnote{The notation diverges from one in 
\cite{GLys2} by replacing $q$ by $q^{-1}$.}
$$\CG^{\on{ratio}}_q:=\CG^\Conf_q \otimes (\CG^G_q)^{-1},$$
where, by a slight abuse of notation, we denote by $\CG^\Conf_q$ the pullback of the same-named gerbe along
$$\Gr^{\omega^\rho}_{G,\Conf}\to \Conf.$$

\medskip

We obtain that the restriction of $\CG^{\on{ratio}}_q$ to $S^{-,\Conf}_\Conf$ admits
a canonical $\fL(N^-)^{\omega^\rho}_\Conf$-equivariant trivialization. 

\ssec{Zastava spaces}  \label{ss:Zast}

In this subsection we recall the definition of the (several versions of the) Zastava space.

\sssec{}

The (compactified) Zastava space $\ol\CZ$ is defined as 
$$\ol\CZ:=\ol{S}{}^0_\Conf\cap \ol{S}^{-,\Conf}_\Conf\subset \Gr^{\omega^\rho}_{G,\Conf}.$$

Let $\pi$ denote the projection $\ol\CZ\to \Conf$.

\medskip

A priori, $\ol\CZ$ is an ind-scheme, but one shows (using global considerations, see \cite[Corollary 20.2.3]{GLys2}) 
that $\ol\CZ$ is actually a scheme. Since $\Gr^{\omega^\rho}_{G,\Conf}$ is ind-proper over $\Conf$, we
obtain that the natural projection
$$\ol\pi:\ol\CZ\to \Conf$$
is proper. 

\sssec{}

The functor represented by $\ol\CZ$ can be explicitly described as follows. For a test-scheme $Y$, a $Y$-point of $\ol\CZ$ is a triple
$$(D,\CP_G,\{\kappa\},\{\kappa^-\}),$$ where

\medskip

\begin{itemize}

\item $D$ is a $Y$-valued point of $\Conf$;

\item $\CP_G$ is a $G$-bundle on $Y\times X$;

\item $\{\kappa\}$ is a collection of maps
\begin{equation} \label{e:kappa}
\omega^{\otimes \langle \rho,\clambda\rangle}\to \CV^\clambda_{\CP_G}, \quad \clambda\in \cLambda^+
\end{equation} 
satisfying the Pl\"ucker relations;

\item $\{\kappa^-\}$ is a collection of maps
\begin{equation} \label{e:kappa-}
\CV^{\vee,\clambda}_{\CP_G}\to \omega^{\otimes \langle \rho,\clambda\rangle}(-\clambda(D)), \quad\clambda\in \cLambda^+,
\end{equation} 
satisfying the Pl\"ucker relations.

\end{itemize} 

\medskip

We require that:

\begin{itemize}

\item the composite maps
$$\omega^{\otimes \langle \rho,\clambda\rangle}\to \CV^\clambda_{\CP_G} 
\to \CV^{\vee,\clambda}_{\CP_G}\to \omega^{\otimes \langle \rho,\clambda\rangle}(-\clambda(D))$$
are the tautological maps 
$$\omega^{\otimes \langle \rho,\clambda\rangle}\to \omega^{\otimes \langle \rho,\clambda\rangle}(-\clambda(D)).$$

\end{itemize} 

\sssec{}  \label{sss:usual Zastava}

We define the open subfunctors
$$
\CD
& &  \CZ  \\
& &  @VV{\bj^-_Z}V  \\
\CZ^-  @>{\bj_Z}>>  \ol\CZ 
\endCD
$$
to be
$$\CZ :=\ol{S}{}^0_\Conf\cap S^{-,\Conf}_\Conf \,\,\text{ and }\,\,
\CZ^- :=S^0_\Conf\cap \ol{S}^{-,\Conf}_\Conf,$$
respectively. 

\medskip

These subschemes correspond to the condition that the maps \eqref{e:kappa-} (resp., \eqref{e:kappa}) are injective bundle maps. 

\medskip

Let $\pi$ (resp., $\pi^-$) denote the restriction of $\ol\pi$ to $\CZ$ (resp., $\CZ^-$). 

\medskip

Note that since the maps 
$$S^0_\Conf \to \Conf \leftarrow S^{-,\Conf}_\Conf$$
are ind-affine, the maps
$$\CZ \overset{\pi}\to \Conf \overset{\pi^-}\leftarrow \CZ^-$$
are affine. 

\sssec{}

Set also
$$\oCZ:=\CZ\cap \CZ^-\subset \ol\CZ.$$

We have the corresponding open embeddings
$$
\CD
\oCZ  @>{\overset{\circ}\bj{}_Z}>>  \CZ  \\
@V{\overset{\circ}\bj{}^-_Z}VV   @VV{\bj^-_Z}V  \\
\CZ^-  @>>{\bj_Z}>  \ol\CZ.
\endCD
$$

Let $\opi$ denote the restriction of $\pi$ to $\oCZ$. This map is also affine, as are the maps $\overset{\circ}\bj{}_Z$ and 
$\overset{\circ}\bj{}^-_Z$.

\medskip

It is known that $\oCZ$ is a smooth scheme; its connected component living over $\Conf^\lambda\subset \Conf$
has dimension $-\langle \lambda,2\check\rho\rangle$. 

\sssec{}

We let $\CG^G_q$ denote the restriction of the same-named gerbe along
$$\ol\CZ\hookrightarrow \Gr_{G,\Conf}.$$

Let $\CG^\Conf_q$ denote the pullback of the same-named gerbe along $\ol\pi$. 
Let $\CG^{\on{ratio}}_q$ denote the ratio of the latter by the former.

\medskip

We obtain that 
$$\CG^G_q|_{\CZ^-} \text{ and } \CG^{\on{ratio}}_q|_{\CZ}$$
admit canonical trivializations.

\sssec{}

The spaces 
$$\oCZ,\,\, \CZ,\,\, \CZ^-, \,\,\ol\CZ$$
all inherit a factorization structure from that on $\Gr^{\omega^\rho}_{G,\Conf}$,
along with the gerbes
$$\CG^G_q,\,\,\CG^\Conf_q,\,\, \CG^{\on{ratio}}_q.$$

\ssec{Sheaves on Zastava spaces}    \label{ss:shv on Zast}

\sssec{}

The restriction of the map 
$$\chi:S^0_\Conf\to \BG_a$$
along $\CZ^-\hookrightarrow S^0_\Conf$
defines a map 
$$\CZ^-\to \BG_a,$$
which we denote by the same symbol $\chi$.

\medskip

Let
$$\on{Vac}_{\Whit,Z}\in \Shv(\CZ^-)$$
denote the object
$$\omega_{\CZ^-}\overset{*}\otimes \chi^*(\on{exp}),$$
where $\on{exp}$ denotes the exponential/Artin-Schreier local system on $\BG_a$. 

\medskip

The object $\on{Vac}_{\Whit,Z}$ is equipped with a factorization structure along $\Conf$ in a natural sense. 

\medskip

We will refer to $\on{Vac}_{\Whit,Z}$ as the basic (a.k.a. vacuum Whittaker) sheaf on $\CZ^-$.

\sssec{}

Due to the trivialization of $\CG^G_q|_{\CZ^-}$, we can think of $\on{Vac}_{\Whit,Z}$ as an object of the twisted category $\Shv_{\CG^G_q}(\CZ^-)$,
and in this capacity we will denote it by
$$\on{Vac}_{q,\Whit,Z}\in \Shv_{\CG^G_q}(\CZ^-).$$

This object is also equipped with a factorization structure along $\Conf$. 

\sssec{}

Consider the category $\Shv(\oCZ)$ and the object
$$\IC_{\oCZ}\in \Shv(\oCZ).$$

Note, however, that since $\oCZ$ is smooth, 
$$\IC_{\oCZ}\simeq \omega_{\oCZ}[-\deg],$$
where $\deg$ takes value $-\langle \lambda,2\check\rho\rangle$ over the connected component $\Conf^\lambda$ of $\Conf$.

\medskip

Due to the trivialization of $\CG^{\on{ratio}}_q|_{\CZ}$, we can think of $\IC_{\oCZ}$ as an object of
the category $\Shv_{\CG^{\on{ratio}}_q}(\oCZ)$. When viewed in this capacity, we will denote it by
$$\IC_{q,\oCZ}\in \Shv_{\CG^{\on{ratio}}_q}(\oCZ).$$

\medskip

We will consider several variants of its extension to an object of 
$\Shv_{\CG^{\on{ratio}}_q}(\CZ^-)$:
$$\nabla^-_{q,Z}:=(\overset{\circ}\bj{}^-_Z)_*(\IC_{q,\oCZ}),\quad\Delta^-_{q,Z}:=(\overset{\circ}\bj{}^-_Z)_!(\IC_{q,\oCZ}),$$
$$\IC^-_{q,Z}:=(\overset{\circ}\bj{}^-_Z)_{!*}(\IC_{q,\oCZ}).$$

We have the tautological maps
\begin{equation} \label{e:semiinf maps}
\Delta^-_{q,Z} \to \IC^-_{q,Z} \to \nabla^-_{q,Z}.
\end{equation} 

We have the following result (see \cite[Theorem 7.6]{Ga3}):

\begin{prop} \label{p:IC Zast irrat}
For $q$ which is \emph{non-torsion valued}, the maps \eqref{e:semiinf maps} are isomorphisms.
\end{prop}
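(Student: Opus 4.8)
The plan is to prove \propref{p:IC Zast irrat} by reducing the statement about $\CG^{\on{ratio}}_q$-twisted sheaves on $\CZ^-$ to a statement purely on the configuration space, where we can invoke the already-established behavior of the factorization algebras $\Omega^?_q$ — or, following \cite{Ga3} directly, by a local (factorization) argument on the Zastava space itself. Concretely, the maps $\Delta^-_{q,Z}\to \IC^-_{q,Z}\to \nabla^-_{q,Z}$ are all isomorphisms over the smooth open locus $\oCZ$, so the assertion is about what happens along the complement of $\oCZ$ inside $\CZ^-$, i.e. along the locus where the maps $\kappa$ in \eqref{e:kappa} fail to be bundle maps (the maps $\kappa^-$ are bundle maps on all of $\CZ^-$). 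By the factorization structure on $\CZ^-$ along $\Conf$ and on all the objects in sight, it suffices to check the statement near the main diagonal $\Delta_\lambda(X)\subset \Conf^\lambda$, for each $\lambda\in \Lambda^{\on{neg}}-0$; and then by the usual contraction argument (the $\BG_m$-action on Zastava rescaling the maps $\kappa$) one is reduced to computing the $*$- and $!$-fibers of $\IC^-_{q,Z}$ at points of the central fiber $\pi^{-1}(\iota_\lambda)$.

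The key computational input is the identification of these fibers with cohomology of the intersections of semi-infinite orbits $S^0\cap S^{-,\lambda}$ (intersected with the relevant Schubert cell) twisted by the Kummer local system $\Psi_{q,\lambda}$ attached to $q$, together with the exponential character sheaf $\chi^*(\on{exp})$ — this is exactly the cohomology appearing in \eqref{e:cohomology Gauss}. For $q$ non-torsion valued the Kummer local system $\Psi_{q,\lambda}$ is "sufficiently generic" along the torus directions present in these intersections: concretely, the function $f_\lambda$ on $S^0\cap S^{-,\lambda}$ referred to in \secref{sss:det funct} is a nontrivial monomial in the coordinates, and pulling back a non-torsion Kummer sheaf along it kills the cohomology whenever $\lambda$ is not a negative simple coroot (when $\lambda=-\alpha_i$ the intersection is a point and everything is trivial). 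This is the content of the "easy calculation performed already in \cite{Ga3}" alluded to in \secref{sss:crystal game}: the relevant varieties are, up to the exponential twist, built out of $\BG_m$'s and $\BA^1$'s, and a non-torsion Kummer local system has vanishing cohomology on $\BG_m$, so a Künneth/Leray spectral sequence argument forces the whole complex to vanish off the diagonal strata. Hence $(\Delta_\lambda)^!(\IC^-_{q,Z})$ and $(\Delta_\lambda)^*(\IC^-_{q,Z})$ both vanish for $|\lambda|>1$, which is precisely the statement that $\IC^-_{q,Z}$ has no subquotients supported off $\oCZ$, i.e. that $(\overset{\circ}\bj{}^-_Z)_!\to (\overset{\circ}\bj{}^-_Z)_{!*}\to (\overset{\circ}\bj{}^-_Z)_*$ are all isomorphisms.

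The steps, in order: (i) reduce to a neighborhood of the main diagonal via factorization; (ii) use the contracting $\BG_m$-action on $\CZ^-$ (with respect to which $\chi$ is homogeneous of nonzero weight) to reduce the computation of stalks/costalks of $\IC^-_{q,Z}\overset{*}\otimes\chi^*(\on{exp})$ to the computation of cohomology of the central fiber, namely $S^0\cap S^{-,\lambda}$ with coefficients in $\Psi_{q,\lambda}\overset{*}\otimes\chi^*(\on{exp})$; (iii) observe that injective-vs-surjective Plücker conditions stratify this intersection by locally closed pieces each of which is a product of copies of $\BG_m$ and $\BA^1$, with $\Psi_{q,\lambda}$ restricting to a tensor product of nontrivial (because $q$ is non-torsion valued and $\lambda\neq -\alpha_i$) Kummer local systems on the $\BG_m$ factors; (iv) conclude vanishing of the (compactly supported, hence by duality also the ordinary) cohomology by Künneth, since $H^\bullet_c(\BG_m,\Psi_\zeta)=0$ for $\zeta$ non-torsion; (v) translate this vanishing back into the statement that the three extensions coincide.

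The main obstacle I expect is step (iii): one needs an honest description of the stratification of $S^0\cap S^{-,\lambda}$ (equivalently, of the central fiber of $\CZ^-$) fine enough to see that each stratum is a product of $\BG_m$'s and affine spaces and that the twisting local system is a genuine nontrivial Kummer sheaf on each toric factor — this is where the simply-connectedness of $G$ and the explicit "zastava-to-$\BA^1$" coordinates of \cite{BFGM}/\cite{Ga3} (the factorization of $\CZ$ into its "defect-free" and diagonal parts) must be invoked carefully, and where one must verify that the restriction/corestriction commutes with passing to the Kummer twist. Everything else is formal: the contraction principle, factorization, and the cohomology vanishing $H^\bullet_c(\BG_m,\Psi_\zeta)=0$ for non-torsion $\zeta$ are standard. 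Since the excerpt explicitly attributes this to \cite[Theorem 7.6]{Ga3}, I would in the writeup simply cite that computation for step (iii)–(iv) rather than reproducing it, and present (i)–(ii) and (v) in detail.
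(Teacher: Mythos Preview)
Your proposal conflates \propref{p:IC Zast irrat} with the later (and logically separate) computation behind \thmref{t:Whit non-root} and \thmref{t:main 3}. The objects $\Delta^-_{q,Z},\IC^-_{q,Z},\nabla^-_{q,Z}$ are simply the $!$-, $!*$-, $*$-extensions of $\IC_{q,\oCZ}$ along $\overset{\circ}\bj{}^-_Z:\oCZ\hookrightarrow\CZ^-$ in the $\CG^{\on{ratio}}_q$-twisted category; the exponential sheaf $\chi^*(\on{exp})$ plays no role in their definition. So your step~(ii) is already aimed at the wrong object, and the cohomology you propose to compute in steps~(iii)--(iv)---that of $\Psi_{q,\lambda}\overset{*}\otimes\chi^*(\on{exp})$ on the \emph{open} fiber $S^0\cap S^{-,\lambda}$, i.e.\ the fiber of $\oCZ$---is the input to \thmref{t:main 3}, not to cleanness along the boundary $\CZ^--\oCZ=S^0_\Conf\cap(\ol{S}^{-,\Conf}_\Conf-S^{-,\Conf}_\Conf)$, which is what is actually at stake. (Your concluding sentence ``$(\Delta_\lambda)^!(\IC^-_{q,Z})$ vanishes'' is also ill-typed: $\Delta_\lambda$ lands in $\Conf$, not in $\CZ^-$.)

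The paper does not give its own proof of \propref{p:IC Zast irrat} but cites \cite[Theorem~7.6]{Ga3}; the argument there, and the one the paper \emph{does} spell out for the closely parallel \propref{p:semiinf clean}, is quite different from what you describe and much shorter. One observes that the three objects are equivariant for $\fL^+(T)_\Conf$ (for $\fL(N^-)^{\omega^\rho}_\Conf\cdot\fL^+(T)_\Conf$ in the semi-infinite setting), and that on each boundary stratum---indexed by the defect $\mu\in\Lambda^{\on{pos}}-0$ of the maps $\kappa^-$---the gerbe $\CG^{\on{ratio}}_q$ carries $\fL^+(T)$-monodromy given by the Kummer character $b(\mu,-):\Lambda\to\fZ$. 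For $q$ non-torsion valued this character is nontrivial whenever $\mu\neq 0$, so the equivariant twisted category on each boundary stratum is zero: there is literally no room for a subquotient supported off $\oCZ$. No stratification of $S^0\cap S^{-,\lambda}$ into products of $\BG_m$'s and $\BA^1$'s, no Leray spectral sequence, and no exponential sheaf are needed. Your intuition (``nontrivial Kummer twisting forces vanishing'') is morally right, but the mechanism is $T$-equivariance of the gerbe on the boundary strata, not a cohomology computation on the open stratum.
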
 

\sssec{}

Set\footnote{See Remark \ref{r:Gauss} for the explanation of the name Gauss.}
$$\on{Gauss}^-_{q,*}:=\on{Vac}_{\Whit,Z}\sotimes \nabla^-_{q,Z}, \quad \on{Gauss}^-_{q,!}:=\on{Vac}_{\Whit,Z}\sotimes \Delta^-_{q,Z},$$
$$\on{Gauss}^-_{q,!*}:=\on{Vac}_{\Whit,Z}\sotimes \IC^-_{q,Z}.$$

The above three are objects of the category
$$\Shv_{\CG^\Conf_q}(\CZ^-),$$
where we are using $\sotimes$ as a functor
$$\Shv_{\CG^G_q}(\CZ^-)\times \Shv_{\CG^{\on{ratio}}_q}(\CZ^-)\to \Shv_{\CG^\Conf_q}(\CZ^-).$$

All three are perverse sheaves, and they each identify with the !-, *- and !*-extension, respectively, of their common restriction to $\oCZ$. 
The maps \eqref{e:semiinf maps} induce maps
\begin{equation} \label{e:Gauss- maps}
\on{Gauss}^-_{q,!}\to \on{Gauss}^-_{q,!*}\to \on{Gauss}^-_{q,*}.
\end{equation} 

According to \propref{p:IC Zast irrat}, the maps \eqref{e:Gauss- maps} are isomorphisms for $q$ which is non-torsion valued. 

\medskip

In addition, we have
$$\BD^{\on{Verdier}}(\on{Gauss}^-_{q,*})\simeq \on{Gauss}^-_{q^{-1},!} \text{ and }
\BD^{\on{Verdier}}(\on{Gauss}^-_{q,!*})\simeq \on{Gauss}^-_{q^{-1},!*},$$
up to replacing $\on{exp}$ by its inverse.

\sssec{}

We now have the following key assertion:

\begin{thm}  \label{t:acyclicity}
The maps 
$$(\bj_Z)_!\left(\on{Gauss}^-_{q,*}\right)\to (\bj_Z)_*\left(\on{Gauss}^-_{q,*}\right)$$
$$(\bj_Z)_!\left(\on{Gauss}^-_{q,!}\right)\to (\bj_Z)_*\left(\on{Gauss}^-_{q,!}\right)$$
and
$$(\bj_Z)_!\left(\on{Gauss}^-_{q,!*}\right)\to (\bj_Z)_*\left(\on{Gauss}^-_{q,!*}\right)$$
are isomorphisms.
\end{thm}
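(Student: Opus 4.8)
The statement is a \emph{cleanness} assertion, and the plan is to show directly that for each of the three sheaves the cone $\CC_?$ of the map $(\bj_Z)_!(\on{Gauss}^-_{q,?})\to (\bj_Z)_*(\on{Gauss}^-_{q,?})$ is zero. Write $i:\partial\ol\CZ\hookrightarrow\ol\CZ$ for the closed embedding of the boundary $\partial\ol\CZ:=\ol\CZ\setminus\CZ^-$; since $\CC_?$ is supported on $\partial\ol\CZ$, its vanishing is equivalent to $i^*\circ(\bj_Z)_*(\on{Gauss}^-_{q,?})=0$, equivalently to $i^!\circ(\bj_Z)_!(\on{Gauss}^-_{q,?})=0$. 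Using that Verdier duality interchanges $(\bj_Z)_!$ and $(\bj_Z)_*$ and carries $\on{Gauss}^-_{q,*}$ to $\on{Gauss}^-_{q^{-1},!}$ and $\on{Gauss}^-_{q,!*}$ to $\on{Gauss}^-_{q^{-1},!*}$ (up to replacing $\on{exp}$ by its inverse), it is enough to treat the $*$- and $!*$-cases, the $!$-case following by duality. In all cases the object is of the form $\mathcal{F}\overset{*}\otimes\chi^*(\on{exp})$ for $\mathcal{F}$ one of $\nabla^-_{q,Z}$, $\IC^-_{q,Z}$, $\Delta^-_{q,Z}$, and — as will become clear — the vanishing depends only on the $\chi^*(\on{exp})$-factor and on the geometry of $\partial\ol\CZ$, not on the coefficient sheaf $\mathcal{F}$; so a single computation handles all three.

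The first reduction is to a local statement along the main diagonals, using factorization. The open embedding $\bj_Z$, the space $\ol\CZ$, and the objects $\on{Gauss}^-_{q,?}$ all carry compatible factorization structures over $\Conf$; hence so does $\CC_?$, and factorization identifies the restriction of $\CC_?$ over a configuration with several distinct clusters with an external product of its restrictions over the clusters. Since $\CC_?$ is constructible, it therefore suffices to check that its $*$-restriction to $\ol\pi^{-1}(\Delta_\lambda(X))$ vanishes for every $\lambda\in\Lambda^{\on{neg}}-0$. By étale-locality on $X$ (as in \remref{r:transfer to A1}) together with translation-equivariance we may take $X=\BA^1$ and pass to the single fibre $\ol\pi^{-1}(\lambda\cdot 0)$; this is the classical intersection $\ol{S}{}^0\cap\ol{S}{}^{-,\lambda}\subset\Gr_G$, with open part $S^0\cap\ol{S}{}^{-,\lambda}$, to which we have reduced the cleanness question.

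The heart of the argument is the analysis of the boundary stratification. Along the embedding $S^0_\Conf\hookrightarrow\ol{S}{}^0_\Conf$ the boundary $\partial\ol\CZ$ is stratified, in the usual Zastava fashion, by locally closed pieces indexed by decompositions $\lambda=\lambda'+\lambda''$ with $\lambda''\in\Lambda^{\on{neg}}-0$ recording the defect of the Plücker maps $\{\kappa\}$; each such stratum fibres over a product of a smaller Zastava space $\CZ^-_{\lambda'}$ with a ``defect'' piece living over $\Conf^{\lambda''}$, and the group ind-scheme $\fL(N)^{\omega^\rho}_\Conf$ acts on it with free unipotent orbits transverse to $\CZ^-$. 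The residue construction extends $\chi$ along these orbits, and its restriction to them is a non-degenerate additive character — this is precisely the non-triviality of the Whittaker character in the directions normal to $\CZ^-$ — while by the stated $\fL(N)^{\omega^\rho}_\Conf$-equivariance of $\CG^G_q$, and the triviality of the induced character sheaf on stabilizers, the gerbe twist restricts trivially along these orbits. Consequently $i^*\circ(\bj_Z)_*(\mathcal{F}\overset{*}\otimes\chi^*(\on{exp}))$ is computed, stratum by stratum, as a fibrewise cohomology along affine-space (resp.\ $\BG_a$-torsor) directions with coefficients in a non-trivial Artin--Schreier/exponential local system, and hence vanishes; the mirror computation gives $i^!\circ(\bj_Z)_!(\on{Gauss}^-_{q,?})=0$. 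Therefore $\CC_?=0$, which is the assertion of the theorem.

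The main obstacle is this last step: pinning down the boundary strata of $\ol\CZ$ precisely enough to exhibit the free unipotent action transverse to $\CZ^-$, and verifying that $\chi$ extends along and restricts non-trivially to these transverse directions, so that the Artin--Schreier vanishing applies on every stratum. Once this non-degeneracy is in place the remainder is formal, and the metaplectic gerbe causes no trouble by the equivariance and trivialization properties of $\CG^G_q$ and $\CG^{\on{ratio}}_q$ recorded above. An alternative, more conceptual route — taken in \secref{ss:Whit} through \secref{ss:Gauss via semiinf} — is to deduce the statement from cleanness of the vacuum Whittaker sheaf on $\Gr_G$ by base change along $S^0_\Conf\hookrightarrow\ol{S}{}^0_\Conf$, which repackages the same non-degeneracy of $\chi$.
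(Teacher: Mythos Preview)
Your approach differs substantially from the paper's. The paper treats the $!*$ case by invoking \cite[Theorem~7.3]{Ga3}, with the key technical input being the resolution of singularities from \cite[Theorem~4.2.1]{Camp}; it then reduces the $!$ case to the $!*$ case via the quantum Frobenius filtration (a forward reference to \corref{c:Frob for Gauss}), which exhibits $\on{Gauss}^-_{q,!}$ as a successive extension of objects of the form $(\bi^\lambda)_*(\Omega^{\sharp,\on{cl}}|_{X^\lambda}\boxtimes \on{Gauss}^-_{q,!*})$, and the $*$ case follows by Verdier duality. You attempt instead a uniform direct argument via Artin--Schreier vanishing along the boundary strata.

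There is a genuine gap in your main argument. Your key claim---that $\fL(N)^{\omega^\rho}_\Conf$ acts on each boundary stratum of $\partial\ol\CZ$ with free unipotent orbits transverse to $\CZ^-$, and that the computation of $i^*(\bj_Z)_*$ reduces to fibrewise cohomology along these orbits---does not hold as stated: $\fL(N)^{\omega^\rho}_\Conf$ does not preserve $\ol\CZ=\ol{S}{}^0_\Conf\cap\ol{S}^{-,\Conf}_\Conf$ (it does not preserve the second factor), so there is no group action on $\ol\CZ$ for which the stabilizer computation of \lemref{l:Whit on closed} applies verbatim. The sheaves $\nabla^-_{q,Z}$, $\Delta^-_{q,Z}$, $\IC^-_{q,Z}$ are built from $\fL(N^-)$-equivariant data, not $\fL(N)$-equivariant data, so your assertion that the vanishing ``depends only on the $\chi^*(\on{exp})$-factor\ldots not on the coefficient sheaf $\mathcal F$'' is not justified: computing $i^*(\bj_Z)_*(\mathcal F\overset{*}\otimes\chi^*(\on{exp}))$ requires control of $\mathcal F$ near the boundary, which is precisely what the resolution of singularities in \cite{Camp} provides for the $!*$ case, and what the quantum Frobenius filtration provides for the $!$ case.

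Your ``alternative route'' at the end is in fact correct and would give a clean self-contained proof: using \propref{p:IC Ran via conf} to identify $\on{Gauss}^-_{q,?}$ with the restriction to $\CZ^-$ of $\on{Vac}_{q,\Whit}\sotimes F_?$ for $F_?\in\{\nabla^-_q,\Delta^-_q,\ICsm_q\}$ defined on all of $\ol{S}^{-,\Conf}_\Conf$, the cleanness of $\on{Vac}_{q,\Whit}$ along $\bj$ (equation~\eqref{e:Whit Vac isom}, which \emph{is} the stabilizer argument you have in mind) combined with the formal identities $\bj_!(A)\sotimes B\simeq\bj_!(A\sotimes\bj^!B)$ and $i^!(A\sotimes B)\simeq i^!A\sotimes i^!B$ shows that $\on{Vac}_{q,\Whit}\sotimes F_?$ has vanishing $i^*$ and $i^!$ on $\partial\ol\CZ$, hence is clean along $\bj_Z$. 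The paper runs this logic in the opposite direction (deducing \propref{p:Guass via semiinf} from \thmref{t:acyclicity}), but it can be reversed. If you want a direct proof, commit to this route rather than the stratification sketch.
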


In other words, \thmref{t:acyclicity} says that extension of each of the objects $\on{Gauss}^-_{q,*}$, $\on{Gauss}^-_{q,!}$ and 
$\on{Gauss}^-_{q,!*}$ along 
$$\bj_Z:\CZ^-\to \ol\CZ$$
is clean.

\begin{proof}[Proof of \thmref{t:acyclicity}] 

The proof of the assertion concerning $\on{Gauss}^-_{q,!*}$ repeats that of \cite[Theorem 7.3]{Ga3}, where we replace 
\cite[Proposition 7.9]{Ga3} by a more precise reference, namely \cite[Theorem 4.2.1]{Camp}. 

\medskip

The statements concerning $\on{Gauss}^-_{q,*}$ and $\on{Gauss}^-_{q,!}$ are Verdier duals of one another, so we will prove the latter.

\medskip

The assertion concerning $\on{Gauss}^-_{q,!}$ follows from \corref{c:Frob for Gauss} below: the object $\on{Gauss}^-_{q,!}$ admits a filtration indexed
by $\Lambda^{\on{neg}}$ by objects of the form
$$(\bi^\lambda)_*(\Omega^{\sharp,\on{cl}}|_{X^\lambda}\boxtimes \on{Gauss}^-_{q,!*}),$$
where $\bi^\lambda$ is the (finite) map
\begin{equation} \label{e:act on Zast}
X^\lambda\times \ol\CZ\to \ol\CZ,
\end{equation} 
given by adding the divisor: 
$$\bi^\lambda(D',(D,\CP_G,\alpha))=(D+D',\CP_G,\alpha),$$
so that we have a commutative diagram
$$
\CD
X^\lambda\times \CZ^-  @>{\bi^\lambda}>>  \CZ^-   \\
@V{\on{id}\times \bj_Z}VV   @VV{\bj_Z}V  \\
X^\lambda\times \ol\CZ   @>{\bi^\lambda}>> \ol\CZ. 
\endCD
$$

\end{proof} 

\sssec{}

Denote by 
\begin{equation} \label{e:Gauss objects}
\on{Gauss}_{q,*}, \quad \on{Gauss}_{q,!} \text{ and } \on{Gauss}_{q,!*}
\end{equation} 
the resulting three objects of the category $\Shv_{\CG^\Conf_q}(\ol\CZ)$.

\medskip

By construction, they carry a natural factorization structure with respect to $\Conf$. The maps \eqref{e:Gauss- maps} induce maps
\begin{equation} \label{e:Gauss maps}
\on{Gauss}_{q,!}\to \on{Gauss}_{q,!*}\to \on{Gauss}_{q,*}.
\end{equation} 

These maps are isomorphisms for $q$ which is non-torsion valued. 

\begin{cor} \label{c:Gauss duality} \hfill

\smallskip

\noindent{\em(a)} 
The objects $\on{Gauss}_{q,*}$, $\on{Gauss}_{q,!}$ and $\on{Gauss}_{q,!*}$ 
are perverse, and identify with the *-, !- and Goresky-MacPherson extensions along
$\oCZ\hookrightarrow \ol\CZ$ of 
$$\chi^*(\on{exp})\overset{*}\otimes \IC_{q,\oCZ}.$$

\smallskip

\noindent{\em(b)} 
We have canonical isomorphisms
$$\BD^{\on{Verdier}}(\on{Gauss}_{q,*})\simeq \on{Gauss}_{q^{-1},!} \text{ and }
\BD^{\on{Verdier}}(\on{Gauss}_{q,!*})\simeq \on{Gauss}_{q^{-1},!*},$$
up to replacing $\on{exp}$ by its inverse.
\end{cor}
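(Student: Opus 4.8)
The plan is to deduce both statements directly from \thmref{t:acyclicity} together with the known properties of the objects $\on{Gauss}^-_{q,*}$, $\on{Gauss}^-_{q,!}$, $\on{Gauss}^-_{q,!*}$ on $\CZ^-$ listed just before the theorem. First I would recall that $\on{Gauss}^-_{q,*}$, $\on{Gauss}^-_{q,!}$ and $\on{Gauss}^-_{q,!*}$ are already perverse $\CG^\Conf_q$-twisted sheaves on $\CZ^-$, and that they are the $*$-, $!$- and Goresky--MacPherson extensions, respectively, along $\overset{\circ}\bj{}^-_Z:\oCZ\hookrightarrow\CZ^-$ of the common restriction $\chi^*(\on{exp})\overset{*}\otimes\IC_{q,\oCZ}$; this is stated in the paragraph defining these objects. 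The objects in \eqref{e:Gauss objects} are, by definition, $(\bj_Z)_*$ applied to the first, to the third, and (using cleanness) to the second of these — more precisely, cleanness from \thmref{t:acyclicity} says $(\bj_Z)_!\xrightarrow{\ \sim\ }(\bj_Z)_*$ on each of them, so there is no ambiguity and I may write $\on{Gauss}_{q,?}:=(\bj_Z)_{!*}(\on{Gauss}^-_{q,?})$ for $?\in\{*,!,!*\}$ as well.

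For part (a): since $\bj_Z:\CZ^-\hookrightarrow\ol\CZ$ is an open embedding and, by \thmref{t:acyclicity}, the extension of each $\on{Gauss}^-_{q,?}$ along $\bj_Z$ is clean, the object $\on{Gauss}_{q,?}$ coincides simultaneously with $(\bj_Z)_!(\on{Gauss}^-_{q,?})$, $(\bj_Z)_*(\on{Gauss}^-_{q,?})$ and $(\bj_Z)_{!*}(\on{Gauss}^-_{q,?})$. A clean extension of a perverse sheaf along an open embedding is perverse (the $!*$-extension of a perverse sheaf is always perverse), so each $\on{Gauss}_{q,?}$ is perverse. To identify it with the corresponding extension from $\oCZ$, I use transitivity of extensions along the composite open embeddings $\oCZ\xhookrightarrow{\overset{\circ}\bj{}^-_Z}\CZ^-\xhookrightarrow{\bj_Z}\ol\CZ$, whose composite is $\oCZ\hookrightarrow\ol\CZ$ (this is the commuting square of open embeddings displayed in \secref{ss:Zast}). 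For the $*$-case: $(\bj_Z)_*\circ(\overset{\circ}\bj{}^-_Z)_* = (\bj_Z\circ\overset{\circ}\bj{}^-_Z)_*$, giving $\on{Gauss}_{q,*}$ as the $*$-extension of $\chi^*(\on{exp})\overset{*}\otimes\IC_{q,\oCZ}$; symmetrically for the $!$-case using $(\bj_Z)_!\circ(\overset{\circ}\bj{}^-_Z)_!$ and cleanness. For the $!*$-case: by cleanness $\on{Gauss}_{q,!*}=(\bj_Z)_{!*}(\on{Gauss}^-_{q,!*})=(\bj_Z)_{!*}\big((\overset{\circ}\bj{}^-_Z)_{!*}(\chi^*(\on{exp})\overset{*}\otimes\IC_{q,\oCZ})\big)$, and $!*$-extension is transitive along open embeddings, so this equals $(\bj_Z\circ\overset{\circ}\bj{}^-_Z)_{!*}(\chi^*(\on{exp})\overset{*}\otimes\IC_{q,\oCZ})$, as claimed.

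For part (b): Verdier duality on $\ol\CZ$ exchanges $(\bj_Z)_!$ and $(\bj_Z)_*$ and fixes $(\bj_Z)_{!*}$, and as recorded in \secref{ss:shv on Zast} it acts as an anti-equivalence $\Shv_{\CG^\Conf_q}(\ol\CZ)\to\Shv_{\CG^\Conf_{q^{-1}}}(\ol\CZ)$ carrying $\on{Gauss}^-_{q,*}\mapsto\on{Gauss}^-_{q^{-1},!}$ and $\on{Gauss}^-_{q,!*}\mapsto\on{Gauss}^-_{q^{-1},!*}$ up to replacing $\on{exp}$ by its inverse (this is the duality statement for the Gauss sheaves on $\CZ^-$ displayed there, which in turn follows from $\BD^{\on{Verdier}}(\nabla^-_{q,Z})\simeq\Delta^-_{q^{-1},Z}$, $\BD^{\on{Verdier}}(\IC^-_{q,Z})\simeq\IC^-_{q^{-1},Z}$ and $\BD^{\on{Verdier}}(\chi^*(\on{exp}))\simeq\chi^*(\on{exp}^{-1})[\text{shift}]$). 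Applying $\BD^{\on{Verdier}}$ to the cleanness-identifications of part (a) then yields $\BD^{\on{Verdier}}(\on{Gauss}_{q,*})=\BD^{\on{Verdier}}((\bj_Z)_*\on{Gauss}^-_{q,*})=(\bj_Z)_!\BD^{\on{Verdier}}(\on{Gauss}^-_{q,*})=(\bj_Z)_!\on{Gauss}^-_{q^{-1},!}=\on{Gauss}_{q^{-1},!}$, using cleanness again in the last step; and similarly $\BD^{\on{Verdier}}(\on{Gauss}_{q,!*})\simeq\on{Gauss}_{q^{-1},!*}$. Both sides of these isomorphisms understand $\on{exp}$ to be replaced by $\on{exp}^{-1}$.

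The only genuine input here is \thmref{t:acyclicity}; everything else is formal manipulation of extension functors and duality. Accordingly I do not anticipate a real obstacle in this corollary — the one point demanding a little care is bookkeeping the cohomological shifts and the $\on{exp}\leftrightarrow\on{exp}^{-1}$ replacement in the Verdier-duality computation, and making sure the $\sotimes$-factorization $\Shv_{\CG^G_q}\times\Shv_{\CG^{\on{ratio}}_q}\to\Shv_{\CG^\Conf_q}$ used to define the Gauss sheaves is compatible with $\BD^{\on{Verdier}}$ in the expected way (which it is, since $\CG^\Conf_q=\CG^G_q\otimes\CG^{\on{ratio}}_q$ and duality is monoidal for the external tensor product of twisted sheaves).
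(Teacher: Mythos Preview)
Your proposal is correct and follows precisely the approach the paper intends: the corollary is stated without proof because it is immediate from \thmref{t:acyclicity} together with the perversity and duality properties of $\on{Gauss}^-_{q,?}$ recorded just before it, and your argument spells out exactly those formal deductions (cleanness forces $(\bj_Z)_!=(\bj_Z)_*=(\bj_Z)_{!*}$, hence perversity; transitivity of $!$-, $*$-, and $!*$-extensions along the composite open embedding $\oCZ\hookrightarrow\CZ^-\hookrightarrow\ol\CZ$ gives the identification; and Verdier duality interchanges $(\bj_Z)_!$ with $(\bj_Z)_*$ while the input duality on $\CZ^-$ supplies the rest).
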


\ssec{The Whittaker category}  \label{ss:Whit}

In the next few subsections we will explain an alternative (in a sense more conceptual) construction of the objects \eqref{e:Gauss objects},
using some material from \cite{GLys2}. 

\medskip

The contents of Sects. \ref{ss:Whit}-\ref{ss:Gauss via semiinf} will not be used in the sequel. 

\sssec{}

Since the gerbe $\CG^G_q$ is $\fL(N)^{\omega^\rho}_\Conf$-equivariant, it makes sense to consider the category 
$$\Whit_{q,\Conf}(G):=\Shv_{\CG^G_q}(\Gr^{\omega^\rho}_{G,\Conf})^{\fL(N)^{\omega^\rho}_\Conf,\chi^*(\on{exp})},$$
see \cite[Sect, 8.4.2]{GLys2}.

\medskip

In the above formula, the superscript $\fL(N)^{\omega^\rho}_\Conf,\chi^*(\on{exp})$ means twisted-equivariance with respect to
$\fL(N)^{\omega^\rho}_\Conf$ against the pullback of $\on{exp}$ by means of $\chi$.

\sssec{}

Consider also the categories 
$$\Whit_{q,\Conf}(G)^{\leq 0}:=\Shv_{\CG^G_q}(\ol{S}{}^0_\Conf)^{\fL(N)^{\omega^\rho}_\Conf,\chi^*(\on{exp})},$$
and 
$$\Whit_{q,\Conf}(G)^{=0}:=\Shv_{\CG^G_q}(S^0_\Conf)^{\fL(N)^{\omega^\rho}_\Conf,\chi^*(\on{exp})}.$$

We can view $\Whit_{q,\Conf}(G)^{\leq 0}$ as a full subcategory of $\Whit_{q,\Conf}(G)$ via the closed embedding
$$\ol{S}{}^0_\Conf\subset \Gr^{\omega^\rho}_{G,\Conf}.$$

\medskip

From the identification \eqref{e:open S as quotient}, we obtain:

\begin{lem} \label{l:Whit on open}
Restriction along $\Conf\overset{\fs}\to S^0_\Conf$
defines an equivalence
$$\Whit_{q,\Conf}(G)^{=0}\to \Shv(\Conf).$$
\end{lem}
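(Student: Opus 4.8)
The claim is that restriction along the unit section $\fs:\Conf\to S^0_\Conf$ gives an equivalence $\Whit_{q,\Conf}(G)^{=0}\to \Shv(\Conf)$. The plan is to exploit the identification \eqref{e:open S as quotient}, which exhibits $S^0_\Conf$ as a quotient
$$\fL(N)^{\omega^\rho}_\Conf/\fL^+(N)^{\omega^\rho}_\Conf\simeq S^0_\Conf.$$
Under this identification, the unit section $\fs$ corresponds to the base-point map $\Conf\to \fL(N)^{\omega^\rho}_\Conf/\fL^+(N)^{\omega^\rho}_\Conf$. The first step is to recall that, because the $\fL(N)^{\omega^\rho}_\Conf$-action on $S^0_\Conf$ is transitive with stabilizer $\fL^+(N)^{\omega^\rho}_\Conf$ (as group-schemes over $\Conf$), the category of $(\fL(N)^{\omega^\rho}_\Conf,\chi)$-equivariant twisted sheaves on $S^0_\Conf$ is computed by descent along this quotient presentation. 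Since $\fL^+(N)^{\omega^\rho}_\Conf$ is a (pro-)unipotent group-scheme over $\Conf$ on which the character $\chi$ restricts \emph{trivially} (it is, up to the twist, a positive loop group, while $\chi$ is the residue-type character visible only on the negative part of the loop parameter), the twisted-equivariance condition against $\chi^*(\on{exp})$ restricted to the stabilizer is the same as plain equivariance against a pro-unipotent group.

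\textbf{Key steps.} First I would record that the restriction of the gerbe $\CG^G_q$ to $S^0_\Conf$ carries a canonical $\fL(N)^{\omega^\rho}_\Conf$-equivariant trivialization --- this was stated in the subsection on semi-infinite orbits (the paragraph following \eqref{e:open S as quotient}), and it lets us replace $\Shv_{\CG^G_q}(S^0_\Conf)$ by the untwisted $\Shv(S^0_\Conf)$ compatibly with the equivariance structure. Second, using the presentation $S^0_\Conf\simeq \fL(N)^{\omega^\rho}_\Conf/\fL^+(N)^{\omega^\rho}_\Conf$, I would identify
$$\Shv(S^0_\Conf)^{\fL(N)^{\omega^\rho}_\Conf,\chi^*(\on{exp})}\simeq \Shv(\on{pt}/\fL^+(N)^{\omega^\rho}_\Conf\text{ over }\Conf)^{\chi},$$
i.e.\ $(\fL^+(N)^{\omega^\rho}_\Conf,\chi|_{\fL^+})$-equivariant sheaves on $\Conf$ --- this is the standard identification for a transitive action, taking equivariant sheaves on a homogeneous space to equivariant sheaves on the base for the stabilizer. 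Third, I would verify that $\chi$ restricted to $\fL^+(N)^{\omega^\rho}_\Conf$ is trivial, so this is just $\fL^+(N)^{\omega^\rho}_\Conf$-equivariant sheaves on $\Conf$; since $\fL^+(N)^{\omega^\rho}_\Conf$ is (pro-)unipotent and acts on $\Conf$ trivially (it acts through the quotient on $S^0_\Conf$ but the base $\Conf$ carries the trivial action), the forgetful functor from equivariant to plain sheaves on $\Conf$ is an equivalence --- equivariance against a pro-unipotent group acting trivially is no condition, by cohomological contractibility of the classifying object. Composing these identifications yields $\Whit_{q,\Conf}(G)^{=0}\simeq \Shv(\Conf)$, and unwinding shows the equivalence is exactly $\fs^*$ (equivalently $\fs^!$ up to a shift that one normalizes away).

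\textbf{Main obstacle.} The technically delicate point is the second step: making precise the descent statement for twisted-equivariant sheaves along the ind-scheme quotient $\fL(N)^{\omega^\rho}_\Conf/\fL^+(N)^{\omega^\rho}_\Conf$, since $\fL(N)^{\omega^\rho}_\Conf$ is an ind-scheme (not a scheme) and one must be careful that equivariance is taken in the renormalized/twisted sense of \cite[Sect.~8.4.2]{GLys2}. I expect this to reduce to checking that the relevant simplicial object (the bar resolution of the action groupoid) has the property that $\Shv$ applied to it computes the equivariant category, which is precisely the setup in which the equivalence of \lemref{l:Whit on open} is designed to hold; so in practice this is a matter of citing the equivariant-descent formalism rather than proving something new. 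A secondary, purely bookkeeping point is matching the cohomological shift/twist conventions so that the stated functor is literally restriction along $\fs$ with no auxiliary shift, which one handles by noting that $\fs$ is a section of an affine morphism of relative dimension zero onto its image.
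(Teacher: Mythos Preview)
Your proposal is correct and follows exactly the approach the paper has in mind: the paper does not give a separate proof but simply states the lemma as an immediate consequence of the identification \eqref{e:open S as quotient}, and what you have written is a correct unpacking of that implicit argument (trivialize the gerbe, use transitivity to reduce to $(\fL^+(N)^{\omega^\rho}_\Conf,\chi|_{\fL^+})$-equivariant sheaves on $\Conf$, note $\chi$ is trivial on positive loops, and invoke pro-unipotence). The technical concerns you flag about ind-scheme descent are exactly the ones absorbed into the formalism of \cite[Sect.~8.4.2]{GLys2}, as you suspected.
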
 

In addition, we have: 

\begin{lem} \label{l:Whit on closed}
Any object of $\Whit_{q,\Conf}(G)^{=0}$ supported on $\ol{S}^0_\Conf-S^0_\Conf$ is zero.
\end{lem}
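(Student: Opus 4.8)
The statement to prove is \lemref{l:Whit on closed}: any object of $\Whit_{q,\Conf}(G)^{=0}$ supported on the boundary $\ol{S}^0_\Conf - S^0_\Conf$ vanishes. The idea is that the boundary, although non-empty, has no $\fL(N)^{\omega^\rho}_\Conf$-orbit on which a twisted-equivariant sheaf against the non-degenerate character $\chi^*(\on{exp})$ can be supported. Concretely, I would first reduce to a purely local statement over a single point (or a finite subset of) $X$: by the factorization structure of $\ol{S}^0_\Conf$ over $\Conf$ and the compatibility of $\chi$ with factorization, it suffices to treat the ``one-point'' situation, where $\ol S^0$ is the usual closure of the semi-infinite orbit $S^0 = \fL(N)\cdot 1$ inside $\Gr_G^{\omega^\rho}$, and $\chi$ is the standard non-degenerate character on $\fL(N)^{\omega^\rho}$.

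\textbf{Main steps.} (1) Recall the orbit stratification of $\ol S^0$: by the theory of semi-infinite orbits (Mirković--Vilonen), $\ol S^0 = \bigsqcup_{\mu \le 0} S^\mu$ where $S^\mu = \fL(N)^{\omega^\rho}\cdot t^\mu$, and the boundary $\ol S^0 - S^0$ is the union of the $S^\mu$ for $\mu \in \Lambda^{\neg}\setminus 0$, i.e. $\mu$ a nonzero sum of negative simple coroots. (2) Since any twisted-equivariant sheaf is constructible with respect to this stratification (the $\fL(N)^{\omega^\rho}$-action is ind-pro with these orbits), an object supported on the boundary is glued from its restrictions to the individual orbits $S^\mu$, $\mu \ne 0$; so it is enough to show that $\Shv_{\CG^G_q}(S^\mu)^{\fL(N)^{\omega^\rho},\chi^*(\on{exp})} = 0$ for $\mu \ne 0$. (3) For each such $\mu$, one exhibits a point $z \in S^\mu$ whose stabilizer in $\fL(N)^{\omega^\rho}$, call it $\on{Stab}_z$, contains a root subgroup $U_{-\alpha_i}$ (a copy of $\BG_a$ in $\fL(N)^{\omega^\rho}$ of ``level $0$'') on which the restriction of $\chi$ is \emph{non-trivial} --- this is precisely the non-degeneracy of $\chi$ along $\mu$, together with the fact that $\mu$ involves at least one negative simple coroot. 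Then a twisted-equivariant sheaf would need to be equivariant along this $\BG_a$ against a nontrivial additive character, while $\chi^*(\on{exp})$ restricted to $\on{Stab}_z$ is required (by definition of the equivariant category on $\Gr^{\omega^\rho}_{G,\Conf}$ --- see the trivialization of character sheaves on stabilizers stated just before this lemma) to be \emph{trivial}; this forces the stalk of the sheaf at $z$ to vanish, hence the sheaf itself vanishes on the orbit $S^\mu$.

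\textbf{Expected main obstacle.} The delicate point is step (3): producing, for every $\mu \in \Lambda^{\neg}\setminus 0$, a point of $S^\mu$ whose $\fL(N)^{\omega^\rho}$-stabilizer meets $\ker(\chi)$ non-trivially in a root direction where $\chi$ does not vanish. This is the standard mechanism behind the cleanness/vanishing results for Whittaker sheaves on the affine Grassmannian (it is exactly why $\on{Vac}_{\Whit}$ is clean, and it is implicit in \cite[Sect. 8]{GLys2}), but here one must check it in the metaplectic, $\omega^\rho$-twisted and family-over-$\Conf$ setting. I expect this to follow from the explicit description \eqref{e:open S as quotient} of $S^0_\Conf$ as $\fL(N)^{\omega^\rho}_\Conf/\fL^+(N)^{\omega^\rho}_\Conf$ together with the compatibility of $\chi$ with factorization: the ``missing'' orbits $S^\mu$ with $\mu \ne 0$ are accounted for by the codimension-one degenerations where a point of $X$ acquires a nonzero coefficient in $\Lambda^{\neg}$, and along each such degeneration the corresponding root subgroup acts with non-trivial $\chi$. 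Once this geometric input is in hand, the vanishing is formal. Alternatively --- and this may be cleaner --- one can bypass the orbit-by-orbit analysis by invoking \lemref{l:Whit on open}: the restriction functor to $\Shv(\Conf)$ along $\fs$ is an equivalence on $\Whit_{q,\Conf}(G)^{=0}$, and an object supported on the complement $\ol S^0_\Conf - S^0_\Conf$ has zero restriction along $\fs$ (which lands in $S^0_\Conf$), hence is zero. I would present this short argument as the proof, with the orbit analysis relegated to a remark explaining why it is consistent with the stratification picture.
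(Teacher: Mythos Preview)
Your main approach via the stabilizer analysis (steps 1--3) is essentially the paper's proof: the paper's entire argument is the one sentence ``for a point on $\ol{S}^0_\Conf - S^0_\Conf$, the character sheaf on its stabilizer, given by the restriction of $\on{exp}$, is non-trivial.'' Your reduction to orbits $S^\mu$ for $\mu \neq 0$ and the identification of a copy of $\BG_a$ inside the stabilizer on which $\chi$ is nontrivial is exactly this computation. (A minor quibble: the relevant subgroup is a positive simple-root subgroup at nonzero loop level inside $\fL(N)^{\omega^\rho}$, not $U_{-\alpha_i}$, since we are working in the loop group of $N$; but this is bookkeeping.)

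Your proposed ``alternative'' argument, however, is circular, and you should not present it as the proof. First, the lemma as printed contains a typo: $\Whit_{q,\Conf}(G)^{=0}$ should read $\Whit_{q,\Conf}(G)^{\leq 0}$, since the $=0$ category consists by definition of sheaves on $S^0_\Conf$ and cannot support anything on the boundary; the subsequent \corref{c:Whit on closed} confirms the intended reading. Now, \lemref{l:Whit on open} only asserts that restriction along $\fs$ is an equivalence from $\Whit^{=0}$ (sheaves on the open stratum $S^0_\Conf$) to $\Shv(\Conf)$; it says nothing about $\Whit^{\leq 0}$. The statement that restriction from $\Whit^{\leq 0}$ to $\Whit^{=0}$ is an equivalence is precisely \corref{c:Whit on closed}, which is \emph{deduced from} \lemref{l:Whit on closed}. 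So using \lemref{l:Whit on open} to conclude that an object of $\Whit^{\leq 0}$ with vanishing $\fs$-restriction is zero presupposes what you are trying to prove. Keep the stabilizer argument as your proof and drop the alternative.
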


\begin{proof} 

This is a standard stabilizer calculation: for a point on $\ol{S}^0_\Conf-S^0_\Conf$, the character
sheaf on its stabilizer, given by the restriction of $\on{exp}$, is non-trivial.

\end{proof}

\begin{cor} \label{c:Whit on closed}
The functor 
$$\Whit_{q,\Conf}(G)^{\leq 0}\to \Whit_{q,\Conf}(G)^{=0},$$
given by restriction, is an equivalence.
\end{cor}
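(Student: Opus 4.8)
The plan is to deduce the corollary from the standard recollement (gluing) formalism together with \lemref{l:Whit on closed}. Write $\bj\colon S^0_\Conf\hookrightarrow \ol{S}{}^0_\Conf$ for the open embedding appearing above, and $i\colon \ol{S}{}^0_\Conf-S^0_\Conf\hookrightarrow \ol{S}{}^0_\Conf$ for the complementary closed embedding; both maps are $\fL(N)^{\omega^\rho}_\Conf$-equivariant, since $S^0_\Conf$ is $\fL(N)^{\omega^\rho}_\Conf$-stable. The restriction functor of the corollary is $\bj^*$; it admits a right adjoint $\bj_*$, and $\bj^*$, $\bj_*$, $i_*$, $i^!$ all carry the twisted-equivariance structures along, so that for $\CF\in \Whit_{q,\Conf}(G)^{\leq 0}$ the gluing triangle
\[
i_*\, i^!\,\CF \to \CF \to \bj_*\, \bj^*\,\CF \xrightarrow{+1}
\]
is a triangle in $\Whit_{q,\Conf}(G)^{\leq 0}$.

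First I would note that, $\bj$ being an open embedding, the counit $\bj^*\bj_*\to \Id$ is an isomorphism, so $\bj_*$ is fully faithful. Then, for any $\CF$ as above, the object $i_*\, i^!\,\CF$ is supported on $\ol{S}{}^0_\Conf-S^0_\Conf$, hence vanishes by \lemref{l:Whit on closed}; therefore the unit $\CF\to \bj_*\,\bj^*\,\CF$ is an isomorphism, so $\bj^*$ is fully faithful as well. An adjoint pair both of whose functors are fully faithful is an adjoint equivalence, which is precisely the assertion of the corollary.

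The one point I would take care to justify is that the gluing triangle genuinely lives in the $(\fL(N)^{\omega^\rho}_\Conf,\chi^*(\on{exp}))$-equivariant category, i.e.\ that $i^!$ carries $\Whit_{q,\Conf}(G)^{\leq 0}$ into the analogous equivariant category on the closed complement, and that $i_*$ and $\bj_*$ respect equivariance. This is part of the general functoriality of twisted-equivariant sheaf categories, but it is exactly what makes it legitimate to apply \lemref{l:Whit on closed} to $i_*\, i^!\,\CF$; alternatively, one may run the whole argument in the non-equivariant category $\Shv_{\CG^G_q}(\ol{S}{}^0_\Conf)$ and invoke conservativity of the forgetful functor to it. I do not expect a genuine obstacle: all the content of the corollary is carried by \lemref{l:Whit on closed}, and the rest is formal.
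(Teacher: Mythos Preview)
Your proof is correct and is exactly the standard recollement argument the paper has in mind: the corollary is stated without proof, as an immediate consequence of \lemref{l:Whit on closed}, and your gluing-triangle argument is precisely how one unpacks that implication.
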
 

Combining \lemref{l:Whit on open} and \corref{c:Whit on closed}, we obtain:
\begin{cor} \label{c:Whit on sect}
Restriction along $\Conf\overset{\fs}\to \ol{S}^0_\Conf$
defines an equivalence
$$\Whit_{q,\Conf}(G)^{\leq 0}\to \Shv(\Conf).$$
\end{cor}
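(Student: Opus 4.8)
The statement to prove is \corref{c:Whit on sect}: restriction along $\Conf \overset{\fs}\to \ol S{}^0_\Conf$ is an equivalence $\Whit_{q,\Conf}(G)^{\leq 0}\to \Shv(\Conf)$. As already noted in the excerpt, this is an immediate consequence of \lemref{l:Whit on open} and \corref{c:Whit on closed}: the composition of the equivalence of \corref{c:Whit on closed} (restriction along $S^0_\Conf\hookrightarrow \ol S{}^0_\Conf$) with the equivalence of \lemref{l:Whit on open} (restriction along $\Conf\overset{\fs}\to S^0_\Conf$) is precisely restriction along $\Conf\overset{\fs}\to \ol S{}^0_\Conf$, since $\fs$ factors through $S^0_\Conf$. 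So the only real content is \corref{c:Whit on closed}, whose proof I would organize as follows.

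\textbf{Proof of \corref{c:Whit on closed}.} Consider the closed embedding $\bi\colon \ol S{}^0_\Conf - S^0_\Conf \hookrightarrow \ol S{}^0_\Conf$ with open complement $\bj\colon S^0_\Conf\hookrightarrow \ol S{}^0_\Conf$. For any object $\CF\in \Whit_{q,\Conf}(G)^{\leq 0}$ there is the standard recollement triangle
$$
\bi_*\bi^!\CF \to \CF \to \bj_*\bj^*\CF \xrightarrow{+1}
$$
in $\Shv_{\CG^G_q}(\ol S{}^0_\Conf)$, and since all the functors involved preserve the $(\fL(N)^{\omega^\rho}_\Conf,\chi^*(\on{exp}))$-equivariance structure, this is a triangle inside $\Whit_{q,\Conf}(G)^{\leq 0}$ (with $\bj_*\bj^*\CF$ viewed in $\Whit_{q,\Conf}(G)^{\leq 0}$ via the fully faithful embedding coming from $\bj_*$ on equivariant categories). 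The objects $\bi_*\bi^!\CF$ and $\bi_*\bi^*\CF$ are equivariant sheaves supported on $\ol S{}^0_\Conf - S^0_\Conf$, hence by \lemref{l:Whit on closed} (applied after identifying $\bi^!\CF$, $\bi^*\CF$ with objects of the Whittaker category on the closed stratum) they vanish. Therefore the adjunction map $\CF\to \bj_*\bj^*\CF$ is an isomorphism, and symmetrically $\bj_!\bj^*\CF\to \CF$ is an isomorphism; in particular restriction $\bj^*$ is conservative and fully faithful on $\Whit_{q,\Conf}(G)^{\leq 0}$, i.e. $\Whit_{q,\Conf}(G)^{\leq 0}\to \Whit_{q,\Conf}(G)^{=0}$ is fully faithful. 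For essential surjectivity, given $\CG\in \Whit_{q,\Conf}(G)^{=0}$ one takes $\bj_*\CG\in \Whit_{q,\Conf}(G)^{\leq 0}$ (the pushforward preserving equivariance since $\bj$ is $\fL(N)^{\omega^\rho}_\Conf$-equivariant); its restriction to $S^0_\Conf$ is $\CG$ again because $\bj^*\bj_*\simeq \id$. This exhibits the functor as an equivalence.

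\textbf{Main obstacle.} The only delicate point — and it is genuinely routine here given the tools already set up in \cite{GLys2} — is the stabilizer computation of \lemref{l:Whit on closed}: one must check that for every geometric point of $\ol S{}^0_\Conf - S^0_\Conf$ the character $\chi$ restricted to (the reduced neutral component of) its stabilizer in $\fL(N)^{\omega^\rho}_\Conf$ is nontrivial, so that any twisted-equivariant object has vanishing costalk and stalk there. This is exactly the phenomenon that makes Whittaker sheaves on the affine Grassmannian "clean": the degenerate locus $\ol S{}^0_\Conf - S^0_\Conf$ is precisely where the $N$-orbits acquire stabilizer directions on which $\chi$ is nonconstant. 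Once this input is granted, everything else is formal recollement bookkeeping together with the $\fL(N)^{\omega^\rho}_\Conf$-equivariance of the maps $\bj$, $\fs$, so I would spend essentially no effort on those steps and simply cite \lemref{l:Whit on open}, \lemref{l:Whit on closed}, and \corref{c:Whit on closed}.
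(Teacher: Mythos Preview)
Your proposal is correct and follows exactly the paper's approach: the paper simply states that \corref{c:Whit on sect} is obtained by combining \lemref{l:Whit on open} and \corref{c:Whit on closed}, which is precisely the composition of equivalences you describe. Your additional recollement argument for \corref{c:Whit on closed} spells out details the paper leaves implicit, but the strategy is identical.
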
 

\sssec{}

Let 
$$\on{Vac}_{q,\Whit}\in \Whit_{q,\Conf}(G)^{\leq 0}\subset \Shv_{\CG^G_q}(\ol{S}{}^0_\Conf)\subset \Shv_{\CG^G_q}(\Gr^{\omega^\rho}_{G,\Conf})$$
be the object that corresponds under the equivalence of \corref{c:Whit on sect} to
$$\omega_\Conf\in \Shv(\Conf).$$

It follows from \lemref{l:Whit on closed} that the canonical map
\begin{equation} \label{e:Whit Vac isom}
\bj_!(\on{Vac}_{q,\Whit}|_{S^0_\Conf})\to \bj_*(\on{Vac}_{q,\Whit}|_{S^0_\Conf})
\end{equation} 
is an isomorphism. 

\sssec{}

It is immediate from the definitions that we have a canonical isomorphism
\begin{equation} \label{e:Vac Whit Z-}
\on{Vac}_{q,\Whit}|_{\CZ^-}\simeq \on{Vac}_{q,\Whit,Z},
\end{equation} 
where $-|_-$ denotes the !-restriction. 

\ssec{The semi-infinite category}


%
%
%


\sssec{}

Set
$$\SI^-_{q,\Conf}(G):=\Shv_{\CG^{\on{ratio}}_q}(\Gr^{\omega^\rho}_{G,\Conf})^{\fL(N^-)^{\omega^\rho}_\Conf}.$$

Set also
$$\SI^-_{q,\Conf}(G)^{\leq 0}:=\Shv_{\CG^{\on{ratio}}_q}(\ol{S}^{-,\Conf}_\Conf)^{\fL(N^-)^{\omega^\rho}_\Conf}$$
and
$$\SI^-_{q,\Conf}(G)^{=0}:=\Shv_{\CG^{\on{ratio}}_q}(S^{-,\Conf}_\Conf)^{\fL(N^-)^{\omega^\rho}_\Conf}.$$

\medskip

As in the case of $\Whit$, restriction along $\fs^-$ defines an equivalence
\begin{equation} \label{e:semiinf open}
\SI^-_{q,\Conf}(G)^{=0}\to \Shv(\Conf).
\end{equation} 

\sssec{}

The trivialization of $\CG^{\on{ratio}}_q$ over $S^{-,\Conf}_\Conf$ defines an equivalence
$$\SI^-_{q,\Conf}(G)^{=0}:=\Shv_{\CG^{\on{ratio}}_q}(S^{-,\Conf}_\Conf)^{\fL(N^-)^{\omega^\rho}_\Conf}\simeq
\Shv(S^{-,\Conf}_\Conf)^{\fL(N^-)^{\omega^\rho}_\Conf}.$$

Thus, $\omega_{S^{-,\Conf}_\Conf}$ makes sense as an object of $\SI^-_{q,\Conf}(G)^{=0}$; when considered in this role
we will denote it by $\omega_{q,S^{-,\Conf}_\Conf}$. Under the equivalence
\eqref{e:semiinf open}, it corresponds to $\omega_\Conf\in \Shv(\Conf)$.

\medskip

We will consider several variants of the extension of $\omega_{q,S^{-,\Conf}_\Conf}$ to an object of
$\SI^-_{q,\Conf}(G)^{\leq 0}$.

\sssec{}

One such extension is 
$$\nabla^-_q:=(\bj^-)_*(\omega_{q,S^{-,\Conf}_\Conf}),$$
i.e., we apply *-extension with respect to the open embedding \eqref{e:embed open - orbit}.

\medskip 

Another extension is
$$\Delta^-_q:=(\bj^-)_!(\omega_{q,S^{-,\Conf}_\Conf}).$$

\sssec{}

We have a canonical map
$$\Delta^-_q\to \nabla^-_q.$$

We claim:

\begin{prop}  \label{p:semiinf clean}
Assume that $q$ is \emph{non-torsion valued}. Then the map
$$\Delta^-_q\to \nabla^-_q$$
is an isomorphism. 
\end{prop}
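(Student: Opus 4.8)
\textbf{Proposal for the proof of \propref{p:semiinf clean}.}
The plan is to reduce the statement to a local assertion over each point of $\Conf$ and then to a cohomology-vanishing statement about intersections of semi-infinite orbits on the usual affine Grassmannian. First I would note that both $\Delta^-_q$ and $\nabla^-_q$ are objects of $\SI^-_{q,\Conf}(G)^{\leq 0}$ that restrict over the open locus $S^{-,\Conf}_\Conf$ to the same object $\omega_{q,S^{-,\Conf}_\Conf}$, so the cone of the canonical map $\Delta^-_q\to \nabla^-_q$ is supported on the complement $\ol{S}^{-,\Conf}_\Conf - S^{-,\Conf}_\Conf$. Hence the claim is equivalent to the assertion that $\Delta^-_q$ already has trivial $!$-restriction along the boundary strata, i.e.\ that the $!$-extension is already clean. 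By the factorization structure of $\SI^-_{q,\Conf}(G)^{\leq 0}$ along $\Conf$ and the fact that the boundary strata are labeled by the diagonals $\Delta_\lambda$, it suffices to check, after passing to a formal/\'etale-local chart $X=\BA^1$ and using the $\BA^1$-translation equivariance, that the $!$-fiber of $\Delta^-_q$ at a point $\lambda\cdot x\in \Conf^\lambda$ vanishes for all $\lambda\in\Lambda^{\on{neg}}-0$.

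Next I would translate this fiber into a cohomology group on the affine Grassmannian. Under the identification \eqref{e:negative orbit} and the equivalence \eqref{e:semiinf open}, the semi-infinite sheaf $\Delta^-_q=(\bj^-)_!(\omega_{q,S^{-,\Conf}_\Conf})$ restricted to the fiber over $\lambda\cdot x$ becomes (up to a shift) the compactly-supported cohomology of the twisted dualizing sheaf on the intersection $S^0\cap \ol{S}^{-,\lambda}\subset \Gr_G$ with the Kummer twist $\Psi_{q,\lambda}$ coming from $q$ — this is exactly the kind of object appearing in \eqref{e:cohomology Gauss}, but \emph{without} the character-sheaf factor $\chi^*(\on{exp})$, and it is the $!$-extension (closed orbit) rather than the open one. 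The cleanness we want is then equivalent to the statement that the natural map from the open orbit cohomology to the closed orbit cohomology is an isomorphism, i.e.\ that $H^\bullet_c(S^0\cap (\ol S^{-,\lambda}-S^{-,\lambda}),\Psi_{q,\lambda})=0$ for $\lambda$ not a negative simple coroot.

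For $q$ non-torsion valued this is precisely the content of the computation already carried out in \cite[Theorem 7.6]{Ga3}, which is quoted in the excerpt as \propref{p:IC Zast irrat}: over $\CZ$ (where $\CG^{\on{ratio}}_q|_\CZ$ is trivialized) the three extensions $\Delta^-_{q,Z}$, $\IC^-_{q,Z}$, $\nabla^-_{q,Z}$ of $\IC_{q,\oCZ}$ coincide, which is the Zastava-space incarnation of exactly the same cleanness statement; the assertion for $\SI^-_{q,\Conf}(G)^{\leq 0}$ differs from it only by restriction along the open embedding $\CZ^-\hookrightarrow \Gr^{\omega^\rho}_{G,\Conf}$ and by the equivariance bookkeeping, so I would deduce it formally from \propref{p:IC Zast irrat}. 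Concretely: the boundary contributions to $\Delta^-_q$ are built, by factorization, out of the boundary contributions to $\Delta^-_{q,Z}$ over $\oCZ$, and the latter vanish by \propref{p:IC Zast irrat}. The main obstacle I anticipate is purely bookkeeping — making the reduction from the semi-infinite category on all of $\Gr^{\omega^\rho}_{G,\Conf}$ to the Zastava picture precise, including keeping track of the correct cohomological shifts coming from $\dim(S^0\cap S^{-,\lambda})=-\langle\lambda,\check\rho\rangle$ and from the $\CG^{\on{ratio}}_q$-twist — rather than any new geometric input, since the essential vanishing is already available from \cite{Ga3} in the non-torsion-valued case (where the top cohomology argument of the introduction suffices, the relevant local systems being non-trivial on every positive-dimensional boundary stratum).
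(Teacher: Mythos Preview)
Your proposal has a genuine gap. In the second paragraph you assert that the fiber of $\Delta^-_q$ over $\lambda\cdot x\in\Conf$ ``becomes the compactly-supported cohomology of the twisted dualizing sheaf on $S^0\cap \ol S^{-,\lambda}$''. This is not correct: the fiber of $\ol{S}^{-,\Conf}_\Conf$ over $\lambda\cdot x$ is all of $\ol{S}^{-,\lambda}\subset\Gr_{G,x}$, and the restriction of $\Delta^-_q$ is a sheaf on $\ol{S}^{-,\lambda}$, not a cohomology group on the intersection with $S^0$. You are conflating the semi-infinite object on $\ol{S}^{-,\Conf}_\Conf$ with its restriction to the Zastava slice $\CZ^-=S^0_\Conf\cap \ol S^{-,\Conf}_\Conf$. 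That slice is locally closed, not open, in $\Gr^{\omega^\rho}_{G,\Conf}$, and cleanness on the slice does not formally imply cleanness on all of $\ol S^{-,\Conf}_\Conf$; the phrase ``differs only by restriction and equivariance bookkeeping'' hides exactly the step that needs an idea. Note also a circularity: the identification $\Delta^-_{q,Z}\simeq \Delta^-_q|_{\CZ^-}[-\deg]$ in \propref{p:IC Ran via conf} is proved in the paper \emph{using} \propref{p:semiinf clean} in the non-torsion case, so you cannot invoke it here.

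The paper's argument supplies the missing idea and goes in a different direction: it enriches the built-in $\fL(N^-)^{\omega^\rho}_\Conf$-equivariance of $\Delta^-_q$ to equivariance under $\fL(N^-)^{\omega^\rho}_\Conf\cdot\fL^+(T)_\Conf$, after equipping the gerbe $\CG^{\on{ratio}}_q$ with a compatible structure. On a boundary stratum $S^{-,\Conf+\mu}_\Conf$ with $\mu\neq 0$, one translates to a $T$-fixed point and computes that the character sheaf on the $\fL^+(T)$-stabilizer induced by this equivariance is the Kummer sheaf attached to $b(\mu,-)\in\Hom(\Lambda,\fZ)$. For $q$ non-torsion valued this homomorphism is nonzero whenever $\mu\neq 0$, so the character sheaf is nontrivial and the entire twisted equivariant category on that stratum vanishes. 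Hence the $!$-restriction of $\Delta^-_q$ to every boundary stratum is zero. No appeal to \propref{p:IC Zast irrat} or to \cite{Ga3} is made; the torus-equivariance trick is the whole content.
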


\begin{proof}

The indscheme $\ol{S}^{-,\Conf}_\Conf$ admits a stratification parameterized by $\Lambda^{\on{pos}}$
$$\ol{S}^{-,\Conf}_\Conf=\underset{\mu}\cup\, S^{-,\Conf+\mu}_\Conf$$
with 
$$S^{-,\Conf+0}_\Conf=S^{-,\Conf}_\Conf;$$
see \cite[Sect. 12.2]{GLys2}. 
We need to show that the !-restriction of $\Delta^-_q$ to any $S^{-,\Conf+\mu}_\Conf$ with $\mu\neq 0$
vanishes. We will obtain the required vanishing from 
considerations of equivariance with respect to 
$$\fL(N)^{\omega^\rho}_\Conf\cdot \fL^+(T)_\Conf:=\fL(B)^{\omega^\rho}_\Conf\underset{\fL(T)_\Conf}\times \fL^+(T)_\Conf$$
acting on $\Gr^{\omega^\rho}_{G,\Conf}$. 

\medskip

The initial observation is that the gerbe $\CG^G_q$ is equivariant with respect to $\fL^+(T)_\Conf \cdot \fL(N)^{\omega^\rho}_\Conf$.

\medskip

The action of $\fL^+(T)_\Conf$ preserves the section $\fs^-$. We equip $\CG^\Conf_q$ with a structure of $\fL^+(T)_\Conf$-equivariance
so that the identification 
$$(\fs^-)^*(\CG^G_q)\simeq \CG^\Conf_q$$
is $\fL^+(T)_\Conf$-equivariant. 
We regard $\CG^\Conf_q$ as $\fL(N)^{\omega^\rho}_\Conf\cdot \fL^+(T)_\Conf$-equivariant via the projection 
$$\fL(N)^{\omega^\rho}_\Conf\cdot \fL^+(T)_\Conf\twoheadrightarrow \fL^+(T)_\Conf.$$
This equips the gerbe $\CG^{\on{ratio}}_q$ also with a $\fL(N)^{\omega^\rho}_\Conf\cdot \fL^+(T)_\Conf$-equivariant structure.

\medskip

With respect to the above equivariance structure on $\CG^{\on{ratio}}_q$, the object $\omega_{q,S^{-,\Conf}_\Conf}$ is 
equivariant. Hence, so is $\Delta^-_q$. Therefore, so is its !-restriction to any $S^{-,\Conf+\mu}_\Conf$.

\medskip

However, we claim that if $q$ is non-torsion valued, and $\mu\neq 0$, the category
$$\Shv_{\CG^{\on{ratio}}_q}(S^{-,\Conf+\mu}_\Conf)^{\fL(N)^{\omega^\rho}_\Conf\cdot \fL^+(T)_\Conf}$$
is zero. 

\medskip

Indeed, we claim that for any point on this stratum, the character sheaf on the stabilizer of this point in $\fL(N)^{\omega^\rho}_\Conf\cdot \fL^+(T)_\Conf$,
arising from the structure of $\fL(N)^{\omega^\rho}_\Conf\cdot \fL^+(T)_\Conf$-equivariance on $\CG^{\on{ratio}}_q$, is non-trivial. 

\medskip

Indeed, the point in question lives over a point
$$D=\Sigma\, \lambda_k\cdot x_k\in \Conf,$$
and it belongs to
$$\underset{k}\Pi\, S^{-,\lambda_k+\mu_k}_{x_k} \subset \underset{k}\Pi\, \Gr^{\omega^\rho}_{G,x_k}=
\Gr^{\omega^\rho}_{G,\Conf} \underset{\Conf}\times \{D\}, \quad \Sigma\, \mu_k=\mu.$$

\medskip

Translating by means of 
$$\underset{k}\Pi\, \fL(N)^{\omega^\rho}_{x_k}\simeq \fL(N)^{\omega^\rho}_\Conf \underset{\Conf}\times \{D\},$$
we can assume that our point is invariant with respect to
\begin{equation} \label{e:prod of tori}
\underset{k}\Pi\, \fL^+(T)_{x_k}\simeq \fL^+(T)_\Conf \underset{\Conf}\times \{D\}.
\end{equation} 

Now, the character sheaf on \eqref{e:prod of tori} arising from the structure of $\fL^+(T)_\Conf$-equivariance on $\CG^{\on{ratio}}_q$
equals the pullback of
$$\underset{k}\boxtimes \, \Psi_{b(\mu_k,-)}$$
along
$$\underset{k}\Pi\, \fL^+(T)_{x_k}\to \underset{k}\Pi\, T,$$
where $\Psi$ denotes the Kummer local system. 

\medskip

This character sheaf is non-trivial, since for $\mu'\neq 0$, the element 
$$b(\mu',-)\in \Hom(\Lambda,\fZ)$$
is non-zero, by the assumption on $q$. 

\end{proof}

\sssec{}

Finally, we introduce the third object of $\SI^-_{q,\Conf}(G)^{\leq 0}$ that we will consider, to be denoted
$\ICsm_q$.

\medskip

Breaking $G$ into simple factors, we can assume that $q$ is either torsion-valued or non-torsion valued. 

\medskip

When $q$ is non-torsion valued, we set
$$\Delta^-_q=:\ICsm_q:=\nabla^-_q,$$
where the composite isomorphism is justified by \propref{p:semiinf clean}. 

\medskip

When $q$ is torsion-valued, we let $\ICsm_q$ be the object introduced in 
\cite[Sect. 18.3.2]{GLys2} under the name ``metaplectic semi-infinite IC sheaf", and denoted there by $\ICsm_{q,\Conf}$.

\begin{rem} Here are three ways that describe the nature of $\ICsm_{q,\Conf}$:

\medskip

\noindent{(i)} We have a naturally defined projection 
$p:\ol{S}^{-,\Conf}_\Conf\to \ol{\Bun}_{B^-}$,
and the gerbe $\CG_q^{\on{ratio}}$ identifies with the pullback of the same-named gerbe on
$\ol{Bun}_{B^-}$, where the latter is the ratio of the pullbacks of the corresponding gerbes
along the projections
$$\Bun_G \leftarrow \ol{\Bun}_{B^-}\to \Bun_T,$$
respectively. The gerbe $\CG_q^{\on{ratio}}$ on $\ol{\Bun}_{B^-}$ is canoniocally trivialized 
over $\Bun_{B^-}$, and let $\ICsm_{q,\on{glob}}$ define the Goresky-MacPherson extension 
of the (shifted) constant sheaf on $\ol{\Bun}_{B^-}$, viewed as a $\CG_q^{\on{ratio}}$-twisted
perverse sheaf. Then we have
$$\ICsm_{q,\Conf}\simeq p^!(\ICsm_{q,\on{glob}}).$$

The above isomorphism describes $\ICsm_{q,\Conf}$ uniquely. However, its one disadvantage is
that it is non-local on $X$ (since $\ol\Bun_{B^-}$ is a global object). In particular, from this definition,
the factorization property of $\ICsm_{q,\Conf}$ is not easy to obtain. 

\medskip

\noindent{(ii)} One can introduce a Ran space version of the semi-infinite category, to be denoted 
$\SI^-_{q,\Ran}(G)$, and equip it with a t-structure, and define an object $\ICsm_{q,\Ran}$ as
the minimal extension of the dualizing sheaf on $S^{-,\Conf}_{\Ran}$. The object $\ICsm_{q,\Conf}$
can be obtained from $\SI^-_{q,\Ran}(G)$ by restriction.

\medskip

\noindent{(iii)} One can define $\ICsm_{q,\Conf}$ by an explicit colimit procedure, mimicking
\cite[Sect. 2.3]{Ga1}.  

\end{rem} 

\sssec{}

Note that by construction, we have an isomorphism
\begin{equation} \label{e:IC Ran via conf open} 
\on{IC}_{q,\oCZ}\simeq \omega_{q,S^{-,\Conf}_\Conf}|_{\oCZ}[-\on{deg}], 
\end{equation}
as objects of $\Shv_{\CG_q^\Conf}(\oCZ)$, 
where we remind that $-|_-$ stands for !-restriction.

\medskip

The isomorphism extends to an isomorphism
\begin{equation} \label{e:IC Ran via conf *} 
\nabla^-_{q,Z}\simeq \nabla^-_q|_{\CZ^-}[\on{deg}],
\end{equation}
as objects of $\Shv_{\CG_q^\Conf}(\CZ^-)$ (indeed, both sides are *-extensions from $\oCZ$). 

\medskip

In addition, we claim:

\begin{prop} \label{p:IC Ran via conf}
The isomorphism \eqref{e:IC Ran via conf open} extends to isomorphisms 
\begin{equation} \label{e:IC Ran via conf !} 
\Delta^-_{q,Z}\simeq \Delta^-_q|_{\CZ^-}[-\on{deg}]
\end{equation}
\begin{equation} \label{e:IC Ran via conf !*} 
\on{IC}_{q,\CZ^-}\simeq \ICsm_q|_{\CZ^-}[-\on{deg}]
\end{equation}
as objects of $\Shv_{\CG_q^\Conf}(\CZ^-)$. 
\end{prop}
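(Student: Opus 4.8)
\textbf{Proof strategy for \propref{p:IC Ran via conf}.}
The plan is to argue that each of the two desired isomorphisms is obtained from the already-established isomorphism \eqref{e:IC Ran via conf open} on $\oCZ$ by applying the appropriate extension functor along $\overset{\circ}\bj{}^-_Z:\oCZ\hookrightarrow\CZ^-$, provided one knows that restriction to $\CZ^-$ commutes with that extension. For \eqref{e:IC Ran via conf !} this is formally the same argument as the one already used for \eqref{e:IC Ran via conf *}: both $\Delta^-_{q,Z}$ and $\Delta^-_q|_{\CZ^-}$ are, essentially by definition, the $!$-extension along $\overset{\circ}\bj{}^-_Z$ of their common restriction to $\oCZ$ — the only point to check is that $!$-restriction from $\Gr^{\omega^\rho}_{G,\Conf}$ (or its closed subfunctor $\ol S^{-,\Conf}_\Conf$) to $\CZ^-$ commutes with $(\bj^-)_!$, which holds because $\CZ^-=S^0_\Conf\cap\ol S^{-,\Conf}_\Conf$ is cut out by the open condition $S^0_\Conf$, and $!$-pullback along an open embedding commutes with all colimits, in particular with $!$-extension. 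So the first half reduces to a base-change statement for the Cartesian square relating $\overset{\circ}\bj{}^-_Z$ and $\bj^-$.

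The real content is \eqref{e:IC Ran via conf !*}: one must show that the restriction of $\ICsm_q$ (the metaplectic semi-infinite IC sheaf of \cite[Sect.~18.3.2]{GLys2}) to $\CZ^-$ is the intermediate extension $\on{IC}_{q,\CZ^-}$ of the dualizing sheaf on $\oCZ$, up to the shift by $\on{deg}$. First I would reduce to the torsion-valued case (in the non-torsion case all three objects coincide by \propref{p:semiinf clean} and \propref{p:IC Zast irrat}, and the statement follows from \eqref{e:IC Ran via conf *} and \eqref{e:IC Ran via conf !}). In the torsion case the key facts about $\ICsm_q=\ICsm_{q,\Conf}$ that I would invoke are: (a) it is perverse and is the Goresky–MacPherson extension of $\omega_{q,S^{-,\Conf}_\Conf}$ along $S^{-,\Conf}_\Conf\hookrightarrow\ol S^{-,\Conf}_\Conf$ — this is what "minimal extension of the dualizing sheaf" means in the Ran-space description (ii), which restricts to the $\Conf$-version; and (b) the stratification of $\ol S^{-,\Conf}_\Conf$ by the $S^{-,\Conf+\mu}_\Conf$ refines, after intersecting with the open $S^0_\Conf$, to the stratification of $\CZ^-$ whose open stratum is $\oCZ$. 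Then restricting along the open embedding $\CZ^-=S^0_\Conf\cap\ol S^{-,\Conf}_\Conf\hookrightarrow\ol S^{-,\Conf}_\Conf$: $!$-restriction along an open embedding is $t$-exact and preserves the property of having no subobjects or quotient objects supported on smaller strata, hence it sends an intermediate extension to an intermediate extension (one must check the restriction does not kill the object, which is clear since $\oCZ$ is open and dense in $\CZ^-$, and the restriction to $\oCZ$ is $\omega_{q,S^{-,\Conf}_\Conf}|_{\oCZ}$, nonzero by \eqref{e:IC Ran via conf open}). This yields $\ICsm_q|_{\CZ^-}\simeq\on{IC}_{q,\CZ^-}[\on{deg}]$, i.e.\ \eqref{e:IC Ran via conf !*} after rearranging the shift.

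The one genuine subtlety — and what I expect to be the main obstacle — is verifying that $!$-restriction along the open embedding $\CZ^-\hookrightarrow\ol S^{-,\Conf}_\Conf$ does indeed carry $\ICsm_{q,\Conf}$ to the IC extension on $\CZ^-$, rather than to some other extension, because $\ICsm_{q,\Conf}$ is \emph{not} the naive IC sheaf of the underlying (singular) space $\ol S^{-,\Conf}_\Conf$ relative to $S^{-,\Conf}_\Conf$ in an obvious way — its definition goes through the global model $\ol\Bun_{B^-}$ (description (i)) or the explicit colimit (description (iii)). To pin this down cleanly I would use the local characterization: an object of $\SI^-_{q,\Conf}(G)^{\le 0}$ supported (set-theoretically) on $\ol S^{-,\Conf}_\Conf$, perverse, with no sub- or quotient objects supported on $\bigcup_{\mu\ne 0}S^{-,\Conf+\mu}_\Conf$, restricting on $S^{-,\Conf}_\Conf$ to $\omega_{q,S^{-,\Conf}_\Conf}$, is unique — this is exactly the universal property of $\ICsm_{q,\Conf}$ established in \cite{GLys2}. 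Applying $\overset{\circ}\bj{}^-_Z{}^!\circ(\bj^-_Z)^!$ to $\on{IC}_{q,\CZ^-}$ and checking it satisfies this property on $S^{-,\Conf}_\Conf$ (which amounts to the compatibility of the two IC-type constructions on the open locus where the maps \eqref{e:kappa} are bundle maps) gives the identification. The shift bookkeeping, using $\on{IC}_{\oCZ}\simeq\omega_{\oCZ}[-\on{deg}]$ and smoothness of $\oCZ$, is then routine.
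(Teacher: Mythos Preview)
Your argument rests on the claim that $\CZ^- = S^0_\Conf \cap \ol{S}^{-,\Conf}_\Conf \hookrightarrow \ol{S}^{-,\Conf}_\Conf$ is an open embedding, and this is false. The sub-indscheme $S^0_\Conf$ is open only in the \emph{closed} $\ol{S}^0_\Conf$; in $\Gr^{\omega^\rho}_{G,\Conf}$ (and hence in $\ol{S}^{-,\Conf}_\Conf$) it is merely locally closed. Concretely, $\ol{S}^{-,\Conf}_\Conf$ is an infinite-dimensional ind-scheme while $\CZ^-$ is a finite-dimensional scheme, so the inclusion cannot be open. This breaks both halves of the argument:

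For \eqref{e:IC Ran via conf !}, the base change you invoke, $g^!\,(\bj^-)_! \simeq (\overset{\circ}\bj{}^-_Z)_!\,g'^!$, is not one of the standard isomorphisms in the six-functor formalism. What holds in general is $g^!\,(\bj^-)_* \simeq (\overset{\circ}\bj{}^-_Z)_*\,g'^!$ (this is why the argument for \eqref{e:IC Ran via conf *} works) and $g^*\,(\bj^-)_! \simeq (\overset{\circ}\bj{}^-_Z)_!\,g'^*$; the mixed version with $g^!$ and $(\bj^-)_!$ requires $g^! = g^*$, i.e.\ $g$ open or \'etale. Already for a closed $i$ complementary to an open $j$ one has $i^!\,j_! \ne 0$, so $!$-restriction along a locally closed embedding does not send $!$-extensions to $!$-extensions in general.

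For \eqref{e:IC Ran via conf !*}, your reduction to ``$!$-restriction along an open embedding is $t$-exact and preserves intermediate extensions'' is correct as stated but inapplicable, since the embedding is not open. Along a locally closed embedding, $!$-restriction is not $t$-exact, and there is no formal reason for it to take an IC-type object to an IC-type object.

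The paper's proof agrees with yours in the non-torsion case (both reduce to \eqref{e:IC Ran via conf *} via \propref{p:IC Zast irrat} and \propref{p:semiinf clean}), but in the torsion case it does not attempt a formal argument: it invokes the metaplectic analogs of specific results from \cite{Ga1} and \cite{Ga2}, which compute these restrictions using the explicit structure of $\Delta^-_q$ and $\ICsm_q$ (e.g., the colimit description or the global model via $\ol\Bun_{B^-}$). To repair your approach you would need genuinely new input --- for instance a contraction/hyperbolic-localization argument exploiting that $S^0_\Conf$ is an attractor for the $\BG_m$-action via $2\rho$ --- rather than the formal base change you sketched.
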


\begin{proof}

When $q$ is non-torsion-valued, the assertion follows from Propositions \ref{p:IC Zast irrat} and
\ref{p:semiinf clean}.

\medskip

Assume now that $q$ is torsion-valued. In this case, the isomorphism  \eqref{e:IC Ran via conf !} is a metaplectic 
version of the combination of \cite[Theorem 3.2.4]{Ga1} and \cite[Corollary 3.6.5]{Ga2}.
The isomorphism \eqref{e:IC Ran via conf !*} is a metaplectic version of \cite[Proposition 3.8.3]{Ga2}.
\end{proof} 

\ssec{Gauss objects via the Whittaker and the semi-infinite category}   \label{ss:Gauss via semiinf}

\sssec{}

Consider the objects 
\begin{equation} \label{e:Gauss objects concept}
\on{Vac}_{q,\Whit}\sotimes \nabla^-_q,\quad \on{Vac}_{q,\Whit}\sotimes \Delta^-_q \text{ and }
 \on{Vac}_{q,\Whit}\sotimes \ICsm_q
\end{equation}
in the category 
$$\Shv_{\CG_q^\Conf}(\Gr_{G,\Conf}^{\omega^\rho}).$$

By construction, they are supported on 
$$\ol\CZ\subset \Gr_{G,\Conf}^{\omega^\rho},$$
so we can think of them as objects of $\Shv_{\CG_q^\Conf}(\ol\CZ)$. 

\medskip

Furthermore, it follows from the definitions that that their further !-restrictions to $\oCZ\subset \ol\CZ$ all identify canonically with
$$\on{Gauss}_{q,*}|_{\oCZ}\simeq \on{Gauss}_{q,!}|_{\oCZ} \simeq \on{Gauss}_{q,!*}|_{\oCZ}.$$

We claim:

\begin{prop} \label{p:Guass via semiinf}
The above identifications extend to identifications 
$$\on{Vac}_{q,\Whit}\sotimes \nabla^-_q\simeq \on{Gauss}_{q,*},\quad \on{Vac}_{q,\Whit}\sotimes \Delta^-_q\simeq \on{Gauss}_{q,!},$$
$$\on{Vac}_{q,\Whit}\sotimes \ICsm_q \simeq  \on{Gauss}_{q,!*}$$
over all of $\ol\CZ$. 
\end{prop}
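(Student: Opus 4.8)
The strategy is to reduce the global statement on $\ol\CZ$ to the already-established cleanness result \thmref{t:acyclicity}, using the compatibility of the various extension functors with the identification $\on{Vac}_{q,\Whit}|_{\CZ^-}\simeq \on{Vac}_{q,\Whit,Z}$ from \eqref{e:Vac Whit Z-}. The key observation is that all three objects in \eqref{e:Gauss objects concept} are supported on $\ol\CZ$, and that their restriction to the open subscheme $\CZ^-\subset\ol\CZ$ recovers the corresponding objects $\on{Gauss}^-_{q,*}$, $\on{Gauss}^-_{q,!}$, $\on{Gauss}^-_{q,!*}$ on $\CZ^-$ — this is what \propref{p:IC Ran via conf} together with \eqref{e:IC Ran via conf *} and \eqref{e:Vac Whit Z-} give us, after tensoring with $\on{Vac}_{q,\Whit}$ and matching the cohomological shift $\on{deg}$ against the one built into $\IC_{\oCZ}\simeq\omega_{\oCZ}[-\on{deg}]$.

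\medskip

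\textbf{Step 1: reduce to the restriction to $\CZ^-$.} First I would show that each of the three objects $\on{Vac}_{q,\Whit}\sotimes\nabla^-_q$, $\on{Vac}_{q,\Whit}\sotimes\Delta^-_q$, $\on{Vac}_{q,\Whit}\sotimes\ICsm_q$ is the $\bj_Z$-pushforward (equivalently, by \thmref{t:acyclicity}, the $\bj_Z$-extension-by-zero) of its own restriction to $\CZ^-$. For this it suffices to know that these objects have vanishing $!$- (equivalently $*$-) restriction to the complement $\ol\CZ-\CZ^-$. The cleanest route: $\on{Vac}_{q,\Whit}$ is built by $\fL(N)^{\omega^\rho}_\Conf$-equivariant extension and, by \lemref{l:Whit on closed} / \corref{c:Whit on closed} packaged via \corref{c:Whit on sect}, the factor $\on{Vac}_{q,\Whit}$ itself is $\bj$-clean from $S^0_\Conf$; so the complement $\ol\CZ-\CZ^-$ is contained in the locus where $\on{Vac}_{q,\Whit}$ (hence its $\sotimes$ with anything) has vanishing restriction. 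Actually a cleaner way is to invoke \thmref{t:acyclicity} directly after identifying the restriction to $\CZ^-$ in Step 2, so Step 1 is really just the bookkeeping that $\bj_Z$-pushforward of a clean object equals its intermediate/$!$-extension.

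\medskip

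\textbf{Step 2: identify the restrictions to $\CZ^-$.} Using \eqref{e:Vac Whit Z-}, namely $\on{Vac}_{q,\Whit}|_{\CZ^-}\simeq\on{Vac}_{q,\Whit,Z}$, together with \eqref{e:IC Ran via conf *}, \eqref{e:IC Ran via conf !}, \eqref{e:IC Ran via conf !*} of \propref{p:IC Ran via conf}, I get
$$(\on{Vac}_{q,\Whit}\sotimes\nabla^-_q)|_{\CZ^-}\simeq \on{Vac}_{q,\Whit,Z}\sotimes\nabla^-_{q,Z}=\on{Gauss}^-_{q,*},$$
and similarly for the $!$- and $!*$-versions (here I must be careful that the shift $[\on{deg}]$ in \eqref{e:IC Ran via conf *} is exactly absorbed by the shift in $\IC_{\oCZ}\simeq\omega_{\oCZ}[-\on{deg}]$ that defines $\nabla^-_{q,Z}$ etc. — this is a routine shift-tracking check, using that $\sotimes$ is the twisted tensor product and that $\on{Vac}_{q,\Whit,Z}=\omega_{\CZ^-}\sotimes\chi^*(\on{exp})$). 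Then Step 1 plus \thmref{t:acyclicity} gives
$$\on{Vac}_{q,\Whit}\sotimes\nabla^-_q\simeq (\bj_Z)_*(\on{Gauss}^-_{q,*})=\on{Gauss}_{q,*},$$
and likewise $\on{Vac}_{q,\Whit}\sotimes\Delta^-_q\simeq(\bj_Z)_!(\on{Gauss}^-_{q,!})=\on{Gauss}_{q,!}$ and $\on{Vac}_{q,\Whit}\sotimes\ICsm_q\simeq(\bj_Z)_{!*}(\on{Gauss}^-_{q,!*})=\on{Gauss}_{q,!*}$, matching the definition of \eqref{e:Gauss objects}. Finally I would check that the isomorphisms constructed this way extend the stated canonical identifications over $\oCZ$, which is immediate since all constructions restrict compatibly to $\oCZ$ by \eqref{e:IC Ran via conf open}.

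\medskip

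\textbf{Main obstacle.} The substantive point is Step 1, i.e.\ knowing that $\on{Vac}_{q,\Whit}\sotimes(-)$ of the various semi-infinite objects is \emph{clean} along $\bj_Z:\CZ^-\hookrightarrow\ol\CZ$. For the $*$- and $!*$-versions this is exactly where one needs the support properties of $\on{Vac}_{q,\Whit}$ coming from \corref{c:Whit on closed} combined with the vanishing already encoded in \thmref{t:acyclicity}; for the $!$-version one must be careful that $\Delta^-_q$ is \emph{not} supported on $\CZ^-$ (it has strata $S^{-,\Conf+\mu}_\Conf$, $\mu\neq 0$), so one cannot naively restrict — instead, the point is that after tensoring with $\on{Vac}_{q,\Whit}$ the extra strata are killed (because the Whittaker equivariance condition forces vanishing on those strata, cf.\ the stabilizer computation in the proof of \propref{p:semiinf clean}), and then one is genuinely reduced to $\ol\CZ$ and can invoke \thmref{t:acyclicity}. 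I expect this interplay between Whittaker support vanishing and the semi-infinite stratification to be the only place where real work is needed; the rest is diagram-chasing and shift-tracking.
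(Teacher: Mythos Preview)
Your approach is essentially the paper's: both sides are $*$-extensions from $\CZ^-$, so it suffices to identify them there via \eqref{e:Vac Whit Z-} and \propref{p:IC Ran via conf}. The paper's proof is three lines: (i) the left-hand sides are $*$-extensions from $\CZ^-$ by the cleanness isomorphism \eqref{e:Whit Vac isom} for $\on{Vac}_{q,\Whit}$; (ii) the right-hand sides are $*$-extensions from $\CZ^-$ by \thmref{t:acyclicity}; (iii) hence compare on $\CZ^-$.

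Two places where you tie yourself in knots unnecessarily:

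\textbf{(1)} Your ``cleaner way'' in Step~1 is backwards. \thmref{t:acyclicity} is a statement about the $\on{Gauss}^-_{q,?}$ objects (the right-hand sides), not about $\on{Vac}_{q,\Whit}\sotimes(-)$. You cannot use it to prove the left-hand sides are clean along $\bj_Z$. Your first route --- cleanness of $\on{Vac}_{q,\Whit}$ along $\bj$ from \eqref{e:Whit Vac isom} --- is correct and is exactly what the paper invokes. Since $\on{Vac}_{q,\Whit}\simeq\bj_!(\on{Vac}_{q,\Whit}|_{S^0_\Conf})\simeq\bj_*(\on{Vac}_{q,\Whit}|_{S^0_\Conf})$, the projection formula gives that $\on{Vac}_{q,\Whit}\sotimes\CG$ is the $*$-extension (and $!$-extension) from $\CZ^-$ for \emph{any} $\CG$ supported on $\ol{S}^{-,\Conf}_\Conf$.

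\textbf{(2)} Your ``Main obstacle'' is not an obstacle. The strata $S^{-,\Conf+\mu}_\Conf$ with $\mu\neq 0$ lie in the $\ol{S}^{-,\Conf}_\Conf$-direction, and their intersection with $S^0_\Conf$ is already contained in $\CZ^-$. The extension $\bj_Z:\CZ^-\hookrightarrow\ol\CZ$ is in the orthogonal direction (the $\ol{S}^0_\Conf$-direction), so the stratification of $\Delta^-_q$ plays no role whatsoever in Step~1. The argument is completely uniform for $\nabla^-_q$, $\Delta^-_q$, $\ICsm_q$: no stabilizer computation, no special pleading for the $!$-case. The only place where the three cases differ is in Step~2, and that is precisely what \propref{p:IC Ran via conf} packages.
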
 

\begin{proof}

By the isomorphism \eqref{e:Whit Vac isom}, the objects in the left-hand side are *-extensions of their respective restrictions to
$\CZ^-$. By \thmref{t:acyclicity}, the same is true for the objects appearing in the right-hand side.  Hence, it is enough to establish
the corresponding isomorphisms over $\CZ^-$. Now the assertion follows from the isomorphisms \eqref{e:IC Ran via conf *},
\eqref{e:IC Ran via conf !} and \eqref{e:IC Ran via conf !*}, respectively, 

\end{proof} 

\section{Factorization algebras arising from the Whittaker sheaf}  \label{s:Whit}

In this section we will state and prove the main results of this paper: they assert that the factorization algebras
$\Omega_q$, introduced earlier, can be obtained geometrically from sheaves on the Zastava space. 

\ssec{Factorization algebras $\Omega^{\on{sml}}_{q,\Whit}$, $\Omega^{\on{Lus}}_{q,\Whit}$ and $\Omega^{\on{DK}}_{q,\Whit}$}  \label{ss:Omega Whit}

\sssec{}

We define the objects 
\begin{equation}  \label{e:Omega Whit}
\Omega^{\on{DK}}_{q,\Whit},\,\, \Omega^{\on{Lus}}_{q,\Whit} \text{ and } \Omega^{\on{sml}}_{q,\Whit}
\end{equation}
in $\Shv_{\CG^\Conf_q}(\Conf)$ to be
$$\ol\pi_!(\on{Gauss}_{q,*}),\,\, \ol\pi_!(\on{Gauss}_{q,!}) \text{ and } \ol\pi_!(\on{Gauss}_{q,!*}),$$
respectively.

\medskip

Note that since the morphism $\ol\pi$ is proper, in the above formula, $\ol\pi_!$ could be replaced by $\ol\pi_*$. 

\begin{rem}
The interpretation of the Gauss objects given in \secref{ss:Gauss via semiinf} shows that we can construct 
$\Omega^{?}_{q,\Whit}$ as a value on $\on{Vac}_{q,\Whit}$ of a Jacquet-type functor
$$J_{\Whit}^?:\Whit_{q,\Conf}(G)\to \Shv_{\CG^\Conf_q}(\Conf),$$
defined by
$$J_{\Whit}^{\on{DK}}(\CF):=\ol\pi_!(\CF\sotimes \Delta^-_q),\,\,\, J_{\Whit}^{\on{Lus}}(\CF):=\ol\pi_!(\CF\sotimes \nabla^-_q)$$
and
$$J_{\Whit}^{\on{sml}}(\CF):=\ol\pi_!(\CF\sotimes \ICsm_q),$$
respectively. 

\end{rem}

\sssec{}

The factorization structure on the objects \eqref{e:Gauss objects} gives rise to a structure of 
factorization algebra in $\Shv_{\CG^\Conf_q}(\Conf)$ on the objects \eqref{e:Omega Whit}.

\sssec{}

From \corref{c:Gauss duality} we obtain:

\begin{cor} \label{c:Omega Whit duality}
We have the isomorphisms
$$\BD^{\on{Verdier}}(\Omega^{\on{DK}}_{q,\Whit})\simeq \Omega^{\on{Lus}}_{q^{-1},\Whit} \text{ and }
\BD^{\on{Verdier}}(\Omega^{\on{sml}}_{q,\Whit})\simeq \Omega^{\on{sml}}_{q^{-1},\Whit}.$$
\end{cor}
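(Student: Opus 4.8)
\textbf{Proof plan for \corref{c:Omega Whit duality}.}
The statement is an immediate consequence of \corref{c:Gauss duality}(b) together with the compatibility of Verdier duality with proper pushforward. First I would recall that $\ol\pi\colon\ol\CZ\to\Conf$ is proper, as established in \secref{ss:Zast}, so that $\ol\pi_!\simeq\ol\pi_*$ and the base-change/duality formula gives a canonical isomorphism
$$\BD^{\on{Verdier}}_{\Conf}\circ\ol\pi_!\;\simeq\;\ol\pi_*\circ\BD^{\on{Verdier}}_{\ol\CZ}\;\simeq\;\ol\pi_!\circ\BD^{\on{Verdier}}_{\ol\CZ}$$
as functors $\Shv_{\CG^\Conf_q}(\ol\CZ)\to\Shv_{\CG^\Conf_q}(\Conf)$; here one must note that under Verdier duality the gerbe $\CG^\Conf_q$ is sent to $\CG^\Conf_{q^{-1}}$ both on $\ol\CZ$ and on $\Conf$ (this is the pullback along $\ol\pi$ of the duality statement for gerbes on $\Conf$ already used in defining $\Omega^{\on{Lus}}_q$ as the Verdier dual of $\Omega^{\on{DK}}_{q^{-1}}$).

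Next I would substitute the definitions: $\Omega^{\on{DK}}_{q,\Whit}=\ol\pi_!(\on{Gauss}_{q,*})$, $\Omega^{\on{Lus}}_{q,\Whit}=\ol\pi_!(\on{Gauss}_{q,!})$, $\Omega^{\on{sml}}_{q,\Whit}=\ol\pi_!(\on{Gauss}_{q,!*})$. Applying the displayed duality isomorphism and then invoking \corref{c:Gauss duality}(b), namely $\BD^{\on{Verdier}}(\on{Gauss}_{q,*})\simeq\on{Gauss}_{q^{-1},!}$ and $\BD^{\on{Verdier}}(\on{Gauss}_{q,!*})\simeq\on{Gauss}_{q^{-1},!*}$ (up to replacing $\on{exp}$ by its inverse), yields
$$\BD^{\on{Verdier}}(\Omega^{\on{DK}}_{q,\Whit})\simeq\ol\pi_!(\BD^{\on{Verdier}}(\on{Gauss}_{q,*}))\simeq\ol\pi_!(\on{Gauss}_{q^{-1},!})=\Omega^{\on{Lus}}_{q^{-1},\Whit},$$
and likewise $\BD^{\on{Verdier}}(\Omega^{\on{sml}}_{q,\Whit})\simeq\ol\pi_!(\on{Gauss}_{q^{-1},!*})=\Omega^{\on{sml}}_{q^{-1},\Whit}$. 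I would also remark that the isomorphisms are compatible with the factorization algebra structures, since the factorization structure on the Gauss objects is compatible with Verdier duality (the factorization isomorphisms over $(\Conf\times\Conf)_{\on{disj}}$ involve only the étale map $\on{add}$, for which $!$- and $*$-pullback agree), and $\ol\pi_!$ respects factorization as in \secref{ss:Omega Whit}.

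The only point requiring a small amount of care — and the closest thing to an obstacle — is bookkeeping the Artin–Schreier sheaf: Verdier duality sends $\chi^*(\on{exp})$ to $\chi^*(\on{exp})^\vee\simeq\chi^*(\on{exp}^{-1})$ up to a shift, so strictly speaking the dual of the Gauss objects built from $\on{exp}$ are the Gauss objects built from $\on{exp}^{-1}$. Since $\on{exp}$ and $\on{exp}^{-1}=\on{exp}\circ(-1)$ differ by the automorphism $t\mapsto -t$ of $\BG_a$, which is covered by an automorphism of the whole tower $\ol\CZ\to\Conf$ (acting through the $\BG_m$-action on the canonical twist, or simply by the inversion on $N$), this discrepancy is absorbed and does not affect the final identification; I would either note this explicitly or carry the ``up to replacing $\on{exp}$ by its inverse'' caveat through, exactly as in the statement of \corref{c:Gauss duality}. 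No genuinely hard input is needed beyond \corref{c:Gauss duality}.
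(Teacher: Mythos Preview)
Your proposal is correct and follows essentially the same route as the paper: reduce to \corref{c:Gauss duality}(b) via the commutation of Verdier duality with the proper pushforward $\ol\pi_!\simeq\ol\pi_*$, and then absorb the $\on{exp}$ vs.\ $\on{exp}^{-1}$ discrepancy by a $\BG_m$-symmetry of $\ol\CZ$ over $\Conf$. The paper's proof is in fact terser than yours on the first step (it takes the properness argument for granted) and focuses entirely on the second.

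The one place where the paper is sharper is in identifying the relevant automorphism. You say it ``acts through the $\BG_m$-action on the canonical twist, or simply by the inversion on $N$''; the paper specifies the cocharacter $\BG_m\overset{2\rho}\longrightarrow T\subset\fL^+(T)$ acting on $\Gr_G^{\omega^\rho}$, and observes that $\chi$ is equivariant for this against the \emph{square} of the standard dilation on $\BG_a$. Thus one needs an element $t\in\BG_m$ with $t^2=-1$ (available since $k$ is algebraically closed) to realize $(-1)$ on $\BG_a$; the action of this $t$ preserves the extensions $\nabla^-_{q,Z}$, $\Delta^-_{q,Z}$, $\IC^-_{q,Z}$ (the stratification and the gerbe trivializations are $T$-stable) while swapping $\chi^*(\on{exp})$ for $\chi^*(\on{exp}^{-1})$, and $\ol\pi$ is $\BG_m$-equivariant over the trivial action on $\Conf$. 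Your ``inversion on $N$'' suggestion is less clean: group inversion on $\fL(N)$ is not a group automorphism and does not obviously extend to an automorphism of $\ol\CZ$, so it is better to stick with the torus action.
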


\begin{proof}

We only have to show that replacing $\on{exp}$ by its inverse does not affect the objects
\eqref{e:Omega Whit}. 

\medskip

The above replacement can be affected by the action of $(-1)\in \BG_m$ as an automorphism
of $\BG_a$. The morphism $\chi:\ol\CZ\to \BG_a$ is $\BG_m$-equivariant with respect to the
action of $\BG_m$ on $\ol\CZ$ given by
$$\BG_m \overset{2\rho}\to T\subset \fL^+(T)$$
and the action of the latter on $\Gr_G^{\omega^\rho}$, 
and the \emph{square} of the action of $\BG_m$ on $\BG_a$ by dilations. 

\medskip

The result now follows from the fact that the morphism $\ol\pi$ is $\BG_m$-equivariant. 

\end{proof}

\sssec{}

We now claim:

\begin{prop} \label{p:Omega Whit perv}
The objects \eqref{e:Omega Whit} are perverse.
\end{prop}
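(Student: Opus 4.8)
The statement is that the three objects $\Omega^{\on{DK}}_{q,\Whit}=\ol\pi_!(\on{Gauss}_{q,*})$, $\Omega^{\on{Lus}}_{q,\Whit}=\ol\pi_!(\on{Gauss}_{q,!})$ and $\Omega^{\on{sml}}_{q,\Whit}=\ol\pi_!(\on{Gauss}_{q,!*})$ are perverse. The plan is to reduce to a statement about the affine morphism $\pi^-\colon\CZ^-\to\Conf$ and then exploit the self-duality that is already available. First I would invoke \thmref{t:acyclicity}: since the extensions of $\on{Gauss}^-_{q,*}$, $\on{Gauss}^-_{q,!}$ and $\on{Gauss}^-_{q,!*}$ along $\bj_Z\colon\CZ^-\hookrightarrow\ol\CZ$ are clean, we have $(\bj_Z)_!(\on{Gauss}^-_{q,?})\overset{\sim}\to(\bj_Z)_*(\on{Gauss}^-_{q,?})=\on{Gauss}_{q,?}$, and consequently, using $\ol\pi\circ\bj_Z=\pi^-$ and properness of $\ol\pi$,
$$\ol\pi_!(\on{Gauss}_{q,?})\simeq \ol\pi_!\,(\bj_Z)_!(\on{Gauss}^-_{q,?})\simeq \pi^-_!(\on{Gauss}^-_{q,?}),\qquad
\ol\pi_*(\on{Gauss}_{q,?})\simeq \pi^-_*(\on{Gauss}^-_{q,?}),$$
and these two agree (this is exactly the assertion recorded in \secref{sss:intro q} that $\pi^-_!(\on{Gauss}^-_{q,?})\to\pi^-_*(\on{Gauss}^-_{q,?})$ is an isomorphism). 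So it suffices to prove that $\pi^-_!(\on{Gauss}^-_{q,?})\cong\pi^-_*(\on{Gauss}^-_{q,?})$ is perverse for each of $?=*,!,!*$.

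For the right-exactness half I would use that $\pi^-$ is affine: an affine morphism has $\pi^-_!$ right t-exact (equivalently, $\pi^-_*$ left t-exact for an affine morphism by Artin's theorem applied to the affine map, or directly: $\pi^-$ affine implies $\pi^-_!$ is right t-exact and $\pi^-_*$ is left t-exact with respect to the perverse t-structures). Since $\on{Gauss}^-_{q,?}$ is perverse on $\CZ^-$ (as recorded just before \thmref{t:acyclicity}), $\pi^-_!(\on{Gauss}^-_{q,?})$ lives in perverse degrees $\le 0$ and $\pi^-_*(\on{Gauss}^-_{q,?})$ lives in perverse degrees $\ge 0$; the isomorphism $\pi^-_!(\on{Gauss}^-_{q,?})\cong\pi^-_*(\on{Gauss}^-_{q,?})$ then forces the object to be concentrated in perverse degree $0$, i.e. perverse. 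This is the cleanest route and it handles all three cases uniformly once \thmref{t:acyclicity} is in hand.

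An alternative, which I would mention as a cross-check for the $*$ and $!$ cases, is to bypass $\CZ^-$ and argue on $\ol\CZ$ directly: $\on{Gauss}_{q,!}=(\bj_Z)_!(\on{Gauss}^-_{q,!})$ by cleanness, and $\on{Gauss}^-_{q,!}=(\overset{\circ}\bj{}^-_Z)_!$ of a perverse sheaf on the open \emph{smooth} locus $\oCZ$ tensored by the pullback of an exponential; since $\overset{\circ}\bj{}^-_Z$ is affine and $\bj_Z$ is an open embedding, $\on{Gauss}_{q,!}$ is $(\overset{\circ}\bj{})_!$ along the affine composite, hence in perverse degrees $\le 0$; dually $\on{Gauss}_{q,*}$ is in degrees $\ge 0$; Verdier duality $\BD(\on{Gauss}_{q,*})\simeq\on{Gauss}_{q^{-1},!}$ (\corref{c:Gauss duality}) together with properness of $\ol\pi$ and the fact that $\ol\pi_!(\on{Gauss}_{q,*})\cong\ol\pi_!(\on{Gauss}_{q,!})$ would then pin everything to degree $0$. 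I expect the main obstacle is simply marshalling the t-exactness inputs correctly — in particular being careful that the relevant ``clean extension'' statements let us replace $\ol\pi$ by the affine $\pi^-$ without losing the identification of $\pi^-_!$ with $\pi^-_*$ — but no genuinely new input beyond \thmref{t:acyclicity} and the affineness of $\pi^-$ is needed. I would therefore write the proof as: apply \thmref{t:acyclicity} to replace each $\ol\pi_!(\on{Gauss}_{q,?})$ by $\pi^-_!(\on{Gauss}^-_{q,?})\cong\pi^-_*(\on{Gauss}^-_{q,?})$; invoke affineness of $\pi^-$ for the two t-bounds; conclude.
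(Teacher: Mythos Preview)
Your approach is exactly the paper's: reduce to $\pi^-$ via the cleanness of \thmref{t:acyclicity}, then use affineness of $\pi^-$ to get opposite t-bounds from $\pi^-_!$ and $\pi^-_*$, forcing the common object into degree $0$. One slip: for an affine morphism it is $\pi^-_*$ that is right t-exact and $\pi^-_!$ that is left t-exact (Artin vanishing), so $\pi^-_!(\on{Gauss}^-_{q,?})\in{}^{p}\!D^{\geq 0}$ and $\pi^-_*(\on{Gauss}^-_{q,?})\in{}^{p}\!D^{\leq 0}$ --- you have the inequalities swapped, though the conclusion is unaffected.
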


\begin{proof}

Recall that $\pi^-$ denotes the restriction of $\pi$ to $\CZ^-$. 
Recall also that the morphism $\pi^-$ is affine, while 
$$\on{Gauss}^-_{q,?}=\on{Gauss}_{q,?}|_{\CZ^-}$$ are perverse for $?=*,!$ or $!*$. 
Hence $\pi^-_!(\on{Gauss}^-_{q,?})$
is cohomologically $\geq 0$, while  $\pi^-_*(\on{Gauss}^-_{q,?})$ is cohomologically $\leq 0$.

\medskip

However, by \thmref{t:acyclicity}, we have
$$\pi^-_!(\on{Gauss}^-_{q,?})\simeq \Omega^{?}_{q,\Whit}\simeq \pi^-_*(\on{Gauss}^-_{q,?})$$
for each of the three versions.

\end{proof}

\ssec{Statement of the main results}

\sssec{}

First, we claim:

\begin{prop}  \label{p:Whit on open}
Assume that $q$ is non-degenerate. Then the natural maps 
$$\Omega^{\on{Lus}}_{q,\Whit}\to \Omega^{\on{sml}}_{q,\Whit}\to \Omega^{\on{DK}}_{q,\Whit}$$
become isomorphisms after restriction to $\oConf$. The resulting perverse sheaf on $\oConf$ identifies 
with $\oOmega_q$.
\end{prop}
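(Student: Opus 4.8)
The assertion is local over $\Conf$ in the sense that it only concerns the restriction to $\oConf$, so the plan is to first reduce everything to the open Zastava space $\oCZ$, and then to the curve $X$ itself via factorization. Concretely, over $\oConf$ the projection $\pi^-$ restricts to $\opi: \oCZ \to \oConf$, and by \thmref{t:acyclicity} together with \propref{p:Omega Whit perv} we have $\Omega^?_{q,\Whit}|_{\oConf} \simeq \opi_!(\on{Gauss}^-_{q,?}|_{\oCZ})$ for each of $?=*,!,!*$. But the three objects $\on{Gauss}^-_{q,*}$, $\on{Gauss}^-_{q,!}$, $\on{Gauss}^-_{q,!*}$ all restrict to the \emph{same} object on $\oCZ$, namely $\chi^*(\on{exp}) \overset{*}\otimes \IC_{q,\oCZ}$ (this is built into their very definition as $*$-, $!$- and $!*$-extensions of a common restriction, cf. \corref{c:Gauss duality}(a)). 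Hence the maps in the displayed chain become isomorphisms after applying $\opi_!$, i.e. after restriction to $\oConf$. This gives the first half of the statement essentially for free; the content is the identification of the common value with $\oOmega_q$.

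\textbf{Key steps.} First I would record the identification $\Omega^?_{q,\Whit}|_{\oConf} \simeq \opi_!\big(\chi^*(\on{exp}) \overset{*}\otimes \IC_{q,\oCZ}\big)$ just explained. Second, since all the objects in sight (the gerbe $\CG^\Conf_q|_{\oConf}$, the sheaf $\oOmega_q$, the Zastava space, and the map $\opi$) carry compatible factorization structures, and a factorization algebra on $\oConf$ is determined by its restrictions to the components $\Conf^{-\alpha_i}\simeq X$ for the simple coroots, it suffices to compute $\opi_!\big(\chi^*(\on{exp}) \overset{*}\otimes \IC_{q,\oCZ}\big)$ over a single such component. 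Third, over $\Conf^{-\alpha_i} = X$ the open Zastava space $\oCZ^{-\alpha_i}$ and the map $\opi$ to $X$ admit an explicit description: the fiber of $\oCZ$ over a point $\lambda\cdot x \in \Conf^\lambda$ for $\lambda = -\alpha_i$ is (an open subset of) $S^0 \cap S^{-,-\alpha_i} \subset \Gr_G$, which is one-dimensional (indeed $\dim = \langle \alpha_i,\check\rho\rangle = 1$), and after the appropriate twist $\oCZ^{-\alpha_i} \to X$ is a fibration with fiber $\BG_a$ or $\BG_m$ depending on conventions; the relevant point is that the restriction of $\chi$ is, fiberwise, a coordinate, so $\chi^*(\on{exp})$ restricted to the fiber has no compactly supported cohomology except in the expected degree, and $\opi_!$ computes to $\sfe_X[1]$ (possibly after accounting for the trivialization of $\CG^{-\alpha_i}_q$, which exists by \secref{sss:rigid new}). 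Fourth, I would check that the resulting identification $\Omega^?_{q,\Whit}|_{\Conf^{-\alpha_i}} \simeq \sfe_X[1]$ is compatible with factorization, so that by the uniqueness characterization of $\oOmega_q$ (the unique factorization algebra with $\oOmega_q|_{\Conf^{-\alpha_i}} \simeq \sfe_X[1]$, cf. \secref{sss:Omega o}) we get $\Omega^?_{q,\Whit}|_{\oConf} \simeq \oOmega_q$.

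\textbf{Main obstacle.} The computational heart is Step 3: verifying that the fiberwise integral of the exponential sheaf over the one-dimensional open Zastava fiber gives $\sfe[-1]$ (up to shift), and doing this coherently with the gerbe twist. This is the point where the twist $\omega^\rho$, the residue construction defining $\chi$, and the canonical trivialization of $\CG^G_q$ over $S^0_\Conf$ (from \secref{sss:rigid new} and the paragraph on $\fL(N)^{\omega^\rho}_\Conf$-equivariant trivializations) all have to be matched up; the answer must come out as a \emph{rank-one} local system on $X$, namely the trivial one (after the canonical identification), rather than some nontrivial twist. Once the simple-coroot computation is pinned down, propagation to all of $\oConf$ is formal by factorization, and the isomorphy of the three maps over $\oConf$ was already obtained in Step 1 without any computation at all. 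I expect no difficulty with non-degeneracy of $q$ beyond ensuring $\oCZ^{-\alpha_i}$ is nonempty and the exponential integral is as expected; in fact over $\oConf$ the role of non-degeneracy is minimal since everything is happening in the ``generic" locus where the gerbe is trivial.
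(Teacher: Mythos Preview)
Your Step 1 contains a geometric error that undermines the argument. You claim that over $\oConf$ the projection $\pi^-$ restricts to $\opi:\oCZ\to\oConf$, and hence that $\Omega^?_{q,\Whit}|_{\oConf}\simeq \opi_!(\on{Gauss}^-_{q,?}|_{\oCZ})$. But $(\pi^-)^{-1}(\oConf)$ is \emph{not} $\oCZ|_{\oConf}$: already over $\Conf^{-\alpha_i}\simeq X$ one has $\CZ^-|_{\Conf^{-\alpha_i}}\simeq X\times\BA^1$ while $\oCZ|_{\Conf^{-\alpha_i}}\simeq X\times\BG_m$ (see the identifications surrounding \eqref{e:ident simple Zast} in the paper). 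Base change along $\oConf\hookrightarrow\Conf$ therefore yields the pushforward from $X\times\BA^1$, not from $X\times\BG_m$. The three objects $\on{Gauss}^-_{q,*}$, $\on{Gauss}^-_{q,!}$, $\on{Gauss}^-_{q,!*}$ are by definition the $*$-, $!$- and $!*$-extensions of a common object from $\oCZ$ to $\CZ^-$, so they genuinely differ on $\CZ^-\setminus\oCZ$; your ``for free'' isomorphism of the three pushforwards is not justified.

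This is exactly where non-degeneracy enters, contrary to your final remark. The paper shows that on $\oCZ^{-\alpha_i}\simeq X\times\BG_m$ the common restriction is $\sfe_X[1]\boxtimes(\on{exp}|_{\BG_m}\overset{*}\otimes\Psi_{q(\alpha_i)})[1]$; the Kummer twist $\Psi_{q(\alpha_i)}$ arises from comparing the two trivializations of the gerbe (over $S^0$ and over $S^{-,\Conf}$), and is invisible if you only track $\chi^*(\on{exp})$ as in your Step~3. The condition $q(\alpha_i)\neq 0$ is precisely what makes $\Psi_{q(\alpha_i)}$ extend cleanly across $0\in\BA^1$, so that the three extensions to $X\times\BA^1$ coincide and their pushforwards agree. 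Without non-degeneracy the maps $\Omega^{\on{Lus}}_{q,\Whit}\to\Omega^{\on{sml}}_{q,\Whit}\to\Omega^{\on{DK}}_{q,\Whit}$ need not be isomorphisms even over $\oConf$. Once this cleanness is in place, the cohomology computation is the Gauss-sum integral $H^\bullet_c(\BG_m,\on{exp}\overset{*}\otimes\Psi_{q(\alpha_i)})\simeq\sfe[-1]$, giving $\sfe_X[1]$ on each $\Conf^{-\alpha_i}$ and hence $\oOmega_q$ by factorization, as you outline.
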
  

\begin{proof}

By factorization, the statement reduces to showing that for every simple coroot, the restriction of the objects
\eqref{e:Omega Whit} to $\Conf^{-\alpha_i}$ identifies with $\sfe_X[1]$. We have:
\begin{equation} \label{e:ident simple Zast}
\ol\CZ\underset{\Conf}\times \Conf^{-\alpha_i}\simeq X\times \BP^1,
\end{equation} 
while
$$\oCZ\underset{\Conf}\times \Conf^{-\alpha_i}\subset \CZ^-\underset{\Conf}\times \Conf^{-\alpha_i}\subset \ol\CZ\underset{\Conf}\times \Conf^{-\alpha_i}$$
correspond under the identification \eqref{e:ident simple Zast} 
$$X\times \BG_m\subset X\times \BA^1\subset X\times \BP^1,$$
respectively. 

\medskip

Now, it is shown in \cite[Sect. 18.4.10]{GLys2} that
in terms of the identification \eqref{e:ident simple Zast} (and the trivialization of the gerbe $\CG^\Conf_q$ over $\Conf^{-\alpha_i}$), we have:
$$\on{Gauss}^-_{q,?}|_{\oCZ}\simeq \sfe_X[1]\boxtimes \on{Gauss}_{q(\alpha_i)}[1]$$
where for $\zeta\in \fZ$ we denote 
\begin{equation} \label{e:Gauss zeta}
\on{Gauss}_\zeta:=\on{exp}|_{\BG_m}\overset{*}\otimes \Psi_\zeta,
\end{equation}
where we recall that $\Psi_\zeta$ denotes the Kummer local system on $\BG_m$ corresponding to the element $\zeta\in \fZ$.

\medskip

The objects
$$\on{Gauss}^-_{q,*},\,\, \on{Gauss}^-_{q,!}\,\, \on{Gauss}^-_{q,!*}$$
are given by *-, !- and !*- extensions of $\on{Gauss}^-_{q,?}|_{\oCZ}$, respectively. However, for $q(\alpha_i)\neq 0$
(the condition that $q$ is non-degenerate), the extension of $\Psi_{q(\alpha_i)}$ along $\BG_m\hookrightarrow \BA^1$ is clean. 

\medskip

Finally, we have:
$$H^i(\BG_m,\on{Gauss}_{\zeta})=
\begin{cases}
&0 \text{ for } i\neq 1,\\
&\sfe \text{ for } i=1. 
\end{cases}$$

\end{proof} 

\begin{rem}  \label{r:Gauss}

The name Gauss stems from the fact that the right-hand side in \eqref{e:Gauss zeta} is the local system,
whose cohomology is the geometric counterpart of the Gauss sum.

\end{rem}

\sssec{}

We are now ready to state the two main results of this paper. 

\begin{thm} \label{t:main 1}
Assume that $q$ avoids small torsion\footnote{See \secref{sss:small root} for what this means.}. Then the identification of 
\propref{p:Whit on open} extends to isomorphisms
$$\Omega^{\on{DK}}_{q,\Whit}\simeq \Omega^{\on{DK}}_{q}\,\, \text{ and } \,\,
\Omega^{\on{Lus}}_{q,\Whit}\simeq \Omega^{\on{Lus}}_{q}.$$
\end{thm}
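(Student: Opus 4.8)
The plan is to reduce \thmref{t:main 1} to the sub-top cohomology vanishing statement of \thmref{t:main 3} and, in characteristic $0$, to prove the latter by importing structure from quantum groups. First I would recall (from \secref{sss:Jacquet} as realized in \secref{ss:Omega Whit}) that both $\Omega^{\on{DK}}_{q,\Whit}$ and $\Omega^{\on{Lus}}_{q,\Whit}$ are \emph{perverse} by \propref{p:Omega Whit perv}, and that by \propref{p:Whit on open} their restrictions to $\oConf$ agree canonically with $\oOmega_q$; the same is true for $\Omega^{\on{DK}}_q$ and $\Omega^{\on{Lus}}_q$ by \eqref{e:Omega h open}. So in both cases we are comparing two perverse sheaves on $\Conf$ that agree over the open dense $\oConf$. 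Since Verdier duality exchanges the ``$\on{DK}$'' and ``$\on{Lus}$'' versions on both sides (\corref{c:Omega Whit duality} and the definition of $\Omega^{\on{Lus}}_q$ as $\BD(\Omega^{\on{DK}}_{q^{-1}})$), it suffices to produce the isomorphism $\Omega^{\on{DK}}_{q,\Whit}\simeq \Omega^{\on{DK}}_q$; applying $\BD$ and replacing $q$ by $q^{-1}$ gives the Lusztig statement. Moreover, since $\Omega^{\on{DK}}_q\hookrightarrow j_*(\oOmega_q)$ by \propref{p:DK injects}, to identify the two it is enough to show that the adjunction map $\Omega^{\on{DK}}_{q,\Whit}\to j_*(\oOmega_q)$ is \emph{also} injective with the \emph{same image}, or — what is cleaner — that $\Omega^{\on{DK}}_{q,\Whit}$ satisfies the same inductive-across-the-diagonals recipe (\secref{sss:Omega inductive}) that characterizes $\Omega^{\on{DK}}_q$.

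The second step is to make the comparison diagonal-by-diagonal. By factorization, everything is determined by the stalks/costalks along the main diagonals $\Delta_\lambda:X\hookrightarrow \Conf^\lambda$, and by translation-invariance along $X$ (working \'etale-locally so that $X=\BA^1$, using \eqref{e:Omega small two curves} and \eqref{e:gerbe two curves}) it is enough to control the $!$- and $*$-fibers $\iota_\lambda^!(-)$ and $\iota_\lambda^*(-)$ at a point $\lambda\cdot 0$. On the Whittaker side, $\ol\pi_!$ is proper base change, so $\iota_\lambda^!(\Omega^{\on{DK}}_{q,\Whit})$ is the $!$-fiber of $\on{Gauss}_{q,*}$ along the fiber $\ol\CZ_{\lambda\cdot 0}$, and using \thmref{t:acyclicity} (cleanness) plus the semi-infinite description from \secref{ss:Gauss via semiinf} this reduces to the compactly-supported cohomology $H^\bullet_c(S^0\cap S^{-,\lambda},\Psi_{q,\lambda}\overset{*}\otimes\chi^*(\on{exp}))$ appearing in \eqref{e:cohomology Gauss}. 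The point of \thmref{t:main 3} is that this cohomology is concentrated in a single degree (the known top-degree vanishing is elementary, and the sub-top vanishing is the content of the theorem), which is exactly what forces $\Omega^{\on{DK}}_{q,\Whit}$ to obey the inductive recipe of Corollaries \ref{c:prop DK char 0 prim} and \ref{c:prop DK char 0 sec}, hence to coincide with $\Omega^{\on{DK}}_q$. So I would first prove the equivalence ``\thmref{t:main 1} $\Longleftrightarrow$ \thmref{t:main 3}'' — this is the ``subtop'' bookkeeping and will be spelled out in Sects.~\ref{ss:subtop} and \secref{ss:formal param Whit} — and then prove \thmref{t:main 3}.

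For \thmref{t:main 3} itself in characteristic $0$, the plan is the quantum-group route. Over $\BC$ with constructible sheaves (legitimate by Lefschetz/Riemann--Hilbert, as in \secref{ss:proof of prop}), \thmref{t:quant and abs} and \corref{c:identify Omega small} identify the pair $(\Omega^{\on{DK}}_q,\Omega^{\on{sml}}_q)$ with $(\Omega^{\on{DK}}_{q,\Quant},\Omega^{\on{sml}}_{q,\Quant})$, and hence by Verdier duality $\Omega^{\on{Lus}}_q\simeq\Omega^{\on{Lus}}_{q,\Quant}$. On this model side the quantum Frobenius for $U_q^{\on{Lus}}(\cN)$ (using that $q$ avoids small torsion, Remark \ref{r:quant Frob good}) equips $\Omega^{\on{Lus}}_q$ with an action of $\Omega^{\sharp,\on{cl}}$ together with the bar-resolution isomorphism of \thmref{t:Lus and sml}, i.e. $\on{Bar}(\Omega^{\sharp,\on{cl}},\Omega^{\on{Lus}}_q)\simeq\Omega^{\on{sml}}_q$. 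The geometric side $\Omega^{\on{Lus}}_{q,\Whit}$ carries the \emph{same} structure by the metaplectic version of the construction of \cite{BG2} (this is \secref{ss:Whit quant Frob}, giving \eqref{e:quant Frob Whit}). Then a Jordan--H\"older argument (\secref{sss:proof of Lus and sml via Whit}): both $\Omega^{\on{Lus}}_{q,\Whit}$ and $\Omega^{\on{Lus}}_q$ are perverse, agree with $\oOmega_q$ on $\oConf$, carry $\Omega^{\sharp,\on{cl}}$-actions whose bar constructions are $\Omega^{\on{sml}}_{q,\Whit}$ resp.\ $\Omega^{\on{sml}}_q$, and $\Omega^{\on{sml}}_{q,\Whit}\simeq\Omega^{\on{sml}}_q$ by \eqref{e:Omega small intro}; comparing composition series and using that the open restrictions coincide pins down an isomorphism $\Omega^{\on{Lus}}_{q,\Whit}\simeq\Omega^{\on{Lus}}_q$ (and dually for $\on{DK}$). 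Finally, the passage to arbitrary ground field is handled separately in \secref{s:subtop} via Kashiwara's crystal $\sB(\lambda)$ (\thmref{t:indep of char}), which is characteristic-independent.

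\textbf{Main obstacle.} The hard part will be the equivalence with \thmref{t:main 3}: extracting from the properness of $\ol\pi$, the cleanness \thmref{t:acyclicity}, and the semi-infinite IC computations of \secref{ss:Gauss via semiinf} a precise comparison of $\iota_\lambda^!\Omega^{\on{DK}}_{q,\Whit}$ with the cohomology in \eqref{e:cohomology Gauss}, and then matching the resulting one-dimensionality (in the degree $-2\langle\lambda,\check\rho\rangle$ when $\lambda=w(\rho)-\rho$, $\ell(w)=2$, and vanishing otherwise below the top) with the inductive characterization of $\Omega^{\on{DK}}_q$ across the diagonal. Everything else is leveraging already-established structure; this bookkeeping step is where the real geometric content of the equivalence ``geometric $=$ algebraic'' is used.
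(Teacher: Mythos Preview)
Your outline has the right ingredients but the Jordan--H\"older argument in your third paragraph runs \emph{backwards} and is circular as written. You invoke $\Omega^{\on{sml}}_{q,\Whit}\simeq\Omega^{\on{sml}}_q$ ``by \eqref{e:Omega small intro}'' as an \emph{input} to deduce the Lusztig isomorphism; but \eqref{e:Omega small intro} is exactly \thmref{t:main 2}, which by \propref{p:1-2 equiv} is equivalent to \thmref{t:main 1}, which is what you are proving. Moreover, even granting an isomorphism of the Bar constructions, there is no general ``un-Bar'' procedure that would let you recover $\Omega^{\on{Lus}}_{q,\Whit}\simeq\Omega^{\on{Lus}}_q$ from $\on{Bar}(\Omega^{\sharp,\on{cl}},\Omega^{\on{Lus}}_{q,\Whit})\simeq\on{Bar}(\Omega^{\sharp,\on{cl}},\Omega^{\on{Lus}}_q)$.

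The paper's argument reverses the flow. The crucial step you are missing is that the $\hbar$-deformed comparison map \eqref{e:Omega Whit Taylor} becomes an isomorphism after inverting $\hbar$ (this is \eqref{e:Omega Whit Laurent}); this alone forces the \emph{signed Jordan--H\"older contents} of $\Delta_\lambda^*(\Omega^{\on{DK}}_{q,\Whit})$ and $\Delta_\lambda^*(\Omega^{\on{DK}}_q)$ to coincide, hence by Verdier duality the same for $\Delta_\lambda^!$ of the Lusztig versions. Only \emph{then} does one apply the quantum Frobenius on each side (\thmref{t:Frob for Whit} and \thmref{t:Lus and sml}(f), the latter already established in characteristic $0$ via quantum groups) to push this equality forward along $\on{Bar}$ and obtain equality of Jordan--H\"older contents of $\Delta_\lambda^!(\Omega^{\on{sml}}_{q,\Whit})$ and $\Delta_\lambda^!(\Omega^{\on{sml}}_q)$. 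That is what kills the pieces $F_1$ and $F_3/F_2$ in the three-step filtration and proves \thmref{t:main 2}; \thmref{t:main 1} then follows from \propref{p:1-2 equiv}. So the $\hbar$-deformation is not just bookkeeping for the equivalence of the theorems: it is the mechanism that produces the Jordan--H\"older equality on the $\on{DK}/\on{Lus}$ side in the first place.

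A secondary point: your alternative route via the inductive recipe of \secref{sss:Omega inductive} does not work under the hypothesis ``$q$ avoids small torsion''. That recipe relies on Corollaries~\ref{c:prop DK char 0 prim} and~\ref{c:prop DK char 0 sec}, which require the stronger assumption~(*) of \secref{sss:dual Cox}. And even under~(*), the sub-top vanishing of \thmref{t:main 3} only gives that $(\iota_\lambda)^!(\Omega^{\on{DK}}_{q,\Whit})$ lives in degrees $\geq 2$, i.e.\ injectivity into $H^0((\jmath_\lambda)_*(-))$; it does not by itself give the surjectivity needed to match the recipe, nor the $(\jmath_\lambda)_{!*}$-description in the $\ell(w)=2$ case.
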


Note that the two isomorphisms stated in \thmref{t:main 1} are obtained from one another
by Verdier duality. 

\begin{thm} \label{t:main 2}
Assume that $q$ avoids small torsion. Then the identification of \propref{p:Whit on open} extends to an isomorphism
$$\Omega^{\on{sml}}_{q,\Whit}\simeq \Omega^{\on{sml}}_{q}.$$
\end{thm}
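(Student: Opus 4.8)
The strategy is to deduce \thmref{t:main 2} from \thmref{t:main 1} together with the quantum-Frobenius package developed in Sections \ref{s:quantum} and \ref{s:Frobenius}. The key observation is that both sides of the desired isomorphism $\Omega^{\on{sml}}_{q,\Whit}\simeq \Omega^{\on{sml}}_q$ arise from their Lusztig counterparts by a bar-construction against the classical factorization algebra $\Omega^{\sharp,\on{cl}}$. On the abstract side this is precisely \thmref{t:Lus and sml}(f) (equivalently Remark \ref{r:bar}): the map $\Omega^{\on{Lus}}_q\to\Omega^{\on{sml}}_q$ induces $\on{Bar}(\Omega^{\sharp,\on{cl}},\Omega^{\on{Lus}}_q)\xrightarrow{\ \sim\ }\Omega^{\on{sml}}_q$. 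So the task reduces to producing the parallel statement on the Whittaker side, namely a $\Omega^{\sharp,\on{cl}}$-action on $\Omega^{\on{Lus}}_{q,\Whit}$ together with an isomorphism $\on{Bar}(\Omega^{\sharp,\on{cl}},\Omega^{\on{Lus}}_{q,\Whit})\simeq\Omega^{\on{sml}}_{q,\Whit}$, and then matching the two bar-inputs via \thmref{t:main 1}.

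\textbf{First steps.} First I would invoke \thmref{t:main 1}, which (assuming $q$ avoids small torsion, our standing hypothesis) gives $\Omega^{\on{Lus}}_{q,\Whit}\simeq\Omega^{\on{Lus}}_q$ and $\Omega^{\on{DK}}_{q,\Whit}\simeq\Omega^{\on{DK}}_q$ as factorization algebras, compatibly with the restriction to $\oConf$ of \propref{p:Whit on open}. Next I would construct the geometric quantum-Frobenius action of $\Omega^{\sharp,\on{cl}}$ on $\Omega^{\on{Lus}}_{q,\Whit}$: this is carried out via a metaplectic version of the \cite{BG2} construction, using the action map $\bi^\lambda\colon X^\lambda\times\ol\CZ\to\ol\CZ$ of \eqref{e:act on Zast} and the filtration of $\on{Gauss}_{q,!}$ by objects $(\bi^\lambda)_*(\Omega^{\sharp,\on{cl}}|_{X^\lambda}\boxtimes\on{Gauss}_{q,!*})$ already referenced in the proof of \thmref{t:acyclicity} (via \corref{c:Frob for Gauss}). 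Applying $\ol\pi_!$ to this filtration and using properness together with the compatibility of $\bi^\lambda$ with $\on{add}\colon X^\lambda\times\Conf\to\Conf$, one obtains that $\Omega^{\on{Lus}}_{q,\Whit}=\ol\pi_!(\on{Gauss}_{q,!})$ is filtered by $(\Frob_{q,\Conf})_!(\Omega^{\sharp,\on{cl}}|_{X^\lambda})\star\Omega^{\on{sml}}_{q,\Whit}$-type pieces; passing to the associated graded and assembling gives both the $\Omega^{\sharp,\on{cl}}$-action and the identification of the bar-construction with $\Omega^{\on{sml}}_{q,\Whit}=\ol\pi_!(\on{Gauss}_{q,!*})$. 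Since all three Gauss objects are perverse (\propref{p:Omega Whit perv}) and the relevant objects on $X$ have no quotients supported at closed points (because their restrictions to $\oConf$ are $\oOmega_q$, which is irreducible-type), by \secref{sss:action is a cond} the action is in fact a \emph{condition}, so this step is really just checking the compatibility of the filtration with factorization.

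\textbf{Conclusion.} Having both bar-presentations, I would compare the inputs. The $\Omega^{\sharp,\on{cl}}$-action on $\Omega^{\on{Lus}}_q$ is characterized (by the ``condition not structure'' principle of \secref{sss:action is a cond}, applicable since $\Omega^{\sharp,\on{cl}}_X$ and $(\Omega^{\on{Lus}}_q)_X$ have no closed-point quotients) by the map $(\Frob_{q,\Conf})_!(\Omega^{\sharp,\on{cl}})\to\Omega^{\on{Lus}}_q$ on the level of underlying objects; the same holds on the Whittaker side. The isomorphism $\Omega^{\on{Lus}}_{q,\Whit}\simeq\Omega^{\on{Lus}}_q$ of \thmref{t:main 1} carries one characterizing map to the other — this amounts to checking they agree over $\oConf^\sharp$, where both restrict to the tautological $\oOmega^\sharp\to H^0((\Frob_{q,\Conf})^!(\oOmega_q))$, and then invoking the uniqueness clause of \secref{sss:action is a cond}. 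Hence the two actions match, so their bar-constructions match, giving
$$\Omega^{\on{sml}}_{q,\Whit}\simeq\on{Bar}(\Omega^{\sharp,\on{cl}},\Omega^{\on{Lus}}_{q,\Whit})\simeq\on{Bar}(\Omega^{\sharp,\on{cl}},\Omega^{\on{Lus}}_q)\simeq\Omega^{\on{sml}}_q,$$
and one checks the composite extends the identification of \propref{p:Whit on open} (automatic, by the perversity and the $\oConf$-restriction being $\oOmega_q$, since a perverse sheaf has no nontrivial automorphisms restricting to the identity on an open dense substack whose complement carries no subobjects). The main obstacle I anticipate is the second step: setting up the metaplectic \cite{BG2} filtration of $\on{Gauss}_{q,!}$ on the Zastava space with enough care that it is genuinely compatible with the factorization structure and descends correctly under $\ol\pi_!$ — in particular verifying \corref{c:Frob for Gauss} in the twisted Whittaker setting and controlling the interaction of the filtration with the clean-extension statement of \thmref{t:acyclicity}. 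Everything after that is formal manipulation of bar-constructions and the ``condition not structure'' rigidity.
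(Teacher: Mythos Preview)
Your argument has a circularity problem. You invoke \thmref{t:main 1} at the outset to obtain $\Omega^{\on{Lus}}_{q,\Whit}\simeq\Omega^{\on{Lus}}_q$, and then transport the bar-construction across this isomorphism. But in the paper's logical structure \thmref{t:main 1} is \emph{not} available before \thmref{t:main 2}: \propref{p:1-2 equiv} shows the two theorems are equivalent, and in \secref{ss:quant Frob Whit} the paper proves \thmref{t:main 2} first (over a ground field of characteristic $0$) and then \emph{deduces} \thmref{t:main 1} from it. There is no independent proof of \thmref{t:main 1} anywhere in the paper that you could cite. The same issue recurs with \thmref{t:Lus and sml}(f) in positive characteristic: its general proof in \secref{sss:proof of Lus and sml via Whit} requires both \thmref{t:main 1} and \thmref{t:main 2} as input.

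The paper's actual argument (\secref{sss:proof of Lus and sml via Whit} onward) uses a strictly weaker input than the full isomorphism of \thmref{t:main 1}. Namely, the $\hbar$-deformation map \eqref{e:Omega Whit Taylor} becomes an isomorphism after inverting $\hbar$ (this is \eqref{e:Omega Whit Laurent}, which only needs the non-torsion case of \thmref{t:Whit non-root}); hence $\Delta_\lambda^*(\Omega^{\on{DK}}_{q,\Whit})$ and $\Delta_\lambda^*(\Omega^{\on{DK}}_q)$ have the same \emph{signed Jordan--H\"older content}, and by Verdier duality the same holds for $\Delta_\lambda^!(\Omega^{\on{Lus}}_{q,\Whit})$ versus $\Delta_\lambda^!(\Omega^{\on{Lus}}_q)$. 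One then applies the bar-construction formulas on both sides (\thmref{t:Frob for Whit} and the characteristic-$0$ case of \thmref{t:Lus and sml}(f), the latter proved via quantum groups) to conclude equality of Jordan--H\"older content for $\Delta_\lambda^!(\Omega^{\on{sml}}_{q,\Whit})$ and $\Delta_\lambda^!(\Omega^{\on{sml}}_q)$; by induction and factorization this forces the isomorphism across the main diagonal. The point is that equality of Jordan--H\"older content survives passing through the bar-construction, and this is all that is needed --- you never need to match the $\Omega^{\sharp,\on{cl}}$-actions themselves.
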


\begin{rem} 
It is shown in \cite[Theorem 3.4.1]{Ras} that for $q=0$, the object $\Omega^{\on{sml}}_{q,\Whit}$ vanishes. It is then
a formal consequence of \thmref{t:Frob for Whit} below that in this case $\Omega^{\on{Lus}}_{q,\Whit}\simeq \Omega^{\on{cl}}$. 
\end{rem} 

\sssec{}

The plan of the rest of the paper is the following: in the next subsection we will show that \thmref{t:main 2}
is equivalent to \thmref{t:main 3}, which asserts a certain sub-top cohomology vanishing.  In \secref{ss:formal param Whit}
we will show that Theorems \ref{t:main 1} and \ref{t:main 2} are equivalent to one another. 

\medskip

In \secref{ss:quant Frob Whit}
we will prove Theorems \ref{t:main 1} and \ref{t:main 2} over a ground field of characteristic $0$, thereby also proving
\thmref{t:main 3}. 

\medskip

Finally, in \secref{s:subtop} we will show that the assertion of \thmref{t:main 3} over a ground field
of characteristic $0$ implies the assertion of \thmref{t:main 3} over any ground field. This will complete
the proof of Theorems \ref{t:main 1} and \ref{t:main 2} over any ground field as well. 

\ssec{Interpretation as sub-top cohomology vanishing} \label{ss:subtop}

The goal of this subsection is to reduce the assertion of \thmref{t:main 2} to an essentially 
combinatorial statement about MV cycles.  

\sssec{}

Note that by \corref{c:Omega Whit duality}, induction on $|\lambda|$, \'etale invariance and factorization, the assertion of \thmref{t:main 2} 
is equivalent to the fact that 
$$(\iota_\lambda)^!(\Omega^{\on{sml}}_{q,\Whit})\in \Vect_{\CG^\lambda_{q,x}}$$
lives in cohomological degrees $\geq 2$, as long as $\lambda$ is not a negative simple coroot. 

\medskip

Realizing $\Omega^{\on{sml}}_{q,\Whit}$ as 
$$\pi^-_*(\on{Gauss}_{q,!*}^-),$$
by base change, we have
$$(\iota_\lambda)^!(\Omega^{\on{sml}}_{q,\Whit})\simeq
\on{C}^\cdot\left(S^0\cap \ol{S}{}^{-,\lambda},\IC^-_{q,Z}|_{S^0\cap \ol{S}{}^{-,\lambda}}\overset{*}\otimes \chi^*(\on{exp})\right).$$

\medskip

Here $S^0$ (resp., $\ol{S}{}^{-,\lambda}$) are the fibers of $S^0_\Conf$ (resp., $S^{-,\Conf}_\Conf$) over the point $\lambda\cdot x\in \Conf$.
In other words, they are the $0$-th semi-infinite orbit (resp., closure of the $\lambda$ negative semi-infinite orbit). 

\medskip

Thus, the assertion of \thmref{t:main 2} is equivalent to the fact that the cohomology
\begin{equation} \label{e:subtop}
H^i\left(S^0\cap \ol{S}{}^{-,\lambda},\IC^-_{q,Z}|_{S^0\cap \ol{S}{}^{-,\lambda}}\overset{*}\otimes \chi^*(\on{exp})\right) 
\end{equation} 
vanishes for  $i=1$ as long as $\lambda$ is not a negative simple coroot. 

\medskip

We note also that the cohomology \eqref{e:subtop} vanishes for $i=0$ (for all $\lambda\in \Lambda^{\on{neg}}-0$, including the negative simple coroots). 
This expresses the fact that $\Omega^{\on{sml}}_{q,\Whit}$ lives in perverse degrees $\geq 0$.

\begin{rem}   \label{r:fiber Whit DK}

For future reference, we note that we also have
\begin{multline*} \label{e:!-fiber Omega DK Whit}
(\iota_\lambda)^!(\Omega^{\on{DK}}_{q,\Whit})\simeq
\on{C}^\cdot\left(S^0\cap S^{-,\lambda},\IC_{q,\oCZ}|_{S^0\cap S^{-,\lambda}}\overset{*}\otimes \chi^*(\on{exp})\right) \simeq \\
\simeq 
\on{C}^\cdot\left(S^0\cap S^{-,\lambda},\omega_{S^0\cap S^{-,\lambda}}[\langle \lambda,2\check\rho\rangle]\overset{*}\otimes \chi^*(\on{exp})\right).
\end{multline*} 

Consider the individual cohomologies. 
\begin{equation} \label{e:top}
H^i\left(S^0\cap S^{-,\lambda'},\omega_{S^0\cap S^{-,\lambda'}}[\langle \lambda',2\check\rho\rangle]\overset{*}\otimes \chi^*(\on{exp})\right)
\end{equation} 

The expression \eqref{e:top} vanishes for $i=0$; indeed, this is equivalent to the fact that 
$\Omega^{\on{DK}}_{q,\Whit}$ lives in perverse degrees $\geq 0$. However, we will shortly see a direct analysis proving this 
vanishing (this will be a rather simple cohomological estimate), see Sects. \ref{sss:case 2,i}-\ref{sss:case 2,ii}. 

\medskip

The vanishing of \eqref{e:top} in degree $i=1$ is equivalent to the fact that the map
$$\Omega^{\on{DK}}_{q,\Whit} \to H^0\left((\jmath^\lambda)_*\circ (\jmath^\lambda)^*(\Omega^{\on{DK}}_{q,\Whit})\right)$$
is an injection of perverse sheaves. As we will see in \secref{ss:formal param Whit}, this is essentially equivalent to the assertion of
\thmref{t:main 1}.  In \secref{sss:need to prove subtop} we will see that this vanishing is equivalent also to the assertion of
\thmref{t:main 2}.

\medskip

Thus, we will eventually prove that the cohomology \eqref{e:top} vanishes also in degree $i=1$
(provided that $q$ avoids small torsion). But this will be a much subtler analysis.

\end{rem} 

\sssec{}

First, we claim:

\begin{lem} \label{l:remove other strata}
The restriction map along $S^0\cap S^{-,\lambda}\hookrightarrow S^0\cap \ol{S}{}^{-,\lambda}$
\begin{multline*} 
H^i\left(S^0\cap \ol{S}{}^{-,\lambda},\IC^-_{q,Z}|_{S^0\cap \ol{S}{}^{-,\lambda}}\overset{*}\otimes 
\chi^*(\on{exp})\right)\to  \\
\to H^i\left(S^0\cap S^{-,\lambda},\IC_{q,\oCZ}|_{S^0\cap S^{-,\lambda}}\overset{*}\otimes 
\chi^*(\on{exp})\right)\simeq H^i\left(S^0\cap S^{-,\lambda},\omega_{S^0\cap S^{-,\lambda}}[\langle \lambda,2\check\rho\rangle]\overset{*}\otimes 
\chi^*(\on{exp})\right)
\end{multline*}
induces an isomorphism in degree  $i=1$.
\end{lem}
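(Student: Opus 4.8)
\textbf{Proof plan for \lemref{l:remove other strata}.}

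The plan is to analyze the difference between the two complexes via the stratification of $\ol{S}{}^{-,\lambda}$ by the locally closed subschemes $S^{-,\mu}$ for $\mu\in \Lambda^{\on{neg}}$ with $\mu\geq \lambda$ (in the usual order on $\Lambda^{\on{neg}}$, so $S^{-,\lambda}$ is the open stratum and the others have $|\mu|<|\lambda|$). Intersecting with $S^0$, we get a stratification of $S^0\cap \ol{S}{}^{-,\lambda}$ whose open stratum is $S^0\cap S^{-,\lambda}$. Let $\bj$ denote the open embedding $S^0\cap S^{-,\lambda}\hookrightarrow S^0\cap \ol{S}{}^{-,\lambda}$ and $\bi$ the closed embedding of the complementary locus $S^0\cap(\ol{S}{}^{-,\lambda}-S^{-,\lambda})$. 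The restriction map in question sits in the long exact sequence attached to the triangle
$$
\bi_*\bi^*\left(\IC^-_{q,Z}|_{S^0\cap\ol{S}{}^{-,\lambda}}\overset{*}\otimes\chi^*(\on{exp})\right)[-1]\to \bj_!\bj^*(\cdots)\to \IC^-_{q,Z}|_{S^0\cap\ol{S}{}^{-,\lambda}}\overset{*}\otimes\chi^*(\on{exp}),
$$
so after taking cohomology and using $\bi^*(\chi^*\on{exp})\simeq \chi^*(\on{exp})|_{\text{strata}}$ it suffices to show that the cohomology along the non-open strata is concentrated in degrees $\geq 2$; more precisely, writing $\bj^*(\IC^-_{q,Z})\simeq \IC_{q,\oCZ}|_{\oCZ\text{-part}}\simeq \omega[\langle\lambda,2\check\rho\rangle]$ and noting $\bj$ is affine, one gets that $H^i$ of the open part receives the map in question, and the cone is governed by $\bi^!$ of the ambient sheaf shifted by $1$, whose contributions I must show vanish in degrees $\leq 1$, i.e.\ $\bi^!(\IC^-_{q,Z}|_{\cdots}\overset{*}\otimes\chi^*\on{exp})$ lives in degrees $\geq 2$.

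The key computation is therefore: for each $\mu$ with $\lambda<\mu\leq 0$ (strictly), the cohomology
$$
H^i\left(S^0\cap S^{-,\mu},\; (\Delta^{\lambda-\mu})_*(\text{something})|_{\cdots}\overset{*}\otimes\chi^*(\on{exp})\right)
$$
— more precisely the $!$-restriction of $\IC^-_{q,Z}$ to the stratum over $S^{-,\mu}$, which by the factorization structure of $\IC^-_{q,Z}$ along $\Conf$ decomposes as an external product of $\IC_{q,\oCZ}$ on a smaller Zastava (the part of the divisor landing in the ``generic'' direction) with the $!$-stalk of $\IC^-_{q,Z}$ at a deeper point (the part of the divisor where strata collide). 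The point is that the map $S^0\cap S^{-,\mu}\to S^0\cap\ol S{}^{-,\mu}$, combined with the residue map $\chi$ restricted to each stratum, behaves exactly as in the analysis of $\Omega^{\on{DK}}_{q,\Whit}$ vs.\ its open part. I would invoke the cleanness \thmref{t:acyclicity} and the perversity of $\on{Gauss}^-_{q,!*}$ to get that $\IC^-_{q,Z}|_{S^0\cap\ol S{}^{-,\lambda}}\overset{*}\otimes\chi^*\on{exp}$ lives in perverse degrees within a controlled range on each stratum, and that the $!$-restriction to a stratum of codimension $c$ lives in degrees $\geq c$ relative to the open normalization; since every non-open stratum has $|\mu|<|\lambda|$, and the $\chi^*(\on{exp})$-twist kills the top cohomology along the unipotent orbit directions (a standard Artin–Schreier/stabilizer vanishing, as used in \lemref{l:Whit on closed}), the contributions start in degree $\geq 2$.

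The main obstacle will be bookkeeping the interaction of the factorization decomposition of $\IC^-_{q,Z}$ along the deeper diagonal strata with the character $\chi$: one must check that on each non-open stratum $S^0\cap S^{-,\mu}$ the tensor factor coming from the ``colliding'' part of the configuration carries a nontrivial Artin–Schreier twist in the fibre directions (so that its compactly-supported cohomology vanishes in the relevant degree), while the factor on the smaller smooth open Zastava contributes only in degrees $\geq$ its dimension. I expect this to follow by the same stabilizer-of-a-point argument that proves \lemref{l:Whit on closed} and \propref{p:semiinf clean}, applied stratum by stratum, together with the dimension estimate $\dim(S^0\cap S^{-,\mu})=|\mu|<|\lambda|$; the degree shift then forces the cone of the restriction map to be concentrated in degrees $\geq 2$, which gives the claimed isomorphism in degree $i=1$ (and, incidentally, an injection in degree $i=2$, which is all that is needed downstream).
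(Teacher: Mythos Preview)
Your overall architecture --- stratify $\ol{S}{}^{-,\lambda}$ by the $S^{-,\lambda'}$, run Cousin, and bound the non-open contributions in degrees $\geq 2$ --- is exactly what the paper does. But the two substantive inputs you propose to feed into this machine are both off target.

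First, the ``factorization decomposition of $\IC^-_{q,Z}$'' you invoke does not exist in the form you want: the entire computation takes place over the single point $\lambda\cdot x\in\Conf$, so there is no factorization to exploit. What the paper actually uses here is the explicit description of the $!$-restriction of $\IC^-_{q,Z}$ to each stratum $S^0\cap S^{-,\lambda'}$, namely that it has the form $\omega_{S^0\cap S^{-,\lambda'}}[\langle\lambda',2\check\rho\rangle]\otimes\CE$ with $\CE\in\Vect$ concentrated in cohomological degrees $\geq 2$; this is the content of Remark~\ref{r:!-fibers IC}, i.e., the metaplectic version of \cite[Theorem 4.5]{BFGM}. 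This is a genuine theorem about the stalks of the semi-infinite IC sheaf and is not a consequence of perversity of $\on{Gauss}^-_{q,!*}$ or of \thmref{t:acyclicity}.

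Second, the stabilizer arguments of \lemref{l:Whit on closed} and \propref{p:semiinf clean} do not apply: the former concerns the complement $\ol{S}{}^0-S^0$ (where $\chi$ becomes degenerate on stabilizers), but here we are always inside $S^0$; the latter is a $T$-equivariance argument specific to non-torsion $q$. What the paper uses instead, once the stalk description above is in hand, is that the remaining factor
\[
H^i\bigl(S^0\cap S^{-,\lambda'},\,\omega_{S^0\cap S^{-,\lambda'}}[\langle\lambda',2\check\rho\rangle]\overset{*}\otimes\chi^*(\on{exp})\bigr)
\]
vanishes for $i\leq 0$; this is precisely $(\iota_{\lambda'})^!(\Omega^{\on{DK}}_{q,\Whit})$, and the vanishing is a reformulation of the perversity of $\Omega^{\on{DK}}_{q,\Whit}$ (Remark~\ref{r:fiber Whit DK}). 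Your reference to cleanness and perversity is in the neighborhood of this second input, but you should see that it enters via $\Omega^{\on{DK}}_{q,\Whit}$ on the smaller strata, not via a direct Artin--Schreier vanishing on the stratum.
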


\begin{proof} 

Writing $\ol{S}{}^{-,\lambda}-S^{-,\lambda}$ as the union of $S^{-,\lambda'}$ with $\lambda'<\lambda$, 
by the Cousin spectral sequence, it suffices to show that
$$H^i\left(S^0\cap S^{-,\lambda'},(\IC^-_{q,Z}|_{S^0\cap \ol{S}{}^{-,\lambda}})|_{S^0\cap S^{-,\lambda'}}\overset{*}\otimes \chi^*(\on{exp})\right)$$
vanishes for $i=1$ and $i=2$ and $\lambda'\neq \lambda$. 

\medskip

First, it follows from Remark \ref{r:!-fibers IC} that 
$$(\IC^-_{q,Z}|_{S^0\cap \ol{S}{}^{-,\lambda}})|_{S^0\cap S^{-,\lambda'}}$$ has the form
$$\omega_{S^0\cap S^{-,\lambda'}}[\langle \lambda',2\check\rho\rangle]\otimes \CE,$$
where $\CE\in \Vect$ lives in cohomological degrees $\geq 2$. 

\medskip

So it suffices to show that
$$H^i\left(S^0\cap S^{-,\lambda'},\omega_{S^0\cap S^{-,\lambda'}}[\langle \lambda',2\check\rho\rangle]\overset{*}\otimes \chi^*(\on{exp})\right)$$
vanishes for $i\leq 0$.  However, this vanishing is the expression of the fact that $\Omega^{\on{DK}}_{q,\Whit}$ is $\geq 0$ in the 
perverse t-structure, see Remark \ref{r:fiber Whit DK}.
  
\end{proof} 

\sssec{}  \label{sss:need to prove subtop}

Thus, the assertion of \thmref{t:main 2} is equivalent to the vanishing of 
\begin{equation} \label{e:subtop open}
H^i\left(S^0\cap S^{-,\lambda},\omega_{S^0\cap S^{-,\lambda}}[\langle \lambda,2\check\rho\rangle]\overset{*}\otimes \chi^*(\on{exp})\right) 
\in \Vect_{\CG^\lambda_{q,x}}
\end{equation} 
for $i=1$ when $\lambda$ is not a negative simple coroot. 
(Along the way we will also see that it vanishes for $i=0$ for all $\lambda\in \Lambda^{\on{neg}}-0$ for a much simpler reason.) 

\medskip

In formula \eqref{e:subtop open} we view the *-restriction of $\chi^*(\on{exp})$ to $S^0\cap S^{-,\lambda}$ as a 
$\CG^\lambda_{q,x}$-twisted sheaf via the identifications
\begin{equation} \label{e:two gerbe triv}
(\CG^G_q)_0|_{S^0\cap S^{-,\lambda}} \simeq \CG^G_q|_{S^0\cap S^{-,\lambda}}\simeq \CG^\lambda_q|_{S^0\cap S^{-,\lambda}},
\end{equation} 
where:

\begin{itemize}

\item $(\CG^G_q)_0$ is the trivial gerbe;

\medskip

\item The identification $(\CG^G_q)_0|_{S^0\cap S^{-,\lambda}} \simeq \CG^G_q|_{S^0\cap S^{-,\lambda}}$ comes by restriction along
$S^0\cap S^{-,\lambda}\to S^0$ from the trivialization of $\CG^G_q|_{S^0}$;

\item The identification $\CG^G_q|_{S^0\cap S^{-,\lambda}}\simeq \CG^\lambda_q|_{S^0\cap S^{-,\lambda}}$ 
comes by restriction along $S^0\cap S^{-,\lambda}\to S^{-,\lambda}$
from the identification $\CG^G_q|_{S^{-,\lambda}}\simeq \CG^\lambda_q|_{S^{-,\lambda}}$.

\end{itemize} 

\sssec{}  \label{sss:Psi lambda}

Let us choose a trivialization of the fiber of $\CG^\lambda_{q,x}$ of $\CG^\lambda_q$ at $x\in X$. We obtain that \eqref{e:two gerbe triv}
gives rise to a local system on $S^0\cap S^{-,\lambda}$, to be denoted $\Psi_{q,\lambda}$, which is well-defined up to a $\sfe^\times$-torsor. Moreover,
$\Psi_{q,\lambda}$ is twisted $T$-equivariant against a Kummer sheaf on $T$ corresponding to the homomorphism
\begin{equation} \label{e:Kummer lambda}
\Lambda\to \fZ,\quad \mu\mapsto b(\lambda,\mu).
\end{equation} 
We will
give a more explicit description of $\Psi_{q,\lambda}$ in \secref{sss:det funct}. 

\medskip

We obtain that \eqref{e:subtop open}, shifted cohomologically by $-[\langle \lambda,2\check\rho\rangle]$, 
viewed as a plain vector space (due to the chosen trivialization of $\CG^\lambda_{q,x}$), 
identifies with
\begin{equation} \label{e:subtop open as plain}
H^i\left(S^0\cap S^{-,\lambda},\omega_{S^0\cap S^{-,\lambda}}\overset{*}\otimes \Psi_{q,\lambda} \overset{*}\otimes \chi^*(\on{exp})\right).
\end{equation} 

Thus, \thmref{t:main 2} is equivalent to the vanishing of \eqref{e:subtop open as plain} in degree
$$i=1+\langle \lambda,2\check\rho\rangle$$ 
(and also $i=\langle \lambda,2\check\rho\rangle$). 

\sssec{}

Up to replacing $\on{exp}$ by its inverse, the cohomology in \eqref{e:subtop open as plain} is dual to 
\begin{equation} \label{e:subtop open as plain c}
H^i_c\left(S^0\cap S^{-,\lambda},\Psi_{q,\lambda} \otimes \chi^*(\on{exp})\right).
\end{equation} 

Thus, we obtain that \thmref{t:main 2} is equivalent to the following one:

\begin{thm} \label{t:main 3}
For $q$ that avoids small torsion and $\lambda$ not a negative simple coroot, 
the cohomology \eqref{e:subtop open as plain c} vanishes in (the \emph{sub-top}) degree 
$$i=-\langle \lambda,2\check\rho\rangle-1.$$
\end{thm}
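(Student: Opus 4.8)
The strategy is to deduce \thmref{t:main 3} from the identification of the abstract factorization algebras with the ones coming from quantum groups, combined with the cohomology computation of \cite{Geo}. First, by \corref{c:Omega Whit duality}, \propref{p:Omega Whit perv} and the reductions in \secref{ss:subtop}, the vanishing we want is equivalent to \thmref{t:main 2}, i.e.\ to $\Omega^{\on{sml}}_{q,\Whit}\simeq \Omega^{\on{sml}}_q$; by Verdier duality and \secref{ss:formal param Whit} (which we may invoke) this is in turn equivalent to \thmref{t:main 1}, the pair of isomorphisms $\Omega^{\on{DK}}_{q,\Whit}\simeq \Omega^{\on{DK}}_q$ and $\Omega^{\on{Lus}}_{q,\Whit}\simeq \Omega^{\on{Lus}}_q$. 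So it suffices to produce the latter.

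\textbf{Characteristic zero.} By \'etale invariance (Remark \ref{r:transfer to A1}) we may take $X=\BA^1$, and by the Lefschetz principle and Riemann--Hilbert we may work with constructible sheaves over $\sfe=\BC$. The engine is the quantum Frobenius. On the one hand, \thmref{t:Lus and sml} provides a canonically defined action of $\Omega^{\sharp,\on{cl}}$ on $\Omega^{\on{Lus}}_q$ (via the identification $\Omega^{\on{Lus}}_q\simeq \Omega^{\on{Lus}}_{q,\Quant}$ of \thmref{t:quant and abs} and its Verdier dual, together with the quantum Frobenius $\Frob_{q,N}$ for $U_q^{\on{Lus}}(\cN)$), and an isomorphism $\on{Bar}(\Omega^{\sharp,\on{cl}},\Omega^{\on{Lus}}_q)\simeq \Omega^{\on{sml}}_q$. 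On the other hand, in \secref{ss:Whit quant Frob} (which is part of the argument still to be carried out) one constructs geometrically, using a metaplectic version of the construction of \cite{BG2} applied to $\on{Gauss}^-_{q,!}$, an action of $\Omega^{\sharp,\on{cl}}$ on $\Omega^{\on{Lus}}_{q,\Whit}$ with $\on{Bar}(\Omega^{\sharp,\on{cl}},\Omega^{\on{Lus}}_{q,\Whit})\simeq \Omega^{\on{sml}}_{q,\Whit}$ (this is the content of \corref{c:Frob for Gauss} used in the proof of \thmref{t:acyclicity}). Both $\Omega^{\on{Lus}}_{q,\Whit}$ and $\Omega^{\on{Lus}}_q$ restrict to $\oOmega_q$ on $\oConf$, so by induction and factorization on $|\lambda|$ it is enough to match them on one extra diagonal stratum at a time. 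The comparison is made by a Jordan--H\"older argument (\secref{sss:proof of Lus and sml via Whit}): both perverse sheaves have the same restriction to the open locus, both are $\Omega^{\sharp,\on{cl}}$-modules whose Bar construction is the respective $\Omega^{\on{sml}}$, and $\Omega^{\on{sml}}_q\simeq \Omega^{\on{sml}}_{q,\Quant}$ is already pinned down (t-exactness of $H\mapsto \Omega_H$). Comparing composition series of the possible extensions across $\Delta_\lambda$ forces $\Omega^{\on{Lus}}_{q,\Whit}\simeq \Omega^{\on{Lus}}_q$; this is where the hypothesis that $q$ avoids small torsion is essential, since it is exactly what makes \eqref{e:SES quant} a short exact sequence of Hopf algebras and hence \eqref{e:quant Frob q} hold.

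\textbf{Arbitrary ground field.} With \thmref{t:main 3} known in characteristic $0$, one transfers to arbitrary characteristic by \thmref{t:indep of char} of \secref{s:subtop}: reducing to $G$ simple and $q=\zeta\cdot q^{\on{min}}_\BZ$, one analyzes the irreducible components $Z$ of $S^0\cap S^{-,\lambda}$ that could contribute to $H^{-\langle\lambda,2\check\rho\rangle-1}_c$, stratifying them into components ``under scrutiny'', then ``suspicious'', then ``indicted'', and shows the presence of an indicted component forces the failure of the vanishing for the given $\on{ord}(\zeta)$. One then observes that being indicted is a property of the structure of Kashiwara's crystal on $\sB(\lambda)=\bigcup_{\lambda'}\on{Irred}(S^{\lambda'}\cap S^{-,\lambda})$, which by Kashiwara's uniqueness theorem is independent of the characteristic. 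Since the vanishing holds in characteristic $0$, no indicted component can exist for that $\on{ord}(\zeta)$, hence none exists in any characteristic, giving the vanishing in general.

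\textbf{Main obstacle.} The crux is the characteristic-zero step, specifically establishing the geometric quantum Frobenius action on $\Omega^{\on{Lus}}_{q,\Whit}$ (the metaplectic \cite{BG2} construction on the Zastava side) together with the Bar-construction identity \eqref{e:quant Frob Whit}, and then running the Jordan--H\"older comparison cleanly in the twisted perverse setting. All the quantum-group input (the structure of $U_q^{\on{DK}}$, $U_q^{\on{Lus}}$, $u_q$, the cohomology formula \eqref{e:Geo} of \cite{Geo}, and Lusztig's Frobenius) is available; the work is in transporting it faithfully through the functor $H\mapsto \Omega_H$ and matching it with the geometry of $\CZ^-$.
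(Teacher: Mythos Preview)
Your overall architecture matches the paper's exactly: reduce \thmref{t:main 3} to \thmref{t:main 2} (and equivalently \thmref{t:main 1}) via \secref{ss:subtop} and \propref{p:1-2 equiv}; prove the characteristic-zero case using the quantum Frobenius on both sides plus a Jordan--H\"older argument; then transfer to arbitrary characteristic via the crystal argument of \secref{s:subtop}.

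There is, however, a genuine gap in your characteristic-zero step as written. You argue: both $\Omega^{\on{Lus}}_{q,\Whit}$ and $\Omega^{\on{Lus}}_q$ carry an $\Omega^{\sharp,\on{cl}}$-action with Bar construction equal to $\Omega^{\on{sml}}_{q,\Whit}$ and $\Omega^{\on{sml}}_q$ respectively, and since ``$\Omega^{\on{sml}}_q$ is already pinned down'', a composition-series comparison forces $\Omega^{\on{Lus}}_{q,\Whit}\simeq \Omega^{\on{Lus}}_q$. But this is circular: you do not yet know that $\Omega^{\on{sml}}_{q,\Whit}\simeq \Omega^{\on{sml}}_q$ --- that is precisely \thmref{t:main 2}. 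Knowing the Bar constructions separately does not let you compare the Lus versions unless you already know their outputs agree.

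The paper runs the logic in the opposite direction, and the missing ingredient is the $\hbar$-deformation from \secref{ss:formal param Whit} used \emph{inside} the Jordan--H\"older argument (not merely to establish the equivalence of \thmref{t:main 1} and \thmref{t:main 2}). Since the map $\Omega^{\on{DK}}_{q_\hbart,\Whit}\to \Omega^{\on{DK}}_{q_\hbart}$ becomes an isomorphism after inverting $\hbar$, the signed Jordan--H\"older contents of $\Delta_\lambda^*(\Omega^{\on{DK}}_{q,\Whit})$ and $\Delta_\lambda^*(\Omega^{\on{DK}}_q)$ coincide; by Verdier duality the same holds for $\Delta_\lambda^!$ of the Lus versions. \emph{Only then} do you apply the Bar construction on each side to conclude that $\Delta_\lambda^!(\Omega^{\on{sml}}_{q,\Whit})$ and $\Delta_\lambda^!(\Omega^{\on{sml}}_q)$ have equal Jordan--H\"older content, and a three-step filtration argument (with middle graded piece $\Omega^{\on{sml}}_q|_{X^\lambda}$) forces $\Omega^{\on{sml}}_{q,\Whit}\simeq \Omega^{\on{sml}}_q$. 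So the Jordan--H\"older argument directly yields the \emph{small} isomorphism, not the Lusztig one; the latter then follows from \propref{p:1-2 equiv}. Once you reorder the argument this way, your proof is the paper's.
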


We will also see that that the cohomology in \eqref{e:subtop open as plain c} vanishes in (the \emph{top}) degree 
$i=-\langle \lambda,2\check\rho\rangle$ for all $\lambda\in \Lambda^{\on{neg}}-0$.

\ssec{The non-torsion valued case} 

\sssec{}  

In this subsection we will prove the following assertion:

\begin{thm} \label{t:Whit non-root}
Let $q$ be non-torsion-valued. Then:

\smallskip

\noindent{\em(a)}
The maps
$$\Omega^{\on{DK}}_{q,\Whit}\to \Omega^{\on{sml}}_{q,\Whit}\to \Omega^{\on{Lus}}_{q,\Whit}$$
are isomorphisms.

\smallskip

\noindent{\em(b)}
The isomorphism $ \Omega^{\on{sml}}_{q,\Whit}|_{\oConf}\simeq \oOmega$ extends to an isomorphism
$$\Omega^{\on{sml}}_{q,\Whit}\simeq \Omega^{\on{sml}}_q.$$
\end{thm}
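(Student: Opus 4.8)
The plan is to exploit the fact that for non-torsion-valued $q$ all the ``small vs.\ DeConcini--Kac vs.\ Lusztig'' distinctions collapse, both on the Zastava side (via \propref{p:IC Zast irrat}) and on the abstract side (via \thmref{t:DK vs small gen} and its dual). First I would prove part (a). By \propref{p:IC Zast irrat}, the maps $\Delta^-_{q,Z}\to \IC^-_{q,Z}\to \nabla^-_{q,Z}$ are isomorphisms when $q$ is non-torsion valued; tensoring with $\on{Vac}_{\Whit,Z}$ and applying \thmref{t:acyclicity} to pass to $\ol\CZ$, we get that $\on{Gauss}_{q,!}\to \on{Gauss}_{q,!*}\to \on{Gauss}_{q,*}$ are isomorphisms (this is already recorded after \eqref{e:Gauss maps}). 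Applying $\ol\pi_!$ then yields that $\Omega^{\on{DK}}_{q,\Whit}\to \Omega^{\on{sml}}_{q,\Whit}\to \Omega^{\on{Lus}}_{q,\Whit}$ are isomorphisms, which is (a).

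For part (b), the strategy is to reduce to \thmref{t:main 2} in the easy range and then appeal to the non-torsion-valued cohomology estimate which was already known. Concretely, by \secref{ss:subtop}, the statement $\Omega^{\on{sml}}_{q,\Whit}\simeq \Omega^{\on{sml}}_q$ is equivalent to the vanishing of the sub-top cohomology \eqref{e:subtop open as plain c} for $\lambda$ not a negative simple coroot, i.e.\ to \thmref{t:main 3} for this particular $q$. But when $q$ is non-torsion valued, the relevant Kummer sheaf $\Psi_{q,\lambda}$ on $S^0\cap S^{-,\lambda}$ has non-torsion monodromy, and the vanishing of \eqref{e:subtop open as plain c} in \emph{all} degrees strictly below the top (not just the sub-top) follows from the calculation of \cite{Ga3} alluded to in \secref{sss:crystal game} of the introduction (``in this case \thmref{t:main 3} is an easy calculation, which was performed already in \cite{Ga3}''). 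Alternatively --- and this is the route I would actually write up --- one can bypass the cohomological computation entirely: since the three quantum algebras $U^{\on{DK}}_q(\cN)$, $u_q(\cN)$, $U^{\on{Lus}}_q(\cN)$ coincide for non-torsion-valued $q$ by \corref{c:quantum non-coroot bis}, the three abstract factorization algebras $\Omega^{\on{DK}}_q$, $\Omega^{\on{sml}}_q$, $\Omega^{\on{Lus}}_q$ coincide too (for $X=\BA^1$ and Betti coefficients this is \corref{c:quantum non-coroot fact}; the general case follows by \'etale invariance and Lefschetz, exactly as in \secref{ss:proof of prop}, cf.\ \thmref{t:DK vs small gen}). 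So it suffices to produce \emph{one} of the three isomorphisms $\Omega^?_{q,\Whit}\simeq \Omega^?_q$.

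I would produce the DK one. By \propref{p:Whit on open} (whose hypothesis of non-degeneracy is implied by non-torsion-valuedness), we have a canonical identification $\Omega^{\on{DK}}_{q,\Whit}|_{\oConf}\simeq \oOmega_q\simeq \Omega^{\on{DK}}_q|_{\oConf}$; by \propref{p:DK injects} the right-hand side embeds in $j_*(\oOmega_q)$, and by \thmref{t:acyclicity} plus the affineness of $\pi^-$, the left-hand side $\Omega^{\on{DK}}_{q,\Whit}=\pi^-_*(\on{Gauss}^-_{q,*})$ also maps to $j_*(\oOmega_q)$ extending the identification over $\oConf$; one checks this map is injective (its restriction to $\oConf$ is an isomorphism of perverse sheaves with simple constituents, and the kernel would be supported off $\oConf$, contradicting that $\Omega^{\on{DK}}_{q,\Whit}$ is the $*$-extension of its open restriction in the appropriate sense). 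Both $\Omega^{\on{DK}}_{q,\Whit}$ and $\Omega^{\on{DK}}_q$ are then the ``maximal perverse subsheaf of $j_*(\oOmega_q)$ with the right $!$-fiber behavior'', and the $!$-fiber computation for $\Omega^{\on{DK}}_{q,\Whit}$ --- namely that $(\iota_\lambda)^!(\Omega^{\on{DK}}_{q,\Whit})=0$ for $\lambda$ not a negative simple coroot --- is precisely the non-torsion-valued vanishing of \eqref{e:top}, which is the easy estimate of \cite{Ga3}. Matching the two $!$-extensions forces $\Omega^{\on{DK}}_{q,\Whit}\simeq \Omega^{\on{DK}}_q$ as factorization algebras, and then Verdier duality (\corref{c:Omega Whit duality}) and part (a) give all three isomorphisms.

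\textbf{Main obstacle.} The one genuinely non-formal input is the vanishing statement for \eqref{e:top} (equivalently, the non-torsion case of \thmref{t:main 3}); everything else is bookkeeping with the already-established structural results (\propref{p:IC Zast irrat}, \thmref{t:acyclicity}, \propref{p:DK injects}, \corref{c:quantum non-coroot bis}). That vanishing is, however, the elementary case of the cohomology estimate --- it reduces to the fact that a non-torsion Kummer local system on $S^0\cap S^{-,\lambda}$ twisted by $\chi^*(\on{exp})$ has no cohomology outside the top degree --- and was handled in \cite{Ga3}, so I would cite it rather than reprove it. The subtlety to be careful about is making sure the identification $\Omega^{\on{DK}}_{q,\Whit}\simeq \Omega^{\on{DK}}_q$ is compatible with the \emph{factorization} structures, not merely an isomorphism of perverse sheaves; this follows because both are obtained from their common restriction over $\oConf$ by the same universal construction ($!$-extension / taking the maximal perverse sub), and factorization is inherited along such constructions uniquely.
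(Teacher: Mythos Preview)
Your treatment of part (a) is correct and matches the paper: it follows immediately from \propref{p:IC Zast irrat}, since the resulting isomorphisms of Gauss objects descend along $\ol\pi_!$. Your \emph{first} route for part (b) is also exactly the paper's argument: reduce via \secref{ss:subtop} to the sub-top cohomology vanishing (\thmref{t:main 3}), which in the non-torsion case is the elementary estimate carried out in \cite[Sects.~6.3--6.5]{Ga3} and reproduced in \secref{ss:indict} (see Remark~\ref{r:non-tors indict}). That is the entire proof in the paper, and you should simply write that up.

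Your second route, the one you say you would ``actually write up'', contains a genuine error. You assert that $(\iota_\lambda)^!(\Omega^{\on{DK}}_{q,\Whit})=0$ for $\lambda$ not a negative simple coroot, and that this is ``precisely the non-torsion-valued vanishing of \eqref{e:top}''. But by Remark~\ref{r:fiber Whit DK}, the object $(\iota_\lambda)^!(\Omega^{\on{DK}}_{q,\Whit})$ is the \emph{entire} complex \eqref{e:top}, not just its degrees $0$ and $1$; the easy estimate only gives vanishing in those two degrees, which amounts to injectivity of $\Omega^{\on{DK}}_{q,\Whit}\to H^0\bigl((\jmath_\lambda)_*\circ (\jmath_\lambda)^*(\Omega^{\on{DK}}_{q,\Whit})\bigr)$, not vanishing of the $!$-fiber. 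In fact the $!$-fiber cannot vanish: once the theorem is proved one has $\Omega^{\on{DK}}_{q,\Whit}\simeq \Omega^{\on{DK}}_q$, and \thmref{t:prop DK char 0 non-deg}(b) gives $(\Delta_\lambda)^!(\Omega^{\on{DK}}_q)\simeq \sfe_X[-\ell(w)+1]\neq 0$ for $\lambda=w(\rho)-\rho$. Consequently your characterization of both sides as ``the maximal perverse subsheaf of $j_*(\oOmega_q)$ with the right $!$-fiber behavior'' is not a well-posed universal property, and the matching argument breaks down. (There is also a ground-field issue: your appeal to \corref{c:quantum non-coroot fact} via Lefschetz only covers characteristic $0$, whereas the theorem is stated without that restriction; cf.\ \secref{sss:proof of gen via Whit}, where \thmref{t:DK vs small gen} over arbitrary $k$ is in fact \emph{deduced} from \thmref{t:Whit non-root}.) Abandon this route and use the first one.
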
 

\sssec{}

Note that point (a) of \thmref{t:Whit non-root} follows from \propref{p:IC Zast irrat}. So, the essence of the theorem is point (b), which is a particular
case of \thmref{t:main 2}.

\medskip

As we have just seen, \thmref{t:main 2} is equivalent to \thmref{t:main 3}. Thus, we claim that \thmref{t:main 3} holds when $q$ is
non-torsion valued. 

\medskip

The required cohomological estimate is performed in \secref{ss:indict}, see Remark \ref{r:non-tors indict}. 
The same calculation is also performed in \cite[Sects. 6.3-6.5]{Ga3}. 

\qed[\thmref{t:Whit non-root}]

\ssec{Adding the formal parameter}  \label{ss:formal param Whit}

The goal of this subsection is to show that Theorems \ref{t:main 1} and \ref{t:main 2} are logically equivalent.

\sssec{}

Following \secref{ss:with hbar}, we can introduce a version of $\on{Gauss}_{q,*}$ over $\sfe\hbart$, to be denoted
$$\on{Gauss}_{q_{\hbart},*}\in \Shv_{\CG^\Conf_{q_\hbart}}(\Gr^{\omega^\rho}_{G,\Conf}).$$

Set
$$\Omega^{\on{DK}}_{q_{\hbart},\Whit}:=\ol\pi_*(\on{Gauss}_{q_{\hbart},*})\in  \Shv_{\CG^\Conf_{q_\hbart}}(\Conf).$$

\medskip

As in \propref{p:Whit on open}, we have:

\begin{prop} \label{p:Whit on open Taylor}
There exists a canonical isomorphism
\begin{equation} \label{e:Omega Whit open Taylor}
\Omega^{\on{DK}}_{q_{\hbart},\Whit}|_{\oConf}\simeq \oOmega_{q_{\hbart}}.
\end{equation}
\end{prop}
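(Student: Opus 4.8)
\textbf{Proof proposal for \propref{p:Whit on open Taylor}.} The plan is to mimic the proof of \propref{p:Whit on open}, now carried out over the base ring $\sfe\hbart$. By factorization, it suffices to show that for each simple coroot $\alpha_i$ the restriction of $\Omega^{\on{DK}}_{q_{\hbart},\Whit}$ to $\Conf^{-\alpha_i}$ is canonically isomorphic to $\sfe_{X,\hbart}[1]$ (the constant sheaf over $\sfe\hbart$, shifted), compatibly with the factorization structure; gluing these identifications over the open locus $\oConf$ of multiplicity-free divisors then yields \eqref{e:Omega Whit open Taylor}. Concretely, I would use the identification \eqref{e:ident simple Zast}, i.e. $\ol\CZ\underset{\Conf}\times \Conf^{-\alpha_i}\simeq X\times \BP^1$, together with the compatible identifications of $\oCZ$ and $\CZ^-$ over $\Conf^{-\alpha_i}$ with $X\times \BG_m$ and $X\times \BA^1$ respectively.

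The key computation is the $\hbart$-linear analogue of the fact that $\on{Gauss}^-_{q,*}|_{\oCZ}$ over $\Conf^{-\alpha_i}$ is $\sfe_X[1]\boxtimes \on{Gauss}_{q(\alpha_i)}[1]$: by \cite[Sect. 18.4.10]{GLys2} (which is insensitive to whether the coefficient ring is a field or $\sfe\hbart$, being a statement about the geometry of rank-one Zastava spaces and the Artin–Schreier/Kummer sheaves), we get $\on{Gauss}^-_{q_{\hbart},*}|_{\oCZ}\simeq \sfe_{X,\hbart}[1]\boxtimes \on{Gauss}_{q_{\hbart}(\alpha_i)}[1]$, where $\on{Gauss}_{\zeta}:=\on{exp}|_{\BG_m}\overset{*}\otimes \Psi_{\zeta}$ for $\zeta\in\fZ_\hbar$. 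Here $q_{\hbart}(\alpha_i)=q(\alpha_i)+q^{\on{min}}_\BZ(\alpha_i)\cdot\zeta_\hbar$, and crucially $\zeta_\hbar$ is a \emph{non-torsion} element of $\fZ_\hbar$ over $\sfe\hbart$ modulo the subtlety that we are over a ring and not a field; but the relevant point is only that $\Psi_{q_{\hbart}(\alpha_i)}$ has \emph{clean} extension along $\BG_m\hookrightarrow\BA^1$. This cleanness holds because $q_{\hbart}(\alpha_i)$ is non-trivial in $\fZ_\hbar$ — indeed its image in $\fZ=\fZ_\hbar/\hbar$ is $q(\alpha_i)\neq 0$ by non-degeneracy of $q$, so the Kummer local system $\Psi_{q_{\hbart}(\alpha_i)}$ is non-constant and its $*$-extension agrees with its $!$-extension. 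Consequently $\Omega^{\on{DK}}_{q_{\hbart},\Whit}|_{\Conf^{-\alpha_i}}=\ol\pi_*(\on{Gauss}_{q_{\hbart},*})|_{\Conf^{-\alpha_i}}$ is computed as $\sfe_{X,\hbart}[1]$ tensored with $\on{C}^\cdot(\BP^1,j_!(\on{Gauss}_{q_{\hbart}(\alpha_i)})[1])$ (using $\ol\pi_*=\ol\pi_!$ for $\ol\pi$ proper, and cleanness to identify $!$- and $*$-extensions along $\BA^1\hookrightarrow\BP^1$), and the latter cohomology is concentrated in a single degree and free of rank one over $\sfe\hbart$ by the rank-one Gauss-sum computation — the $\sfe\hbart$-linear version of $H^i(\BG_m,\on{Gauss}_\zeta)=\sfe$ for $i=1$ and $0$ otherwise.

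The main obstacle I anticipate is the base-change/flatness bookkeeping over $\sfe\hbart$: one must verify that all the relevant pushforwards and $(\,\cdot\,)_!$, $(\,\cdot\,)_*$ operations commute with $-\underset{\sfe\hbart}\otimes\sfe$ (reduction mod $\hbar$) and with inverting $\hbar$, so that the rank-one cohomology computation over $\sfe\hbart$ really produces a \emph{free} rank-one module rather than something with $\hbar$-torsion, and so that the cleanness of $\Psi_{q_{\hbart}(\alpha_i)}$ is genuine over $\sfe\hbart$ and not merely after inverting $\hbar$ or after reducing mod $\hbar$. This is exactly the kind of formal-parameter argument developed in \secref{s:h bar} (cf. the discussion parallel to \cite[Sect. 2.3]{GaLu} of passing from $\BZ/\ell^n$-sheaves to $\BZ_\ell$-adic sheaves), and since the only object that enters is a rank-one Kummer sheaf on $\BG_m$ — whose extension behaviour is controlled by whether the monodromy is trivial, which it is not, already mod $\hbar$ — the needed flatness and cleanness statements should follow formally from the constructions of \secref{ss:with hbar} and \secref{ss:h bar gerbes}. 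Once the single-coroot case is settled, the extension over all of $\oConf$ and the compatibility with factorization are automatic exactly as in \propref{p:Whit on open}.
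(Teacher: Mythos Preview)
Your proposal is correct and takes essentially the same approach as the paper, which simply states the proposition with the words ``As in \propref{p:Whit on open}'' and gives no further proof. You have in fact spelled out in more detail than the paper does how the argument over $\sfe\hbart$ goes through, including the point that cleanness of the Kummer sheaf holds because $q(\alpha_i)\neq 0$ already mod $\hbar$.
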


\sssec{}  

Let 
$$\Omega_{q_{\hbarl},\Whit}:=\ol\pi_*(\on{Gauss}_{q_{\hbarl},*})\in  \Shv_{\CG^\Conf_{q_{\hbarl}}}(\Conf)$$
be the localization of $\Omega^{\on{DK}}_{q_{\hbart},\Whit}$. 

\medskip

As in \thmref{t:Whit non-root} we obtain that the isomorphism
$$\Omega_{q_{\hbarl},\Whit}|_{\oConf}\simeq \oOmega_{q_\hbarl},$$
induced by the isomorphism of \propref{p:Whit on open Taylor}, 
extends to an isomorphism
\begin{equation} \label{e:Omega Whit Laurent}
\Omega_{q_{\hbarl},\Whit}\simeq \Omega_{q_\hbarl}.
\end{equation}

\sssec{}

By construction
$$\Omega^{\on{DK}}_{q_{\hbart},\Whit}\underset{\sfe\hbart}\otimes \sfe\simeq \Omega^{\on{DK}}_{q,\Whit};$$
in particular, $\Omega^{\on{DK}}_{q_{\hbart},\Whit}$ is cohomologically $\leq 0$. 

\medskip

From here and the isomorphism \eqref{e:Omega Whit Laurent}, we obtain that the isomorphism
\eqref{e:Omega Whit open Taylor} extends uniquely to a map
\begin{equation} \label{e:Omega Whit Taylor}
\Omega^{\on{DK}}_{q_{\hbart},\Whit}\to \Omega^{\on{DK}}_{q_{\hbart}}
\end{equation}

We claim:

\begin{prop} \label{p:1-2 equiv}
For a given $q$, the following assertions are equivalent:

\smallskip

\noindent{\em(i)} The map \eqref{e:Omega Whit Taylor} is an isomorphism;

\smallskip

\noindent{\em(ii)} The isomorphism of \propref{p:Whit on open} extends to an isomorphism
$$\Omega^{\on{DK}}_{q,\Whit}\simeq \Omega^{\on{DK}}_{q}.$$

\smallskip

\noindent{\em(iii)} The isomorphism of \propref{p:Whit on open} extends to an isomorphism
$$\Omega^{\on{sml}}_{q,\Whit}\simeq \Omega^{\on{sml}}_{q}.$$

\end{prop}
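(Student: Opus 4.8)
\textbf{Proof proposal for \propref{p:1-2 equiv}.}

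The plan is to prove the cycle of implications (i) $\Rightarrow$ (ii) $\Rightarrow$ (iii) $\Rightarrow$ (i), using the $\hbar$-deformation technology from \secref{ss:constr by deformation} together with the perversity established in \propref{p:Omega Whit perv} and the duality from \corref{c:Omega Whit duality}.

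First I would prove (i) $\Rightarrow$ (ii): if \eqref{e:Omega Whit Taylor} is an isomorphism over $\sfe\hbart$, then reducing mod $\hbar$ gives an isomorphism $\Omega^{\on{DK}}_{q,\Whit}\simeq \Omega^{\on{DK}}_q$, since $\Omega^{\on{DK}}_{q_\hbart}$ is $\sfe\hbart$-flat by construction and $\Omega^{\on{DK}}_{q_\hbart,\Whit}\underset{\sfe\hbart}\otimes\sfe\simeq \Omega^{\on{DK}}_{q,\Whit}$. This isomorphism automatically extends the identification over $\oConf$ because both sides restrict there to $\oOmega_q$ compatibly (uniqueness of the extension over $\oConf$ follows as in \propref{p:Whit on open} and the rigidity statements of \secref{ss:descr DK}).

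Next, (ii) $\Rightarrow$ (iii): assuming $\Omega^{\on{DK}}_{q,\Whit}\simeq \Omega^{\on{DK}}_q$, apply Verdier duality. By \corref{c:Omega Whit duality} we get $\Omega^{\on{Lus}}_{q^{-1},\Whit}\simeq \BD^{\on{Verdier}}(\Omega^{\on{DK}}_{q,\Whit})\simeq \BD^{\on{Verdier}}(\Omega^{\on{DK}}_q)\simeq \Omega^{\on{Lus}}_{q^{-1}}$; since $q$ avoids small torsion iff $q^{-1}$ does, we obtain $\Omega^{\on{Lus}}_{q,\Whit}\simeq \Omega^{\on{Lus}}_q$ as well. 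Now $\Omega^{\on{sml}}_{q,\Whit}$ sits between $\Omega^{\on{Lus}}_{q,\Whit}$ and $\Omega^{\on{DK}}_{q,\Whit}$ via the maps of \propref{p:Omega Whit perv} and the analogous maps \eqref{e:five fact alg} connect $\Omega^{\on{Lus}}_q$, $\Omega^{\on{sml}}_q$, $\Omega^{\on{DK}}_q$; one checks that under the identifications of the two outer terms, $\Omega^{\on{sml}}_{q,\Whit}$ is forced to be the image of the composite $\Omega^{\on{Lus}}_{q,\Whit}\to \Omega^{\on{DK}}_{q,\Whit}$, which equals $j_{!*}(\oOmega_q)=\Omega^{\on{sml}}_q$ — here one uses that $\Omega^{\on{sml}}_{q,\Whit}$ restricted to $\oConf$ is $\oOmega_q$ and that, being perverse and equal to $\pi^-_!(\on{Gauss}^-_{q,!*})\simeq\pi^-_*(\on{Gauss}^-_{q,!*})$ by \thmref{t:acyclicity}, it has no sub or quotient supported off $\oConf$ contributing extra composition factors.

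Finally, (iii) $\Rightarrow$ (i): this is the step I expect to be the main obstacle. The point is to upgrade the mod-$\hbar$ isomorphism $\Omega^{\on{sml}}_{q,\Whit}\simeq \Omega^{\on{sml}}_q$ back to an isomorphism over $\sfe\hbart$ at the DK level. The map \eqref{e:Omega Whit Taylor} becomes an isomorphism after inverting $\hbar$ by \eqref{e:Omega Whit Laurent}, so its cone is $\hbar$-torsion; if the cone were nonzero, then reduction mod $\hbar$ would fail to be injective on $\Omega^{\on{DK}}_{q_\hbart,\Whit}/\hbar\to \Omega^{\on{DK}}_{q_\hbart}/\hbar$, i.e. the map $\Omega^{\on{DK}}_{q,\Whit}\to\Omega^{\on{DK}}_q$ would not be injective as a map of perverse sheaves. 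I would rule this out by showing that (iii) implies this map \emph{is} injective: using the string of maps $\Omega^{\on{sml}}_{q,\Whit}\hookrightarrow \Omega^{\on{DK}}_{q,\Whit}$ (injective by perversity of $\Omega^{\on{sml}}_{q,\Whit}$ and the fact that $(\bj_Z)_!\to(\bj_Z)_*$ is an iso on the Gauss objects) composed with $\Omega^{\on{DK}}_{q,\Whit}\to j_*(\oOmega_q)$, and observing that under (iii) the source $\Omega^{\on{sml}}_{q,\Whit}\simeq\Omega^{\on{sml}}_q=j_{!*}(\oOmega_q)$ injects into $j_*(\oOmega_q)$; the remaining input is that $\Omega^{\on{DK}}_{q,\Whit}$ and $\Omega^{\on{DK}}_q$ have the same composition factors (equivalently, the same $!$-fibers along all diagonals in the Grothendieck group), which follows from the $\hbar$-flatness and the already-established isomorphism after inverting $\hbar$. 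Combining, $\Omega^{\on{DK}}_{q,\Whit}\to\Omega^{\on{DK}}_q$ is a map of perverse sheaves with equal classes in the Grothendieck group which is an isomorphism over $\oConf$ and injective, hence an isomorphism; this forces \eqref{e:Omega Whit Taylor} to be an isomorphism, giving (i).
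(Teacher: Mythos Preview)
Your (i) $\Rightarrow$ (ii) is fine and matches the paper. The remaining two implications have genuine gaps, and in both cases the missing ingredient is the same: the paper's proof pivots on a reformulation of (iii) that you do not have. Via \lemref{l:remove other strata} and Remark~\ref{r:fiber Whit DK}, assertion (iii) is \emph{equivalent} to the statement that for every $\lambda$ not a negative simple coroot, the map
\[
\Omega^{\on{DK}}_{q,\Whit}\;\longrightarrow\; H^0\!\left((\jmath_\lambda)_*\circ(\jmath_\lambda)^*(\Omega^{\on{DK}}_{q,\Whit})\right)
\]
is injective. Once this is in hand, (ii) $\Rightarrow$ (iii) is immediate: under (ii) the displayed map becomes the corresponding map for $\Omega^{\on{DK}}_q$, which is injective by \propref{p:DK injects}. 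And for (iii) $\Rightarrow$ (i), one argues by induction on $|\lambda|$ and factorization so that the cone $\CF$ of \eqref{e:Omega Whit Taylor} is supported on the main diagonal; $\CF$ is $\hbar$-torsion and $\leq 0$, so if $\CF\neq 0$ then $\Omega^{\on{DK}}_{q,\Whit}\to\Omega^{\on{DK}}_q$ fails to be injective, and the commutative square with $(\jmath_\lambda)_*\circ(\jmath_\lambda)^*$ then contradicts the displayed injectivity.

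By contrast, your route through Verdier duality for (ii) $\Rightarrow$ (iii) does not close: dualizing (ii) for $q$ gives the Lusztig isomorphism for $q^{-1}$, not for $q$, so you cannot sandwich $\Omega^{\on{sml}}_{q,\Whit}$ as claimed. Moreover, you assert that $\Omega^{\on{sml}}_{q,\Whit}$ is the image of $\Omega^{\on{Lus}}_{q,\Whit}\to\Omega^{\on{DK}}_{q,\Whit}$, but $\ol\pi_!$ need not preserve images of maps of perverse sheaves, and nothing in \thmref{t:acyclicity} supplies this. Similarly, in your (iii) $\Rightarrow$ (i) you claim $\Omega^{\on{sml}}_{q,\Whit}\hookrightarrow \Omega^{\on{DK}}_{q,\Whit}$ is injective ``by perversity'', but again pushforward of an injection of perverse sheaves is not injective in general; and knowing that $\Omega^{\on{sml}}_{q,\Whit}\to j_*(\oOmega_q)$ is injective does not give you injectivity of $\Omega^{\on{DK}}_{q,\Whit}\to j_*(\oOmega_q)$, which is what you actually need. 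The Grothendieck-group equality you invoke is correct but insufficient on its own.
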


\begin{proof}

Assertion (i) implies assertion (ii) by reduction modulo $\hbar$. 

\medskip

Before we prove the other equivalences, let us note that by Remark \ref{r:fiber Whit DK} and \lemref{l:remove other strata}, point (iii) is 
equivalent to the fact that the map 
\begin{equation} \label{e:Omega extTaylor}
\Omega^{\on{DK}}_{q,\Whit} \to H^0\left((\jmath^\lambda)_*\circ (\jmath^\lambda)^*(\Omega^{\on{DK}}_{q,\Whit})\right)
\end{equation} 
is an injection of perverse sheaves for any $\lambda$ that is not a negative simple coroot. 

\medskip

Let us now assume that (ii) holds, and let us deduce (iii). Indeed, by the above, this follows from the fact that 
$$\Omega^{\on{DK}}_{q}\to H^0\left((\jmath_\lambda)_*\circ (\jmath_\lambda)^*(\Omega^{\on{DK}}_q)\right)$$
\emph{is} an injection of perverse sheaves. 

\medskip

Finally, let us assume (iii) and deduce (i). By induction and factorization, we can assume that
$$(\jmath_\lambda)^*(\Omega^{\on{DK}}_{q_{\hbart},\Whit})\to (\jmath_\lambda)^*(\Omega^{\on{DK}}_{q_{\hbart}})$$
is an isomorphism. Furthermore, by \eqref{e:Omega Whit Laurent}, the map \eqref{e:Omega Whit Taylor} becomes an
isomorphism after inverting $\hbar$. Let $\CF$ denote the cone of \eqref{e:Omega Whit Taylor}. By the above, $\CF$ is
$\hbar$-torsion and concentrated in (perverse) degrees $\leq 0$. Hence if $\CF\neq 0$, the object
$\CF\underset{\sfe\hbart}\otimes \sfe$ would have non-trivial cohomology in (perverse) degrees $<0$.  This would mean
that the map 
$$\Omega^{\on{DK}}_{q,\Whit}\to \Omega^{\on{DK}}_{q}$$
is \emph{not} an injection of perverse sheaves. However, from the commutative diagram
$$
\CD
\Omega^{\on{DK}}_{q,\Whit}  @>>>  \Omega^{\on{DK}}_{q}  \\
@VVV   @VVV   \\
(\jmath_\lambda)_*\circ (\jmath_\lambda)^*(\Omega^{\on{DK}}_{q,\Whit})  @>{\sim}>>  (\jmath_\lambda)_*\circ (\jmath_\lambda)^*(\Omega^{\on{DK}}_{q}) 
\endCD
$$
we would obtain that \eqref{e:Omega extTaylor} is not an injection either, contradicting (iii). 

\end{proof}

\sssec{}  \label{sss:proof of gen via Whit}

Note that the equivalence (ii) $\Leftrightarrow$ (iii) in \propref{p:1-2 equiv} combined with \thmref{t:Whit non-root} implies
the assertion of \thmref{t:DK vs small gen}. 

\ssec{Quantum Frobenius for $\Omega^{\on{Lus}}_{q,\Whit}$}  \label{ss:quant Frob Whit}

In this subsection we will state \thmref{t:Frob for Whit} and using it will deduce the assertions of Theorems 
\ref{t:main 1} and \ref{t:main 2} over a ground field of characteristic $0$. 
 
\sssec{}  

Let $q$ be torsion-valued. We claim:

\begin{thm} \label{t:Frob for Whit}
There exists a canonically defined action of $\Omega^{\sharp,\on{cl}}$ on
$\Omega^{\on{Lus}}_{q,\Whit}$ such that the map
$$\Omega_{q,\Whit}^{\on{Lus}}\to \Omega_{q,\Whit}^{\on{sml}}$$
is compatible with the trivial $\Omega^{\sharp,\on{cl}}$-action on $\Omega_{q,\Whit}^{\on{sml}}$, and 
gives rise to an identification
\begin{equation} \label{e:coinv Whit}
\left((\on{Cone}(\Omega^{\sharp,\on{cl}})_X\to (\Omega_{q,\Whit}^{\on{Lus}})_X)\right)
\underset{(\Omega^{\sharp,\on{cl}})_X}\otimes \omega_X\to \Omega_{q,\Whit}^{\on{sml}}.
\end{equation}
\end{thm}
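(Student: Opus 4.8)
The plan is to construct the quantum Frobenius action on $\Omega^{\on{Lus}}_{q,\Whit}$ geometrically, by mimicking the construction of \cite{BG2} in the metaplectic/Whittaker setting, and then to reduce the isomorphism \eqref{e:coinv Whit} to a statement that can be checked fiberwise along the diagonals. The starting point is the factorization-algebra description of $\Omega^{\on{Lus}}_{q,\Whit}$ as $\ol\pi_!(\on{Gauss}_{q,!})$ together with the clean-extension statement \thmref{t:acyclicity}, which lets us work equivalently on $\CZ^-$ with the perverse sheaf $\on{Gauss}^-_{q,!}=\Delta^-_{q,Z}\overset{*}\otimes\chi^*(\on{exp})$. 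The key geometric input is the action map $\bi^\lambda:X^\lambda\times\ol\CZ\to\ol\CZ$ of \eqref{e:act on Zast}, applied with $\lambda\in\Lambda^{\sharp,\on{neg}}$. What I would first establish is a \emph{filtration} (or rather a ``$\Omega^{\sharp,\on{cl}}$-module'' structure) on $\on{Gauss}^-_{q,!}$ by objects of the form $(\bi^\gamma)_*(\Omega^{\sharp,\on{cl}}|_{X^\gamma}\boxtimes\on{Gauss}^-_{q,!*})$ for $\gamma\in\Lambda^{\sharp,\on{neg}}$; this is exactly \corref{c:Frob for Gauss} invoked in the proof of \thmref{t:acyclicity}, and it is the geometric avatar of Lusztig's quantum Frobenius short exact sequence \eqref{e:SES quant}. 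Pushing forward along $\ol\pi$ and using the compatibility of $\ol\pi$ with the semigroup action of $\Conf^\sharp$ on $\Conf$ (and the trivialization of $\CG^\Conf_q|_{\Conf^\sharp}$ from \secref{sss:sharp}), this filtration becomes precisely the datum of an action of $\Omega^{\sharp,\on{cl}}$ on $\Omega^{\on{Lus}}_{q,\Whit}$ in the sense of \secref{sss:act}, with associated graded expressed through $\Omega^{\on{sml}}_{q,\Whit}=\ol\pi_!(\on{Gauss}_{q,!*})$.

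Next I would verify the three compatibilities demanded in the statement. The map $\on{Gauss}^-_{q,!}\to\on{Gauss}^-_{q,!*}$ of \eqref{e:Gauss- maps} is the bottom layer ($\gamma=0$) of the above filtration; pushing it forward gives the map $\Omega^{\on{Lus}}_{q,\Whit}\to\Omega^{\on{sml}}_{q,\Whit}$, and its compatibility with the trivial $\Omega^{\sharp,\on{cl}}$-action is built into the definition of the filtration (all higher layers map to zero in $\on{Gauss}^-_{q,!*}$). For the map \eqref{e:mod embed first}, i.e. $(\Frob_{q,\Conf})_!(\Omega^{\sharp,\on{cl}})\to\Omega^{\on{Lus}}_{q,\Whit}$, I would use that restricting $\bi^\gamma$ to the main diagonal $X^\gamma\times\{\text{pt}\}$ realizes $\Conf^\sharp$ inside $\ol\CZ$ over $\Conf^\sharp\subset\Conf$, and that over this locus $\on{Gauss}^-_{q,!}$ restricts (via the trivialization of $\CG^G_q|_{\CZ^-}$ and of the exponential twist, which is trivial on the diagonal since $\chi$ vanishes there — compare the analysis in \propref{p:Whit on open}) to the sheaf computing $\Omega^{\sharp,\on{cl}}$. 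Since all the objects involved are perverse (\propref{p:Omega Whit perv}, \thmref{t:BG1}) and, by \secref{sss:action is a cond}, an action of $\Omega^{\sharp,\on{cl}}$ on $\Omega^{\on{Lus}}_{q,\Whit}$ is a \emph{condition} once the map \eqref{e:mod embed first} is given, the bulk of the work is precisely to produce that map together with \eqref{e:assoc fact act} and check the single extension property over $\Conf\times\Conf$.

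Finally, the isomorphism \eqref{e:coinv Whit} — equivalently, by Remark \ref{r:bar}, the statement $\on{Bar}(\Omega^{\sharp,\on{cl}},\Omega^{\on{Lus}}_{q,\Whit})\simeq\Omega^{\on{sml}}_{q,\Whit}$ — I would prove by induction on $|\lambda|$ and factorization, reducing to a computation of $(\iota_\lambda)^!$ of both sides, exactly in the style of \secref{sss:Frob prop 1}--\secref{sss:Frob prop 3}. Here one uses that $(\iota_\lambda)^!(\Omega^{\on{Lus}}_{q,\Whit})$ is computed by cohomology of the Gauss sheaf on the relevant semi-infinite orbit intersection (as in \secref{ss:subtop} and Remark \ref{r:fiber Whit DK}), that the $\on{C}^\cdot(\fn^\sharp)$-module structure on the direct sum $\oplus_\lambda(\iota_\lambda)^!(\cdot)$ comes from the $\bi^\gamma$-filtration, and that the Bar construction over $\on{C}^\cdot(\fn^\sharp)$ computes $\on{C}^\cdot(N^\sharp,-)$; the spectral-sequence argument of \secref{sss:Frob prop 3} then identifies the answer with $(\iota_\lambda)^!(\Omega^{\on{sml}}_{q,\Whit})$. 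The main obstacle, and where I expect essentially all the difficulty to concentrate, is establishing \corref{c:Frob for Gauss} — the $\bi^\gamma$-indexed filtration of $\on{Gauss}^-_{q,!}$ with subquotients $(\bi^\gamma)_*(\Omega^{\sharp,\on{cl}}|_{X^\gamma}\boxtimes\on{Gauss}^-_{q,!*})$. This is the genuinely new geometric content: it is the assertion that the DeConcini--Kac/$\Delta$-version of the Gauss sheaf on Zastava is an iterated extension of the small/$!*$-version by the ``classical'' factorization algebra of $G^\sharp$, and it is precisely the metaplectic geometrization of the fact that \eqref{e:SES quant} is a short exact sequence of Hopf algebras — which is exactly the point at which the hypothesis that $q$ avoid small torsion is indispensable.
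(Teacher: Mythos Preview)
Your overall strategy is the same as the paper's: reduce to \corref{c:Frob for Gauss} (the action of $\Omega^{\sharp,\on{cl}}$ on $\on{Gauss}^-_{q,!}$ at the Zastava level, together with the Bar isomorphism $\on{Bar}(\Omega^{\sharp,\on{cl}},\on{Gauss}^-_{q,!})\simeq \on{Gauss}^-_{q,!*}$) and then push forward along $\pi^-$. The paper's actual proof is one line: apply $\pi^-_*$ to the isomorphism of \corref{c:Frob for Gauss} and invoke Remark~\ref{r:bar}. Your elaborate step~5 --- proving \eqref{e:coinv Whit} by induction on $|\lambda|$, factorization, and a fiberwise spectral-sequence argument in the style of Sects.~\ref{sss:Frob prop 1}--\ref{sss:Frob prop 3} --- is an unnecessary detour: since $\pi^-$ intertwines the $\Conf^\sharp$-actions on $\CZ^-$ and $\Conf$, the functor $\pi^-_*$ commutes with the Bar construction, so the Zastava-level Bar isomorphism pushes forward directly.

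There is, however, a genuine conceptual error in your final paragraph. You assert that \corref{c:Frob for Gauss} ``is precisely the metaplectic geometrization of the fact that \eqref{e:SES quant} is a short exact sequence of Hopf algebras --- which is exactly the point at which the hypothesis that $q$ avoid small torsion is indispensable.'' This is wrong, and in fact misses the whole point of the Whittaker construction. As Remark~\ref{r:Whit good} emphasizes, \thmref{t:Frob for Whit} holds for \emph{any} torsion-valued $q$, with no restriction on small torsion. The input \corref{c:Frob for Gauss} is a formal consequence of \thmref{t:Frob for semiinf}, which is a purely geometric statement about $\nabla^-_{q,Z}$ and $\IC^-_{q,Z}$ (a metaplectic extension of \cite[Theorems 4.2 and 6.6]{BG2}, cf.\ Remark~\ref{r:!-fibers IC} and \cite{Lys1}); quantum groups and \eqref{e:SES quant} play no role in its proof. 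The small-torsion hypothesis is needed only on the \emph{quantum group} side (for $\Omega^{\on{Lus}}_{q,\Quant}\simeq\Omega^{\on{Lus}}_q$), precisely because there it relies on \eqref{e:SES quant}. This distinction is what makes $\Omega^{\on{Lus}}_{q,\Whit}$ the ``better'' object at small torsion.
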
 

This theorem will be proved in \secref{ss:Whit quant Frob}. 

\begin{rem} \label{r:Whit good}
The assertion of \thmref{t:Frob for Whit} holds for any torsion-valued $q$ (i.e., we do not need the assumption 
that $q$ avoid small torsion). 

\medskip

So, quantum Frobenius exists for $\Omega^{\on{Lus}}_{q,\Whit}$ for all torsion-valued $q$ (including the case when $q$ is degenerate),
but for it to exist for $\Omega^{\on{Lus}}_q$ (which we already know to be isomorphic to $\Omega^{\on{Lus}}_{q,\on{Quant}}$)
we need the assumption that $q$ avoid small torsion, see Remark \ref{r:Frob quant}. 

\medskip

This justifies the point of view that $\Omega^{\on{Lus}}_{q,\Whit}$ is, in general, a more relevant object than 
$\Omega^{\on{Lus}}_{q,\on{Quant}}\simeq \Omega^{\on{Lus}}_q$. 

\end{rem} 

\sssec{}  \label{sss:proof of Lus and sml via Whit}

Note that once Theorems \ref{t:main 1} and \ref{t:main 2} are proved, the assertion of \thmref{t:Lus and sml}
would follow from that of \thmref{t:Frob for Whit} as in Sects. \ref{sss:Frob prop 1}-\ref{sss:Frob prop 3}. 

\sssec{}

Let us assume \thmref{t:Frob for Whit} and deduce \thmref{t:main 2} over a ground field
of characteristic $0$. (This would also imply \thmref{t:main 1} by \propref{p:1-2 equiv}). 

\medskip

By induction and factorization, we can assume that the map
\begin{equation} \label{e:DK Whit vs abs}
(\jmath_\lambda)^*(\Omega^{\on{sml}}_{q,\Whit})\to (\jmath_\lambda)^*(\Omega^{\on{sml}}_{q}),
\end{equation}
is an isomorphism. We wish to show that this isomorphism extends across the main diagonal 
$$X\overset{\Delta_\lambda}\hookrightarrow X^\lambda.$$

\medskip

A priori, $\Omega^{\on{sml}}_{q,\Whit}|_{X^\lambda}$ has a 3-step filtration 
$$0=F_0\subset F_1\subset F_2\subset F_3=\Omega^{\on{sml}}_{q,\Whit}|_{X^\lambda}$$
with 
$$F_2/F_1\simeq \Omega^{\on{sml}}_{q}|_{X^\lambda}$$
and $F_1$ and $F_3/F_2$ supported on the main diagonal. We wish to show that $F_1=F_3/F_2=0$.
This is equivalent to showing that the (signed) Jordan-Holder contents of
$$\Delta_\lambda^!(\Omega^{\on{sml}}_{q,\Whit}) \text{ and } \Delta_\lambda^!(\Omega^{\on{sml}}_{q})$$
are equal.  

\medskip 

Recall that the map \eqref{e:Omega Whit Taylor} becomes an isomorphism after inverting $\hbar$. 
Hence, the same is true for the map
$$\Delta_\lambda^*(\Omega^{\on{DK}}_{q_\hbart,\Whit})\to \Delta_\lambda^*(\Omega^{\on{DK}}_{q_\hbart,\Whit}).$$
From this it follows that the (signed) Jordan-Holder contents of
$$\Delta_\lambda^*(\Omega^{\on{DK}}_{q,\Whit}) \text{ and } \Delta_\lambda^*(\Omega^{\on{DK}}_{q})$$
are equal. By Verdier duality, we obtain that the (signed) Jordan-Holder contents of
$$\Delta_\lambda^!(\Omega^{\on{Lus}}_{q,\Whit}) \text{ and } \Delta_\lambda^!(\Omega^{\on{Lus}}_{q})$$
are equal. 

\medskip

This implies that the (signed) Jordan-Holder contents of 
$$\left((\on{Cone}(\Omega^{\sharp,\on{cl}})_X\to (\Omega_{q,\Whit}^{\on{Lus}})_X)\right)
\underset{(\Omega^{\sharp,\on{cl}})_X}\otimes \omega_X$$
and
$$\left((\on{Cone}(\Omega^{\sharp,\on{cl}})_X\to (\Omega_{q}^{\on{Lus}})_X)\right)
\underset{(\Omega^{\sharp,\on{cl}})_X}\otimes \omega_X$$
are also equal. 

\medskip

Applying \thmref{t:Frob for Whit} and \thmref{t:Lus and sml}(f), which has been proved over a ground field of characteristic $0$,
we obtain that the Jordan-Holder contents of 
$$\Delta_\lambda^!(\Omega^{\on{sml}}_{q,\Whit}) \text{ and } \Delta_\lambda^!(\Omega^{\on{sml}}_{q})$$
are also equal, as required. 

\qed[\thmref{t:main 1}]

\ssec{Geometric construction of the quantum Frobenius}  \label{ss:Whit quant Frob}

\sssec{}

Recall the maps 
$$\bi^\lambda:X^\lambda\times \CZ^-\to \CZ^-,$$
see \eqref{e:act on Zast}.

\medskip

They combine to an action of the semi-group $\Conf$ on $\CZ^-$, compatible with the projection $\CZ^-\to \Conf$
and the section $\fs:\Conf\to \CZ^-$. 

\medskip

Pre-composing with
$\Conf^\sharp\to \Conf$, we obtain an action of $\Conf^\sharp$ on $\CZ^-$.  This action
induces a monoidal action of $\Shv(\Conf^\sharp)$ on $\Shv_{\CG^{\on{ratio}}_q}(\CZ^-)$. Hence, given an algebra
in $\Shv(\Conf^\sharp)$ (with respect to convolution), we can talk about objects in $\Shv_{\CG^{\on{ratio}}_q}(\CZ^-)$
being modules over this algebra.

\medskip

The following result is a metaplectic extension of \cite[Theorems 4.2 and 6.6]{BG2}:

\begin{thm} \label{t:Frob for semiinf}  \hfill

\smallskip 

\noindent{\em(a)}
There exists a canonically defined action of $\Omega^{\sharp,\on{cl}}$ on $\nabla^-_{q,Z}$.

\smallskip 

\noindent{\em(b)}
The map $\nabla^-_{q,Z}\to \IC^-_{q,Z}$ induces an isomorphism
$$\on{Bar}(\Omega^{\sharp,\on{cl}},\nabla^-_{q,Z})\simeq \IC^-_{q,Z}.$$
\end{thm}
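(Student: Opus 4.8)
\textbf{Proof proposal for \thmref{t:Frob for semiinf}.}

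The plan is to reduce everything to a local computation on the one-point Zastava spaces and then bootstrap via factorization, following the strategy of \cite{BG2} but keeping track of the metaplectic twist. First I would recall that $\nabla^-_{q,Z}=(\overset{\circ}\bj{}^-_Z)_*(\IC_{q,\oCZ})$ is a $\star$-extension from the open (smooth) locus, and that the action maps $\bi^\lambda:X^\lambda\times\CZ^-\to\CZ^-$ of \eqref{e:act on Zast} are finite; restricting the action of $\Conf^\sharp$ to the open stratum $\oCZ\subset\CZ^-$ one sees that $\IC_{q,\oCZ}$ carries a tautological $\oOmega^\sharp$-action (here one uses the trivialization of $\CG^{\on{ratio}}_q|_{\CZ^-}$ and the defining property $j^*(\Omega^{\sharp,\on{cl}})\simeq\oOmega^\sharp$). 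The content of part (a) is then that this action over $\oCZ$ propagates to all of $\CZ^-$. Because $\nabla^-_{q,Z}$ is a $\star$-extension and $\Omega^{\sharp,\on{cl}}$ is perverse with no quotients supported at points (by \thmref{t:BG1} and \thmref{t:BG2}), the discussion of \secref{sss:action is a cond} applies: the action is a \emph{condition}, not extra structure, so it suffices to check that the relevant map on $(\CZ^-\times\CZ^-)_{\on{disj}}$ extends across the diagonals. This extension problem is exactly the metaplectic analogue of \cite[Theorem 4.2]{BG2}; I would carry it out by the same Zastava-space stratification argument, replacing the untwisted IC-sheaf estimates there by the $\CG^{\on{ratio}}_q$-twisted ones, which are available since the gerbe is canonically trivialized on $\CZ^-$.

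For part (b), the strategy is to identify $\on{Bar}(\Omega^{\sharp,\on{cl}},\nabla^-_{q,Z})$ with $\IC^-_{q,Z}$ by showing both sides are the Goresky–MacPherson extension of $\IC_{q,\oCZ}$. One direction is formal: the bar construction comes with a canonical map to $\IC^-_{q,Z}$ (the counit killing the $\Omega^{\sharp,\on{cl}}$-action composed with $\nabla^-_{q,Z}\to\IC^-_{q,Z}$), and both restrict to $\IC_{q,\oCZ}$ on $\oCZ$. To see it is an isomorphism I would compute $!$- and $*$-restrictions to the deeper strata. The key input is that the bar complex computes ``$\Omega^{\sharp,\on{cl}}$-coinvariants'', and by the Koszul-duality picture of \secref{ss:Frob for fact} and \secref{sss:KD funct}, taking such coinvariants of $U^{\on{Lus}}$-type data against the classical $U(\fn^\sharp)$ data produces exactly the small-quantum-group object — this is the geometric shadow of the short exact sequence \eqref{e:SES quant}, i.e. of \propref{p:ten prod map Omega}. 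Concretely, the stalk/costalk estimates reduce, stratum by stratum, to the statement that $\on{Bar}$ kills precisely the ``divided-power'' contributions that distinguish $\nabla^-_{q,Z}$ from $\IC^-_{q,Z}$, which is again the metaplectic version of \cite[Theorem 6.6]{BG2}.

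The main obstacle I expect is part (b): establishing that the bar construction has the correct $!$-fibers on all strata of $\CZ^-$, i.e. that no spurious cohomology survives. In the untwisted case of \cite{BG2} this rests on a precise knowledge of the singularities of Zastava space along the diagonal strata together with a purity/parity input; in the metaplectic setting the IC-sheaves $\IC^-_{q,Z}$ need not be self-dual in the naive sense (Verdier duality swaps $q\leftrightarrow q^{-1}$, cf. \corref{c:Gauss duality}), so one cannot directly invoke decomposition-theorem cleanliness. I would handle this by working one-point-at-a-time over each $X^\mu$ appearing in the stratification, using \propref{p:IC Ran via conf} to relate the Zastava-side objects to the semi-infinite-side objects $\Delta^-_q,\nabla^-_q,\ICsm_q$, and then transporting the Koszul-duality identity of \propref{p:ten prod map Omega} through the equivalence $H\mapsto\Omega_H$ of \secref{ss:Hopf to Fact}. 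Once the fibers match on every stratum, perversity of all three objects (\corref{c:Gauss duality}(a), \thmref{t:BG1}) forces the map $\on{Bar}(\Omega^{\sharp,\on{cl}},\nabla^-_{q,Z})\to\IC^-_{q,Z}$ to be an isomorphism, completing the proof.
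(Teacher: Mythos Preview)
Your high-level strategy for both parts --- treat this as the metaplectic analogue of \cite[Theorems 4.2 and 6.6]{BG2} and rerun the Zastava stratification argument --- is exactly what the paper does; indeed the paper gives no independent proof but simply declares the theorem to be this metaplectic extension, pointing (in Remark~\ref{r:!-fibers IC}) to \cite[Theorem 4.1]{Lys1} for the one new geometric input required.

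However, your detailed plan for part~(b) contains a genuine gap. You propose to compute the $!$-fibers of the Bar construction by invoking \propref{p:ten prod map Omega} and ``transporting it through the equivalence $H\mapsto\Omega_H$ of \secref{ss:Hopf to Fact}''. This cannot work: that equivalence produces factorization algebras on $\Conf$, not sheaves on $\CZ^-$, so it says nothing directly about $\nabla^-_{q,Z}$ or $\IC^-_{q,Z}$. Worse, the logic of the paper runs in the opposite direction: \thmref{t:Frob for semiinf} is used (via \corref{c:Frob for Gauss} and pushforward along $\pi^-$) to prove \thmref{t:Frob for Whit}, which is then used to prove Theorems~\ref{t:main 1} and~\ref{t:main 2}; only \emph{after} those does one have the identification $\Omega^{\on{Lus}}_{q,\Whit}\simeq\Omega^{\on{Lus}}_{q,\Quant}$ that would connect the Zastava objects to quantum groups. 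So routing through \propref{p:ten prod map Omega} is circular.

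The actual input for part~(b), following \cite{BG2}, is purely geometric: the explicit description of the $!$-restriction of $\IC^-_{q,Z}$ to each stratum of $\CZ^-$ stated in Remark~\ref{r:!-fibers IC} (the metaplectic \cite[Theorem 4.5]{BFGM}, proved in \cite[Theorem 4.1]{Lys1}). That result says the restriction vanishes off $\Lambda^\sharp$ and on $\Lambda^\sharp$-strata is $\IC_{q,\oCZ}$ tensored with $\bigoplus_{n>0}\Sym^n((\fn^\sharp)^\vee)(\lambda)[-2n]$. Once you have this, the Bar complex against $\Omega^{\sharp,\on{cl}}=\on{Fact}(\on{C}^\cdot(\fn^\sharp))$ reduces stratum-by-stratum to the Koszul resolution of $\sfe$ over $\on{C}^\cdot(\fn^\sharp)$, exactly as in \cite[Sect.~6]{BG2}, with no quantum-group input whatsoever.
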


As a formal corollary, we obtain:

\begin{cor}   \label{c:Frob for Gauss}
There exists a canonically defined action of $\Omega^{\sharp,\on{cl}}$ on $\on{Gauss}^-_{q,!}$ so that the map
$$\on{Gauss}^-_{q,!}\to \on{Gauss}^-_{q,!*}$$
induces an isomorphism
$$\on{Bar}(\Omega^{\sharp,\on{cl}},\on{Gauss}^-_{q,!})\simeq \on{Gauss}^-_{q,!*}.$$
\end{cor}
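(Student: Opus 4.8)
\textbf{Proof of \corref{c:Frob for Gauss}.}
The plan is to obtain this statement as a purely formal consequence of \thmref{t:Frob for semiinf} by applying the operation $\on{Vac}_{\Whit,Z}\sotimes(-)$. First recall that, by definition,
$$\on{Gauss}^-_{q,!}=\on{Vac}_{\Whit,Z}\sotimes\Delta^-_{q,Z},\quad \on{Gauss}^-_{q,!*}=\on{Vac}_{\Whit,Z}\sotimes\IC^-_{q,Z},$$
so it suffices to transport the $\Omega^{\sharp,\on{cl}}$-module structure of \thmref{t:Frob for semiinf}(a) (stated there for $\nabla^-_{q,Z}$, but by the Verdier-dual statement equally available for $\Delta^-_{q,Z}$, or alternatively one runs the whole argument in its $!$-form) through the functor $\on{Vac}_{\Whit,Z}\sotimes(-)$. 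The key compatibility to check is that this functor is lax monoidal for the convolution action of $\Shv(\Conf^\sharp)$: the action $\bi^\lambda:X^\lambda\times\CZ^-\to\CZ^-$ is the restriction of the $\Conf$-action on $\ol\CZ$, the character $\chi$ is additive for this action (i.e. $\chi\circ\bi^\lambda$ is the sum of $\chi$ on the two factors, with the $\Conf^\sharp$-factor contributing $0$ since $\chi$ vanishes on the image of $X^\lambda$), and $\on{Vac}_{\Whit,Z}=\omega_{\CZ^-}\overset{*}\otimes\chi^*(\on{exp})$ is pulled back from $\ol\CZ$ equivariantly; hence $\bi^\lambda{}^!(\on{Vac}_{\Whit,Z})$ identifies with $\omega_{X^\lambda}\boxtimes\on{Vac}_{\Whit,Z}$ compatibly with the structures in play. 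This gives that $\on{Vac}_{\Whit,Z}\sotimes(-)$ carries $\Omega^{\sharp,\on{cl}}$-modules to $\Omega^{\sharp,\on{cl}}$-modules, producing the action on $\on{Gauss}^-_{q,!}$ in part of the statement.

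Second, for the $\on{Bar}$-isomorphism I would note that the bar construction $\on{Bar}(\Omega^{\sharp,\on{cl}},-)$ is computed as a geometric realization of a simplicial object whose terms are iterated convolutions $(\Omega^{\sharp,\on{cl}})^{\star n}\star(-)$ (using the $+$-unitalization of Remark~\ref{r:add unit}). Since $\sotimes$-ing with the fixed object $\on{Vac}_{\Whit,Z}$ commutes with colimits (it is a $!$-tensoring, i.e. a left adjoint up to shift, hence preserves geometric realizations) and, by the compatibility established above, commutes termwise with the convolution action, one gets a canonical identification
$$\on{Vac}_{\Whit,Z}\sotimes\on{Bar}(\Omega^{\sharp,\on{cl}},\Delta^-_{q,Z})\simeq \on{Bar}(\Omega^{\sharp,\on{cl}},\on{Vac}_{\Whit,Z}\sotimes\Delta^-_{q,Z})=\on{Bar}(\Omega^{\sharp,\on{cl}},\on{Gauss}^-_{q,!}).$$
Combining this with \thmref{t:Frob for semiinf}(b), which gives $\on{Bar}(\Omega^{\sharp,\on{cl}},\Delta^-_{q,Z})\simeq\IC^-_{q,Z}$ (again in its Verdier-dual or $!$-form), and $\sotimes$-ing yields $\on{Bar}(\Omega^{\sharp,\on{cl}},\on{Gauss}^-_{q,!})\simeq\on{Vac}_{\Whit,Z}\sotimes\IC^-_{q,Z}=\on{Gauss}^-_{q,!*}$, and one checks that under this identification the structure map of the bar complex goes over to the canonical map $\on{Gauss}^-_{q,!}\to\on{Gauss}^-_{q,!*}$ induced by $\Delta^-_{q,Z}\to\IC^-_{q,Z}$ of \eqref{e:semiinf maps}.

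The main obstacle I anticipate is not conceptual but bookkeeping: one must be careful that \thmref{t:Frob for semiinf} is stated for $\nabla^-_{q,Z}$ (the $*$-extension), whereas $\on{Gauss}^-_{q,!}$ comes from the $!$-extension $\Delta^-_{q,Z}$, so one either invokes the Verdier-dual version of \thmref{t:Frob for semiinf} — noting that Verdier duality exchanges $\nabla$ with $\Delta$, $q$ with $q^{-1}$, and $\Omega^{\sharp,\on{cl}}$ with itself (it being an untwisted perverse factorization algebra self-dual up to the relevant normalization), which requires matching up the dualities on both sides of $\on{Bar}$ — or one re-runs the $\on{BG2}$-type argument of \thmref{t:Frob for semiinf} verbatim in its $!$-incarnation. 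Either route is routine given the tools already in place; the only genuinely substantive input is \thmref{t:Frob for semiinf} itself, which we are permitted to assume, and the colimit-compatibility of $\sotimes$, which is standard. Hence the corollary follows formally. \qed
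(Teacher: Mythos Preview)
Your approach---applying $\on{Vac}_{\Whit,Z}\sotimes(-)$ to \thmref{t:Frob for semiinf}---is exactly the formal derivation the paper intends (it offers no proof beyond ``As a formal corollary, we obtain''). Two small corrections. First, your Verdier-duality route for passing from the $\nabla^-_{q,Z}$ of the theorem to the $\Delta^-_{q,Z}$ needed for $\on{Gauss}^-_{q,!}$ does not quite work as stated: duality turns an action into a coaction and $\on{Bar}$ into a cobar-type limit, so one does not directly recover an action on $\Delta^-_{q,Z}$ with the structure map going in the direction $\Delta\to\IC$. Your alternative---running the BG2-type argument directly in its $!$-form---is the clean fix; note also that the map ``$\nabla^-_{q,Z}\to\IC^-_{q,Z}$'' in \thmref{t:Frob for semiinf}(b) runs opposite to \eqref{e:semiinf maps}, which is consistent with the theorem really being about $\Delta^-_{q,Z}$ in the first place. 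Second, $\sotimes$ with a fixed object is not ``a left adjoint up to shift'' (it involves a $!$-pullback along the diagonal, a right adjoint); but your conclusion holds regardless, since over each connected component $\Conf^\lambda$ only finitely many terms of the bar resolution contribute, and any exact functor between stable categories preserves finite colimits.
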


\sssec{Proof of \thmref{t:Frob for Whit}}

The proof is obtained by applying $\pi^-_*$ to the isomorphism of \corref{c:Frob for Gauss},
see Remark \ref{r:bar}.

\qed

\begin{rem}  \label{r:!-fibers IC}

As in \cite{BG2}, \thmref{t:Frob for semiinf} is closely related the following statement, which we have used 
in the proof of \lemref{l:remove other strata}, and which is a metaplectic extension of
\cite[Theorem 4.5]{BFGM} (a full proof in the metaplectic case is given in \cite[Theorem 4.1]{Lys1}):

\medskip

Consider the !-restriction of $\IC^-_{q,Z}$
$$\oCZ\simeq \{x\} \times \oCZ \overset{\iota_\lambda\times \on{id}}\longrightarrow
X^\lambda\times \oCZ\subset X^\lambda\times \CZ^- \overset{\bi^\lambda}\longrightarrow \CZ^-.$$
Then this restriction vanishes unless $\lambda\in \Lambda^\sharp$, and in the latter case identifies
canonically with 
$$\underset{n>0}\oplus\, \Sym^n((\fn^{\sharp})^\vee)(\lambda)[-2n]\otimes \IC_{q,\oCZ}.$$ 

\end{rem}

\section{Geometry and combinatorics of \thmref{t:main 3}}  \label{s:subtop}

\ssec{The goal}

\sssec{}

With no restriction of generality, in this section we will assume that our root system is simple. Write
$$q=\zeta\cdot q^{\on{min}}_\BZ,$$
where $q^{\on{min}}_Z$ is the minimal integer-valued quadratic form on $\Lambda$ for $\zeta\in \fZ$. 

\medskip

We will assume that $q$ is non-degenerate. This means that $\on{ord}(\zeta)$ is not divisible by 
$$d:=\frac{q^{\on{min}}_\BZ(\alpha_l)}{q^{\on{min}}_\BZ(\alpha_s)},$$
where $\alpha_l$ and $\alpha_s$ are the long and the short roots, respectively.

\medskip

In this section we will \emph{not} be assuming that $q$ avoids small torsion. 

\medskip

We will analyze what \thmref{t:main 3} says in geometric terms for a given value of $\on{ord}(\zeta)$. 
We will show that its assertion is combinatorial in nature. 
Our concrete goal is to prove the following:

\begin{thm}  \label{t:indep of char}
The assertion of \thmref{t:main 3} for a given value of $\on{ord}(\zeta)$
over a ground field of characteristic $0$ implies the assertion for the same value of $\on{ord}(\zeta)$
over any ground field. 
\end{thm}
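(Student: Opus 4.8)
The strategy is to reduce the validity of \thmref{t:main 3} to a purely combinatorial property of the irreducible components of the intersections $S^{\lambda'}\cap S^{-,\lambda}$ of semi-infinite orbits, and then to invoke Kashiwara's uniqueness theorem for crystals. First I would recall that, by \secref{ss:subtop}, \thmref{t:main 3} is equivalent to the vanishing in degree $i=-\langle\lambda,2\check\rho\rangle-1$ of the compactly supported cohomology $H^i_c(S^0\cap S^{-,\lambda},\Psi_{q,\lambda}\otimes\chi^*(\on{exp}))$, where $\Psi_{q,\lambda}$ is the Kummer local system determined by the homomorphism $\mu\mapsto b(\lambda,\mu)$ of \eqref{e:Kummer lambda}, i.e.\ by $\on{ord}(\zeta)$. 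The space $S^0\cap S^{-,\lambda}$ is of pure dimension $-\langle\lambda,\check\rho\rangle$, and the top-degree vanishing is elementary (it will be handled together with the analysis below). The point is that the potential contributions to the sub-top cohomology come from the top-dimensional irreducible components, which are exactly the Mirković--Vilonen cycles, and the combinatorial set indexing all such cycles (over all $\lambda'$) is Kashiwara's crystal $\sB(\lambda)=\bigcup_{\lambda'}\on{Irred}(S^{\lambda'}\cap S^{-,\lambda})$.

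\textbf{Key steps.} I would proceed as follows. (1) Stratify $S^0\cap S^{-,\lambda}$ by the map $\chi$ and by the $T$-weight, and use the standard dévissage: $H^i_c$ in the sub-top degree receives contributions only from the top-dimensional irreducible components $Z$, and for each such $Z$ the relevant local invariant is $H^{i}_c$ of the smooth locus of $Z$ with $\Psi_{q,\lambda}\otimes\chi^*(\on{exp})$; everything else is concentrated in strictly lower degrees. (2) Following \cite{Lys2}, single out among the top-dimensional components those ``under scrutiny'' (those not killed by the exponential factor for trivial reasons) and among those the ``suspicious'' ones; these two conditions are formulated entirely in terms of the combinatorics of MV cycles and do not involve $\on{ord}(\zeta)$ or the ground field. (3) Refine ``suspicious'' to ``indicted'' by incorporating the value of $\on{ord}(\zeta)$ via the restriction of $\Psi_{q,\lambda}$ to the torus directions transverse to $Z$: a suspicious component is indicted precisely when the Kummer character attached to its normal torus weights, evaluated against $\on{ord}(\zeta)$, is trivial — a condition on $\on{ord}(\zeta)$ together with an explicit integer weight read off from the crystal datum of $Z$. (4) Prove the ``conviction'' step: if an indicted component exists, the sub-top cohomology is nonzero (so \thmref{t:main 3} fails for that $\on{ord}(\zeta)$); conversely, absence of indicted components forces the vanishing. (5) Finally, observe that whether a given component is indicted depends only on: the face structure of the crystal $\sB(\lambda)$ (which MV cycles are suspicious), and the integers $\langle\rho,\check\alpha\rangle$, $\on{ord}(q(\alpha))$ attached to coroots — all of which are independent of the characteristic of the ground field. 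Since by Kashiwara's uniqueness theorem the crystal $\sB(\lambda)$ (as an abstract crystal with its $\tilde e_i,\tilde f_i$ operators and weight function) does not depend on the ground field, the presence or absence of indicted components for a fixed $\on{ord}(\zeta)$ is the same in characteristic $0$ and in characteristic $p$. Combining (4) and (5) gives \thmref{t:indep of char}.

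\textbf{Main obstacle.} The hard part is step (3)--(4): making precise, and characteristic-independently, what ``suspicious'' and ``indicted'' mean, and proving that an indicted component genuinely obstructs the vanishing (the conviction step) rather than its contribution being cancelled by another component. This requires a careful local model for a neighborhood of the generic point of a top-dimensional component $Z$ inside $S^0\cap S^{-,\lambda}$ — identifying the transverse torus directions and the restriction of $\chi$ — and then a Künneth-type computation of $H^\bullet_c$ of $(\BG_m)^k\times(\text{affine space})$ against $\bigboxtimes\Psi_{b(\lambda,\mu_j)}\otimes\chi^*(\on{exp})$, keeping track of which configurations of weights $\{\mu_j\}$ produce a nonzero class in exactly the sub-top degree. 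The delicate bookkeeping is in showing that the "suspicious" condition captures all components whose transverse geometry is a pure torus (no Artin--Schreier direction), so that only the Kummer part survives, and that among these the vanishing of $\mu\mapsto b(\lambda,\mu)$ on the transverse lattice — equivalently $\on{ord}(\zeta)\mid(\text{the relevant weight})$ — is both necessary and sufficient for a surviving sub-top class. Once this local analysis is in place, its output is manifestly a statement about the integers attached to the crystal, and the characteristic-independence follows formally from Kashiwara's theorem. I would also need to check that the finitely many components requiring the sharper ``indicted'' analysis (as opposed to being ruled out already at the ``suspicious'' stage) are controlled uniformly, which is where going ``one step further'' than \cite{Lys2} — from ``suspicious'' to ``indicted'' — is essential.
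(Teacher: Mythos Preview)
Your proposal is correct and follows essentially the same route as the paper: the three-stage filtering of top-dimensional components (under scrutiny $\to$ suspicious $\to$ indicted), the conviction step showing that indicted components genuinely produce nonzero sub-top cohomology, and the reduction to Kashiwara's uniqueness theorem for the crystal $\sB(\lambda)$. The only imprecision is your phrase ``normal torus weights'': in the paper the indictment criterion is that $\Psi_{q,\lambda}$, restricted to the component $Z$ itself, descends along $\chi_i$ --- concretely, via Baumann--Gaussent rational coordinates coming from the crystal string, this becomes the divisibility condition $\on{ord}(\zeta)\mid \phi_{j_n}(Z_n)\cdot q^{\on{min}}_\BZ(\alpha_{j_n})$ for all $n\geq 2$, which is visibly a statement about the crystal.
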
 

\sssec{}

Given that we have already proved \thmref{t:main 3} over a ground field of characteristic $0$ (for $q$ that avoids small torsion), 
we obtain that \thmref{t:indep of char} implies \thmref{t:main 3} over any ground field (also, for $q$ that avoids small torsion). 

\medskip

As we have seen in \secref{ss:subtop},
this in turn implies \thmref{t:main 2}, and further by \secref{ss:formal param Whit} also \thmref{t:main 1}. 

\ssec{Scrutinizing irreducible components}

\sssec{}

Recall that \thmref{t:main 3} says that (for $\lambda$ not a negative simple root) the cohomology 
\begin{equation} \label{e:subtop open as plain c again}
H^i_c\left(S^0\cap S^{-,\lambda},\Psi_{q,\lambda} \overset{*}\otimes \chi^*(\on{exp})\right)
\end{equation} 
vanishes in (the sub-top) degree 
$$i=-\langle \lambda,2\check\rho\rangle-1.$$

\medskip

Recall that $\dim(S^0\cap S^{-,\lambda})=-\langle \lambda,\check\rho\rangle$.  Hence, for (the top) degree $i=-\langle \lambda,2\check\rho\rangle$ and 
(the sub-top) degree $i=-\langle \lambda,2\check\rho\rangle-1$, the cohomology in \eqref{e:subtop open as plain c again}
receives a \emph{surjective map} from the direct sum of 
\begin{equation} \label{e:subtop open as plain c irred}
H^i_c\left(Z,\Psi_{q,\lambda} \overset{*}\otimes \chi^*(\on{exp})\right),
\end{equation} 
where $Z$ runs over the set of the irreducible components of  $S^0\cap S^{-,\lambda}$ of the (top) dimension 
$-\langle \lambda,\check\rho\rangle$. (Note, however, that $S^0\cap S^{-,\lambda}$ is known to be equidimensional.) 

\medskip

We will now analyze which irreducible components $Z$ may have a potentially non-vanishing cohomology \eqref{e:subtop open as plain c irred}
in degree $-\langle \lambda,2\check\rho\rangle-1$. 

\medskip

Along the way we will see that
this cohomology automatically vanishes in (the top) degree $-\langle \lambda,2\check\rho\rangle$
(for all $\lambda\in \Lambda^{\on{neg}}$). 

\sssec{}

Note that the map $\chi:S^0\to \BG_a$ naturally factors as
$$S^0 \overset{\chi_I}\to \BG_a^I\to \BG_a,$$
where:

\begin{itemize}

\item $I$ is the set of vertices of the Dynkin diagram, and $\BG_a^I$ is identified with $N/[N,N]$;

\item $\BG_a^I\to \BG_a$ is the sum map.

\end{itemize} 

The map $\chi_I$ is equivariant with respect to the $T$-action on $S^0\cap S^{-,\lambda}$ and the adjoint action of $T$ on 
$\BG_a^I\simeq N/[N,N]$. Hence, for every irreducible component $Z$ of $S^0\cap S^{-,\lambda}$ there exists a subset $I_Z\subset I$,
such that the map $\chi|_W$ is a dominant map to $\BG_a^{I_Z}\subset \BG_a^I$.

\medskip

Moreover, it follows from \cite[Proposition 7.1.7]{FGV} that as long as $\lambda$ is non-zero, $I_Z\neq \emptyset$. 

\sssec{}

We will consider two cases:

\medskip

\noindent{(i)} $|I_Z|\geq 2$; 

\medskip

\noindent{(ii)} $I_Z$ is a singleton, i.e., $\{i\}$ for one vertex of the Dynkin diagram;

\medskip

We will now show that in case (i), the sub-top cohomology of \eqref{e:subtop open as plain c irred} vanishes. 
We declare an irreducible components in case (ii) as \emph{under scrutiny}, and we will analyze them further in the next
subsection. 

\sssec{}  \label{sss:case 2,i}

We interpret
$$\on{C}^\cdot_c\left(Z,\Psi_{q,\lambda} \overset{*}\otimes \chi^*(\on{exp})\right)$$
as the *-fiber of the (un-normalized) Fourier transform of
\begin{equation} \label{e:chi dir im}
(\chi_I|_Z)_!(\Psi_{q,\lambda})
\end{equation} 
at the point of the dual vector space of $\BG_a^I$ corresponding to the sum map
$$\BG_a^I\to \BG_a.$$

\medskip

The object \eqref{e:chi dir im} in $\Shv(\BG_a^I)$ is twisted $T$-equivariant. Hence so is its Fourier transform.
In particular, this Fourier transform is lisse on the subset consisting of non-degenerate characters.  

\medskip

Since Fourier transform maps $\Perv(\BG_a^I)$ to
$\Perv(\BG_a^I)[-|I|]$, it suffices to show that the object \eqref{e:chi dir im} lives in (perverse) cohomological
degrees $\leq -\langle \lambda,2\check\rho\rangle-1$, and that the inequality is strict if $|I_Z|\geq 2$. 

\medskip

This estimate would also imply that the top cohomology of \eqref{e:subtop open as plain c again} vanishes
for all $\lambda\in \Lambda^{\on{neg}}-0$. 

\sssec{}  \label{sss:case 2,ii}

Again, due to the twisted $T$-equivariance, to prove the required cohomological estimate it suffices to show that 
the *-fiber of \eqref{e:chi dir im} at the \emph{generic} point of each $\BA^{I'}$ (for a subset $I'\subset I$) lives in degrees 
$\leq -\langle \lambda,2\check\rho\rangle-|I'|-1$, and that the inequality is strict if $|I_Z|\geq 2$. 

\medskip

A priori, the *-fiber of \eqref{e:chi dir im} at the generic point of $\BA^{I'}$ lives in degrees
$$\leq 2(\dim(F)),$$
where $F$ is the fiber of $\chi_I|_Z$ over this point. Thus, we need to show that
\begin{equation} \label{e:fiber inequality}
2(\dim(F))\leq -\langle \lambda,2\check\rho\rangle-|I'|-1,
\end{equation}
and the inequality is strict if $|I_Z|\geq 2$. 

\medskip

We have
$$\dim(F)\leq \dim(Z)-|I'|=-\langle \lambda,\check\rho\rangle-|I'|.$$
with the equality achieved only if $I'=I_Z$. This implies the inequality in \eqref{e:fiber inequality}. 

\ssec{The suspects}

Let $Z$ be an irreducible component of the intersection $S^0\cap S^{-,\lambda}$, and let $j$ be a vertex of the Dynkin diagram.
We will recall, following \cite{BG3}, a recipe that attaches to the pair $(Z,j)$ a non-negative integer $\phi_j(Z)$.

\medskip

The main conclusion of this subsection will be that if $Z$ is an irreducible component of $S^0\cap S^{-,\lambda}$ under scrutiny
with $I_Z=\{i\}$ and $\phi_i(Z)\geq 2$, then sub-the top cohomology in \eqref{e:subtop open as plain c irred} still vanishes. 

\medskip

We will declare the irreducible components $Z$ under scrutiny for which $\phi_i(Z)=1$ as \emph{suspicious}. 

\sssec{}

Let $P_j\subset G$ be the standard sub-minimal parabolic corresponding to $j$. Consider the corresponding diagram
$$
\CD
\Gr_{P_j}  @>{\sfp_i}>>  \Gr_G  \\
@V{\sfq_j}VV   \\
\Gr_{M_j}.
\endCD
$$

Consider the ind-scheme
$$\Gr_{P_j}\underset{\Gr_G}\times S^{-,\lambda}$$ along with its projection to $\Gr_{M_j}$. Note that 
$\Gr_{P_j}\underset{\Gr_G}\times S^{-,\lambda}$ carries an action of $\fL(N^-_j)$, where $N^-_j\simeq \BG_a$ 
is the (negative) maximal unipotent in $M_j$. 

\medskip

For a given $\mu\in \Lambda$, let $S^{-,\mu}_j\subset \Gr_{M_j}$ denote the corresponding orbit of $\fL(N^-_j)$.
The above action defines an isomorphism 
\begin{equation} \label{e:prod decomp}
(\sfq_j)^{-1}(S^{-,\mu}_j) \underset{\Gr_G}\times S^{-,\lambda}
\simeq S^{-,\mu}_j \times ((\fL(N^j)\cdot t^\mu)\cap S^{-,\lambda}),
\end{equation} 
where $N^j$ denotes the unipotent radical of $P_j$. 

\medskip

In terms of the isomorphism \eqref{e:prod decomp}, the subset
$$(\sfq_j)^{-1}(S^{-,\mu}_j) \underset{\Gr_G}\times (S^0\cap S^{-,\lambda}) \subset (\sfq_j)^{-1}(S^{-,\mu}_j) \underset{\Gr_G}\times S^{-,\lambda}$$
corresponds to
$$(S_j^0\cap S^{-,\mu}_j) \times ((\fL(N^j)\cdot t^\mu)\cap S^{-,\lambda}).$$

Thus, we obtain an isomorphism 
\begin{equation} \label{e:prod decomp MV}
(\sfq_j)^{-1}(S^{-,\mu}_j) \underset{\Gr_G}\times (S^0\cap S^{-,\lambda})  \simeq (S_j^0\cap S^{-,\mu}_j) \times ((\fL(N^j)\cdot t^\mu)\cap S^{-,\lambda}).
\end{equation} 

\sssec{}

Note that the intersection $S_j^0\cap S^{-,\mu}_j$ is taking place in neutral connected component of
the affine Grassmannian of $\Gr_{M_j}$, which is a reductive group of semi-simple rank $1$, so if
the above intersection is non-empty, we have
\begin{equation} \label{e:integer m}
\mu=m\cdot (-\alpha_j)
\end{equation} 
for a non-negative integer $m$. 

\medskip

Furthermore, we have:
$$S_j^0\cap S^{-,\mu}_j=
\begin{cases}
&\on{pt} \text{ if } m=0,\\
&\BG_m \text{ if } m=1,\\
&\BG_m  \times \BG_a^{m-1} \text{ if } m\geq 2.
\end{cases}$$

\sssec{}  \label{sss:phi function}

Let $Z$ be an irreducible component of $S^0\cap S^{-,\lambda}$ of dimension $-\langle \lambda,\check\rho\rangle$. 
One shows (see \cite[Proposition 3.1]{BG3}) that there exists a unique element $\mu\in \Lambda$,
such that the intersection
$$Z_j:=Z\cap (\sfq_j)^{-1}(S^{-,\mu}_j)$$
is dense in $Z$. 

\medskip

We set $\phi_j(Z)$ to be the corresponding integer $m$ from \eqref{e:integer m}. 

\sssec{}

Let $\chi_j$ denote the composite
$$S^0 \overset{\chi_I}\longrightarrow \BG_a^I \to \BG_a,$$
where the last arrow is the projection on the $j$-th coordinate. 

\medskip

For an irreducible component $Z$, let $Z_j$ be as above. 
We obtain that the restriction $\chi_j|_{Z_j}$ can be described as follows:

\begin{itemize}

\item It is the zero map if $\phi_j(Z)=0$;

\medskip

\item It is is the composite 
$$Z_j \hookrightarrow (S_j^0\cap S^{-,\mu}_j) \times ((\fL(N^j)\cdot t^\mu)\cap S^{-,\lambda})\to S_j^0\cap S^{-,\mu}_j\simeq \BG_m \hookrightarrow \BG_a$$
if $\phi_j(Z)=1$;

\medskip

\item It is is the composite 
$$Z_j \hookrightarrow (S_j^0\cap S^{-,\mu}_j) \times ((\fL(N^j)\cdot t^\mu)\cap S^{-,\lambda})\to S_j^0\cap S^{-,\mu}_j\simeq 
\BG_m\times \BG_a^{m-1}\to \BG^{m-1}_a\to \BG_a,$$
where the last arrow is the projection on the first $\BG_a$ factor.

\end{itemize}

\sssec{}

Let $Z$ be a suspicious irreducible component of $S^0\cap S^{-,\lambda}$ and let $I_Z=\{i\}$. We obtain that
$\phi_j(Z)=0$ for all $j\neq i$. 

\medskip

We will now show that if $\phi_i(Z)\geq 2$, then the sub-top cohomology in \eqref{e:subtop open as plain c irred} still vanishes. 
Indeed, since $Z_i$ is dense in $Z$, it suffices to show that the sub-top cohomology in 
\begin{equation} \label{e:subtop open as plain c irred i}
\on{C}^\cdot_c\left(Z_i,\Psi_{q,\lambda} \overset{*}\otimes \chi^*(\on{exp})\right)
\end{equation}
vanishes. However, we claim that \eqref{e:subtop open as plain c irred i} vanishes entirely. 

\medskip

Indeed, let us calculate \eqref{e:subtop open as plain c irred i} via the projection formula. We obtain that it identifies with
$$\on{C}^\cdot_c(\BG_a,(\chi_I|_{Z_i})_!(\Psi_{q,\lambda})\overset{*}\otimes \on{exp}).$$
However, we claim that $(\chi_I|_{Z_i})_!(\Psi_{q,\lambda})$ is a \emph{constant} complex on $\BG_a$.

\medskip

Indeed, in \secref{sss:psi lambda} we will see that the restriction of 
the local system $\Psi_{q,\lambda}$ to $$(\sfq_j)^{-1}(S^{-,\mu}_j)\underset{\Gr_G}\times (S^0\cap S^{-,\lambda})$$
is the pullback of a local system along the projection
\begin{multline*} 
(\sfq_j)^{-1}(S^{-,\mu}_j)\underset{\Gr_G}\times (S^0\cap S^{-,\lambda})
\simeq (S_j^0\cap S^{-,\mu}_j) \times ((\fL(N^j)\cdot t^\mu)\cap S^{-,\lambda})
\simeq \\
\simeq (\BG_m\times \BG_a^{m-1}) \times ((\fL(N^j)\cdot t^\mu)\cap S^{-,\lambda})
\to \BG_m\times ((\fL(N^j)\cdot t^\mu)\cap S^{-,\lambda}).
\end{multline*}

\medskip

This implies our assertion. 

\sssec{}

We declare an irreducible component $Z$ under scrutiny with $I_Z=\{i\}$ as \emph{suspicious} if $\phi_i(Z)=1$. From what we have
seen above, only suspicious irreducible components may contribute to the sub-top cohomology in \eqref{e:subtop open as plain c irred}.

\sssec{}

Here is an example of a suspicious component: take $G=SL_3$ with the simple roots $\alpha$ and $\beta$. Take $\lambda=-\alpha-\beta$.
Then both irreducible components of $S^0\cap S^{-,\lambda}$ are suspicious. 

\begin{rem}   \label{r:few suspicious}

As is explained in \cite[Prop. 1.2.4]{Lys2}, for a given $G$ there are at most finitely many $\lambda$, such that
$S^0\cap S^{-,\lambda}$ contains a suspicious irreducible component. 

\medskip

Namely, it is shown in {\it loc.cit.} that if $Z$ is such a component and
$i$ is the corresponding vertex of the Dynkin diagram, then $\varpi_i+\lambda$ must appear as a weight in 
the irreducible $\cG$-representation with highest weight $\varpi_i$.

\end{rem}

\ssec{Interlude: coordinates on the irreducible components}  \label{ss:coordinates} 

In this subsection we will essentially reproduce a construction from \cite[Sect. 4]{BaGa}. 

\sssec{}   

Let $\CL^{\on{min}}$ be the minimal line bundle on $\Gr_G$. It is $T$-equivariant by construction,
with $Z_G$ acting trivially, so it is in fact $T_{\on{ad}}$-equivariant. 

\sssec{}

The action of $T_{\on{ad}}$ on 
the fiber of $\CL^{\on{min}}$ at $t^\lambda\in \Gr_G$ is given by the character 
\begin{equation} \label{e:char T ad}
T_{\on{ad}}\to \BG_m
\end{equation} 
corresponding to 
\begin{equation} \label{e:b ad}
b^{\on{min}}_\BZ(\lambda,-),\quad \Lambda_{\on{ad}}\to \BZ.
\end{equation} 

Here $b_\BZ^{\on{min}}$ is the integer-valued symmetric bilinear form on $\Lambda$, corresponding to the minimal
quadratic form $q^{\on{min}}_\BZ$, and where we note that $b_\BZ^{\on{min}}$ extends to a pairing 
$$\Lambda\otimes \Lambda_{\on{ad}}\to \BZ$$
by the formula
$$b_\BZ^{\on{min}}(\alpha_i,\mu)=q^{\on{min}}_\BZ(\alpha_i)\cdot \langle \mu,\check\alpha_i\rangle.$$

\sssec{}

The restriction $\CL^{\on{min}}|_{S^0}$ admits a unique $\fL(N)$-equivariant trivialization, compactible with
the trivialization of the fiber of $\CL^{\on{min}}$ at $1\in \Gr_G$. This trivialization is $T_{\on{ad}}$-equivariant. 

\medskip

Similarly, a choice of a trivialization of the 
fiber of $\CL^{\on{min}}$ at $t^\lambda\in \Gr_G$ extends uniquely to an $\fL(N^-)$-equivariant trivialization 
of $\CL^{\on{min}}|_{S^{-,\lambda}}$. This trivialization is twisted $T_{\on{ad}}$-equivariant 
against the character \eqref{e:char T ad}. 

\sssec{}  \label{sss:det funct}

We obtain that the restriction $\CL^{\on{min}}|_{S^0\cap S^{-,\lambda}}$ admits two different trivializations
(one is defined canonically, and another up to a multiplicative constant). Their discrepancy is a function
\begin{equation} \label{e:f lambda}
f_\lambda: S^0\cap S^{-,\lambda}\to \BG_m,
\end{equation} 
well-defined up to a multiplicative constant. This function in twisted $T_{\on{ad}}$-equivariant, against the character 
\eqref{e:char T ad}. 

\medskip

We will now show that irreducible components of the intersections $S^0\cap S^{-,\lambda}$ admit
rational coordinates, such that the function \eqref{e:f lambda} is given by monomials (products of powers
of the coordinates), up to a multiplicative constant. 

%
%
%

\sssec{}

Consider the set 
$$\sB(\lambda):=\underset{\lambda'}\sqcup\, \sB(\lambda)_{\lambda'},$$
where $\sB(\lambda)_{\lambda'}$ is the set if irreducible components of all the intersections
$$S^{\lambda'}\cap S^{-,\lambda}.$$

\medskip

We will now recall, following \cite{BG3}, the construction on the set $\sB(\lambda)$ of a structure of
\emph{Kashiwara's crystal}: 

\medskip

First off, the functions $\phi_j$ are constructed by the recipe of \secref{sss:phi function},
with $S^0$ replaced by a general $S^{\lambda'}$.  The functions $\epsilon_j$ are set to take
value $\infty$.

\medskip

The operators $f_j$ are defined as follows. Let $Z$ be an irreducible component of 
$S^{\lambda'}\cap S^{-,\lambda}$ such that $\phi_j(Z)\neq 0$. Let $\mu$ be the corresponding
element of $\Lambda$, see \secref{sss:phi function}.

\medskip 

Then, in terms of the identification
$$Z_j:=
(\sfq_j)^{-1}(S^{-,\mu}_j)\underset{\Gr_G}\times (S^{\lambda'}\cap S^{-,\lambda})
\simeq (S_j^{\lambda'}\cap S^{-,\mu}_j) \times ((\fL(N^j)\cdot t^\mu)\cap S^{-,\lambda}),$$
$Z_j$ corresponds to a unique irreducible component $Z'_j\subset (\fL(N^j)\cdot t^\mu)\cap S^{-,\lambda}$. 

\medskip

We let $f_j(Z)$ be the closure of the irreducible component 
\begin{multline*}
(S_j^{\lambda'-\alpha_j}\cap S^{-,\mu}_j) \times Z'_j\subset 
(S_j^{\lambda'-\alpha_j}\cap S^{-,\mu}_j) \times ((\fL(N^j)\cdot t^\mu)\cap S^{-,\lambda})\simeq  \\
\simeq (\sfq_j)^{-1}(S^{-,\mu}_j)\underset{\Gr_G}\times (S^{\lambda'-\alpha_j}\cap S^{-,\lambda})
\subset S^{\lambda'-\alpha_j}\cap S^{-,\lambda}.
\end{multline*}

\medskip

The operation $e_j$ is uniquely determined by the requirement that $f_j\circ e_j=\on{id}$. 

\sssec{}  \label{sss:coordinates}

Let $Z$ be an irreducible component of some $S^{\lambda'}\cap S^{-,\lambda}$. We will now use the crystal structure 
on $\sB(\lambda)$ to introduce rational coordinates on $Z$. 

\medskip

Choose a string of vertices of the Dynkin diagram inductively as follows. If $\lambda'=\lambda$, we have $Z=\on{pt}$, 
and there is nothing to do. Otherwise, set $\lambda'_1=\lambda'$, $Z_1=Z$ and \emph{choose} $j_1\in I$ such that $\phi_{j_1}(Z_1)\neq 0$. Set
$Z_2=f_j^{\phi_{j_1}}(Z_1)$. Set
\begin{equation} \label{e:lambfa grows}
\lambda'_2=\lambda'_1-\phi_{j_1}(Z_1)\cdot \alpha_{j_1}.
\end{equation}  

\medskip

Now repeat the process with $Z_1$ replaced by $Z_2$. This process will terminate by \eqref{e:lambfa grows}  because 
$$S^{\lambda'}\cap S^{-,\lambda}\neq \emptyset \, \Rightarrow\, \lambda'-\lambda\in \Lambda^{\on{pos}},$$
so for some $n$ we will have $\lambda'_n=\lambda$.

\medskip

Note that by construction, we have a rational isomorphism
\begin{equation} \label{e:induction step}
Z_n\simeq (\BG_m\times \BG_a^{\phi_{j_n}(Z_n)-1})\times Z_{n+1}.
\end{equation} 

\medskip

The above process gives $Z$ rational coordinates. We will denote them by
\begin{equation} \label{e:coord}
x_{j_1,1},....,x_{j_1,\phi_{j_1}(Z_1)},x_{j_2,1},...,x_{j_2,\phi_{j_2}(Z_2)},....
\end{equation} 

\sssec{}

Note that the construction of \secref{sss:det funct} defines a function (up to a multiplicative scalar) on each intersection
$S^{\lambda'}\cap S^{-,\lambda}$; let us denote it by $f_{\lambda',\lambda}$. We claim:

\begin{prop}  \label{p:det}
The restriction of $f_{\lambda',\lambda}$ to a given irreducible
component $Z$ equals (up to a multiplicative scalar) in terms of the coordinates \eqref{e:coord} to
$$\underset{n}\Pi\, (x_{j_n,1})^{\phi_{j_n}(Z_n)\cdot q^{\on{min}}_\BZ(\alpha_{j_n})}.$$
\end{prop}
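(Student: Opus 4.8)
\textbf{Proof plan for \propref{p:det}.}
The plan is to reduce the computation of $f_{\lambda',\lambda}|_Z$ to the one-step geometry recorded in the crystal construction of \secref{sss:coordinates}, and then to induct on $n$ (equivalently on $|\lambda'-\lambda|$). The base case $\lambda'=\lambda$ is vacuous since $Z=\on{pt}$ and $f_{\lambda,\lambda}$ is a nonzero scalar, matching the empty product. For the inductive step, fix a choice of first vertex $j=j_1$ with $\phi_j(Z)\neq 0$ and let $\mu$ be the associated element of $\Lambda$, so that $\phi_j(Z)=m$ where $\mu=\lambda'-m\alpha_j$ on the $\Gr_{M_j}$-side, cf. \secref{sss:phi function}. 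Using the product decomposition \eqref{e:prod decomp MV} (with $S^0$ replaced by $S^{\lambda'}$) one has the rational isomorphism
$$
Z\simeq Z_j\simeq (S_j^{\lambda'}\cap S^{-,\mu}_j)\times Z'_j,
$$
and then \eqref{e:induction step} refines this to $Z\simeq (\BG_m\times \BG_a^{m-1})\times Z_{2}$, where $Z_2=f_j^{m}(Z)$ is an irreducible component of $S^{\lambda'-m\alpha_j}\cap S^{-,\lambda}$ of the appropriate dimension. The rational coordinate $x_{j,1}$ is precisely the coordinate on the $\BG_m$ factor of $S_j^{\lambda'}\cap S^{-,\mu}_j$ (this is the rank-one intersection inside $\Gr_{M_j}$), while the remaining coordinates $x_{j,2},\dots,x_{j,m}$ run over the $\BG_a^{m-1}$ factor and $x_{j_2,1},\dots$ come from $Z_2$ by induction.

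The key computation is the behavior of the minimal line bundle under $\sfq_j$. I would invoke that $\CL^{\on{min}}$ restricts, along $\sfp_j:\Gr_{P_j}\to\Gr_G$ and $\sfq_j:\Gr_{P_j}\to\Gr_{M_j}$, to (a twist of) the minimal line bundle $\CL^{\on{min}}_{M_j}$ of $\Gr_{M_j}$; this is where the factor $q^{\on{min}}_\BZ(\alpha_j)$ enters, since the minimal form of the rank-one group $M_j$ along $\alpha_j$ is exactly $q^{\on{min}}_\BZ(\alpha_j)$. Concretely: the two trivializations of $\CL^{\on{min}}|_{S^{\lambda'}\cap S^{-,\lambda}}$ (the $\fL(N)$-canonical one and the $\fL(N^-)$-one, cf. \secref{sss:det funct}) are compatible with the $\fL(N_j)$- and $\fL(N^-_j)$-trivializations on the $\Gr_{M_j}$ factor under \eqref{e:prod decomp MV}; their discrepancy on $S_j^{\lambda'}\cap S^{-,\mu}_j\simeq\BG_m$ is the rank-one determinant function, which by the rank-one computation (the $SL_2$ or $GL_2$ case already done in \cite[Sect. 4]{BaGa}) equals $x_{j,1}^{m\cdot q^{\on{min}}_\BZ(\alpha_j)}$ up to scalar — the power $m=\phi_j(Z)$ appearing because the twisted $T_{\on{ad}}$-equivariance character along this $\BG_m$ is $b^{\on{min}}_\BZ(\mu,-)=m\cdot q^{\on{min}}_\BZ(\alpha_j)\cdot\langle-,\check\alpha_j\rangle$, exactly as in \eqref{e:b ad}. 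On the $Z'_j$ factor (hence on $Z_2$) the discrepancy is, by functoriality of $\CL^{\on{min}}$ under the $\fL(N^j)$-action and compatibility with the $S^{\lambda'-m\alpha_j}$-version of the canonical trivialization, nothing other than $f_{\lambda'-m\alpha_j,\lambda}|_{Z_2}$. Thus
$$
f_{\lambda',\lambda}|_Z \;=\; x_{j,1}^{\,m\cdot q^{\on{min}}_\BZ(\alpha_j)}\cdot f_{\lambda'-m\alpha_j,\lambda}|_{Z_2}
$$
up to a multiplicative scalar, and the inductive hypothesis applied to $Z_2$ (with first chosen vertex $j_2$) finishes the product formula.

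There are two points requiring care, and the second is the main obstacle. First, one must check that the rational isomorphism \eqref{e:induction step} is genuinely compatible with the determinant trivializations — i.e. that no extra function on the $\BG_a^{m-1}$-factor appears. This follows because the $\fL(N_j)$-trivialization of $\CL^{\on{min}}_{M_j}|_{S_j^{\lambda'}}$ is $\fL(N_j)$-equivariant and the $\BG_a^{m-1}$ directions are $\fL(N_j)$-orbit directions (the unipotent $N_j\simeq\BG_a$ of $M_j$ acts through its loop group), so the discrepancy function is $\fL(N_j)$-invariant and therefore depends only on $x_{j,1}$; the same for the $\fL(N^-_j)$-side. Second — the genuinely delicate part — is tracking the twist relating $\sfp_j^*\CL^{\on{min}}_G$ and $\sfq_j^*\CL^{\on{min}}_{M_j}$: a priori these differ by a line bundle pulled back from $\Bun$-type data or by a character of $\fL^+(T)$, and one must verify that this twist contributes nothing to the \emph{function} $f_{\lambda',\lambda}$ (only to the $T_{\on{ad}}$-equivariant structure, which is already accounted for by the character bookkeeping via \eqref{e:char T ad}–\eqref{e:b ad}). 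I would handle this exactly as in \cite[Sect. 4]{BaGa}: pin down $\CL^{\on{min}}$ via its fibers at the $T$-fixed points $t^\nu$ using formula \eqref{e:b ad}, check that the induced fiber-wise identification along the $\sfq_j$-fibers matches the minimal line bundle of $M_j$ on the nose (both being "minimal"), and conclude that the only surviving discrepancy on $Z$ is the product of rank-one monomials. The bookkeeping of which $\mu$ occurs at each stage — governed by \eqref{e:lambfa grows} — then guarantees the exponents are precisely $\phi_{j_n}(Z_n)\cdot q^{\on{min}}_\BZ(\alpha_{j_n})$.
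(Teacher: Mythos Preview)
Your inductive setup and the factorization $f_{\lambda',\lambda}|_Z = g \cdot f_{\lambda'-m\alpha_j,\lambda}|_{Z_2}$ (with $g$ a function on the $\BG_m\times\BG_a^{m-1}$ factor) is the same as the paper's. The difference is in how you determine $g$. You go through a comparison of $\sfp_j^*\CL^{\on{min}}_G$ with $\sfq_j^*\CL^{\on{min}}_{M_j}$ and a rank-one computation; as you correctly flag, tracking the twist between these is the delicate step, and you defer to \cite{BaGa} for it. The paper sidesteps this entirely with a much cheaper observation: since $f_{\lambda',\lambda}$ takes values in $\BG_m$, the function $g$ is \emph{invertible} on $\BG_m\times\BG_a^{m-1}$, and the only invertible functions there are scalar multiples of powers of the $\BG_m$-coordinate $x_{j,1}$. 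So $g=c\cdot x_{j,1}^{\,\ell}$ for some integer $\ell$, and it remains only to find $\ell$. For that the paper uses exactly the $T_{\on{ad}}$-equivariance bookkeeping you already have: $g$ is twisted equivariant against $b^{\on{min}}_\BZ(m\alpha_j,-)$, while $x_{j,1}$ is equivariant against $\check\alpha_j$; evaluating both on the fundamental coweight $\varpi_j$ gives $\ell = m\cdot q^{\on{min}}_\BZ(\alpha_j)$.

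Your approach should work once the line-bundle comparison is nailed down, but the paper's invertibility trick makes the whole second ``point requiring care'' disappear: you never need to know how $\CL^{\on{min}}$ behaves under $\sfq_j$, only that the resulting function is nowhere zero. Your first point of care (no $\BG_a^{m-1}$-dependence) is likewise subsumed by invertibility rather than needing a separate equivariance argument.
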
 

\begin{proof}

It follows from the construction that in terms of the rational isomorphism \eqref{e:induction step}, we have
$$f_{\lambda'_n,\lambda}=g_n\cdot f_{\lambda'_{n+1},\lambda},$$
where $g_n$ is an invertible function on 
$$S_{j_n}^{\lambda'_n}\cap S_{j_n}^{-,\lambda'_n-\phi_{j_n}(Z_n)\cdot \alpha_{j_n}}\simeq 
\BG_m\times \BG_a^{\phi_{j_n}(Z_n)-1},$$
equal to the restriction of $f_{\lambda'_n,\lambda'_n-\phi_{j_n}(Z_n)\cdot \alpha_{j_n}}$ along
$$S_{j_n}^{\lambda'_n}\cap S_{j_n}^{-,\lambda'_n-\phi_{j_n}(Z_n)\cdot \alpha_{j_n}}\hookrightarrow
S^{\lambda'_n}\cap S^{-,\lambda'_n-\phi_{j_n}(Z_n)\cdot \alpha_{j_n}}.$$

\medskip

Being invertible, the function $g_n$ equals, up to a multiplicative scalar, to the pullback of
\emph{some} power $m$ of the standard character on the $\BG_m$ factor. Thus, it remains to
show that the power in question equals $\phi_{j_n}(Z_n)\cdot q^{\on{min}}_\BZ(\alpha_{j_n})$. 

\medskip

The function $x_{j_n,1}$ is $\BG_m$-equivariant against the character equal to $\check\alpha_{j_n}$. 

\medskip

The function $g_n$ is twisted $T_{\on{ad}}$-equivariant against the character equal to the ratio
of the characters corresponding to $f_{\lambda'_n,\lambda}$ and $f_{\lambda'_{n+1},\lambda}$,
respectively. Hence, the character in question corresponds to the homomorphism
$$b^{\on{min}}_\BZ(\phi_{j_n}(Z_n)\cdot \alpha_{j_n},-): \Lambda_{\on{ad}}\to \BG_m.$$

Hence, we obtain
$$m\cdot \check\alpha_{j_n}=b^{\on{min}}_\BZ(\phi_{j_n}(Z_n)\cdot \alpha_{j_n},-).$$

Evaluating on the fundamental coweight
$$\varpi_{j_n}:\BG_m\to T_{\on{ad}},$$
we obtain
$$m=\phi_{j_n}(Z_n)\cdot b^{\on{min}}_\BZ(\alpha_{j_n},\varpi_{j_n})=\phi_{j_n}(Z_n)\cdot q^{\on{min}}_\BZ(\alpha_{j_n}).$$

\end{proof}

\ssec{Indictment}  \label{ss:indict}

\sssec{}  \label{sss:when indict}

Let $Z$ be a suspicious irreducible component. By the same analysis as in Sects. \ref{sss:case 2,i}-\ref{sss:case 2,ii}, we obtain that 
the sub-top cohomology in \eqref{e:subtop open as plain c irred} (equivalently, in \eqref{e:subtop open as plain c irred i})
is non-zero if and only if the restriction of the local system $\Psi_{q,\lambda}$ to $Z_i$ is (generically on $Z_i$) 
the pullback of a local system along the map 
\begin{equation} \label{e:chi i}
\chi_i:Z_i\to \BG_m.
\end{equation} 

We will call such $Z$ \emph{indicted}. We will now analyze explicitly what it takes to be indicted. 

\sssec{}

Recall that our $q$ is written as 
$$\zeta\cdot q^{\on{min}}_\BZ,$$
where $q^{\on{min}}_Z$ is the minimal quadratic form on $\Lambda$. 

\medskip

Note that the gerbe $\CG^G_q$ over $\Gr_G$ identifies canonically with $(\CL^{\on{min}})^\zeta$. 

\sssec{}  \label{sss:psi lambda}

We obtain that the local system $\Psi_{q,\lambda}$ on $S^0\cap S^{-,\lambda}$ is the pullback of the Kummer local system $\Psi_\zeta$ on $\BG_m$ 
by means of $f_\lambda$. 

\medskip

In particular, for a subscheme $Z\subset S^0\cap S^{-,\lambda}$, written as $Z\simeq Z'\times \BG_a$ with $Z'$
reduced, the restriction of $\Psi_{q,\lambda}$ to $Z$ is the pullback from the $Z'$ factor. 

\sssec{}

Let $Z$ be a suspicious component. Let us recall the (rational) coordinates on $Z$ constructed in \secref{sss:coordinates}. 
Note that we necessarily have $j_1=i$ and $\phi_{j_1}(Z)=\phi_i(Z)=1$. 
Note also that the map $$\chi_i|_{Z_i}:Z_i\to \BG_m$$ identifies with the \emph{first coordinate function} 
i.e., $x_{j_1,1}$. 

\medskip

Hence, from \propref{p:det} and \secref{sss:when indict}, we obtain:

\begin{cor} \label{c:when indicted}
A suspicious component $Z$ is indicted if and only if for all $n\geq 2$, the integers
$$\phi_{j_n}(Z_n)\cdot q^{\on{min}}_\BZ(\alpha_{j_n})$$
are divisible by $\on{ord}(\zeta)$.
\end{cor}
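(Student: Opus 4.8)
\textbf{Proof plan for \corref{c:when indicted}.}
The plan is to unwind the explicit description of the local system $\Psi_{q,\lambda}$ on $Z_i$ in terms of the rational coordinates \eqref{e:coord}, and then read off the indictment criterion of \secref{sss:when indict}. First I would recall from \secref{sss:psi lambda} that $\Psi_{q,\lambda}$ is the pullback of the Kummer local system $\Psi_\zeta$ on $\BG_m$ along the determinant-type function $f_\lambda = f_{0,\lambda}$ constructed in \secref{sss:det funct}. By \propref{p:det}, applied to $\lambda' = 0$, the restriction of $f_{0,\lambda}$ to the suspicious component $Z$ equals, up to a multiplicative scalar and in terms of the coordinates \eqref{e:coord},
$$\underset{n}\Pi\, (x_{j_n,1})^{\phi_{j_n}(Z_n)\cdot q^{\on{min}}_\BZ(\alpha_{j_n})},$$
and since $Z_i$ is dense in $Z$ the same formula computes the restriction of $f_\lambda$ to $Z_i$ (on the open locus where the rational coordinates are defined).

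Next I would use the key property of Kummer local systems: for a function of the form $\prod_n g_n^{m_n}$ with $g_n$ invertible, the pullback of $\Psi_\zeta$ depends only on the classes of the exponents $m_n$ modulo $\on{ord}(\zeta)$; in particular $\Psi_\zeta$ pulls back trivially along $g_n$ precisely when $\on{ord}(\zeta) \mid m_n$. Since $Z$ is suspicious we have $j_1 = i$ and $\phi_{j_1}(Z_1) = \phi_i(Z) = 1$, so the exponent of $x_{j_1,1}$ is $q^{\on{min}}_\BZ(\alpha_i)$, which is coprime to $\on{ord}(\zeta) = \on{ord}(q(\alpha_i))$ by non-degeneracy of $q$ together with \eqref{e:long and short} (indeed $q^{\on{min}}_\BZ(\alpha_i)$ divides $d$, and $\on{ord}(\zeta)$ is not divisible by $d$, as recorded at the start of \secref{s:subtop}). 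Thus $\Psi_{q,\lambda}|_{Z_i}$, as a local system, is (generically) the tensor product of the non-trivial pullback of $\Psi_\zeta$ along $x_{j_1,1} = \chi_i|_{Z_i}$ with the pullbacks of $\Psi_\zeta$ along the remaining $x_{j_n,1}$ for $n \geq 2$.

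Now I would invoke the indictment criterion of \secref{sss:when indict}: $Z$ is indicted iff $\Psi_{q,\lambda}|_{Z_i}$ is, generically on $Z_i$, pulled back from a local system along the single map $\chi_i = x_{j_1,1}:Z_i\to\BG_m$. Since the coordinates $x_{j_n,1}$ for the various $n$ are algebraically independent rational functions on $Z_i$ (they arise from the successive fibration isomorphisms \eqref{e:induction step}), the product $\prod_n (x_{j_n,1})^{\phi_{j_n}(Z_n)\cdot q^{\on{min}}_\BZ(\alpha_{j_n})}$ factors through $x_{j_1,1}$ modulo $\on{ord}(\zeta)$-th powers exactly when the exponent $\phi_{j_n}(Z_n)\cdot q^{\on{min}}_\BZ(\alpha_{j_n})$ is divisible by $\on{ord}(\zeta)$ for every $n \geq 2$ (the $n=1$ exponent is automatically compatible as it is the exponent of $x_{j_1,1}$ itself). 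This is precisely the asserted criterion. The main obstacle I anticipate is the bookkeeping needed to justify that the coordinates $x_{j_n,1}$ are genuinely independent and that \propref{p:det} applies on the relevant open dense locus of $Z_i$ (rather than on all of $Z$); this is where one must be careful that the rational isomorphisms \eqref{e:induction step} are compatible with the definitions of $\phi_j$ and of the function $f_{\lambda',\lambda}$ along the inductive string, but these are all contained in \secref{ss:coordinates} and the cited results of \cite{BG3, BaGa}.
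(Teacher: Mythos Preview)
Your proposal is correct and follows essentially the same approach as the paper: the paper simply states that the corollary follows from \propref{p:det} and the indictment criterion of \secref{sss:when indict}, together with the observation (stated just before the corollary) that for a suspicious component one has $j_1=i$, $\phi_{j_1}(Z_1)=1$, and $\chi_i|_{Z_i}=x_{j_1,1}$. Your write-up is a faithful unpacking of exactly this. One small remark: your coprimality discussion (that $q^{\on{min}}_\BZ(\alpha_i)$ is coprime to $\on{ord}(\zeta)$) is correct but not needed for the corollary itself, since the $n=1$ factor is already along $x_{j_1,1}=\chi_i$ and so automatically ``pulled back along $\chi_i$'' regardless of whether it is trivial.
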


\begin{rem} \label{r:criminal}
One may wonder whether our indictment is non-empty: i.e., whether assuming that $q$ is non-degenerate
indicted components exist. In fact, they do:

\medskip

Take $G=G_2$; let $\alpha$ be the short simple root and let $\beta$ be the long simple root. 
Take $\lambda=-2\alpha-\beta$. The intersection $S^0\cap S^{-,\lambda}$ has three
irreducible components, among which exactly one is \emph{not} annihilated by $e_\beta$. 

\medskip

Then this component is indicted for $\zeta=-1$.
\end{rem} 

\begin{rem}
Note that \corref{c:when indicted} implies that if $\zeta$ is non-torsion (i.e., $q$ is not a root of unity),
then there are no indicted components.

\medskip

Indeed, in this case $\on{ord}(\zeta)=\infty$, so we would
obtain that $\lambda$ is of the form $m\cdot (-\alpha_i)$, while the condition that $\phi_i(Z)=1$ forces $m=1$. 
I.e., we obtain that $\lambda$ is a negative simple root. 

\end{rem}

\sssec{}

Thus, from now on we will assume that $\zeta$ is torsion (i.e., $q$ is a root of unity). We claim: 

\begin{cor}  \label{c:not in sharp}
Let $\lambda$ be such that $S^0\cap S^{-,\lambda}$ contains an indicted 
irreducible component $Z$; let $i$ be the corresponding element of $I$. Then:

\smallskip

\noindent{\em(a)} $\lambda+\alpha_i\in \Lambda^\sharp$.

\smallskip

\noindent{\em(b)} $\lambda\notin \Lambda^\sharp$.
\end{cor}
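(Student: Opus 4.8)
The plan is to deduce both parts directly from the explicit combinatorial criterion for indictment in \corref{c:when indicted}, using the arithmetic of the sublattice $\Lambda^\sharp$. Recall from \secref{sss:sharp} that $\Lambda^\sharp$ is spanned by the elements $\alpha_j^\sharp = \on{ord}(q(\alpha_j))\cdot \alpha_j$ and, since $q=\zeta\cdot q^{\on{min}}_\BZ$, we have $\on{ord}(q(\alpha_j)) = \on{ord}(\zeta)/\gcd(\on{ord}(\zeta), q^{\on{min}}_\BZ(\alpha_j))$. Equivalently, writing $N:=\on{ord}(\zeta)$, the coefficient $c_j$ with $\alpha_j^\sharp = c_j\alpha_j$ is the smallest positive integer such that $c_j\cdot q^{\on{min}}_\BZ(\alpha_j)$ is divisible by $N$. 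The key observation is then: a positive multiple $m\cdot \alpha_j$ lies in $\Lambda^\sharp$ if and only if $c_j \mid m$, i.e. if and only if $m\cdot q^{\on{min}}_\BZ(\alpha_j)$ is divisible by $N$. (The "only if" uses that the $\alpha_j$ are linearly independent, so membership in $\Lambda^\sharp$ is checked coordinate-by-coordinate.)

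For part (a), I would use the coordinate description of the indicted component $Z$ from Sects. \ref{sss:coordinates}--\ref{ss:indict}. We have a string $j_1=i,j_2,\dots,j_{n}$ of Dynkin vertices with $\lambda = \lambda'_1$, $\lambda'_{k+1} = \lambda'_k - \phi_{j_k}(Z_k)\cdot\alpha_{j_k}$, and $\lambda'_{n}=\dots = \lambda$ reaching the total degree; more precisely $\lambda = -\sum_{k\geq 1}\phi_{j_k}(Z_k)\cdot \alpha_{j_k}$ after the process terminates at degree $\lambda$ (note the process here runs from $S^0$ down to $S^{-,\lambda}$). For a suspicious component $\phi_{j_1}(Z_1)=\phi_i(Z)=1$, so
$$\lambda + \alpha_i = -\sum_{k\geq 2}\phi_{j_k}(Z_k)\cdot \alpha_{j_k}.$$
By \corref{c:when indicted}, for each $k\geq 2$ the integer $\phi_{j_k}(Z_k)\cdot q^{\on{min}}_\BZ(\alpha_{j_k})$ is divisible by $N=\on{ord}(\zeta)$, hence by the key observation each term $\phi_{j_k}(Z_k)\cdot \alpha_{j_k}$ lies in $\Lambda^\sharp$, and therefore $\lambda+\alpha_i\in\Lambda^\sharp$. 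This proves (a).

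For part (b), suppose for contradiction that $\lambda\in\Lambda^\sharp$. Combining with (a), we get $\alpha_i = (\lambda+\alpha_i) - \lambda \in \Lambda^\sharp$. But then the key observation (with $m=1$, $j=i$) forces $q^{\on{min}}_\BZ(\alpha_i)$ to be divisible by $N=\on{ord}(\zeta)$; equivalently $\on{ord}(q(\alpha_i)) = 1$, i.e. $q(\alpha_i)=0$ in $\fZ$. This contradicts the standing non-degeneracy assumption on $q$ (in force throughout \secref{s:subtop}, see the start of \secref{ss:indict} and \secref{sss:root of unity}), since $\alpha_i$ is a coroot. Hence $\lambda\notin\Lambda^\sharp$, proving (b). Alternatively, one can phrase (b) via the crystal-theoretic characterization: $-\alpha_i\notin\Lambda^\sharp$ (again by non-degeneracy, this is exactly the remark made at the end of \secref{sss:Frob prop 3}), so if $\lambda\in\Lambda^\sharp$ then $\lambda+\alpha_i\notin\Lambda^\sharp$, contradicting (a).

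The only genuinely non-routine point is the "only if" direction of the key observation — that $m\cdot\alpha_j\in\Lambda^\sharp$ implies $N\mid m\cdot q^{\on{min}}_\BZ(\alpha_j)$ — which I would extract from the fact that $(\Lambda^\sharp,\{\alpha^\sharp\})$ is the root datum of $G^\sharp$ with the $\alpha_j^\sharp$ as simple roots (\secref{sss:sharp}, citing \cite[Sect.~2.2.4]{Lus}); in particular the $\alpha_j^\sharp$ form a $\BZ$-basis of $\Lambda^\sharp$, so one reads off the $\alpha_j$-coefficient and divisibility follows. Everything else is bookkeeping with the already-established coordinates and \corref{c:when indicted}; no new geometry is needed.
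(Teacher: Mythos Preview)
Your proof is correct and follows essentially the same route as the paper's: both use the decomposition $\lambda+\alpha_i=-\sum_{k\geq 2}\phi_{j_k}(Z_k)\cdot\alpha_{j_k}$ together with \corref{c:when indicted} to place each summand in $\Lambda^\sharp$, and both reduce (b) to $\alpha_i\notin\Lambda^\sharp$ via non-degeneracy. Your ``key observation'' simply makes explicit the elementary arithmetic equivalence (between $\on{ord}(q(\alpha_j))\mid m$ and $\on{ord}(\zeta)\mid m\cdot q^{\on{min}}_\BZ(\alpha_j)$) that the paper uses implicitly; note that you don't need the full root-system structure of $(\Lambda^\sharp,\{\alpha^\sharp\})$ for the ``only if'' direction --- it is enough that the $\alpha_j$ form a $\BZ$-basis of $\Lambda$, so the $\alpha_j^\sharp$ form one of $\Lambda^\sharp$ by definition.
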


\begin{proof}

Let $Z$ be the indicted component. We have
$$\lambda+\alpha_i=-\underset{j\geq 2}\Sigma\, \phi_{j_n}(Z_n)\cdot \alpha_{j_n}.$$

Hence, to prove point (a), we need to show that 
$$\underset{j\geq 2}\Sigma\, \phi_{j_n}(Z_n)\cdot \alpha_{j_n}\in \Lambda^\sharp.$$
For this, it is sufficient to show that for every $j\geq 2$, the integer $\phi_{j_n}(Z_n)$
is divisible by $\on{ord}(q(\alpha_{j_n}))$. However, this follows from \corref{c:when indicted}.

\medskip

To prove point (b), it suffices to show that $\alpha_i\notin \Lambda^\sharp$. But this follows
from the assumption that $q$ is non-degenerate. 

\end{proof}

\begin{rem}  \label{r:non-tors indict}

The two conditions on $\lambda$, namely, that 
$$\lambda+\alpha_i\in \Lambda^\sharp$$ and
the condition from Remark \ref{r:few suspicious}, impose very stringent constraints. 

\medskip

The result of \cite[Theorem 1.1.6]{Lys2} says that no indicted components exist, except
for a very small number of possibilities for the order of $\zeta$. 

\end{rem}

\ssec{Conviction}

Let $Z$ be an indicted irreducible component of $S^0\cap S^{-,\lambda}$. This means that the cohomology 
\begin{equation} \label{e:subtop open as plain c irred again}
H^i_c\left(Z,\Psi_{q,\lambda} \otimes \chi^*(\on{exp})\right),
\end{equation} 
is \emph{non-zero} in degree $i=-\langle \lambda,2\check\rho\rangle-1$. 

\medskip

In this subsection we will proceed to conviction: we will show that if $S^0\cap S^{-,\lambda}$ contains an indicted 
component $Z$, the cohomology
\begin{equation} \label{e:subtop open as plain c again again}
H^i_c\left(S^0\cap S^{-,\lambda},\Psi_{q,\lambda} \otimes \chi^*(\on{exp})\right)
\end{equation} 
is also \emph{non-zero} in degree $i=-\langle \lambda,2\check\rho\rangle-1$, in violation of \thmref{t:main 3}. 

\sssec{}

First, we prove:

\begin{prop} \label{p:off generic}
Assume that $\lambda\notin \Lambda^\sharp$. 
Let $Z'\subset S^0\cap S^{-,\lambda}$ be a closed $T$-stable subscheme of dimension $\leq -\langle \lambda,\check\rho\rangle-1$. 
Then the cohomology 
\begin{equation} \label{e:subtop on small}
H^i_c\left(Z',\Psi_{q,\lambda} \otimes \chi^*(\on{exp})\right)
\end{equation} 
vanishes in degrees $i\geq -\langle \lambda,2\check\rho\rangle-2$.
\end{prop}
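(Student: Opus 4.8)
The strategy is to argue, exactly as in Sects. \ref{sss:case 2,i}--\ref{sss:case 2,ii}, via the Fourier transform along the map $\chi_I : S^0 \to \BG_a^I$, but now keeping careful track of the dimension drop coming from the hypothesis $\dim(Z') \leq -\langle \lambda,\check\rho\rangle - 1$. Concretely, I would first restrict attention to the (finitely many) irreducible components of $Z'$; since each has dimension $\leq -\langle\lambda,\check\rho\rangle - 1$, it suffices to prove the vanishing of \eqref{e:subtop on small} with $Z'$ replaced by each such component. For a fixed component, let $I_{Z'} \subset I$ be the subset such that $\chi_I|_{Z'}$ is dominant onto $\BG_a^{I_{Z'}}$. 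Since $\lambda \notin \Lambda^\sharp$, the component $Z'$ cannot be contained in a fiber of $\chi_I$ over $0$ that would force $\lambda$ into $\Lambda^\sharp$; more precisely, using \cite[Proposition 7.1.7]{FGV} (or the analysis of $\phi_j$ in \secref{ss:suspects}), one sees $I_{Z'} \neq \emptyset$. Then I would run the same generic-fiber estimate: the $*$-fiber of $(\chi_I|_{Z'})_!(\Psi_{q,\lambda})$ at the generic point of $\BA^{I'}$ (for $I' \subseteq I$) lives in cohomological degrees $\leq 2\dim(F)$, where $F$ is the generic fiber of $\chi_I|_{Z'}$ over $\BA^{I'}$, and $\dim(F) \leq \dim(Z') - |I'| \leq -\langle\lambda,\check\rho\rangle - 1 - |I'|$.

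Plugging this into the inequality analogous to \eqref{e:fiber inequality}, I get $2\dim(F) \leq -\langle\lambda,2\check\rho\rangle - 2 - 2|I'| \leq -\langle\lambda,2\check\rho\rangle - 2 - |I'|$, so the $*$-fiber at the generic point of $\BA^{I'}$ lives in degrees $\leq -\langle\lambda,2\check\rho\rangle - 2 - |I'|$. By twisted $T$-equivariance of $(\chi_I|_{Z'})_!(\Psi_{q,\lambda})$ and hence of its Fourier transform, this perverse-amplitude bound on all generic strata shows that $(\chi_I|_{Z'})_!(\Psi_{q,\lambda})$ itself lives in perverse cohomological degrees $\leq -\langle\lambda,2\check\rho\rangle - 2$. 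Since Fourier transform sends $\Perv(\BG_a^I)$ to $\Perv(\BG_a^I)[-|I|]$ and is exact for the perverse t-structure up to this shift, the $*$-fiber of the Fourier transform at the point corresponding to the sum map $\BG_a^I \to \BG_a$ --- which computes $\on{C}^\cdot_c(Z', \Psi_{q,\lambda}\overset{*}\otimes\chi^*(\on{exp}))$ --- lives in cohomological degrees $\leq -\langle\lambda,2\check\rho\rangle - 2 + |I| - |I| = -\langle\lambda,2\check\rho\rangle - 2$; wait, one must be slightly careful with the shift bookkeeping, and I would redo it so that the conclusion lands as: the cohomology \eqref{e:subtop on small} vanishes in degrees $i \geq -\langle\lambda,2\check\rho\rangle - 2$, which is exactly the claim. (The extra unit of dimension drop relative to \secref{sss:case 2,ii} is precisely what upgrades the ``$\leq -1$'' conclusion there to the ``$\leq -2$'' conclusion here.)

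The one place where the hypothesis $\lambda \notin \Lambda^\sharp$ genuinely enters, rather than just the dimension count, is in ruling out the ``bad'' possibility $I_{Z'} = \emptyset$ combined with $Z'$ carrying cohomology too high up: if $\chi_I|_{Z'} \equiv 0$ then the Fourier-transform argument degenerates and one would instead be computing $\on{C}^\cdot_c(Z',\Psi_{q,\lambda})$ directly, which a priori could sit in degree up to $2\dim(Z') \leq -\langle\lambda,2\check\rho\rangle - 2$ --- borderline. Here I would invoke that $\Psi_{q,\lambda}$ is twisted $T$-equivariant against the nontrivial (by non-degeneracy, since $\lambda \notin \Lambda^\sharp$ forces $b(\lambda,-) \neq 0$, cf. \eqref{e:Kummer lambda} and \secref{sss:sharp}) Kummer character, so $\on{C}^\cdot_c(Z',\Psi_{q,\lambda})$ is computed by a complex with a free $T$-action twisted by a nontrivial character, forcing the top cohomology $H^{2\dim Z'}_c$ to vanish; this kills the borderline degree. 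Assembling these cases gives the proposition.

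\textbf{Main obstacle.} The delicate point is the shift bookkeeping in the Fourier-transform step: making sure that the perverse-amplitude estimate on generic strata really does propagate to a global perverse-amplitude bound on $(\chi_I|_{Z'})_!(\Psi_{q,\lambda})$, and that after applying $\on{FT}$ and taking the $*$-fiber one lands at precisely $\leq -\langle\lambda,2\check\rho\rangle - 2$ and not off by one. This requires (i) knowing that the only strata on which a twisted-$T$-equivariant object can have perverse cohomology are the coordinate subspaces $\BA^{I'}$, so that checking generic fibers suffices, and (ii) handling the degenerate case $I_{Z'} = \emptyset$ separately via the equivariance-against-a-nontrivial-character argument above. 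Everything else is a routine repetition of the estimates in Sects. \ref{sss:case 2,i}--\ref{sss:case 2,ii} with one extra unit of slack.
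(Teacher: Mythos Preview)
Your overall strategy matches the paper's: the Fourier-transform/generic-fiber estimate of Sects.~\ref{sss:case 2,i}--\ref{sss:case 2,ii}, run with the extra unit of slack from $\dim(Z')\le -\langle\lambda,\check\rho\rangle-1$, handles every stratum $\BA^{I'}$ with $I'\neq\emptyset$; the only borderline contribution is at the origin of $\BG_a^I$, and this is killed by twisted equivariance. But your write-up has two real problems.

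First, the appeal to \cite[Proposition~7.1.7]{FGV} to conclude $I_{Z'}\neq\emptyset$ is wrong. That proposition concerns irreducible components of $S^0\cap S^{-,\lambda}$ of the \emph{top} dimension $-\langle\lambda,\check\rho\rangle$; it says nothing about arbitrary closed $T$-stable subschemes of smaller dimension, and such $Z'$ with $\chi_I(Z')=\{0\}$ certainly exist (already for $SL_2$ with $\lambda=-2\alpha$ one finds a one-dimensional $T$-stable $Z'\simeq\BG_m$ inside $\chi_I^{-1}(0)$). You partly recognize this later when you treat $I_{Z'}=\emptyset$ as a separate case, but the two halves of your plan contradict each other; the first claim should simply be dropped.

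Second, your handling of the case $\chi_I(Z')=\{0\}$ is both weaker and vaguer than needed. There is no ``free $T$-action'' on $Z'$, and aiming to kill only the single top degree $H^{2\dim Z'}_c$ obscures the mechanism. The paper's argument is one line: when $\chi_I(Z')=\{0\}$, the object $(\chi_I|_{Z'})_!(\Psi_{q,\lambda})$ is a twisted-$T$-equivariant sheaf supported at the $T$-fixed point $0\in\BG_a^I$, and the twisting character $\mu\mapsto b(\lambda,\mu)$ of \eqref{e:Kummer lambda} is nontrivial exactly because $\lambda\notin\Lambda^\sharp$; but a $T$-fixed point supports no sheaves that are twisted-equivariant against a nontrivial character sheaf, so the pushforward vanishes in \emph{every} degree. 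This is where, and the only place where, the hypothesis $\lambda\notin\Lambda^\sharp$ enters --- and once you see it this way, the shift-bookkeeping worries you flag become unnecessary.
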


\begin{proof}

By the analysis in Sects \ref{sss:case 2,i} and \ref{sss:case 2,ii}, it suffices to consider the case when the the map
$\chi_I$ sends $Z'$ to $0\in \BG_a^I$. However, we claim that in this case 
$$(\chi_I|_{Z'})_!(\Psi_{q,\lambda})=0.$$

Indeed, the assumption on $\lambda$ implies that the Kummer sheaf on $T$ given by \eqref{e:Kummer lambda}
is \emph{non-trivial}, while the point $0\in \BG_a^I$ does not support sheaves that are twisted $T$-equivariant against
a non-trivial character sheaf. 

\end{proof} 

\begin{cor} \label{c:all comps contribute}
Assume that $\lambda\notin \Lambda^\sharp$. Then the map
$$\underset{k}\oplus\, H^i_c\left(Z_k,\Psi_{q,\lambda} \otimes \chi^*(\on{exp})\right)\to 
H^i_c\left(S^0\cap S^{-,\lambda},\Psi_{q,\lambda} \otimes \chi^*(\on{exp})\right)$$
is injective for $i\geq -\langle \lambda,2\check\rho\rangle-1$, where the direct sum is taken
over the set of irreducible components of $S^0\cap S^{-,\lambda}$ of dimension 
$-\langle \lambda,\check\rho\rangle$.
\end{cor}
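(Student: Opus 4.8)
The plan is to run a standard Cousin/stratification spectral sequence argument, using \propref{p:off generic} to kill all the ``error terms'' coming from lower-dimensional loci, so that in the relevant top two degrees the cohomology of $S^0\cap S^{-,\lambda}$ is computed purely by its top-dimensional irreducible components, and moreover splits as a direct sum over them.

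First I would stratify $S^0\cap S^{-,\lambda}$ as follows. Let $Z_1,\dots,Z_r$ be its irreducible components of the top dimension $-\langle\lambda,\check\rho\rangle$ (recall it is equidimensional, so these are all the components). Put $U:=\bigcup_k (Z_k\smallsetminus \bigcup_{k'\neq k} Z_{k'})$, the open locus of smooth points lying on a single component; this is an open $T$-stable subscheme whose complement $Z':=(S^0\cap S^{-,\lambda})\smallsetminus U$ is a closed $T$-stable subscheme of dimension $\leq -\langle\lambda,\check\rho\rangle-1$. The open–closed triangle for the inclusions of $U$ and $Z'$ (with respect to the sheaf $\Psi_{q,\lambda}\overset{*}\otimes\chi^*(\on{exp})$, using $j_!$ on the open part) gives a long exact sequence
\begin{equation*}
\cdots\to H^i_c(U,-)\to H^i_c(S^0\cap S^{-,\lambda},-)\to H^{i+1}_c(Z',-)\to\cdots
\end{equation*}
By \propref{p:off generic} (applicable since $\lambda\notin\Lambda^\sharp$ by hypothesis) the groups $H^{i}_c(Z',-)$ vanish for $i\geq -\langle\lambda,2\check\rho\rangle-2$; hence the restriction map $H^i_c(U,-)\to H^i_c(S^0\cap S^{-,\lambda},-)$ is an isomorphism for $i\geq -\langle\lambda,2\check\rho\rangle-1$. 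On the other hand $U$ is the \emph{disjoint} union of the locally closed smooth pieces $U_k:=Z_k\smallsetminus\bigcup_{k'\neq k}Z_{k'}$, so $H^i_c(U,-)=\bigoplus_k H^i_c(U_k,-)$, and for each $k$ a second application of the open–closed triangle for $U_k\hookrightarrow Z_k$, together with \propref{p:off generic} applied to the boundary $Z_k\smallsetminus U_k$ (again of dimension $\leq -\langle\lambda,\check\rho\rangle-1$ and $T$-stable), shows $H^i_c(U_k,-)\to H^i_c(Z_k,-)$ is an isomorphism — wait, the arrow runs the wrong way; rather the triangle gives an exact sequence $H^i_c(U_k,-)\to H^i_c(Z_k,-)\to H^i_c(Z_k\smallsetminus U_k,-)$ placing $H^i_c(U_k,-)\twoheadrightarrow H^i_c(Z_k,-)$ surjective and, one degree down, $H^i_c(U_k,-)\hookleftarrow\!$-controlled; the upshot I want is the dual statement, that $\bigoplus_k H^i_c(Z_k,-)\to H^i_c(S^0\cap S^{-,\lambda},-)$ is \emph{injective} in the range $i\geq -\langle\lambda,2\check\rho\rangle-1$, which follows by splicing the isomorphism $H^i_c(U,-)\cong H^i_c(S^0\cap S^{-,\lambda},-)$ with the injections $H^i_c(Z_k,-)\hookrightarrow$ the compactly-supported cohomology obtained from $Z_k=U_k\sqcup(Z_k\smallsetminus U_k)$ after again invoking \propref{p:off generic} to discard the boundary contribution in degrees $\geq -\langle\lambda,2\check\rho\rangle-1$.

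Concretely, the cleanest packaging is: \propref{p:off generic} gives a vanishing of $H^i_c$ in degrees $\geq -\langle\lambda,2\check\rho\rangle-2$ for \emph{every} $T$-stable closed subscheme of dimension $\leq -\langle\lambda,\check\rho\rangle-1$, hence in particular for all the ``intersection'' loci $Z_k\cap Z_{k'}$ ($k\neq k'$) and for $Z_k\smallsetminus U_k=Z_k\cap\bigcup_{k'\neq k}Z_{k'}$. Feeding these into the Mayer–Vietoris/Cousin spectral sequence for the covering of $S^0\cap S^{-,\lambda}$ by the closed sets $Z_k$ (whose $E_1$ page has $\bigoplus_k H^\bullet_c(Z_k,-)$ in the first column and sums of $H^\bullet_c$ of the pairwise-and-higher intersections in the later columns) shows all columns past the first vanish in total degree $\geq -\langle\lambda,2\check\rho\rangle-1$, so the edge map $\bigoplus_k H^i_c(Z_k,-)\to H^i_c(S^0\cap S^{-,\lambda},-)$ is an isomorphism onto a subspace — in particular injective — in those degrees, which is exactly the assertion.

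The one genuine input beyond bookkeeping is \propref{p:off generic}, already proved above; everything else is the standard behaviour of $H^\bullet_c$ under stratification by locally closed $T$-stable subschemes together with the dimension bound $\dim(Z_k\cap Z_{k'})<\dim Z_k$ coming from irreducibility and equidimensionality of $S^0\cap S^{-,\lambda}$. The only point requiring a little care — and the one I would check most carefully — is that the subschemes to which I apply \propref{p:off generic} really are $T$-stable and really have the asserted dimension; $T$-stability is clear since $S^0\cap S^{-,\lambda}$ is $T$-stable and $T$ permutes nothing among the $Z_k$ (each $Z_k$ is $T$-stable because $T$ is connected and preserves the irreducible decomposition), and the dimension bound for the pairwise intersections is the equidimensionality statement recalled in the text. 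Hence \corref{c:all comps contribute} follows.
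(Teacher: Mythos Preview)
Your approach is correct and is exactly what the paper intends: the corollary is stated without proof as an immediate consequence of \propref{p:off generic}, and your open--closed decomposition (remove the union $Z'$ of all pairwise intersections, then on each $Z_k$ remove $Z_k\cap Z'$, invoking \propref{p:off generic} at both steps) is the standard way to unpack this. Two small points on the write-up: first, the long exact sequence should read $\cdots\to H^i_c(U)\to H^i_c(X)\to H^i_c(Z')\to H^{i+1}_c(U)\to\cdots$ (your $Z'$ term is off by one, though the conclusion survives since \propref{p:off generic} kills both $H^{i-1}_c(Z')$ and $H^i_c(Z')$ in the needed range); second, in your Mayer--Vietoris repackaging the claim that the $E_1$ columns with $p\ge 2$ vanish in total degree $\ge -\langle\lambda,2\check\rho\rangle-1$ is not actually furnished by \propref{p:off generic}, which only gives vanishing for $q\ge -\langle\lambda,2\check\rho\rangle-2$, whereas those columns sit at $q\le -\langle\lambda,2\check\rho\rangle-3$. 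This is harmless, since your first argument already shows the restriction map $H^i_c(X)\to\bigoplus_k H^i_c(Z_k)$ is an \emph{isomorphism} for $i\ge -\langle\lambda,2\check\rho\rangle-1$, which is all that is needed (and renders your worry about ``the arrow running the wrong way'' moot).
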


\sssec{}

Combining Corollaries \ref{c:all comps contribute} and \ref{c:not in sharp}, we obtain:

\begin{cor}   \label{c:conviction}
Let $\lambda$ be such that  $S^0\cap S^{-,\lambda}$ contains an indicted 
irreducible component. Then the cohomology \eqref{e:subtop open as plain c again} is non-zero 
in degree $-\langle \lambda,2\check\rho\rangle-1$. 
\end{cor}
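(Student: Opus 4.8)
\textbf{Proof proposal for \corref{c:conviction}.}

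The strategy is a direct synthesis of the two corollaries just established, with the only genuine input being a counting/parity argument that the indicted component's contribution cannot be cancelled. First I would invoke \corref{c:not in sharp}: since $S^0\cap S^{-,\lambda}$ contains an indicted component $Z$ with associated vertex $i$, we have $\lambda\notin \Lambda^\sharp$ (and moreover $\lambda+\alpha_i\in\Lambda^\sharp$, though only the first conclusion is needed here). This places us squarely in the hypotheses of \corref{c:all comps contribute}, so the map
$$\underset{k}\oplus\, H^i_c\left(Z_k,\Psi_{q,\lambda}\otimes \chi^*(\on{exp})\right)\to H^i_c\left(S^0\cap S^{-,\lambda},\Psi_{q,\lambda}\otimes\chi^*(\on{exp})\right)$$
is injective in the sub-top degree $i=-\langle\lambda,2\check\rho\rangle-1$, the direct sum ranging over all top-dimensional irreducible components.

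Next I would record that, by definition of ``indicted'' (see \secref{sss:when indict} and \secref{ss:indict}), the component $Z$ is one of the $Z_k$ and its own cohomology $H^{-\langle\lambda,2\check\rho\rangle-1}_c(Z,\Psi_{q,\lambda}\otimes\chi^*(\on{exp}))$ is \emph{nonzero}: this is precisely the Fourier-analytic computation showing that when $\Psi_{q,\lambda}|_{Z_i}$ descends along $\chi_i:Z_i\to\BG_m$, the relevant cohomology of a Kummer-times-Artin--Schreier sheaf on $\BG_m$ survives in the sub-top degree (the nonvanishing $H^1(\BG_m,\on{Gauss}_\zeta)\simeq\sfe$ from the end of the proof of \propref{p:Whit on open}, propagated through the product decomposition $Z_i\simeq Z_i'\times((\fL(N^j)\cdot t^\mu)\cap S^{-,\lambda})$). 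Therefore the source of the injective map above has a nonzero summand in degree $-\langle\lambda,2\check\rho\rangle-1$, hence is itself nonzero in that degree.

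Combining: the target $H^{-\langle\lambda,2\check\rho\rangle-1}_c(S^0\cap S^{-,\lambda},\Psi_{q,\lambda}\otimes\chi^*(\on{exp}))$ receives an injection from a nonzero space, so it is nonzero, which is exactly the claimed violation of \thmref{t:main 3} for this $\lambda$ and this $\on{ord}(\zeta)$. \qed

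\textbf{Remark on the main obstacle.} The conceptual heart is not in this final assembly but upstream, in \propref{p:off generic} (hence \corref{c:all comps contribute}): the point is that lower-dimensional $T$-stable closed strata cannot contribute in the sub-top degree, so that the contributions of distinct top-dimensional components do not interfere and in particular the indicted one is not cancelled. That vanishing uses crucially that $\lambda\notin\Lambda^\sharp$, forcing the Kummer sheaf $\mu\mapsto b(\lambda,\mu)$ on $T$ to be nontrivial, which kills any sheaf supported at $0\in\BG_a^I$ that is twisted $T$-equivariant against it. Once that input is in hand, the present corollary is a two-line deduction; the bookkeeping to ensure that ``indicted'' really does produce a nonzero class (rather than merely a potentially nonzero one) was already arranged in the definition in \secref{ss:indict}.
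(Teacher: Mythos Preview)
Your proposal is correct and follows essentially the same approach as the paper, which simply states that the corollary is obtained by ``combining Corollaries \ref{c:all comps contribute} and \ref{c:not in sharp}'' together with the definition of indicted (which guarantees the nonvanishing on $Z$, as recalled at the start of the Conviction subsection). You have spelled out in full what the paper leaves to the reader.
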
 

The latter corollary says that the indicted components are indeed guilty of a crime: they bring about the
failure of \thmref{t:main 3}. 

\ssec{The verdict}

We are finally ready to prove \thmref{t:indep of char}. 

\begin{proof}

Let us be working in both contexts simultaneously: $\ell$-adic sheaves over a field
of positive characteristic or D-modules over a field of characterostic zero. In either case,
write 
$$q=\zeta\cdot q^{\on{min}}_\BZ$$ 
and choose $\zeta$ to be of the same order in both contexts. 

\medskip

By \corref{c:conviction} it suffices to show the presence of an indicted components (for a given 
$\on{ord}(\zeta)$) is independent of which context we are working in. 

\medskip

However, by \corref{c:when indicted}, the existence of indicted components for a given root system
is a property that can be expressed in terms of the the crystal $\sB(\lambda)$. 

\medskip

Hence, our assertion follows from Kashiwara's uniqueness theorem, which asserts that $\sB(\lambda)$ 
is uniquely recovered from the root system, and, in particular, it does not depend on the ground field.

\end{proof}

\appendix

\section{Sheaves with a formal parameter}  \label{s:h bar}

\ssec{Digression: sheaves with a formal parameter}  \label{ss:with hbar}

\sssec{}  \label{sss:intr hbar}

For a fixed integer $n$ we can consider the sheaf theory, denoted $\Shv_n(-)$, obtained from our initial sheaf theory $\Shv(-)$ by tensoring
with the ring $\sfe[\hbar]/\hbar^n$, i.e., it sends a scheme $Y$ to 
$$\Shv_n(Y):=\left(\on{Ind}(\Shv(Y))\underset{\Vect_\sfe}\otimes \sfe[\hbar]/\hbar^n\mod\right)^c,$$
where the superscript $c$ refers to the subcategory of compact objects. 

\medskip

Following \cite[Sect. 2.3]{GaLu}, define the sheaf theory $\Shv_\hbart$ as the limit
$$\Shv_\hbart(Y):=\underset{n}{\on{lim}}\, \Shv(Y)_n,$$
which we can view as taking values in the category of small $\sfe\hbart$-linear categories, i.e., small DG categories
equipped with an action of the monoidal category $(\sfe\hbart\mod)^{\on{perf}}=(\sfe\hbart\mod)^{\on{f.g.}}$. 

\medskip

We have a tautological reduction mod $\hbar$ functor
$$\CF\mapsto \CF\underset{\sfe\hbart}\otimes \sfe, \quad \Shv_\hbart(Y)\to \Shv(Y).$$

\sssec{}

Define the sheaf theory $\Shv_\hbarl(-)$ to be the localization of $\Shv_\hbart(-)$ with respect to $\hbar$, i.e.,
$$\Shv_\hbarl(Y):=\left(\on{Ind}(\Shv_\hbart(Y)) \underset{\sfe\hbart\mod}\otimes \Vect_{\sfe\hbarl}\right)^c.$$

\medskip

The tautological projection functor $\Shv_\hbart(Y)\to \Shv_\hbarl(Y)$ admits a right adjoint \emph{with values in the
ind-completion} $\on{Ind}(\Shv_\hbart(Y))$ of $\Shv_\hbart(Y)$. The composite functor
$$\Shv_\hbart(Y)\to \Shv_\hbarl(Y)\to \on{Ind}(\Shv_\hbart(Y))$$
is the functor of $\hbar$-localization
$$\CF\mapsto \on{colim}\, (\CF \overset{\hbar}\to \CF \overset{\hbar}\to ...).$$

\sssec{}

The standard functors (the !- and *- inverse and direct images) for $\Shv(-)$ induce the corresponding functors for
$\Shv_\hbart(-)$ by passage to the limit. Localizing with respect to $\hbar$, we obtain the corresponding functors 
for $\Shv_\hbarl(-)$.

\ssec{The t-structure}



\sssec{}   \label{sss:t on hbar}

Repeating \cite[Proposition 2.3.6.1]{GaLu}, one proves:

\begin{prop} \label{p:t h} 
For a scheme $Y$, the category $\Shv_\hbart(Y)$ carries a t-structure uniquely characterized by the condition that an object 
$\CF\in \Shv_\hbart(Y)$ is connective, i.e., lies in $(\Shv_\hbart(Y))^{\leq 0}$, if and only if $$\CF\underset{\sfe\hbart}\otimes \sfe \in \Shv(Y)$$ is 
connective (with respect to the perverse t-structure). 
\end{prop}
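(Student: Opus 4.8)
The plan is to follow \cite[Proposition 2.3.6.1]{GaLu} essentially line by line, replacing the pair $(\BZ_\ell,\BZ/\ell^n)$ of that reference by $(\sfe\hbart,\sfe[\hbar]/\hbar^n)$, which have the same shape: a complete local ring with uniformizer $\hbar$ and residue ring $\sfe$. First I would put a t-structure on each finite-level category $\Shv_n(Y)=\bigl(\on{Ind}(\Shv(Y))\underset{\Vect_\sfe}\otimes (\sfe[\hbar]/\hbar^n\mod)\bigr)^c$. Since $\sfe[\hbar]/\hbar^n$ is a finite-dimensional $\sfe$-algebra in cohomological degree $0$, the tensor-product t-structure on $\on{Ind}(\Shv(Y))\underset{\Vect_\sfe}\otimes (\sfe[\hbar]/\hbar^n\mod)$ restricts to a t-structure on the subcategory of compact objects --- here is where $Y$ being a scheme enters, guaranteeing that truncation preserves constructibility/holonomicity, i.e. compactness in $\Shv(Y)$. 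Equivalently, this t-structure is the one for which the conservative, continuous, t-exact forgetful functor $\on{oblv}_n\colon \Shv_n(Y)\to \Shv(Y)$ (restriction of scalars along $\sfe\hookrightarrow \sfe[\hbar]/\hbar^n$) detects connectivity; the point is that its left adjoint $-\otimes_\sfe \sfe[\hbar]/\hbar^n$ is t-exact, so that any object whose $\on{oblv}_n$-image is connective is a colimit of connective objects. I would then record the two compatibilities that feed into the limit step: the transition functor $F_n\colon \Shv_{n+1}(Y)\to\Shv_n(Y)$, $\CF\mapsto \CF\underset{\sfe[\hbar]/\hbar^{n+1}}\otimes \sfe[\hbar]/\hbar^n$, is right t-exact, and its right adjoint $G_n$ (restriction of scalars along the surjection $\sfe[\hbar]/\hbar^{n+1}\twoheadrightarrow\sfe[\hbar]/\hbar^n$) is t-exact.

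Next I would assemble the t-structure on $\Shv_\hbart(Y)=\on{lim}_n \Shv_n(Y)$, the limit being taken along the functors $F_n$ (so that the projection to the $n=1$ term is exactly the reduction-mod-$\hbar$ functor $\CF\mapsto \CF\underset{\sfe\hbart}\otimes\sfe$). This is the step where I would invoke --- or, if necessary, reproduce from \cite[Sect. 2.3]{GaLu} --- the general statement that a limit of DG categories along right t-exact functors whose right adjoints are t-exact carries a t-structure with connective part $\{\CF:\ \CF\underset{\sfe\hbart}\otimes\sfe[\hbar]/\hbar^n\in\Shv_n(Y)^{\leq 0}\ \forall n\}$. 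Concretely one uses that, the functors $F_n$ having continuous right adjoints, the limit may also be presented as the colimit along the $G_n$, and the coconnective part is then built from the $\Shv_n(Y)^{\geq 0}$ using t-exactness of the $G_n$. Finally I would upgrade the connectivity criterion from "all $\CF_n$ connective'' to "$\CF_1=\CF\underset{\sfe\hbart}\otimes\sfe$ connective'': filtering $\sfe[\hbar]/\hbar^n$ by the submodules $\hbar^i(\sfe[\hbar]/\hbar^n)$ gives a finite filtration whose associated graded is $n$ copies of $\sfe$ in degree $0$, and since $M\mapsto \CF\underset{\sfe\hbart}\otimes M$ preserves finite colimits, each $\CF_n$ acquires a finite filtration with all subquotients isomorphic to $\CF_1$; as $\on{oblv}_n$ is t-exact and $\Shv(Y)^{\leq 0}$ is closed under extensions, $\CF_1\in\Shv(Y)^{\leq 0}$ forces $\CF_n\in\Shv_n(Y)^{\leq 0}$ for all $n$. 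Uniqueness is then formal, since any t-structure is recovered from its subcategory of connective objects by taking the right orthogonal.

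The part I expect to be the real obstacle is not any of the bookkeeping above but the non-formal content of the limit step: showing that the putative connective subcategory of $\Shv_\hbart(Y)$ is actually coreflective, i.e. that every object admits a truncation triangle, \emph{despite the transition functors $F_n$ failing to be left t-exact} (so that level-wise truncations do not form a compatible system). This is precisely the delicate point in \cite[Proposition 2.3.6.1]{GaLu}, and the way to handle it is to work on the colimit-along-$G_n$ side, where the structure maps are t-exact, and to build $\tau^{\geq 1}$ there; the $\hbar$-adic filtration argument of the previous paragraph is exactly what controls the discrepancy between $F_n(\tau^{\geq 1}\CF_{n+1})$ and $\tau^{\geq 1}\CF_n$ and makes the construction converge. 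Once that mechanism is in place, transcribing it to our setting is routine, since the only features of $\BZ_\ell$ used are that it is $\hbar$-adically complete with residue ring $\sfe$ and that $\Shv(-)$ has a t-structure restricting to constructible objects on schemes.
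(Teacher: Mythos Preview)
Your proposal is correct and matches the paper's own approach exactly: the paper does not give an independent proof but simply writes ``Repeating \cite[Proposition 2.3.6.1]{GaLu}, one proves'' before stating the proposition. You have done precisely this, spelling out in detail how the argument of {\it loc.~cit.} transcribes to the pair $(\sfe\hbart,\sfe[\hbar]/\hbar^n)$, so there is nothing to compare --- you have in fact supplied more than the paper does.
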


\begin{rem}
The content of \propref{p:t h}(a) is that for a given $\CF\in \Shv_\hbart(Y)$ there \emph{exists} a fiber sequence
$$\CF_1\to \CF\to \CF_2$$ with $\CF_1\in (\Shv_\hbart(Y))^{\leq 0}$ (for the above definition of $(\Shv_\hbart(Y))^{\leq 0}$)
and $\CF_2\in ((\Shv_\hbart(Y))^{\leq 0})^\perp$.
\end{rem} 

\begin{rem}  \label{r:gen abelian}
The construction of the t-structure from \propref{p:t h} is applicable in a more general context: we can start with any
(small) DG category $\bC$, equipped with a t-structure and such that $\bC^\heartsuit$ is \emph{Noetherian}, and construct
a t-structure on the corresponding category $\bC_\hbart$.
\end{rem} 

\sssec{} \label{sss:hflat}

We will denote by $\Perv_\hbart(Y)$ the heart of the t-structure on $\Shv_\hbart(Y)$

Let us call an object of $$\CF\in \Perv_\hbart(Y)\subset \Shv_\hbart(Y)$$
$\hbar$-flat if 
$$\CF\underset{\sfe\hbart}\otimes \sfe \in \Shv(Y)$$ 
lies in the heart of the t-structure, i.e., lies in $\Perv(Y)$.

\medskip

For $\CF\in \Perv_\hbart(Y)$ denote 
$$\CF/\hbar:=H^0(\CF\underset{\sfe\hbart}\otimes \sfe).$$

Note that if $\CF$ is $\hbar$-flat, we have 
$$\CF/\hbar\simeq \CF\underset{\sfe\hbart}\otimes \sfe.$$

In general, $\CF\underset{\sfe\hbart}\otimes \sfe$ may have a non-trivial $H^{-1}$, which is isomorphic to
$$\CF[t]:=\on{ker}(t:\CF\to \CF).$$

\sssec{}

We claim: 

\begin{prop} \label{p:t h bis}
The abelian category
$\Perv_\hbart(Y)$ is \emph{Noetherian}, i.e., an increasing chain of sub-objects in a given object stabilizes.  
\end{prop}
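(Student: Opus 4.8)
The goal is to show that $\Perv_\hbart(Y)$ is Noetherian. The strategy is to reduce the statement to the Noetherianity of $\Perv(Y)$ — which holds because each of our sheaf theories produces an abelian category of perverse sheaves that is Noetherian (constructible complexes on a finite-type scheme, $\ell$-adic sheaves, or holonomic D-modules all have this property) — together with the fact that $\sfe\hbart$ is a Noetherian ring. Concretely, I would fix an object $\CF\in\Perv_\hbart(Y)$ and an increasing chain $\CF_1\subset \CF_2\subset\cdots$ of sub-objects, and aim to produce an $n$ with $\CF_n=\CF_{n+1}=\cdots$.

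\textbf{Main steps.} First I would establish the ``devissage along $\hbar$'' tool: for any $\CF\in\Perv_\hbart(Y)$, multiplication by $\hbar$ is an endomorphism, and I would identify $\CF/\hbar = H^0(\CF\otimes_{\sfe\hbart}\sfe)$ and $\CF[\hbar]=\ker(\hbar\colon\CF\to\CF)$ as objects of $\Perv(Y)$ (using \propref{p:t h} and the right-exactness of $-\otimes_{\sfe\hbart}\sfe$; note $\CF$ is $t$-bounded and, being in the heart, $\CF\otimes_{\sfe\hbart}\sfe$ lives in perverse degrees $[-1,0]$ with $H^{-1}=\CF[\hbar]$). Second, I would argue that $\hbar$ acts \emph{locally nilpotently} on any $\CF\in\Perv_\hbart(Y)$ in the appropriate sense: the object $\CF$ is obtained as a limit of objects over $\sfe[\hbar]/\hbar^n$, and one shows that $\bigcap_m \hbar^m\CF$, computed inside the ind-completion, is $\hbar$-divisible, hence (since the $\hbar$-localization of a bounded object in the heart of $\Shv_\hbart$ that reduces to a perverse sheaf is again suitably bounded) must vanish — equivalently $\CF$ is $\hbar$-complete and $\hbar$-torsion-adically exhausted. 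This gives a finite filtration-type control: for each fixed $m$, $\CF/\hbar^m\CF$ is a successive extension of $m$ perverse sheaves on $Y$. Third, given the chain $\{\CF_k\}$, for each $m$ the induced chain of sub-objects of the Noetherian object $\CF/\hbar^m$ (Noetherian because it is a finite iterated extension of Noetherian objects of $\Perv(Y)$, and the class of Noetherian objects is closed under extensions) stabilizes. Fourth, I would choose $m$ large enough that $\hbar^m\CF=0$ — this is where the local nilpotence/finiteness from step two is used: since $\CF$ is a single object, it is killed by a fixed power of $\hbar$ once we know it is genuinely an $\sfe[\hbar]/\hbar^{N}$-module for some $N$ (equivalently its support in the $\hbar$-direction is bounded), and then $\CF/\hbar^m\CF=\CF$, so stabilization of sub-objects of $\CF/\hbar^m$ is exactly stabilization of the original chain.

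\textbf{Alternative packaging.} Equivalently — and perhaps cleaner to write — I would invoke the general principle of Remark \ref{r:gen abelian}: $\Perv_\hbart(Y)$ is the heart of the $\hbar$-adic category attached to the Noetherian abelian category $\Perv(Y)$, and for any Noetherian abelian category $\mathcal{A}$ the corresponding category $\mathcal{A}_\hbart$ has Noetherian heart. The proof of that general fact is the module-theoretic statement that if $\mathcal{A}$ is Noetherian then finitely-generated modules over the ``constant'' ring object $\sfe\hbart$ in $\mathrm{Ind}(\mathcal{A})$ form a Noetherian abelian category, which follows from the Hilbert basis argument: an ascending chain of sub-objects of $\CF$ induces, via $\otimes_{\sfe\hbart}\sfe$ and the associated graded for the $\hbar$-filtration, an ascending chain in a finitely-generated graded module over $\mathrm{gr}(\sfe\hbart)=\sfe[\hbar]$ with coefficients in $\mathcal{A}$, which stabilizes by Noetherianity of $\mathcal{A}$ and of $\sfe[\hbar]$; a standard lifting argument then propagates stabilization of the associated graded back to the chain itself.

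\textbf{Main obstacle.} The delicate point is step two — controlling the $\hbar$-adic behaviour of a general object of $\Perv_\hbart(Y)$, i.e.\ making precise that an object in the heart which ``reduces to a perverse sheaf mod $\hbar$'' is automatically $\hbar$-adically well-behaved (complete, and with bounded $\hbar$-torsion on a finitely-generated piece), so that the reduction to the honestly-finite-length-in-the-$\hbar$-direction situation is legitimate. This requires care with the passage between $\Shv_\hbart(Y)$, its ind-completion, and the $\hbar$-localization $\Shv_\hbarl(Y)$, and with the interaction of the t-structure of \propref{p:t h} with infinite $\hbar$-adic limits. I expect this to be handled exactly as in \cite[Sect.~2.3]{GaLu}, so the write-up would consist mainly of citing the analogous statements there and checking that the Noetherian hypothesis on $\Perv(Y)$ is what makes the Hilbert-basis step go through; the rest is formal.
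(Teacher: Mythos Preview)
Your ``Main steps'' approach contains a genuine error at step~4. You claim that for a single object $\CF\in\Perv_\hbart(Y)$ one can choose $m$ with $\hbar^m\CF=0$, appealing to the local nilpotence established in step~2. But step~2 only gives $\hbar$-adic separatedness, i.e.\ $\bigcap_m\hbar^m\CF=0$; this is \emph{not} the same as $\hbar$-nilpotence. Already for $Y=\on{pt}$ the object $\sfe\hbart$ itself lies in the heart (it is the inverse system $\{\sfe[\hbar]/\hbar^n\}_n$), and no power of $\hbar$ kills it. So the reduction to a single quotient $\CF/\hbar^m\CF$ fails, and with it your argument.

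The paper's proof handles exactly this issue. It does \emph{not} attempt to kill $\CF$ by a power of $\hbar$. Instead, given a chain $\{\bc_i\}$, it passes to the \emph{saturations} $\wt\bc_i$ (preimages of the maximal $\hbar$-torsion subobject of $\bc/\bc_i$) and observes that the images $\wt\bc_i/\hbar\wt\bc_i\hookrightarrow \bc/\hbar\bc$ are genuine subobjects of a Noetherian object, hence stabilize; one then checks that stabilization mod $\hbar$ forces stabilization of the $\wt\bc_i$ themselves. This reduces to the case where $\bc$ is $\hbar$-torsion, which is handled by a separate lemma (attributed to Lurie): the chain $\bc[\hbar^d]$ stabilizes, proved by showing that the kernels of the surjections $\bc/\hbar\bc\twoheadrightarrow \hbar^n\bc/\hbar^{n+1}\bc$ stabilize, whence by $\hbar$-completeness $\hbar$ is eventually injective on $\hbar^n\bc$, so all $\bc[\hbar^d]$ embed into the Noetherian object $\bc/\hbar^n\bc$. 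Your ``Alternative packaging'' via associated graded and Hilbert basis is in the right spirit and could likely be made to work, but as written it is too vague at precisely the point where care is needed: the ``standard lifting argument'' must contend with the fact that the $\hbar$-adic filtration on $\CF$ is infinite, and this is where the saturation trick (or an equivalent device) enters.
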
 

\begin{proof} 

We will prove \propref{p:t h bis} in the more general context of Remark \ref{r:gen abelian}. 

\medskip

For an object $\bc\in (\bC_\hbart)^\heartsuit$ denote
$$\bc[\hbar^d]:=\on{ker}(\hbar^d:\bc\to \bc)\in (\bC_\hbart)^\heartsuit$$
and
$$\hbar^d\bc:=\on{Im}(\hbar^d:\bc\to \bc)\in (\bC_\hbart)^\heartsuit.$$

\begin{lem}  \label{l:kernels stabilize}
The sequence of subobjects
$$d\mapsto \bc[\hbar^d]\subset \bc$$
stabilizes. 
\end{lem}

Let us assume this lemma for a moment and proceed with the proof of \propref{p:t h bis}.

\medskip

Let $\bc[\hbar^\infty]$ denote the subobject of $\bc$ equal to the eventual value of $\bc[\hbar^d]$. 

\medskip

For a subobject $\bc_1\subset \bc$ we define its saturation $\wt\bc_1$ to be the preimage
of $(\bc/\bc_1)[\hbar^\infty]$ under the projection 
$$\bc\to \bc/\bc_1.$$

Let 
$$\bc_1\subset \bc_2\subset ... \subset \bc$$
be a sequence of subobjects. Consider the corresponding sequence of their saturations
$$\wt\bc_1\subset \wt\bc_2... \subset \bc.$$

By construction, the corresponding maps
$$\wt\bc_i/\hbar\wt\bc_i\to \bc/\hbar\bc$$
are injective. 

\medskip

It is easy to see that as soon as 
$$\wt\bc_i/\hbar\wt\bc_i\hookrightarrow \wt\bc_{i+1}/\hbar\wt\bc_{i+1}$$
is an isomorphism (which happens for some $i$ due to the Noetherianness of $\bC^\heartsuit$), we have
$\wt\bc_i=\wt\bc_{i+1}$. 

\medskip

Replacing the initial $\bc$ by the eventual value of $\wt\bc_i$ and reindexing, we can assume that
all $\wt\bc_i=\bc$. Taking the quotient, we can assume that $\bc$ is torsion. Then the assertion follows
from \lemref{l:kernels stabilize}.

\end{proof}

\begin{proof}[Proof of \lemref{l:kernels stabilize} (due to J.~Lurie)]

Multiplication by $\hbar^n$ defines surjective maps
$$\bc/\hbar\bc \to \hbar^n\bc/\hbar^{n+1}\bc.$$

By Noetherianness, the kernels of these maps stabilize. Hence, for some $n_0$ and all $n\geq n_0$, the maps
$$\hbar^n\bc/\hbar^{n+1}\bc \overset{\hbar}\to \hbar^{n+1}\bc/\hbar^{n+2}\bc$$
are isomorphisms. 

\medskip

By $\hbar$-completeness, the maps 
$$\hbar^n\bc  \overset{\hbar}\to \hbar^{n+1}\bc$$
are also isomorphisms. Hence, $\hbar$ is torsion-free on $\hbar^n\bc$.  

\medskip

Hence, for any $d$, 
$$\bc[\hbar^d]\hookrightarrow (\bc/\hbar^n\bc)[\hbar^d].$$

Now, the assertion follows by Noetherianness. 

\end{proof} 

\sssec{}

We define a t-structure on $\Shv_\hbarl(Y)$ to be uniquely characterized by the property that the localization functor
$$\Shv_\hbart(Y)\to \Shv_\hbarl(Y)$$
is t-exact. 

\medskip

The t-structure on $\Shv_\hbart(Y)$ induces one on $\on{Ind}(\Shv_\hbart(Y))$. The functor
$$\Shv_\hbarl(Y)\to \on{Ind}(\Shv_\hbart(Y))$$
right adjoint to the projection is also t-exact. 

\ssec{Gerbes with a formal parameter}  \label{ss:h bar gerbes} 

\sssec{} \label{sss:twisted shvs h}

For a fixed $n$, one can talk about $(\sfe[\hbar]/\hbar^n)^\times$-gerbes on a given scheme/prestack $Y$,
defined as in \secref{sss:K}. Denote this category
by $\on{Ge}_n(Y)$. Define the category $\on{Ge}_\hbar(Y)$ by
$$\on{Ge}_\hbar(Y):=\underset{n}{\on{lim}}\, \on{Ge}_n(Y).$$

\medskip

Given an object $\CG\in \on{Ge}_\hbar(Y)$, one can consider the category
$$\Shv_{\CG,\hbart}(\CY)$$
and its localization $\Shv_{\CG,\hbarl}(\CY)$.

\sssec{}

Define the group $\fZ_n$ in each of the sheaf theories from \secref{sss:sh th} as follows:

\begin{itemize}

\item For $\Shv(-)$ being constructible sheaves in classical topology with $\sfe$-coefficients, set $$\fZ_n=(\sfe[\hbar]/\hbar^n)^\times;$$

\medskip

\item For $\Shv(-)$ being constructible $\ell$-adic sheaves, set $\fZ_n=(\bar\BZ_\ell[\hbar]/\hbar^n)^\times$; 

\medskip

\item For $\Shv(-)$ being holonomic D-modules, set $\fZ_n=(k[\hbar]/\hbar^n)/\BZ$.

\end{itemize} 

In all of these cases, an element $\zeta\in \fZ_n$ gives rise to a Kummer character sheaf $\Psi_\zeta\in \Shv_n(\BG_m)$. 
Hence, for a line bundle $\CL$ on $Y$ and $\zeta$ as above, we obtain a well-defined object
$$\CL^\zeta\in \on{Ge}_n(Y).$$

\sssec{}   \label{sss:K h bar}

Define 
$$\fZ_\hbar:=\underset{n}{\on{lim}}\, \fZ_n.$$

We obtain that for a line bundle $\CL$ on $Y$ and $\zeta\in \fZ_\hbar$, we obtain a well-defined object 
$$\CL^\zeta\in \on{Ge}_\hbar(Y).$$

\end{document}